\DeclareMathOperator{\sd}{sd}
\newcommand{\m}{\to}
\providecommand{\Ab}{\ensuremath\mathsf{Ab}}
\providecommand{\sgn}{\ensuremath\mathsf{sgn}}
\providecommand{\Star}{\ensuremath\mathsf{Star}}
\providecommand{\Link}{\ensuremath\mathsf{Link}}
\providecommand{\Linkhat}{\ensuremath\widehat{ \mathsf{Link}}}
\providecommand{\Vr}{\ensuremath\mathsf{Vert}}
\providecommand{\Cone}{\ensuremath\mathsf{Cone}}
\providecommand{\N}{\ensuremath\mathbb N_0}
\providecommand{\Z}{\ensuremath\mathbb Z}
\providecommand{\Q}{\ensuremath\mathbb Q}
\providecommand{\F}{\ensuremath\mathbb F}
\providecommand{\bT}{\ensuremath\mathbb T}
\providecommand{\cM}{\ensuremath\mathcal M}
\providecommand{\cT}{\ensuremath\mathcal T}
\DeclareMathOperator{\im}{im}
\DeclareMathOperator{\GL}{GL}
\DeclareMathOperator{\SL}{SL}
\DeclareMathOperator{\rank}{rank}
\DeclareMathOperator{\Sym}{Sym}
\DeclareMathOperator{\Lk}{Lk}
\DeclareMathOperator{\Ind}{Ind}
\DeclareMathOperator{\St}{St}
\newcommand{\B}{\operatorname{B}}
\newcommand{\BA}{\operatorname{BA}}
\newcommand{\DD}{\operatorname{DD}}
\newcommand{\TA}{\operatorname{TA}}
\newcommand{\BAA}{\operatorname{BAA}}
\definecolor{grey}{gray}{.5}
\newcommand{\inter}{\operatorname{int}}
\newcommand{\ls}{\left\{}
\newcommand{\rs}{\right\}}
\numberwithin{thmcounter}{section}
\newaliascnt{thmauto}{thmcounter}
\newaliascnt{Defauto}{thmcounter}
\newaliascnt{exauto}{thmcounter}
\newaliascnt{quauto}{thmcounter}
\newaliascnt{lemauto}{thmcounter}
\newaliascnt{propauto}{thmcounter}
\newaliascnt{corauto}{thmcounter}
\newaliascnt{remauto}{thmcounter}
\newaliascnt{convauto}{thmcounter}
\newaliascnt{obauto}{thmcounter}
\newaliascnt{conauto}{thmcounter}
\newaliascnt{claimauto}{thmcounter}
\newtheorem{atheorem}{Theorem}
\newtheorem*{ThmA'}{Theorem A'}
\newtheorem*{ThmB'}{Theorem B'}
\newtheorem*{ThmC'}{Theorem C'}
\newtheorem{theorem}[thmauto]{Theorem}
\newtheorem{lemma}[lemauto]{Lemma}
\newtheorem{proposition}[propauto]{Proposition}
\newtheorem{corollary}[corauto]{Corollary}
\newtheorem{conjecture}[conauto]{Conjecture}
\theoremstyle{definition}
\newtheorem{definition}[Defauto]{Definition}
\newtheorem{example}[exauto]{Example}
\newtheorem{remark}[remauto]{Remark}
\newtheorem{convention}[convauto]{Convention}
\newtheorem{claim}[claimauto]{Claim}
\newtheorem{observation}[obauto]{Observation}
\author{Benjamin Br\"uck}
\address{Institut f{\"u}r Mathematische Logik und Grundlagenforschung, Universit\"at M{\"u}nster, Germany}
\email{benjamin.brueck@uni-muenster.de}
\thanks{Benjamin Br\"uck was partially supported by the Deutsche Forschungsgemeinschaft (DFG, German Research Foundation) -- Project-ID 427320536 -- SFB 1442, as well as by Germany’s Excellence Strategy EXC 2044 -- 390685587, Mathematics Münster: Dynamics–Geometry–Structure.}
\author{Jeremy Miller}\thanks{Jeremy Miller was supported in part by NSF grant DMS-2202943 and a Simons Foundation collaboration grant}
\address{Department of Mathematics, Purdue University, USA}
\email{jeremykmiller@purdue.edu}
\author{Peter Patzt}
\thanks{Peter Patzt was supported in part by NSF grant DMS-2405310, a Simons Foundation collaboration grant, and by the Danish National Research Foundation through the Copenhagen Centre
for Geometry and Topology (DNRF151) and the European Research Council under the European Union’s
Seventh Framework Programme ERC Grant agreement ERC StG 716424 - CASe, PI Karim Adiprasito.}
\address{Department of Mathematics, University of Oklahoma, USA}
\email{ppatzt@ou.edu}
\author{Robin J. Sroka}
\thanks{Robin J.\ Sroka was supported by the European Research Council (ERC grant agreement No.772960) and the Danish National Research Foundation (DNRF92, DNRF151) as a PhD student at the University of Copenhagen, by NSERC Discovery Grant A4000 in connection with a Postdoctoral Fellowship at McMaster University, by the Swedish Research Council under grant no.\ 2016-06596 while in residence at Institut Mittag-Leffler in Djursholm, Sweden during the semester \emph{Higher algebraic structures in algebra, topology and geometry}, and by the Deutsche Forschungsgemeinschaft (DFG, German Research Foundation) -- Project-ID 427320536 -- SFB 1442, as well as by Germany’s Excellence Strategy EXC 2044 390685587, Mathematics Münster: Dynamics–Geometry–Structure as a Postdoctoral Research Associate at the University of Münster.}
\address{Department of Mathematics \& Statistics, McMaster University, Canada}
\curraddr{Mathematisches Institut, Universität Münster, Germany}
\email{robinjsroka@uni-muenster.de}
\author{Jennifer C. H. Wilson} \thanks{Jennifer Wilson was supported in part by NSF CAREER grant DMS-2142709 and NSF grant DMS-1906123.} 
\address{Department of Mathematics,  University of Michigan, USA}
\email{jchw@umich.edu}
\title{On the codimension-two cohomology of $\SL_n(\Z)$}
\begin{document}

\begin{abstract}
Borel--Serre proved that $\SL_n(\Z)$ is a virtual duality group of dimension $n \choose 2$ and the Steinberg module $\St_n(\Q)$ is its  dualizing module. This module is the top-dimensional homology group of the Tits building associated to $\SL_n(\Q)$.
We determine the ``relations among the relations'' of this Steinberg module. That is, we construct an explicit partial resolution of length two of the $\SL_n(\Z)$-module $\St_n(\Q)$. We use this partial resolution to show the codimension-$2$ rational cohomology group $H^{{n \choose 2} -2}(\SL_n(\Z);\Q)$ of $\SL_n(\Z)$ vanishes for $n \geq 3$. This resolves a case of a conjecture of Church--Farb--Putman. We also produce lower bounds for the codimension-$1$ cohomology of certain congruence subgroups of $\SL_n(\Z)$.
\end{abstract}
\maketitle

\tableofcontents


\section{Introduction}

\subsection{Steinberg modules and Borel--Serre duality}

Although the Steinberg module was initially introduced as an object of study in representation theory, the work of Borel--Serre \cite{BoSe} showed its importance to the study of cohomology of arithmetic groups.
In this paper, we are interested in the arithmetic group $\SL_n(\Z)$ and its congruence subgroups. We use their relationship to the Steinberg module for $\SL_n(\Q)$ to obtain new insights about the high-dimensional cohomology of these groups.

We begin by recalling the relevant definitions.
Let $\F$ be a field. The \emph{Tits building} associated to $\SL_n(\F)$, denoted $\cT_n(\F)$, is the geometric realisation of the poset of proper nonzero subspaces of $\F^n$. It is $(n-2)$-spherical by the Solomon--Tits Theorem \cite{Sol:Steinbergcharacterfinite} and its one potentially nonvanishing reduced homology group is called the Steinberg module \[\St_n(\F) := \widetilde H_{n-2}(\cT_n(\F)).\] The group $\SL_n(\F)$ acts on the Tits building and hence the Steinberg module is a representation of $\SL_n(\F)$. The results of Borel--Serre \cite{BoSe} show that $\SL_n(\Z)$ is a virtual duality group of dimension $n \choose 2$ and that the Steinberg module $\St_n(\Q)$ is the virtual dualizing module. Thus, for any finite index subgroup $\Gamma \subseteq \SL_n(\Z)$, we have $H^{k}(\Gamma;\Q) = 0$ for $k >{n \choose 2}$ and 
\begin{equation}
\label{Borel-Serre}
H^{{n \choose 2}-i}(\Gamma;\Q) \cong H_i(\Gamma;\St_n(\Q) \otimes \Q).
\end{equation}
 If $\Gamma$ is torsion-free, then $H^{{n \choose 2}-i}(\Gamma) \cong H_i(\Gamma;\St_n(\Q))$. 
We call the cohomology group $H^{{n \choose 2}-i}(\Gamma)$ the \emph{codimension-$i$} cohomology of $\Gamma$.

\subsection{Resolutions of Steinberg modules}
Borel--Serre duality is useful because it translates questions about the high-degree cohomology of $\SL_n(\Z)$ and its finite index subgroups to questions about their low-degree homology, at the cost of working with twisted coefficients. One can compute this group homology with twisted coefficients by constructing a projective  resolution of the coefficient module. 
The main achievement of this work is the construction of a partial resolution of $\St_n(\Q)$, 
$$ \cM_2 \longrightarrow \cM_1 \longrightarrow \cM_0 \longrightarrow \St_n(\Q) \longrightarrow 0,$$
where the $\SL_n(\Z)$-modules $\cM_i$ for $i=0,1,2$ have generating sets that allow for an easy description of the $\SL_n(\Z)$-action (see below). 

This extends work of Solomon--Tits \cite{Sol:Steinbergcharacterfinite}, Ash--Rudolph \cite{AR}, and Bykovski\u\i \, \cite{Byk}:
Given a basis $\beta=\{ \vec v_0,\ldots,\vec v_{n-1} \}$ of $\Q^n$, let $A_{\beta}$ be the full subcomplex of $\cT_n(\Q)$ of all subspaces that are spans of nonempty proper subsets of $\{ \vec v_0,\ldots,\vec v_{n-1} \}$. This subcomplex is called an \emph{apartment}. It is homeomorphic to $S^{n-2}$ and this sphere has a canonical fundamental class $[A_{\beta}]$ (well-defined up to sign).  See \autoref{ApartmentCircle}. 
\begin{figure}[h!]
\labellist
\Large \hair 0pt
\pinlabel {\small $v_0 $ } [l] at   0 18.5
\pinlabel {\small  $v_1 $ } [l] at  -3 0 
\pinlabel {\small  $v_2 $ } [r] at  21.5 0
\pinlabel {\small  $v_0 \oplus v_1$ } [r] at  0 9 
\pinlabel {\small   $v_1 \oplus v_2$ } [c] at  9 -2
\pinlabel {\small  $v_0 \oplus v_2$ } [l] at  10 9 
\endlabellist
\begin{center}{\includegraphics[scale=3]{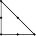}}\end{center} 
\caption{The apartment $A_{\{ \vec v_0, \vec v_1, \vec v_{2} \}}$ in $\cT_3(\Q)$}
\label{ApartmentCircle}
\end{figure} 
 By the Solomon--Tits Theorem, the images of all these homology classes form a generating set for the Steinberg module $\St_n(\Q) = \widetilde H_{n-2}(\cT_n(\Q))$. 
Ash--Rudolph \cite{AR} showed that in fact, a generating set is given by the \emph{integral apartment classes} (also known as \emph{modular symbols}), i.e.~the images of $[A_{\beta}]$, where $\beta=\{ \vec v_0,\ldots,\vec v_{n-1} \}$ is a basis of $\Z^n$.
Bykovski\u\i \, \cite{Byk} extended this to a presentation.
Now our resolution computes the two-syzygies (the relations among the relations) of $\St_n(\Q)$.

Our partial resolution admits the following ``combinatorial'' description: We define the groups $\cM_i$ as quotients of free abelian groups, generated by formal symbols $[[\vec v_0,\ldots,\vec v_k]]$, where  $\vec v_0,\ldots,\vec v_k$ are certain sets of vectors in $\Z^n$. The action of $\SL_n(\Z)$ on $\Z^n$ induces an action on the sets of these formal symbols, given by $\phi \cdot [[\vec v_0,\ldots,\vec v_k]] = [[\phi(\vec v_0),\ldots,\phi(\vec v_k)]]$.

\noindent {\bf Generators:}
Let $\cM_0$ be the quotient of the free abelian group
 $$ \left\langle [[\vec v_0,\ldots,\vec v_{n-1}]] \; \middle| \; \text{ $\vec v_0,\ldots,\vec v_{n-1}$ a basis of $\Z^n$} \right\rangle_{\Z}$$ by the relations: \begin{enumerate}
\item $[[\vec v_0,\ldots,\vec v_{n-1}]]=\sgn(\sigma)[[\vec v_{\sigma(0)},\ldots,\vec v_{\sigma({n-1})}]]$  for all permutations $\sigma \in \Sym(n)$,
\item $[[\vec v_0, \ldots,\vec v_{n-1}]]=[[\pm \vec v_0,\ldots,\pm \vec v_{n-1}]]$, (with the $n$ signs each chosen independently). 
\end{enumerate}

\noindent {\bf Relations:} 
Let $\cM_1$ be the quotient of the free abelian group
 $$ \left\langle [[\vec v_0,\ldots,\vec v_n]] \; \middle| \;  \begin{array}{l}\text{there exist indices $i,j,k$ with } \\ \bullet \text{ $\vec v_0,\ldots,\vec v_{i-1}, \vec v_{i+1}, \ldots, \vec v_n$ is a basis of $\Z^n$,  } \\ \bullet \text{  $\vec v_i=\pm \vec v_j \pm \vec v_k$ or $\vec v_i=\pm \vec v_j \pm \vec v_k\pm \vec v_l$ for $l \not = i,j,k$} \end{array} \right\rangle_{\Z}$$ 
 by the relations
 \begin{enumerate}
\item $[[\vec v_0,\ldots,\vec v_n]]=\sgn(\sigma)[[\vec v_{\sigma(0)},\ldots,\vec v_{\sigma(n)}]]$  for all permutations $\sigma \in \Sym(n+1)$, 
\item $[[ \vec v_0,\ldots,\vec v_n]]=[[\pm \vec v_0, \ldots,\pm \vec v_n]]$  (signs chosen independently).
\end{enumerate}

\noindent {\bf Relations among the relations:} 
Let $\cM_2$ be the quotient of the free abelian group
 \[
\sbox0{\ensuremath{\left. [[\vec v_{0},\ldots,\vec v_{n+1}]]\; \middle|  \;\begin{array}{l}\text{there exist distinct indices $i,j,k,l,m$ with  } \\  \bullet \text{ $\vec v_{0},\ldots, \vec v_{i-1}, \vec v_{i+1}, \ldots, \vec v_{j-1}, \vec v_{j+1},,\ldots, \vec v_n$ is a basis of $\Z^n$,  } \\ \bullet \text{ $\vec v_i=\pm \vec v_k \pm \vec v_l$} \\ \bullet \text{ $\vec v_j = \pm \vec v_m \pm \vec v_l$ or $\vec v_j=\pm \vec v_m \pm \vec v_p$ for $p \not = i,j,k,l,m$}  \end{array} \right.}}
\mathopen{\resizebox{1.2\width}{1.1\ht0}{$\Bigg\langle$}}
\raisebox{2pt}{\usebox{0}}
\mathclose{\resizebox{1.2\width}{1.1\ht0}{$\Bigg\rangle$}}_{\Z}
\] 
by the relations
\begin{enumerate}
\item $[[\vec v_{0},\ldots,\vec v_{n+1}]]=\sgn(\sigma)[[\vec v_{\sigma(0)},\ldots,\vec v_{\sigma(n+1)}]]$ for all permutations $\sigma \in \Sym(n+2)$,
\item $[[\vec v_{0},\ldots,\vec v_{n+1}]]=[[\pm \vec v_{0},\ldots,\pm \vec v_{n+1}]]$  (signs chosen independently).
\end{enumerate}

\noindent {\bf Maps in the resolution:} 
Let $\delta_1\colon  \cM_1 \m \cM_0$ and $\delta_2 \colon  \cM_2 \m \cM_1$ be the maps 
\begin{align*}
\delta_1\colon   [[\vec v_0, \ldots,\vec v_n]] &\longmapsto \sum_i (-1)^i \; [[\vec v_0, \ldots,\vec v_{i-1}, \vec v_{i+1}, \ldots, \vec v_n]].   \\ 
\delta_2\colon   [[\vec v_0, \ldots,\vec v_{n+1}]] &\longmapsto \sum_i (-1)^{i+1} \; [[\vec v_0, \ldots,\vec v_{i-1}, \vec v_{i+1}, \ldots, \vec v_{n+1}]].   
\end{align*}
For these maps, we define the symbols $[[\vec v_0,\ldots,\vec v_{i-1}, \vec v_{i+1}, \ldots, \vec v_n]]$ and $[[\vec v_0,\ldots,\vec v_{i-1}, \vec v_{i+1}, \ldots, \vec v_{n+1}]]$ to be zero if the vectors do not span $\Z^n$. 

The map $\epsilon \colon  \cM_0 \m \St_n(\Q)$ is the ``integral apartment class map'' mentioned above. More precisely, it is defined as follows. If $[[\vec v_0,\ldots,\vec v_{n-1}]]$ is a generator of $\cM_0$, then $\beta = \{ \vec v_0,\ldots,\vec v_{n-1} \}$ is a basis of $\Z^n$ that comes with an order that is well-defined up  to the action of the alternating group. This order determines a sign of the corresponding apartment class $[A_{\beta}]$. Define $\epsilon$ via the formula:
\begin{align*} 
\epsilon \colon  \cM_0 & \longrightarrow \St_n(\Q) \\ 
 [[\vec v_0,\ldots,\vec v_{n-1}]] &\longmapsto [A_{\beta}]. 
\end{align*}

\begin{atheorem}\label{TheoremA}

The sequence \[\cM_2 \overset{\delta_2}{\longrightarrow} \cM_1 \overset{\delta_1}{\longrightarrow} \cM_0 \overset{\epsilon}{\longrightarrow} \St_n(\Q) \longrightarrow 0\] is exact.

\end{atheorem}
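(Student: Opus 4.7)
The plan is to prove exactness at each of the positions $\St_n(\Q)$, $\cM_0$, $\cM_1$ and to verify the two ``composite vanishes'' identities, with the main new content concentrated at exactness at $\cM_1$. Surjectivity of $\epsilon$ is the Ash--Rudolph theorem that integral apartment classes generate $\St_n(\Q)$. Exactness at $\cM_0$ is essentially Bykovski\u\i's presentation of $\St_n(\Q)$: Bykovski\u\i\ showed that the relations among integral apartment classes are generated by the two-term sum symbols, and the three-term sum generators added to $\cM_1$ here produce only redundant relations, so $\delta_1(\cM_1) = \ker(\epsilon)$ regardless. The vanishing of $\epsilon \circ \delta_1$ and $\delta_1 \circ \delta_2$ is a direct check on generators, amounting to $\partial^2 = 0$ together with the fact that the boundary of an apartment in $\cT_n(\Q)$ is null-homologous.

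The genuinely new content is exactness at $\cM_1$, i.e., $\ker(\delta_1) \subseteq \im(\delta_2)$. The approach is to realise the sequence $\cM_2 \to \cM_1 \to \cM_0$ as the top part of the cellular chain complex of a semi-simplicial set $Y_n$ of \emph{augmented partial frames} in $\Z^n$. A typical $(n-1)$-simplex of $Y_n$ is a signed unordered basis of $\Z^n$; an $n$-simplex is a basis together with one extra vector of the form $\pm \vec v_j \pm \vec v_k$ or $\pm \vec v_j \pm \vec v_k \pm \vec v_l$; and an $(n+1)$-simplex is a basis together with two such extras satisfying exactly the compatibility conditions in the definition of $\cM_2$. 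After modding out by the sign and permutation relations, the cellular chains of $Y_n$ in its top three dimensions become $\cM_0, \cM_1, \cM_2$, and the simplicial face maps become $\delta_1, \delta_2$. Thus if $Y_n$ is $n$-connected, a standard spectral sequence argument (or an explicit chain-level filling) reduces exactness at $\cM_1$ to the already established exactness at $\cM_0$.

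The main obstacle is proving the required connectivity of $Y_n$. I would attack it by induction on $n$ via a ``bad simplex'' link argument in the tradition of Hatcher--Vogtmann, Maazen, and Church--Putman: given a map $S^k \to Y_n$, identify the links of simplices in $Y_n$ as joins of smaller $Y_m$'s with relatives of the Tits building, use the induction hypothesis together with the Solomon--Tits theorem to control their connectivity, and thereby homotope the sphere off a chosen obstruction subcomplex. The three-term sum generators in the definition of $\cM_1$, together with the mixed compatibility conditions for $\cM_2$, are precisely what is required for certain ``bad'' simplices to be fillable by an augmented frame in the appropriate link; omitting them would stall the induction, while enlarging the generating set further would blow up $\cM_1$ and $\cM_2$ without gain. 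Calibrating this generating family to be the minimal one that both makes the link connectivity argument go through and yields $\SL_n(\Z)$-modules with a tractable description is the technical heart of the proof.
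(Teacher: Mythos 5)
Your proposal follows essentially the same route as the paper: the semi-simplicial set $Y_n$ you describe is the paper's complex $\BAA_n$ of doubly-augmented partial bases (with $\cM_i \cong C_{n-1+i}(\BAA_n,\BAA_n')$), its $n$-connectivity is the paper's main technical theorem, proved by exactly the Maazen/Church--Putman-style induction on links with a Euclidean-algorithm retraction (plus one computer-assisted case), and the passage from connectivity to exactness runs through Quillen's map-of-posets spectral sequence over the Tits building. No substantive differences in strategy.
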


Exactness of \[\cM_0 \overset{\epsilon}{\longrightarrow} \St_n(\Q) \longrightarrow 0\] is due to Ash--Rudolph \cite{AR} and exactness of \[\cM_1 \overset{\delta_1}{\longrightarrow} \cM_0 \overset{\epsilon}{\longrightarrow} \St_n(\Q) \longrightarrow 0\] follows from Bykovski\u\i \ \cite{Byk}. See Church--Putman \cite{CP} for an alternate proof.

\subsection{Applications to the cohomology of $\SL_n(\Z)$} Using \autoref{TheoremA}, we show that the codimension-$2$ rational homology of $\SL_n(\Z)$ vanishes for large $n$. 

\begin{atheorem} \label{TheoremB}
For $n \geq 3$, $H^{{n \choose 2}-2}(\SL_n(\Z);\Q) \cong 0$.
\end{atheorem}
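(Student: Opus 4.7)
The plan is to combine Borel--Serre duality with \autoref{TheoremA} to reduce the claim to injectivity of a single $\Q$-linear map between finite-dimensional coinvariant spaces. By equation~\eqref{Borel-Serre}, one has $H^{\binom{n}{2}-2}(\SL_n(\Z);\Q) \cong H_2(\SL_n(\Z); \St_n(\Q) \otimes \Q)$, so it suffices to show this $H_2$ vanishes. \autoref{TheoremA} supplies the partial resolution
$$\cM_2 \xrightarrow{\delta_2} \cM_1 \xrightarrow{\delta_1} \cM_0 \xrightarrow{\epsilon} \St_n(\Q) \longrightarrow 0.$$
The first key observation is that each $\cM_i \otimes \Q$ is $\SL_n(\Z)$-acyclic in positive degrees. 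Indeed, $\cM_i$ decomposes as a direct sum of modules of the form $\Ind_H^{\SL_n(\Z)}(\chi)$, where $H$ is the stabilizer in $\SL_n(\Z)$ of a generator $[[\vec v_0,\ldots]]$ (taken modulo the sign and alternating relations). Each such $H$ acts on a finite set of vectors by signed permutations and is therefore finite, so Shapiro's lemma combined with the vanishing of rational homology of finite groups in positive degrees yields $H_j(\SL_n(\Z); \cM_i \otimes \Q) = 0$ for all $j \geq 1$ and $i \in \{0,1,2\}$.

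With this acyclicity in hand, I would next extract the relevant homological consequence of the partial resolution. Writing $K_0 := \ker(\epsilon)$ and $K_1 := \ker(\delta_1)$, one obtains the short exact sequences $0 \to K_0 \to \cM_0 \to \St_n(\Q) \to 0$ and $0 \to K_1 \to \cM_1 \to K_0 \to 0$, together with the surjection $\cM_2 \twoheadrightarrow K_1$ (this uses exactness of the resolution). Chasing the resulting long exact sequences in rational group homology, and using the acyclicity of the $\cM_i \otimes \Q$, produces an isomorphism
$$H_2(\SL_n(\Z); \St_n(\Q) \otimes \Q) \;\cong\; \ker\!\bigl((K_1 \otimes \Q)_{\SL_n(\Z)} \to (\cM_1 \otimes \Q)_{\SL_n(\Z)}\bigr).$$
Since $(\cM_2 \otimes \Q)_{\SL_n(\Z)} \twoheadrightarrow (K_1 \otimes \Q)_{\SL_n(\Z)}$, this $H_2$ is a quotient of $\ker(\overline{\delta}_2)$, where
$$\overline{\delta}_2 \colon (\cM_2 \otimes \Q)_{\SL_n(\Z)} \longrightarrow (\cM_1 \otimes \Q)_{\SL_n(\Z)}$$
is the map induced by $\delta_2$ on coinvariants. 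It therefore suffices to prove that $\overline{\delta}_2$ is injective.

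The main step, and the principal obstacle, is verifying this injectivity. Both coinvariant spaces are finite-dimensional $\Q$-vector spaces spanned by $\SL_n(\Z)$-orbit classes of the combinatorial generators, and these orbits are classified by the combinatorial ``type'' of the linear relations among the underlying vectors (for instance, whether the extra vectors in a $\cM_2$-generator are two-term or three-term $\pm$-sums, and whether the two defining relations share a common vector). My plan is to enumerate representatives of each orbit type on both sides, write down the matrix of $\overline{\delta}_2$ using the alternating-sum formula for $\delta_2$ (remembering that a boundary term $[[\ldots,\vec v_{i-1},\vec v_{i+1},\ldots]]$ is declared zero when the remaining vectors fail to span $\Z^n$), and check that this matrix has trivial kernel. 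The hard part is the bookkeeping of the many identifications imposed by the sign and permutation relations combined with the $\SL_n(\Z)$-action on orbit classes: these collapse many boundary terms to zero and identify others with one another, and one must rule out nontrivial kernel elements across all orbit types. I expect the branch where the two relations in a $\cM_2$-generator share a common vector (the ``$\vec v_j = \pm \vec v_m \pm \vec v_l$'' case) to be the most delicate, since the shared vector produces additional coincidences among boundary orbit classes; this is where the bulk of the combinatorial work should lie.
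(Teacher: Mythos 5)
Your reduction is sound and follows the paper's route: Borel--Serre duality, acyclicity of the $\cM_i \otimes \Q$ in positive degrees (which does follow from the finiteness of the setwise stabilizers of the generators, exactly as you argue; the paper phrases this as projectivity of these modules over $\Q[\SL_n(\Z)]$), and the dimension shift through $K_0$ and $K_1$ correctly identify $H_2(\SL_n(\Z);\St_n(\Q)\otimes\Q)$ with a quotient of $\ker(\overline{\delta}_2)$. But the argument stops exactly where it needs to start: you name the injectivity of $\overline{\delta}_2$ as ``the main step, and the principal obstacle,'' and then only describe a plan for an orbit-by-orbit matrix computation without carrying it out. As written, the theorem is not proved.

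The gap closes far more easily than your plan suggests, because the domain $(\cM_2\otimes\Q)_{\SL_n(\Z)}$ is already the zero vector space, so there is no matrix to analyze and no delicate bookkeeping of coincidences among boundary orbit classes. The point is that every generator of $\cM_2$ admits an element of $\SL_n(\Z)$ preserving its underlying set of $\pm$-vectors but permuting them oddly, which forces the class to equal its own negative in the coinvariants and hence to vanish over $\Q$. Concretely, if $\vec v_0 = \vec v_2+\vec v_3$ and $\vec v_1 = \vec v_4+\vec v_5$ (no shared vector), the automorphism swapping $\vec v_2\leftrightarrow\vec v_4$ and $\vec v_3\leftrightarrow\vec v_5$ and fixing the remaining basis vectors lies in $\SL_n(\Z)$ and acts on the generator by three transpositions; if $\vec v_0=\vec v_2+\vec v_3$ and $\vec v_1=\vec v_2+\vec v_4$ (the shared-vector case you expected to be hardest), the automorphism $\vec v_2\mapsto -\vec v_2$, $\vec v_3\mapsto\vec v_2+\vec v_3$, $\vec v_4\mapsto-\vec v_4$ has determinant $1$ and induces the single transposition $v_0\leftrightarrow v_3$ on the $\pm$-vectors. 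In both cases $\sigma\otimes q=-\sigma\otimes q=0$. You need to supply an argument of this kind (or genuinely execute the enumeration you sketch); until then the key claim is unestablished, and your expectation that the shared-vector branch requires the bulk of the combinatorial work is a sign that the intended computation is aimed at the wrong target.
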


A standard transfer argument implies that $H^i(\GL_n(\Z);\Q )$ is a summand of $H^i(\SL_n(\Z);\Q )$. Thus $H^{{n \choose 2}-2}(\GL_n(\Z);\Q) \cong 0$ for $n \geq 3$. \autoref{TheoremB} resolves the codimension-$2$ case of a conjecture of Church--Farb--Putman \cite[Conjecture 2]{CFPconj}.

\begin{conjecture}[Church--Farb--Putman] \label{CFPconjecture}
For $n \geq i+2$, $H^{{n \choose 2}-i}(\SL_n(\Z);\Q) \cong 0$.
\end{conjecture}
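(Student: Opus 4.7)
The plan is to attempt this open conjecture by iterating the strategy used for the codimension-two case (\autoref{TheoremB}). First, via Borel--Serre duality \eqref{Borel-Serre}, the statement for torsion-free finite-index subgroups is equivalent to $H_i(\Gamma;\St_n(\Q)) = 0$ for $n \geq i+2$, and the transfer argument over $\Q$ reduces the general claim to showing $H_i(\SL_n(\Z);\St_n(\Q)\otimes \Q)=0$ in the same range. So the target becomes low-degree group homology with Steinberg coefficients.

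The main step would be to extend the length-two resolution from \autoref{TheoremA} to a partial resolution
\[\cM_i \xrightarrow{\delta_i} \cM_{i-1} \xrightarrow{\delta_{i-1}} \cdots \xrightarrow{\delta_1} \cM_0 \xrightarrow{\epsilon} \St_n(\Q)\longrightarrow 0\]
of length $i$. Guided by the pattern in \autoref{TheoremA}, the module $\cM_k$ should be generated by formal symbols $[[\vec v_0,\ldots,\vec v_{n+k-1}]]$ on $(n+k)$-tuples in $\Z^n$ carrying $k$ additional ``small support'' linear dependencies on top of an integral basis (generalising the $\pm$-triangle and $\pm$-pair patterns visible in the definitions of $\cM_1$ and $\cM_2$), modulo signed permutations and sign flips on each coordinate. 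The differentials would be the usual alternating sums, with symbols on non-spanning subtuples set to zero. Proving exactness would presumably go via the same kind of argument suggested by the Solomon--Tits/Ash--Rudolph/Bykovski\u\i{} hierarchy: identify $\cM_\bullet$ with the chain complex of a highly connected semisimplicial ``dependency'' set over the Tits building and establish the needed connectivity by a poset/matroid argument.

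Given such a resolution, one would feed it into the hyperhomology spectral sequence
\[E^2_{p,q} = H_p\bigl(\SL_n(\Z);\cM_q\otimes\Q\bigr) \Longrightarrow H_{p+q}\bigl(\SL_n(\Z);\St_n(\Q)\otimes\Q\bigr).\]
By Shapiro's lemma each $E^2_{p,q}$ decomposes over $\SL_n(\Z)$-orbits of generators as a sum of rational group homology of their stabilisers, which are essentially signed-permutation extensions of block-upper-triangular subgroups of $\SL_n(\Z)$ dictated by the dependency pattern. The goal would be to show that the total complex vanishes in degrees $\leq i$ for $n \geq i+2$, by combining vanishing of the stabiliser homology with an inductive control on $n$.

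The hard part is expected to be twofold. Combinatorially, one must guess the correct ``shape'' of the higher dependencies appearing in the generators of $\cM_k$ and prove exactness of the extended complex; as $k$ grows, the admissible dependency patterns multiply and the Bykovski\u\i{}-style deformation/retraction arguments used for $k=1,2$ become substantially more delicate. Homologically, one needs sharp vanishing bounds for the rational homology of the stabilisers, which are arithmetic groups built from $\SL_m(\Z)$'s with $m<n$; here one would hope to bootstrap via induction on $n$, feeding lower-dimensional cases of the Church--Farb--Putman conjecture back into the spectral sequence, together with vanishing results for the rational homology of unipotent radicals and Levi factors. The main conceptual obstacle is that, unlike the $i=2$ case, for $i \geq 3$ there is no previously constructed resolution to bootstrap from, so even formulating the correct $\cM_k$ and proving its exactness seems to require a genuinely new idea --- perhaps realising $\cM_\bullet$ as the chain complex of a natural simplicial resolution of the integral Tits building associated with a filtration by ``dependency rank.''
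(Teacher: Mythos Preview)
This statement is a \emph{conjecture}, not a theorem: the paper does not prove it. Only the cases $i=0$ (Lee--Szczarba, Ash--Rudolph), $i=1$ (Bykovski\u\i, Church--Putman), and $i=2$ (\autoref{TheoremB} of this paper) are established; for $i\geq 3$ the statement remains open. Your proposal is therefore not a proof but a research outline, and you yourself flag its main gaps.

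Two remarks on how your outline compares to what the paper actually does for $i=2$. First, the paper does \emph{not} build the resolution by guessing dependency patterns on formal symbols and proving exactness via matroid or poset arguments on the Tits building. Instead, it constructs a geometric simplicial complex $\BAA_n$ (containing $\BA_n$ and $\B_n$), proves it is Cohen--Macaulay of dimension $n+1$ via an intricate retraction argument that already requires computer verification (\autoref{Sec3}), and then reads off the resolution as the relative chain complex $C_*(\BAA_n,\BAA_n')$. Extending this to length $i$ would require a complex ``$\mathrm{BA}^{\,i}_n$'' that is Cohen--Macaulay of dimension $n+i-1$; identifying the correct augmentation types and proving connectivity is the genuine obstacle, and there is no indication that the pattern stabilises into something tractable.

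Second, your endgame via Shapiro's lemma and stabiliser homology is more elaborate than what is needed or used. For $i=2$ the paper never computes stabiliser homology: it shows directly (\autoref{vanishing_coinvariants}) that the $\SL_n(\Z)$-coinvariants of $C_{n+1}(\BAA_n,\BAA_n';\Q)$ vanish, by exhibiting for each top-dimensional simplex an explicit element of $\SL_n(\Z)$ that stabilises it setwise but reverses its orientation. This immediately kills $H_2$. An inductive bootstrap feeding lower-$n$ cases of the conjecture back into a spectral sequence, as you suggest, is not how the known cases work and would face its own circularity issues.
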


For codimension $i=0$, this conjecture is true and due to Lee--Szczarba \cite{LS}. Vanishing in codimension-$0$ follows easily from Ash--Rudolph's \cite{AR} generating set for $\St_n(\Q)$. For codimension-$1$, the conjecture was established by Church--Putman \cite{CP} and follows from the Bykovski\u\i \  presentation \cite{Byk} of $\St_n(\Q)$. Similarly, \autoref{TheoremB} follows readily from our result determining the relations among the relations in the Steinberg module, \autoref{TheoremA}. 

The rational cohomology of $\SL_n(\Z)$ has been completely computed for $n \leq 7$ (Soul\'e \cite{Soule} for $n=3$, Lee--Szczarba \cite{LSK45} for $n=4$, and Elbaz-Vincent--Gangl--Soul\'e \cite{PerfFormModGrp} for $n=5$, $6$, and $7$). These calculations verify \autoref{CFPconjecture} for $n\leq 7$ and also show that the vanishing range predicted by \autoref{CFPconjecture} is not sharp for $n=3$, $5$, or $7$. This failure of sharpness is reflected in the fact that \autoref{TheoremB} implies that the codimension-$2$ rational cohomology vanishes for $n \geq 3$ while the codimension-$2$ case of \autoref{CFPconjecture} only concerns vanishing for $n \geq 4$.

\subsection{Applications to the cohomology of congruence subgroups}

The principal level $p$-congruence subgroup of $\SL_n(\Z)$, denoted $\Gamma_n(p)$, is defined to be the kernel of the mod-$p$ reduction map $$\SL_n(\Z) \longrightarrow \SL_n(\Z/p). $$ Using \autoref{TheoremA}, we obtain a combinatorial chain complex computing $H_1(\Gamma_n(p);\St_n(\Q)) \cong H^{{n \choose 2}-1}(\Gamma_n(p))$ (see \autoref{RelHomologyGamma}). In the case $p=3$ or $5$, we use this to obtain the following numerical estimate on the size of the codimension-$1$ homology. For a field $\F$  let $Gr_k^m(\F)$ denote the Grassmannian of $k$-planes in $\F^m$.

\begin{atheorem}\label{TheoremC}
For $p=3$ or $5$, $\displaystyle \dim_{\Q} H^{{n \choose 2}-1}(\Gamma_n(p);\Q) \geq p^{n-2 \choose 2} |Gr_2^{n}(\F_p)| \left(\frac{p-1}{2}\right)^{n-2}. $
\end{atheorem}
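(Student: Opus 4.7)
By Borel–Serre duality \eqref{Borel-Serre}, it suffices to bound $\dim_\Q H_1(\Gamma_n(p); \St_n(\Q) \otimes \Q)$ from below. The plan is first to reduce this computation to the middle homology of an explicit combinatorial chain complex, and then to exhibit enough linearly independent cycles.

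\textbf{Step 1 (reduction to a combinatorial complex).} Each $\SL_n(\Z)$-module $\cM_i$ of \autoref{TheoremA} is a direct sum of permutation modules $\Ind^{\SL_n(\Z)}_{H} \Z^\chi$ whose stabilizers $H$ are finite signed permutation groups. For $p \geq 3$, $\Gamma_n(p)$ is torsion-free, so $\Gamma_n(p) \cap gHg^{-1} = \{e\}$ for every $g \in \SL_n(\Z)$, and each $\cM_i$ restricts to a free $\Z[\Gamma_n(p)]$-module. Hence \autoref{TheoremA} provides a length-$2$ projective resolution of $\St_n(\Q)$ over $\Z[\Gamma_n(p)]$, giving
\[ H_1(\Gamma_n(p); \St_n(\Q) \otimes \Q) \;\cong\; H_1\bigl((\cM_\bullet)_{\Gamma_n(p)} \otimes \Q\bigr), \]
the middle homology of the three-term coinvariant complex. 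The $\Gamma_n(p)$-orbits of the generators of each $\cM_i$ are classified by their mod-$p$ reductions, so this complex has a purely combinatorial description in terms of tuples of vectors in $\F_p^n$ subject to the same sum/basis conditions and sign/permutation relations as in \autoref{TheoremA}.

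\textbf{Step 2 (explicit cycles).} The target count factorises as $p^{\binom{n-2}{2}} \, |Gr_2^n(\F_p)| \, (\tfrac{p-1}{2})^{n-2} = |Gr_2^n(\F_p)| \cdot |B_{n-2}(\F_p)|/2^{n-2}$ where $B_{n-2}$ is the upper-triangular Borel in $\GL_{n-2}(\F_p)$. This suggests parametrising cycles by a 2-plane $V \in Gr_2^n(\F_p)$, a framing of the complement $\F_p^n/V$ by nonzero vectors taken up to sign (contributing $(\tfrac{p-1}{2})^{n-2}$), and a unipotent upper-triangular datum coupling the complement to $V$ (contributing $p^{\binom{n-2}{2}}$). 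For each such datum I would construct a cycle $z_{V,W,U} \in (\cM_1)_{\Gamma_n(p)} \otimes \Q$ as a $\Q$-linear combination of $\cM_1$-symbols $[[\vec v_0,\dots,\vec v_n]]$ in which three of the vectors lie in a lift of $V$ and realise a sum relation $\vec v_i = \pm \vec v_j \pm \vec v_k$ (or its three-summand variant), while the remaining $n-2$ vectors encode the framing and the unipotent data. The faces of $\delta_1 z_{V,W,U}$ that omit a complementary vector become zero in $\cM_0$ because the remaining vectors span only $V$ together with part of the complement; the surviving faces omit one of the three vectors in $V$ and can be made to cancel in $(\cM_0)_{\Gamma_n(p)} \otimes \Q$ after the sign and permutation relations are applied, by a careful choice of coefficients.

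\textbf{Step 3 (linear independence; main obstacle).} The heart of the argument is showing that the classes $[z_{V,W,U}]$ are $\Q$-linearly independent in $H_1$, i.e., that no nontrivial combination lies in $\mathrm{im}(\delta_2)$. The strategy is to construct a $\Q$-linear functional on $(\cM_1)_{\Gamma_n(p)} \otimes \Q$ that separates the $z_{V,W,U}$ and vanishes on every $\delta_2$-boundary. A natural candidate records from a generator its ``syzygy 2-plane'' $V \subset \F_p^n$ (spanned by the vectors involved in the sum relation) together with the combinatorial type of the remaining vectors. Verifying that this functional descends modulo $\mathrm{im}(\delta_2)$ is a delicate case analysis, since it must handle both families of $\cM_2$-relators distinguished in \autoref{TheoremA}. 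The restriction $p \in \{3,5\}$ is expected to reflect the numerical smallness of $\F_p^\times / \pm 1$: for these primes the scalars that can appear in $\cM_2$-relators are constrained enough that the functional automatically annihilates boundaries, whereas for larger primes additional terms survive and obstruct the lower bound.
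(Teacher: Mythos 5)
Your Step 1 is correct and matches the paper: Borel--Serre duality plus the freeness of the $\cM_i$ over $\Z[\Gamma_n(p)]$ (the stabilizers are finite signed permutation groups and $\Gamma_n(p)$ is torsion-free) reduces the problem to the middle homology of the coinvariant complex, and the identification of $\Gamma_n(p)$-orbits of generators with mod-$p$ data is exactly \autoref{quotientBAA} and \autoref{RelHomologyGamma}. The gap is that Steps 2 and 3 are a plan rather than a proof. You never actually write down the cycles $z_{V,W,U}$, never verify that the boundary terms ``can be made to cancel \dots by a careful choice of coefficients,'' and — most importantly — never construct the separating functional or check that it kills $\mathrm{im}(\delta_2)$. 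That last point is where the real difficulty lives: $\cM_1$-generators can have additive core of size three or four (so the ``syzygy plane'' is sometimes a $3$-space, not a $2$-space), and the two families of $\cM_2$-relators (double-triple and double-double) produce boundaries mixing generators whose syzygy planes differ, so a functional that only records the plane and the combinatorial type of the remaining vectors does not obviously descend. Your proposed explanation for the restriction to $p\in\{3,5\}$ is also not substantiated and is not where the restriction actually enters.

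For comparison, the paper avoids constructing cycles or functionals altogether. It shows $H_1(\Gamma_n(p);\St_n(\Q))\cong H_n(\BAA_n^\pm(\F_p),\BAA_n^\pm(\F_p)')$ surjects onto $H_{n-1}(\BAA_n^\pm(\F_p)')$, which requires $(n-1)$-connectivity of $\BAA_n^\pm(\F_p)$; this is deduced from the Miller--Patzt--Putman connectivity of $\BA_n^\pm(\F_p)$, known only for $p=3,5$ — that is the true source of the restriction on $p$. It then runs Quillen's map-of-posets spectral sequence for $\mathbb P(\BAA_n^\pm(\F_p)')\to\bT_n^\pm(\F_p)$; the classes you are trying to build by hand appear there as $E^2_{n-3,2}\cong\Z[Gr_2^n(\F_p)^\pm]\otimes\widetilde H_{n-4}(\cT_{n-2}^\pm(\F_p))$, with the essential nonvanishing input being $H_2(\BAA_2^\pm(\F_p))\cong\Z$ (the fundamental class of the compactified modular curve of level $p$). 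Linear independence is then automatic from the degeneration $E^2_{n-3,2}=E^\infty_{n-3,2}$ together with $E^\infty_{n-2,1}=0$, rather than from a separating functional. If you want to salvage your direct approach, you would essentially have to reprove this degeneration by hand, and you should expect the connectivity input for $\BA_n^\pm(\F_p)$ (hence $p\in\{3,5\}$) to reappear in your cancellation argument.
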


See \cite[Corollary 1.2]{MNP} for an upper bound of a similar flavour in the case $p=3$. 

\subsection{Proof structure and paper outline} 

Following Lee--Szczarba \cite{LS}, Church--Farb--Putman \cite{CFP}, and Church--Putman \cite{CP}, we will construct our resolution of $\St_n(\Q)$ by proving that certain simplicial complexes are highly-connected. The complexes relevant to our paper are called $\BAA_n$. These complexes are related to Maazen's complex of partial bases \cite{Maazen79, Maazen} with added augmentations in the sense of Church--Putman \cite{CP}. The augmentations are inspired by the Voronoi tessellation of the symmetric spaces associated to the groups $\SL_n(\Z)$. In \autoref{Sec2}, we define $\BAA_n$ and some variants. In the following sections, we adapt an argument of Church--Putman \cite{CP} to prove $\BAA_n$ is highly-connected, and in fact Cohen--Macaulay of dimension $n+1$. Because of the added complexity needed to study the relations among the relations, we use computer calculations for one step in the proof. In \autoref{Sec4}, we construct a retraction map that is used in the connectivity argument and is based on the Euclidean algorithm. The last step of the construction of this retraction uses that certain finite subcomplexes of $\BAA_4$ are highly-connected. This is proved in \autoref{Sec3} using computer calculations. In \autoref{Sec5} and \autoref{sec_connectivity_BAA}, we complete the proof that $\BAA_n$ is highly-connected. In \autoref{Sec6}, we recall a spectral sequence due to Quillen \cite{Quillen-Poset} concerning maps of posets. We use this spectral sequence in \autoref{Sec7} to study the codimension-$2$ cohomology of $\SL_n(\Z)$ and in \autoref{Sec8} to study the codimension-$1$ cohomology of congruence subgroups.

\subsection{Code for the computer calculations}

The code that was used to perform the computer calculations described in  \autoref{Sec3} is publicly available under \url{https://github.com/benjaminbrueck/codim2_cohomology_SLnZ}. Comments on runtime and verifiability of the results can be found in \autoref{sec_details_on_computer_calculations}.

\subsection{Acknowledgements}

We thank Alexander Kupers, Andrew Putman, Nathalie Wahl, and Dan Yasaki for helpful conversations and Lukas K\"uhne and Joshua Maglione for comments on the presentation of the python code. We thank our anonymous referee for their feedback. 

\section{Definitions} \label{Sec2}

Following Church--Putman \cite{CP} (building on ideas of Church--Farb--Putman \cite{CFP} and Lee--Szczarba \cite{LS}), we will construct our partial resolution of Steinberg modules using highly-connected complexes. In this section, we define the relevant complexes. 

\label{sec_definitions}

\subsection{Definition of $\B_n$ and $\BA_n$}

We begin by recalling a variant of Maazen's complex of partial bases $\B_n$ \cite{Maazen79, Maazen}. Church--Farb--Putman \cite{CFP} observed that high connectivity of this complex can be used to construct generators for Steinberg modules. We will then recall the definition of a large complex of augmented partial bases, denoted $\BA_n$, that was introduced by Church--Putman to study relations in Steinberg modules. 

\begin{definition}
Let $\Lambda$ be a PID. A vector $\vec v \in \Lambda^n$ is called \emph{primitive} if it spans a summand.
\end{definition}

\noindent Recall that a vector $\vec v \in \Lambda^n$ is primitive if and only if the greatest common divisor of its entries is a unit. If $\Lambda$ is a field, every $\vec v \in \Lambda^n\backslash \ls 0 \rs$ is primitive.

\begin{convention}
Throughout this text, we take $\Lambda$ to be either $\Z$ or $\F_p$. Given a primitive vector $\vec v$, the equivalence class $\pm \vec v$ is denoted by $v$. Given an equivalence class $v$, we let  $\vec v$ denote an (arbitrary) choice of representative of $v$.  We refer to equivalence classes $v$  as $\pm$-\emph{vectors}. If $\Lambda = \Z$, we also call $v$ a \emph{line}, since in this case there is a bijection between rank-$1$ summands (lines) in $\Z^n$ and equivalence classes of primitive vectors.

For $\vec v_0, \ldots, \vec v_k \in \Lambda^n$, we write $\langle \vec v_0, \ldots, \vec v_k \rangle_\Lambda$ for the $\Lambda$-span of $\vec v_0, \ldots, \vec v_k$. If $\Lambda = \Z$, we shorten this notation to $\langle \vec v_0, \ldots, \vec v_k \rangle \coloneqq \langle \vec v_0, \ldots, \vec v_k \rangle_\Z$.
\end{convention}

\begin{definition} \label{def:BA-simplices} 
Let $\Lambda$ be $\Z$ or $\F_p$.
  Let $V_n^\pm(\Lambda)$ be the set $$V_n^\pm(\Lambda) : = \{ v  \;  |  \; \vec v \in \Lambda^n  \text{  is primitive}\}.$$
  A subset $$\sigma = \{v_0, \dots, v_k\} \subset V_n^\pm(\Lambda)$$ of $(k+1)$ $\pm$-vectors is called
  \begin{enumerate}
  \item a \emph{standard} simplex, if $\langle \vec v_0, \ldots, \vec v_k \rangle_{\Lambda}$ is a rank-$(k+1)$ summand of $\Lambda^{n}$ and if $k=n-1$, the determinant of $[ \vec v_0 \cdots \vec v_{n-1}]$ is $\pm 1$; \label{2.3i}
  
  \item a \emph{2-additive} simplex, if (possibly after re-indexing) 
	\begin{equation*}
		\vec v_0 = \pm \vec v_1 \pm  \vec v_2
	\end{equation*}	  
  for some choice of signs and $\sigma \setminus \{v_0\}$ is a standard simplex.
  \end{enumerate}
\end{definition}

Note that the condition in \autoref{def:BA-simplices} \autoref{2.3i} that the determinant of $[ \vec v_0 \cdots \vec v_{n-1}]$ be $\pm 1$ is always true in the case $\Lambda = \Z$ and is only an extra condition in the case $\Lambda=\F_p$.

\begin{definition}
Let $\Lambda$ be $\Z$ or $\F_p$ and $n \in \N$. The simplicial complexes $\B_n^\pm(\Lambda)$ and $\BA_n^\pm(\Lambda)$ have $V^\pm_n(\Lambda)$ as their vertex set, and
  \begin{enumerate}
  \item the simplices of $\B_n^\pm(\Lambda)$ are all standard simplices;
  \item the simplices of $\BA_n^\pm(\Lambda)$ are all either standard simplices or 2-additive simplices.
  \end{enumerate}
\end{definition}

\subsection{Definition of $\BAA_n$}
 
 We now introduce a  larger complex denoted  $\BAA_n$. This  complex captures relations among the relations in Steinberg modules. The second ``A'' indicates that we add even more augmentations.

\begin{definition} \label{def:BAA-simplices}
Let $\Lambda$ be $\Z$ or $\F_p$.  A subset $$\sigma = \{v_0, \dots, v_k\} \subset V_n^\pm(\Lambda)$$ of $(k+1)$ $\pm$-vectors is called
  \begin{enumerate}
  \item a \emph{3-additive} simplex, if (possibly after re-indexing)
    $$\vec v_0 = \pm \vec v_1 \pm \vec v_2 \pm \vec v_3,$$
    for some choice of signs and $\sigma \setminus \{v_0\}$ is a standard simplex;

  \item a \emph{double-triple} simplex, if (possibly after re-indexing)
\begin{align*} \vec v_0 = \pm \vec v_2 \pm \vec v_3, \qquad   
   \vec v_1 = \pm \vec v_2 \pm \vec v_4,
    \end{align*}
    for some choice of signs and $\sigma \setminus \{v_0, v_1\}$ is a standard simplex;
  \item a \emph{double-double} simplex, if (possibly after re-indexing)
\begin{align*}  \vec v_0 = \pm \vec v_2 \pm \vec v_3, \qquad  
   \vec v_1 = \pm \vec v_4 \pm \vec v_5,
    \end{align*}
     for some choice of signs and $\sigma \setminus \{v_0, v_1\}$ is a standard simplex.
  \end{enumerate}
\end{definition}

We remark that the name ``double-triple" reflects that, after performing a change of basis and re-indexing, a double-triple simplex is represented by vectors of the form
$$ \vec v_0 = \pm \vec v_2 \pm \vec v_3 ,\quad \vec v_1 = \pm \vec v_2 \pm \vec v_3 \pm \vec v_4 ,\quad \vec v_2 ,\quad \vec v_3,\quad \ldots, \quad \vec v_{k}$$ 
for some choice of signs and a partial basis  $\vec v_2 , \vec v_3, \ldots, \vec v_{k}$ of $\Lambda^n$. See also \autoref{obs_list_facet_types}.

\begin{definition}
 Let $\Lambda$ be $\Z$ or $\F_p$ and $n \in \N$.  The simplicial complex $\BAA_n^\pm(\Lambda)$ has $V_n^\pm(\Lambda)$ as its vertex set. The simplices of $\BAA_n$ are precisely the ones introduced in \autoref{def:BA-simplices} and \autoref{def:BAA-simplices}.
\end{definition}

\begin{convention}
When $\Lambda = \Z$, we also write $\B_n$, $\BA_n$ and $\BAA_n$ for $\B_n^\pm(\Z)$, $\BA_n^\pm(\Z)$ and $\BAA_n^\pm(\Z)$, respectively.
\end{convention}

\subsection{Definition of $\Linkhat$, $\B^m_n$, $\BA^m_n$, $\BAA_n^m$ and $\Linkhat^{<}$} In this subsection, we specialise to the case $\Lambda=\Z$. We will define some subcomplexes of links of simplices. Throughout this section, let $\vec e_1, \ldots, \vec e_k$ denote the standard basis elements of $\Z^k$ and $e_1, \ldots, e_k$ the corresponding lines.
\begin{definition}
Let $n \in \N$ and let $X_n$ denote the complex $\B_n$, $\BA_n$ or $\BAA_n$. Consider a simplex $\sigma = \{w_0, \dots, w_k\}$ of $X$. Then $\Linkhat_{X_n}(\sigma)$ is defined to be the full subcomplex of $\Link_{X_n}(\sigma)$ on the vertex set
  $$\{ v \in \Link_{X_n}(\sigma) \; | \; \vec v \notin  \langle \vec w_0, \dots, \vec w_k \rangle\}.$$
\end{definition}

\begin{definition}
  Let $m,n \in \N$ and let $X_{m+n}$ denote the complex $\B_{m+n}$, $\BA_{m+n}$ or $\BAA_{m+n}$. Consider the standard simplex $\Delta^{m} = \{e_1, \dots, e_m\}$ contained in $X_{m+n}$. We set
  \begin{equation*}
  X^m_n \coloneqq \Linkhat_{X_{m+n}}(\Delta^{m}).
\end{equation*}
  When $X_{m+n}$ is $\B_{m+n}$, $\BA_{m+n}$ or $\BAA_{m+n}$, respectively, we write $\B_n^m$, $\BA_n^m$ or $\BAA_n^m$, respectively, for $X^m_n$. 
\end{definition}

The majority of the paper will be devoted to proving the following theorem. It is our main technical tool and the main theorems follow fairly quickly from it.

\begin{theorem} \label{BAAnmConn}
Let $n \geq 1$. Then $\BAA_n^m$ is $n$-connected. 
\end{theorem}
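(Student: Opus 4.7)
The plan is to prove \autoref{BAAnmConn} by induction on $n$, adapting the strategy used by Church--Putman in their proof that $\BA_n^m$ is highly-connected, but with the substantial extra bookkeeping forced by the richer collection of simplex types (3-additive, double-triple, double-double) present in $\BAA$. For the base case $n = 1$, the complex $\BAA_1^m$ is small enough to be analyzed directly (one checks that it is connected by listing primitive vectors in $\Z^{m+1}/\langle \vec e_1,\ldots,\vec e_m\rangle$ and joining them via 2-additive edges). For the inductive step, I would assume connectivity of $\BAA_{n'}^{m'}$ for all $n' < n$ and all $m'$, and show that an arbitrary simplicial map $f \colon S^k \to \BAA_n^m$ with $k \leq n$ extends over $D^{k+1}$.

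First I would fix a splitting $\Z^{m+n} = \langle \vec e_1,\ldots,\vec e_m\rangle \oplus \Z^n$ and, for a vertex $v$ of $\BAA_n^m$ represented by a primitive vector $\vec v$, define its \emph{complexity} in terms of the size (in some lexicographic norm) of the projection $\pi(\vec v) \in \Z^n$, which is nonzero by the definition of $\Linkhat$. After replacing $f$ by a simplicial approximation and coning off to a disk $D$, the strategy is to identify a vertex $v^*$ of $D$ of maximal complexity and perform a \emph{star replacement}: excise the open star $\St_D(v^*)$ and glue in a cone on $\Linkhat_{\BAA_n^m}(v^*)$ with an apex of strictly smaller complexity, obtained from $\vec v^*$ by a single Euclidean-algorithm reduction against another vertex $\vec u$ appearing in the link.

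This reduction requires two inputs. First, the link $\Linkhat_{\BAA_n^m}(v^*)$ has to be $(k-1)$-connected; this will follow from the induction hypothesis by identifying the link (up to high-connectivity-preserving comparisons) with complexes of the form $\BAA_{n'}^{m'}$ for $n' < n$, depending on which type of simplex $v^*$ sits in. Second, one has to verify that the combinatorial replacement of $v^*$ by the new apex sends each simplex $\sigma \ni v^*$ of $D$ to a simplex of one of the allowed types in \autoref{def:BA-simplices} and \autoref{def:BAA-simplices}. The structured Euclidean move used in Section 4 is designed precisely to make this check possible.

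The main obstacle, and the source of the novelty of this paper compared to \cite{CP}, is exactly the second input. In $\BA_n^m$ there are only standard and 2-additive simplices, and a single Euclidean swap preserves both types after a short case analysis. In $\BAA_n^m$, however, replacing $v^*$ inside a 3-additive, double-triple, or double-double simplex can produce configurations outside the list of allowed simplex types, and one is forced to rewrite such a configuration as a sum of allowed simplices inside a small subcomplex spanned by a few vectors of bounded complexity. I expect the bulk of the argument to go into showing that such rewritings always exist and can be performed coherently along $D$. The deepest case, which involves only four ambient coordinates and a specific additive pattern, reduces to showing that certain finite subcomplexes of $\BAA_4$ are sufficiently connected; this is where the computer calculations of \autoref{Sec3} enter. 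Granting the retraction, one iterates: each application strictly decreases the maximum complexity of a vertex of $D$, so after finitely many steps all vertices of $D$ lie in a subcomplex isomorphic to $\BAA_{n-1}^{m+1}$, where the inductive hypothesis provides a null-homotopy of $f$ and completes the proof.
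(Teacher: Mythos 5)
Your outline tracks the paper's strategy at the top level—induction on $n$, reduction of the last coordinate via the Euclidean algorithm, the recognition that the new simplex types cause the reduction to fail to be simplicial and must be repaired by local subdivision/rewriting, and the reduction of the worst (internal double-triple) case to computer-verified connectivity of finite subcomplexes of $\BAA_4$—but several steps as you describe them would fail. First, the base case: for $n=1$ you need $1$-connectivity of $\BAA_1^m$, and joining vertices by $2$-additive edges only settles $\pi_0$; one must also identify the double-triple $2$-simplices that fill the minimal squares of the Cayley graph of $\Z^m$ to kill $\pi_1$ (this is \autoref{inductionBeg}). Second, at the end of the reduction: once the maximal last coordinate reaches $0$ the image lies in a copy of $\BAA_{n-1}^{m}$, and the induction hypothesis only gives $(n-1)$-connectivity there—not enough to null-homotope a sphere of dimension $d=n$. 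The paper instead cones off with the vertex $e_{m+n}$.

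The more substantial gap is in the reduction step itself. A single-apex star replacement at a maximal-complexity vertex $v^*$ cannot be performed right away: if a vertex adjacent to $v^*$ in the domain also has maximal complexity, then $\Link(v^*)$ does not map into the subcomplex of smaller-complexity vertices, and no choice of apex fixes this. The paper spends the first half of \autoref{sec_connectivity_BAA} (Step 1, together with ``Procedure 1'' for removing overly augmented simplices) isolating the maximal vertices before touching them, and this isolation is itself delicate for the $\BAA$ simplex types. Moreover, the Euclidean move does not directly produce the new apex: because of carrying it is not simplicial, and the paper uses it instead to build a \emph{topological} retraction $\Linkhat_{\BAA_n^m}(v) \twoheadrightarrow \Linkhat^{<}_{\BAA_n^m}(v)$ (\autoref{retraction}), which transfers the $(n-1)$-connectivity of $\Linkhat_{\BAA_n^m}(v)\cong \BAA_{n-1}^{m+1}$ supplied by induction to $\Linkhat^{<}_{\BAA_n^m}(v)$; the star of a maximal bad simplex is then replaced by a cone on its link mapped into this highly connected subcomplex, rather than by a vertex-by-vertex substitution. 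Your ``rewriting as a sum of allowed simplices'' gestures at the right repair, but without the isolation step and without recasting the Euclidean move as a retraction used only to certify connectivity, the argument as proposed does not close.
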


For the cases where $n+m \leq 2$, this immediately follows from results of Church--Putman: The complex $\BAA_1^0 = \B_1$ is a single point given by the unique line spanning $\Z$; the complex $\BAA_1^1 = \BA_1^1$ is isomorphic to the Cayley graph of $\Z$ with respect to the generating set $\{ e_1 \}$, so it is a line \cite[Proof of Theorem C', base case]{CP}; the complex $\BAA_2^0 = \BA_2$ is contractible as well by \cite[Remark 1.4]{CP}.

In the present article, we prove that \autoref{BAAnmConn} also hold if $n+m > 2$. In this case, the following stronger statement is true.

\begin{theorem} \label{BAAnmCM}
Let $n \geq 1$ and $m+n \geq 3$. Then $\BAA_n^m$ is Cohen--Macaulay of dimension $n+1$. 
\end{theorem}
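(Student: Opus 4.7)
The plan is to proceed by strong induction on $n$ (with $m$ allowed to vary, subject to $m+n \geq 3$), using the standard characterization that a $d$-dimensional simplicial complex $X$ is Cohen--Macaulay of dimension $d$ if and only if $X$ is $(d-1)$-connected and the link of every vertex is Cohen--Macaulay of dimension $d-1$. The low-rank base cases ($n+m = 3$) would be handled directly, building on the small cases already noted just after \autoref{BAAnmConn}. The dimension claim is immediate from \autoref{def:BA-simplices} and \autoref{def:BAA-simplices}: the top-dimensional simplices of $\BAA_n^m$ have exactly $n+2$ vertices, for example a double-double simplex consisting of a standard $(n-1)$-simplex extending $\Delta^m$ to a full basis of $\Z^{n+m}$, augmented by two 2-additive vertices that use four of those basis vectors. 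Hence $\dim \BAA_n^m = n+1$ exactly, and the $n$-connectivity is precisely \autoref{BAAnmConn}, whose proof occupies the bulk of the paper.

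For the vertex-link condition, fix a vertex $v$ of $\BAA_n^m$. By $\GL_n(\Z)$-equivariance we may change basis so that $v = e_{m+1}$. The link $L := \Link_{\BAA_n^m}(v)$ then naturally contains the subcomplex $\BAA_{n-1}^{m+1} = \Linkhat_{\BAA_{n+m}}(\{e_1,\dots,e_{m+1}\})$, and since $(n-1) + (m+1) = n+m \geq 3$ the inductive hypothesis applies to give that this subcomplex is Cohen--Macaulay of dimension $n$. The remaining task is to show that $L$ itself has the same property. The extra vertices of $L$ relative to $\BAA_{n-1}^{m+1}$ are precisely the lines in $\langle e_1,\dots,e_{m+1}\rangle$ that are neither in $\langle \Delta^m \rangle$ nor equal to $v$; the extra simplices come from 2-additive, 3-additive, double-triple, and double-double relations that use $v$ as one of the involved vectors. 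I would carry out a case analysis on the simplex type, showing via a Quillen-style poset-fiber argument (or an explicit retraction) that each extra vertex $w$, expressible as $\vec w = \pm \vec e_i \pm \vec e_{m+1}$ or $\vec w = \pm \vec e_i \pm \vec e_j \pm \vec e_{m+1}$, can be collapsed using the relation it satisfies with $v$, so that $L$ deformation retracts onto $\BAA_{n-1}^{m+1}$.

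The main obstacle is ensuring compatibility of this retraction with all of the augmentation types simultaneously. In particular, double-triple and double-double simplices carry two augmented vertices whose additive relations may both involve $v$; after re-indexing, the additive constraints defining the simplex and the collapse map can interact in subtle ways, and one must verify that every allowed sign pattern and re-indexing still gives a valid simplex after the collapse. I would expect that the trickiest link configurations reduce, after further case analysis or a secondary induction on $m$, to the low-rank cases verified by the computer calculations of \autoref{Sec3}, in parallel with the strategy for proving \autoref{BAAnmConn} itself.
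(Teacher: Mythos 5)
Your overall division of labour---take the $n$-connectivity of \autoref{BAAnmConn} as given and reduce the Cohen--Macaulay property to a statement about links---matches the paper, which deduces the theorem from \autoref{prop_connecitivity_links}. But the way you propose to verify the link condition has a genuine gap. First, the claim that the full vertex link $L=\Link_{\BAA_n^m}(v)$ deformation retracts onto $\Linkhat_{\BAA_n^m}(v)\cong\BAA_{n-1}^{m+1}$ is both unjustified and stronger than what the paper establishes: the paper only shows the inclusion is highly connected, via \autoref{GRW-SubcomplexLemma}, and even that step requires knowing the connectivity of $\Linkhat_{\BAA_n^m}(\{v\}\cup\tau)$ for the simplices $\tau$ spanned by the extra vertices, where $\{v\}\cup\tau$ is $2$-additive, $3$-additive, double-triple or double-double---so it is not a self-contained collapse. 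Second, and more seriously, even a genuine homotopy equivalence $L\simeq\BAA_{n-1}^{m+1}$ would not transfer the Cohen--Macaulay property, since being Cohen--Macaulay is not homotopy-invariant; your recursion ``vertex links are CM of dimension $d-1$'' therefore does not close on the family $\BAA_{n'}^{m'}$. Unwinding one level, the vertex links of $L$ are links of edges of $\BAA_n^m$ through $v$, and for a $2$-additive or $3$-additive edge these are not of the form $\BAA_{n'}^{m'}$ at all: by \autoref{links_of_BAA_augmentations} they are links in $\BA_n^m$ or in $\B_n^m$, the latter joined with the finite set $J(\sigma)$ whose nonemptiness (\autoref{lem_characterisation_critical_3add} is the relevant subtlety) supplies exactly the one extra degree of connectivity needed. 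The inductive hypothesis you set up never applies to these complexes.

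What is actually required---and what \autoref{prop_connecitivity_links} does---is a direct case analysis on the type of an arbitrary $k$-simplex $\sigma$ of $\BAA_n^m$: for $3$-additive, double-triple and double-double $\sigma$ one quotes Church--Putman's Cohen--Macaulayness of $\B_n^m$ together with the join decomposition $\Link_{\B_n^m}(\cdots)\simpjoin J(\sigma)$; for $2$-additive $\sigma$ one uses the Cohen--Macaulayness of $\BA_n^m$ and then passes from $\Linkhat$ to $\Link$ with \autoref{GRW-SubcomplexLemma}; and only for standard $\sigma$ does the connectivity of $\BAA_{n-k-1}^{m+k+1}$ enter, followed by a two-stage application of \autoref{GRW-SubcomplexLemma} through an intermediate subcomplex separating the $2$-additive from the $3$-additive extra vertices. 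Your sketch gestures at a case analysis but locates it in the wrong place (collapsing extra vertices of a single vertex link) and omits the essential inputs from $\B_n^m$ and $\BA_n^m$; also, the computer calculations of \autoref{Sec3} play no role in this deduction---they are used only in the construction of the retraction underlying the connectivity theorem you are citing.
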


Recall that a simplicial complex is called Cohen--Macaulay of dimension $d$ if it is $d$-dimensional, $(d-1)$-connected, and links of $p$-simplices are $(d-1-p)$-connected. 
In fact, to deduce the main theorems, it will be sufficient to prove the connectivity result \autoref{BAAnmConn} for the case $m=0$. The complexes $\BAA_n^m$ are ``relative versions'' of this complex that naturally show up in our inductive proof. The Cohen--Macaulay property is not directly needed for this induction or the main theorems; it however follows rather easily from the steps of our proof.

We need to consider the following subcomplex of links. 

\begin{definition}
  Let $m,n \in \N$ and let $X^m_n$ denote the complex $\B_n^m$, $\BA_n^m$ or $\BAA_n^m$. Consider a simplex $\sigma = \{w_0, \dots, w_k\}$ of $X^m_n$. Then $\Linkhat_{X^m_n}(\sigma)$ is defined to be the full subcomplex of $\Link_{X^m_n}(\sigma)$ on the vertex set
  $$\{ v \in \Link_{X_n^m}(\sigma) \; | \;  \vec v \notin \langle \vec e_1, \dots, \vec e_m, \vec w_0, \dots, \vec w_k \rangle\}.$$
\end{definition}

\begin{definition} \label{LinkhadDef}
  Let $R \in \mathbb{Z}_{\geq 1}$,  let $X^m_n$ denote the complex $\B_n^m$, $\BA_n^m$ or $\BAA_n^m$ and consider a simplex $\sigma = \{w_0, \dots, w_k\}$ of $X^m_n$. We write $\Linkhat^{< R}_{X^m_n}(\sigma)$ for the full subcomplex of $\Linkhat_{X^m_n}(\sigma)$ on the vertex set
  $$\{ v \in \Linkhat_{X^m_n}(\sigma) \;  | \; \vec v = c_1 \vec e_1 + \dots + c_{m+n} \vec e_{m+n} \text{ with } |c_{m+n}|  < R  \}.$$
  We will use the notation $\Linkhat^{<}_{X^m_n}(\sigma) = \Linkhat^{< R}_{X^m_n}(\sigma)$ with $R$ equal to the absolute value of the maximum nonzero last coordinate of the vectors in $\sigma$.
  
\end{definition} 

\section{Constructing the retraction}
\label{Sec4}
In this section, we present the main technical result that enables us to show that $\BAA_n$ is spherical of dimension $n+1$. To prove it, we build on ideas of Church--Putman \cite{CP} and Maazen \cite{Maazen79}.

\begin{theorem}\label{retraction}
  Let $n\geq 2$, $m\geq 0$ and  $w = \langle \vec w \rangle \in \BAA^m_n$ be a vertex. Assume the last coordinate of $\vec w \in \Z^{m+n}$ is nonzero. Then, the inclusion
  $$i\colon \Linkhat_{\BAA_n^m}^<(w) \hookrightarrow \Linkhat_{\BAA_n^m}(w)$$
  admits a \emph{topological} retraction
  $$r\colon \Linkhat_{\BAA_n^m}(w) \twoheadrightarrow \Linkhat^<_{\BAA_n^m}(w).$$
\end{theorem}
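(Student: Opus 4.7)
The plan is to mimic the Maazen--Church--Putman approach: define $r$ on the vertices of $\Linkhat_{\BAA_n^m}(w)$ via the Euclidean algorithm applied to the last coordinate, and then extend continuously to all of $\Linkhat_{\BAA_n^m}(w)$ by filling in simplices one at a time. Let $R = |w_{m+n}|$, the absolute value of the last coordinate of $\vec w$. On a vertex $v$ with representative $\vec v = c_1 \vec e_1 + \cdots + c_{m+n}\vec e_{m+n}$, set $r(v) = v$ when $|c_{m+n}| < R$; otherwise, choose the unique integer $q \in \Z$ with $|c_{m+n} - q \cdot w_{m+n}| < R$, and define $r(v) = \langle \vec v - q\vec w \rangle$. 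One must verify that $\vec v - q\vec w$ is primitive and does not lie in $\langle \vec e_1, \ldots, \vec e_m, \vec w \rangle$; the latter is immediate and the former is a standard by-product of the Euclidean step in this setup.

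To extend $r$ to higher simplices, the plan is to induct on simplex dimension. For each simplex $\sigma$ of $\Linkhat_{\BAA_n^m}(w)$ with ``large'' vertices $v_{i_1},\ldots,v_{i_\ell}$ (those with $|c_{m+n}| \geq R$), form the auxiliary subcomplex $Z(\sigma) \subseteq \Linkhat_{\BAA_n^m}(w)$ spanned by $\sigma$ together with every intermediate line $\langle \vec v_{i_j} - k \vec w \rangle$ for $0 \leq k \leq q_{i_j}$. Consecutive vectors in such a sequence differ by $\vec w$, so each consecutive pair together with $w$ sits in a $2$-additive relation inside $\BAA_{m+n}$; accordingly $Z(\sigma)$ is glued together from standard, $2$-additive, $3$-additive, double--double, and double--triple simplices arising from $\sigma$ under Euclidean reduction. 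By the inductive hypothesis, $r$ is already defined on $\partial \sigma$ with image in $Z(\sigma) \cap \Linkhat^<_{\BAA_n^m}(w)$, so to extend $r$ across $\sigma$ it suffices to show that $Z(\sigma) \cap \Linkhat^<_{\BAA_n^m}(w)$ is contractible; then a standard cellular extension produces $r|_\sigma$.

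The main obstacle is establishing this contractibility as the combinatorial type of $\sigma$ varies over the five kinds of simplices in \autoref{def:BA-simplices} and \autoref{def:BAA-simplices}. When $\sigma$ is standard or carries at most one additive relation, one can reduce to the situation already handled by Church--Putman \cite{CP}, and the iterated Euclidean reductions unwind into a contractible flag-like subcomplex by hand. However, when $\sigma$ is a double--triple or double--double simplex, reducing its two dependent vertices can produce mutually interacting new relations; after a change of basis, the resulting configuration lives in a copy of $\BAA_4$ whose connectivity is not tractable by a direct case analysis. This is precisely where the argument invokes the computer calculations of \autoref{Sec3}, which verify the requisite high-connectivity of the relevant finite subcomplexes of $\BAA_4$. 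Once contractibility of $Z(\sigma) \cap \Linkhat^<_{\BAA_n^m}(w)$ is established for every simplex type, the inductively constructed extensions glue to a continuous map $r \colon \Linkhat_{\BAA_n^m}(w) \to \Linkhat^<_{\BAA_n^m}(w)$ with $r \circ i = \id$ by construction, completing the proof.
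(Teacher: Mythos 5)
Your overall architecture --- Euclidean reduction on vertices, then a cell-by-cell topological extension justified by connectivity of auxiliary finite complexes, with computer verification reserved for the worst configurations --- is the right shape and matches what the paper does for its hardest case. But the auxiliary complexes $Z(\sigma)$ you propose are too small, and the extension step fails already for a carrying minimal internally $3$-additive simplex. Concretely, take $\gamma=\{v_0,v_1,v_2,v_3\}$ with $\vec v_3=\vec v_0+\vec v_1+\epsilon\vec v_2$ and $b_0+b_1+\epsilon b_2\notin[0,R)$ (where $a_iR+b_i$ is the last coordinate of $\bar v_i$). Your $Z(\gamma)$ is the full subcomplex on the vertices of $\gamma$ and the lines $\langle\vec v_{i}-k\vec w\rangle$ with $0\le k\le q_{i}$; every such line except the terminal one $r(v_i)$ has last coordinate of absolute value at least $R$, so $Z(\gamma)\cap\Linkhat^<_{\BAA_n^m}(w)$ has vertex set exactly $\{r(v_0),r(v_1),r(v_2),r(v_3)\}$. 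Since $\overline{r(v_3)}=\overline{r(v_0)}+\overline{r(v_1)}+\epsilon\overline{r(v_2)}-\bar w$ (a sum of \emph{four} terms), this $4$-set spans no simplex of $\BAA$, while each of its $3$-element subsets spans a standard $2$-simplex; hence $Z(\gamma)\cap\Linkhat^<$ is $\partial\Delta^3\cong S^2$. All facets of $\gamma$ are standard, so $r|_{\partial\gamma}$ is the simplicial isomorphism onto this $2$-sphere, which is essential in it; no extension over the $3$-cell $\gamma$ exists inside your target. So the claimed contractibility is false, and it is not merely a matter of delegating a check to the computer.

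The fix is to enlarge the targets: one must admit the ``overshoot'' lines $\langle\bar w-\overline{r(v_i)}\rangle$ and, more importantly, translates by multiples of $\vec w$ of \emph{pairwise and triple sums} of the $\vec v_i$, even when those sums are not vertices of $\sigma$. This is exactly what the paper's explicit constructions use (the barycentre of a carrying $3$-additive simplex is sent to $r(\langle\bar v_0+\bar v_1\rangle)$, a translate of $\vec v_0+\vec v_1$), and what the complexes $Q^m_n(\vec v_1,\vec v_2,\vec v_3;\vec w)$ of \autoref{def_Q} are designed to contain --- note their vertex families (iv)--(vi). The computer calculations of \autoref{Sec3} verify $3$-connectivity of \emph{those} complexes, not of yours, so they cannot be cited as is. Two smaller points: Church--Putman only treat standard and $2$-additive simplices, so the $3$-additive case cannot be ``reduced to Church--Putman'' and needs its own analysis (as in \autoref{TA-non-carrying}--\autoref{TA-internally}); and you should stipulate that on cells already lying in $\Linkhat^<_{\BAA_n^m}(w)$ the extension is chosen to be the identity, so that $r\circ i=\id$ actually holds.
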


The definition of the retraction map occurring in \autoref{retraction} is inspired by work of Church--Putman \cite[Section 4]{CP} and Maazen \cite[Chapter III]{Maazen79}. On vertices, the retraction is given by using the Euclidean algorithm to ``reduce'' the last coordinate of vertices in the domain ``modulo $R$'', where $R > 0$ is the last coordinate of a fixed vector $\vec w \in \Z^{m+n}$ (compare with \autoref{DefRetB}). Church--Putman \cite[Section 4.1]{CP} demonstrated that this map can be used to prove that the complex of partial frames $\B_n$ is spherical (compare with \autoref{Bretraction}). However, the method does not directly apply to the complex of augmented partial frames $\BA_n$. To show that $\BA_n$ is spherical, Church--Putman \cite{CP} need to modify the definition of the retraction. The reason for this additional difficulty in the paper of Church--Putman comes from the following algebraic fact. For an integer $z$, let us denote by $(z \mod R) \in \{0, \dots, R-1\}$ the remainder of division of $z$ by $R > 0$. Let $R > 0$ and $a,b \geq 0$ be nonnegative integers. Then
 $$(a \mod R) + (b \mod R) = \begin{cases} (a+b \mod R) \text{ or }\\ (a+b \mod R) + R. \end{cases}$$
It is a consequence of this fact that the \emph{simplicial} retraction maps defined for $\B_n$ do not extend to \emph{simplicial} retraction maps for $\BA_n$. The problem is that, because of the second case in the equation above, the image of a $2$-additive simplex might not always span a simplex \cite[p.\ 1020]{CP}. To circumvent this problem, Church--Putman subdivide all problematic $2$-additive simplices, which they call \emph{carrying} simplices, in the domain of the retraction. They do this by adding a single vertex at the barycentre of every carrying simplex and extending this subdivision to the whole complex.  Then, they specify the value that their ``modified'' retraction takes at these newly introduced vertices and prove that the resulting map is a \emph{topological} retraction (compare with \autoref{BAretraction}).
In our construction of the retraction map for $\BAA_n$, i.e.\ \autoref{retraction}, we need to deal additionally with $3$-additive simplices. For these simplices, the following algebraic fact is the main source of trouble. Let $R > 0$ and $a,b,c \geq 0$ be nonnegative integers. Consider the integer $a+b+c$ or $a+b-c$. Then
\begin{enumerate}
  \item $(a \mod R) + (b \mod R) + (c \mod R) = \begin{cases} (a+b+c \mod R),\\ (a+b+c \mod R) + R \text{, or }\\ (a+b+c \mod R) + 2R \end{cases}$ for the sum $a+b+c$ and
  \item $(a \mod R) + (b \mod R) - (c \mod R) = \begin{cases} (a+b-c \mod R),\\ (a+b-c \mod R) + R \text{, or }\\ (a+b-c \mod R) - R \end{cases}$ for the sum $a+b-c$.
\end{enumerate}

Similarly to the difficulty for $\BA_n$, the problem is that, because of the second and third case in both item i) and ii), the image of a $3$-additive simplex might not always span a simplex. To circumvent this, we subdivide these problematic \emph{carrying} $3$-additive simplices by adding a new vertex at their barycentre and, in analogy with Church--Putman, construct a \emph{topological} retraction map for $\BAA_n$. However, since double-double and double-triple simplices might contain multiple problematic $2$-additive and $3$-additive facets (codimension-1 faces) we face novel difficulties. We not only need to explain how $2$-additive and $3$-additive simplices are subdivided but also need to describe how higher dimensional simplices can be subdivided in a \emph{compatible} fashion. For the most complicated case, we use computer calculations to show the existence of a retraction and do not make the corresponding subdivisions explicit (see \autoref{lem:extending-over-dt-internal-case} et seq.\ and \autoref{Sec3}).

We now start working towards the proof of \autoref{retraction} by introducing and fixing some notation. In the next subsection, we discuss the results that Church--Putman obtained for $\BA_n$ in greater detail. In each of the following subsections, we explain how the retraction maps can be defined on and extended over double-double, $3$-additive, and double-triple simplices, respectively.

\begin{convention} \label{bar-convention}
Throughout this section, we work in the setting of \autoref{retraction}. We fix the natural numbers $n\geq 2$, $m\geq 0$. For each line $v$ in $\Z^{m+n}$, we let $\bar v$ denote a choice of primitive vector in $v$ with nonnegative last coordinate. Note that the vector $\bar v$ is uniquely defined unless its last coordinate is zero. The line $w$ occurring in the statement of \autoref{retraction} is fixed throughout this section and $R$ always denotes the last coordinate of $\bar w$, which by assumption satisfies $R>0$. 
\end{convention}

The following notions will be frequently used for $\Linkhat_{\BAA^m_n}(w)$ in this section, and for $\BAA^m_n$ in the subsequent section.

\begin{definition} \label{relative-simplex-types}
  Let $\sigma$ be a simplex of $\Linkhat_{\BAA^m_n}(w)$ or $\BAA^m_n$. We say that $\sigma$ is a \emph{standard}, \emph{2-additive}, \emph{3-additive}, \emph{double-double} or \emph{double-triple} simplex of $\Linkhat_{\BAA^m_n}(w)$ or $\BAA^m_n$ if the \emph{underlying simplex}
  $$\sigma \ast \{e_1, \dots, e_m, w\} \text{ or } \sigma \ast \{e_1, \dots, e_m\}$$
  in $\BAA_{m+n}$ is a simplex of this type.
\end{definition}

\begin{example}
\label{ex_simplex_types}
Let $\vec v_1, \vec v_2 \in \Z^{m+n}$ such that $\{\vec v_1, \vec v_2, \vec e_1,\ldots, \vec e_m, \vec w\} $ is a partial basis. Then $\{ v_1, \langle \vec v_1 + \vec w \rangle\}$ is a standard simplex in $\BAA_{m+n}$ and  $\BAA^m_n$, but it is a 2-additive simplex in $\Linkhat_{\BAA^m_n}(w)$. Similarly, $\{ v_1, v_2, \langle \vec v_1 + \vec v_2  + \vec e_1 \rangle\}$ is a 3-additive simplex in $\BAA^m_n$ (and in $\Linkhat_{\BAA^m_n}(w)$) and $\{ v_1, \langle \vec v_1 + \vec e_1 \rangle , \langle \vec v_1 + \vec w \rangle\}$ is a double-triple simplex in $\Linkhat_{\BAA^m_n}(w)$.
\end{example}

Any simplex $\sigma$ that is not a standard simplex contains a unique minimal face that determines its type. This is the content of the next definition.

\begin{definition} \label{minimal-simplices-and-additive-core}
  Let $\sigma$ be a simplex of $\Linkhat_{\BAA^m_n}(w)$ or $\BAA^m_n$.
  \begin{enumerate}
  \item The simplex $\sigma$ is called a \emph{minimal} simplex of $2$-additive, $3$-additive, double-double, or double-triple type (in $\Linkhat_{\BAA^m_n}(w)$ or $\BAA^m_n$) if $\sigma$ is of this type and $\sigma$ does not contain a proper face  also  of this type.
  \item The \emph{additive core} of a nonstandard simplex $\sigma$ is the unique minimal face of $\sigma$ with the same type as $\sigma$.
  \end{enumerate}
\end{definition}

\begin{example}
\label{ex_minimal_simplex}
Let again $\vec v_1, \vec v_2 \in \Z^{m+n}$ such that $\{\vec v_1, \vec v_2, \vec e_1,\ldots, \vec e_m, \vec w\} $ is a partial basis. The simplices $\{ v_1, v_2, \langle \vec v_1 + \vec v_2 \rangle \}$ and $\{ v_1, \langle \vec v_1 + \vec w \rangle\}$ are minimal in $\Linkhat_{\BAA^m_n}(w)$. In particular, these simplices form their own additive cores.
The simplex $\{ v_1, v_2,  \langle \vec v_1 + \vec w \rangle\}$ is not minimal in $\Linkhat_{\BAA^m_n}(w)$. Its additive core is $\{ v_1, \langle \vec v_1 + \vec w \rangle\}$.
\end{example}

The next definition is parallel to \cite[Definition 4.9]{CP}. 

\begin{definition} \label{external-wrelated-internal-simplices}
  Let $\sigma$ be a simplex of $\Linkhat_{\BAA^m_n}(w)$ or $\BAA^m_n$.
  \begin{enumerate}
  \item We say that $\sigma$ is \emph{external} if the additive core of the underlying simplex in $\BAA_{m+n}$ contains $e_i$ for some $1 \leq i \leq m$.
  \item We say that $\sigma$ is \emph{$w$-related} if $\sigma$ is a simplex in $\Linkhat_{\BAA^m_n}(w)$ and the additive core of the underlying simplex in $\BAA_{m+n}$ contains $w$.
  \item We say that $\sigma$ is \emph{internal} if the additive core of the underlying simplex in $\BAA_{m+n}$ is contained in $\sigma$.
  \end{enumerate}
\end{definition}

Note that an internal simplex is neither external nor $w$-related.

\begin{example}
Among the simplices in \autoref{ex_simplex_types} and \autoref{ex_minimal_simplex}, in $\Linkhat_{\BAA^m_n}(w)$,
\begin{itemize}
  \begin{minipage}{0.35\linewidth}  
\item $\{ v_1,  \langle \vec v_1 + \vec w \rangle\}$ is $w$-related,
\item $\{ v_1, v_2, \langle \vec v_1 + \vec v_2  + \vec e_1 \rangle\}$ is external,
\end{minipage} \quad
  \begin{minipage}{0.5\linewidth}  
\item $\{ v_1, \langle \vec v_1 + \vec e_1 \rangle , \langle \vec v_1 + \vec w \rangle\}$ is external and $w$-related,
\item $\{ v_1, v_2,  \langle \vec v_1 + \vec v_2 \rangle\}$ is internal.
\end{minipage}
\end{itemize}
\end{example}

\subsection{Definition on vertices, standard and 2-additive simplices}

We start by defining the retraction maps on the set of vertices  $\Vr(\Linkhat_{\BAA_n^m}(w))$ of the simplicial complex $\Linkhat_{\BAA_n^m}(w)$.

\begin{definition} \label{DefRetB} Let $\Vr(X)$ denote the vertex set of the simplicial complex $X$. Then, we define
\begin{align*} r\colon  \Vr(\Linkhat_{\BAA_n^m}(w)) & \longrightarrow \Vr(\Linkhat^<_{\BAA_n^m}(w)) \\
v & \longmapsto \langle \bar v - a \bar w \rangle
\end{align*} 
  where $a \in \Z$ is chosen so that $\bar v - a \bar w$ has last coordinate in the interval $[0,R)$.
\end{definition}

The constant $a \in \Z$ in \autoref{DefRetB} is determined by the Euclidean algorithm. We note that, although the vector $\bar v$ is not uniquely determined by $v$ if its last coordinate is zero, the line $r(v)$ is still uniquely determined because $r(v) = v$. More generally, we observe that $r(v) = v$ if the last coordinate of $\bar v$ is contained in $[0, R)$.

\begin{convention}
  Consider $v \in \Vr(\Linkhat_{\BAA_n^m}(w))$, then the line $r(v) \in \Vr(\Linkhat^<_{\BAA_n^m}(w))$ is spanned by a vector $\overline{r(v)}$. Recall that this vector is \emph{not well-defined} if the last coordinate of $\overline{r(v)}$ is zero (see \autoref{bar-convention}). We use the following notational convention for $\overline{r(v)}$. Let $\bar v$ be a vector representing $v$ that has nonnegative last coordinate $aR+b$ where $a \in \Z$ and $b \in [0, R)$. In this situation, $\overline{r(v)}$ always denotes the vector $\bar v - a \bar w$.
\end{convention}

Before we discuss the effect of this map on standard and $2$-additive simplices, we record some facts that will help us to calculate the value of $r$ on certain vertices. The following observation is elementary but useful.

\begin{observation} \label{ObservationColumnOperations}
  Let $\{\bar v_0, \bar v_1, \bar v_2, \ldots, \bar v_p, \bar w\}$ be a partial basis for $\Z^{m+n}$. Then, if we replace any element $\bar v_i$ by  $\bar v_i + a \bar w$ for any $a \in \Z$, the result is still a partial basis for $\Z^{m+n}$ and spans the same summand. In particular, $\bar v_i + a \bar w$ is necessarily primitive.
\end{observation}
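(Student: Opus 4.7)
The plan is to realize the replacement $\bar v_i \mapsto \bar v_i + a\bar w$ as the action of a $\Z$-linear automorphism of $\Z^{m+n}$ on the partial basis, since automorphisms manifestly send partial bases to partial bases and preserve summands.

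First, I would extend the given partial basis $\{\bar v_0, \ldots, \bar v_p, \bar w\}$ to a full basis $\mathcal B$ of $\Z^{m+n}$; such an extension exists by the definition of a partial basis. Then I would define $\phi \colon \Z^{m+n} \to \Z^{m+n}$ by $\phi(\bar v_i) = \bar v_i + a\bar w$, $\phi(x) = x$ for all other elements $x \in \mathcal B$, and extend $\Z$-linearly. Written in the basis $\mathcal B$, this map is represented by an elementary matrix with a single off-diagonal entry $a$; in particular it has determinant $1$, so $\phi$ is an automorphism of $\Z^{m+n}$.

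Now $\phi$ sends the partial basis $\{\bar v_0, \ldots, \bar v_p, \bar w\}$ to $\{\bar v_0, \ldots, \bar v_{i-1}, \bar v_i + a\bar w, \bar v_{i+1}, \ldots, \bar v_p, \bar w\}$, and applying $\phi$ to the extension $\mathcal B$ yields a full basis of $\Z^{m+n}$ extending the new collection, so the new collection is again a partial basis. The summand $\langle \bar v_0, \ldots, \bar v_p, \bar w\rangle$ is preserved by $\phi$ (each generator maps into it) and the restriction $\phi\vert_{\langle \bar v_0, \ldots, \bar v_p, \bar w\rangle}$ is invertible for the same elementary-matrix reason, so the images span the same rank-$(p+2)$ summand. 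Primitivity of $\bar v_i + a\bar w$ is then immediate: any vector in a partial basis of a free $\Z$-module spans a rank-$1$ summand of a summand of $\Z^{m+n}$, hence a rank-$1$ summand of $\Z^{m+n}$, which is the definition of primitivity. The only point requiring care, and it is not really an obstacle, is to recognize that the transition matrix between the old and the new tuple is an elementary matrix in $\SL_{p+2}(\Z)$; once this is in place the rest is formal.
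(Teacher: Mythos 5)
Your argument is correct. The paper states this as an unproved ``elementary but useful'' observation, so there is no official proof to compare against; your route --- realizing the replacement as the elementary automorphism $\phi$ of $\Z^{m+n}$ fixing a completed basis except for $\bar v_i \mapsto \bar v_i + a\bar w$, then noting that automorphisms carry partial bases to partial bases, that $\phi$ preserves the summand $\langle \bar v_0,\ldots,\bar v_p,\bar w\rangle$ because it maps each generator into it with invertible restriction (inverse given by $a \mapsto -a$), and that primitivity of $\bar v_i + a\bar w$ follows since any member of a partial basis spans a rank-one summand --- is exactly the standard argument the authors intend the reader to supply, and every step checks out.
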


The next lemma describes some properties of the map $r$. Its proof is easy and left to the reader.

\begin{lemma} \label{lem_properties_r}
  Let $v \in \Vr(\Linkhat_{\BAA_n^m}(w))$ and let $\epsilon_1, \epsilon_2 \in \{-1,+1\}$ be two signs. Then, the map $r$ introduced in \autoref{DefRetB} has the following properties.
  \begin{enumerate}
  \item \label{Observation-r-externally-additive} If $u \in \Vr(\Linkhat_{\BAA_n^m}(w)) \sqcup \{e_1, \dots, e_m\}$ is a line such that the last coordinate of $\bar u$ is zero, then
    $$r(\langle \epsilon_1 \bar v + \epsilon_2 \bar u \rangle) = \langle \epsilon_1 \overline{r(v)} + \epsilon_2 \bar u \rangle.$$
  \item \label{r(v+-w)} It holds that
    $$r(\langle \epsilon_1 \bar v + \epsilon_2 \bar w \rangle) = r(v), \text{ if } \epsilon_1 = \epsilon_2,$$
    and, if $\epsilon_1 \neq \epsilon_2$, then
    \begin{equation*}
      r(\langle \epsilon_1 \bar v + \epsilon_2 \bar w \rangle) =  
      \begin{cases}
	r(v),  \text{ if the last coordinate of } \bar v \text{ is in } \{0\} \sqcup [R, \infty),\\
	\langle \epsilon_1 \overline{r(v)} + \epsilon_2 \bar w \rangle = \langle \bar w - \overline{r(v)} \rangle = \langle \bar w - \bar v \rangle, \text{ if the last coordinate of } \bar v \text{ is in } (0,R).
      \end{cases}
    \end{equation*}
  \item \label{Observation-r-internally-additive} Given two vertices $v_1,v_2 \in \Vr(\Linkhat_{\BAA_n^m}(w))$. Let $a_i R + b_i$ for $a_i \geq 0$ and $b_i \in [0, R)$ denote the last coordinate of $\bar v_i$. Then, 
    \begin{equation*}
      r( \langle \bar v_1+\bar v_2\rangle )  =
      \begin{cases}
        \langle \overline{r(v_1)} + \overline{r(v_2)}\rangle, \text{ if } b_1 + b_2 \in [0,R), \\
         \langle \overline{r(v_1)} + \overline{r(v_2)} -\bar w \rangle, \text{ if } b_1 + b_2 \in [R,2R).
      \end{cases}
    \end{equation*}
  \end{enumerate}
\end{lemma}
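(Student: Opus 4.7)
The plan is to prove \autoref{lem_properties_r} by direct computation from the definitions of $\bar{\cdot}$ (which has nonnegative last coordinate by \autoref{bar-convention}) and of $r$ (given by \autoref{DefRetB}). All three parts reduce to a case-analysis of the last coordinate of the vector representing the relevant line. Throughout, I write the last coordinate of $\bar v$ as $aR+b$ with $a\in\N$ and $b\in[0,R)$, so that $\overline{r(v)}=\bar v-a\bar w$ by definition.

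For part \autoref{Observation-r-externally-additive}, since the last coordinate of $\bar u$ is zero, the last coordinate of $\epsilon_1\bar v+\epsilon_2\bar u$ equals $\epsilon_1(aR+b)$. I would split on the sign of $\epsilon_1$: if $\epsilon_1=+1$, then $\epsilon_1\bar v+\epsilon_2\bar u$ already has nonnegative last coordinate $aR+b$, so it represents the correct $\bar{\cdot}$-vector, and applying $r$ subtracts exactly $a\bar w$, which yields $\langle\overline{r(v)}+\epsilon_2\bar u\rangle$. If $\epsilon_1=-1$, one negates to obtain $\bar v-\epsilon_2\bar u$ as the canonical representative, applies $r$, and then uses that lines are invariant under sign changes to match the target expression.

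For part \autoref{r(v+-w)}, I would again compute the last coordinate of $\epsilon_1\bar v+\epsilon_2\bar w$, which equals $\epsilon_1(aR+b)+\epsilon_2 R$. When $\epsilon_1=\epsilon_2=+1$, this is $(a+1)R+b$ and the Euclidean reduction subtracts $(a+1)\bar w$, recovering $\overline{r(v)}$; the case $\epsilon_1=\epsilon_2=-1$ is handled by a preliminary sign flip. When $\epsilon_1\neq\epsilon_2$, I take $\epsilon_1=+1,\epsilon_2=-1$ without loss of generality and split into the three subcases: if $b=0$ and $a=0$, then $\bar v-\bar w$ has last coordinate $-R$, so after flipping we apply $r$ to $\bar w-\bar v$ and recover $v=r(v)$; if $a\geq 1$, then $\bar v-\bar w$ already has nonnegative last coordinate $(a-1)R+b$ and reducing gives $r(v)$; if $a=0$ and $b\in(0,R)$, then $\bar w-\bar v$ has last coordinate $R-b\in(0,R)$ so no further reduction is needed, and the three equal expressions $\langle\epsilon_1\overline{r(v)}+\epsilon_2\bar w\rangle$, $\langle\bar w-\overline{r(v)}\rangle$, $\langle\bar w-\bar v\rangle$ coincide because $\overline{r(v)}=\bar v$ in this subcase.

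For part \autoref{Observation-r-internally-additive}, the vector $\bar v_1+\bar v_2$ has nonnegative last coordinate $(a_1+a_2)R+(b_1+b_2)$. If $b_1+b_2\in[0,R)$, this is already in reduced form, so $r$ subtracts $(a_1+a_2)\bar w$, producing $\langle(\bar v_1-a_1\bar w)+(\bar v_2-a_2\bar w)\rangle=\langle\overline{r(v_1)}+\overline{r(v_2)}\rangle$. If $b_1+b_2\in[R,2R)$, I rewrite the last coordinate as $(a_1+a_2+1)R+(b_1+b_2-R)$ and subtract $(a_1+a_2+1)\bar w$, which gives the second formula. No step requires more than elementary arithmetic; the only subtlety is to keep the sign convention for $\bar{\cdot}$ consistent, which is where the mild case distinctions come from. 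The main obstacle is thus purely bookkeeping of signs and of when the last coordinate crosses a multiple of $R$, not any conceptual difficulty.
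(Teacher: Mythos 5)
Your proof is correct, and it is precisely the elementary case-analysis the paper has in mind — the paper itself omits the argument, stating only that it ``is easy and left to the reader.'' All three parts check out, including the careful handling of the sign convention for $\bar{\cdot}$ and the convention $\overline{r(v)}=\bar v-a\bar w$ when the last coordinate is zero.
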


We now discuss the effect of the retraction on standard simplices. This has been studied by Church--Putman in Section 4.1 of \cite{CP} as part of their proof that $\B^m_n$ is a Cohen--Macaulay complex of dimension $n-1$ \cite[Theorem 4.2]{CP}.

\begin{definition}
  Let $\Linkhat_{\B_n^m}(w)$ and $\Linkhat_{\B_n^m}^{<}(w)$ denote the subcomplexes of $\Linkhat_{\BAA_n^m}(w)$ and $\Linkhat_{\BAA_n^m}^{<}(w)$, respectively, that consists of all standard simplices in the sense of \autoref{relative-simplex-types}.
\end{definition}

Note that the vertex sets of $\Linkhat_{\B_n^m}(w)$ and $\Linkhat_{\BAA_n^m}(w)$ are equal. The following result is a case of Church--Putman {\cite[Lemma 4.5]{CP}}.

\begin{proposition}[{\cite[Lemma 4.5]{CP}}] \label{Bretraction}
  Let $n, m \geq 0$. Then, \autoref{DefRetB} induces a simplicial map
  $$r\colon  \Linkhat_{\B_n^m}(w) \twoheadrightarrow \Linkhat^<_{\B_n^m}(w) \hookrightarrow \Linkhat_{\BAA_n^m}^{<}(w)$$
  that restricts to the inclusion on the subcomplex $\Linkhat^<_{\B_n^m}(w)$ of $\Linkhat_{\B_n^m}(w)$. In particular, $\Linkhat^<_{\B_n^m}(w)$ is a simplicial retract of $\Linkhat_{\B_n^m}(w)$.
\end{proposition}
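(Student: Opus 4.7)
My plan is to verify three statements in sequence: first, that the vertex formula of \autoref{DefRetB} actually lands in $\Vr(\Linkhat^<_{\B_n^m}(w))$; second, that the resulting vertex map extends to a simplicial map by carrying standard simplices to standard simplices; and third, that it restricts to the identity on $\Linkhat^<_{\B_n^m}(w)$. The inclusion $\Linkhat^<_{\B_n^m}(w) \hookrightarrow \Linkhat^<_{\BAA_n^m}(w)$ in the claim is tautological, so all substance lies in the first map.

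For step one, by the choice of $a$ the last coordinate of $\bar v - a\bar w$ lies in $[0, R)$, so the condition $|c_{m+n}| < R$ from \autoref{LinkhadDef} is automatic. Primitivity of $\bar v - a\bar w$ and the fact that $\{\bar v - a\bar w, \vec e_1, \ldots, \vec e_m, \bar w\}$ is a partial basis spanning the same rank-$(m+2)$ summand as $\{\bar v, \vec e_1, \ldots, \vec e_m, \bar w\}$ follow directly from \autoref{ObservationColumnOperations}. Moreover, $\bar v - a\bar w \notin \langle \vec e_1, \ldots, \vec e_m, \bar w\rangle$; otherwise $\bar v$ would lie in this span, contradicting $v \in \Vr(\Linkhat_{\B_n^m}(w))$. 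For step two, given a standard simplex $\sigma = \{v_0, \ldots, v_k\}$ of $\Linkhat_{\B_n^m}(w)$, the set $\{\bar v_0, \ldots, \bar v_k, \vec e_1, \ldots, \vec e_m, \bar w\}$ is a partial basis spanning a rank-$(k+m+2)$ summand of $\Z^{m+n}$; I would apply \autoref{ObservationColumnOperations} once for each $i$, replacing $\bar v_i$ by $\overline{r(v_i)} = \bar v_i - a_i \bar w$. This preserves the summand spanned and (when the simplex is top-dimensional in $\B_{m+n}$) the unit determinant condition, so $\{\overline{r(v_0)}, \ldots, \overline{r(v_k)}, \vec e_1, \ldots, \vec e_m, \bar w\}$ is again a partial basis of $\Z^{m+n}$. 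Pairwise distinctness of the lines $r(v_i)$ is an automatic consequence, so $\{r(v_0), \ldots, r(v_k)\}$ is a standard simplex of $\Linkhat^<_{\B_n^m}(w)$.

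For step three, if $v \in \Vr(\Linkhat^<_{\B_n^m}(w))$, then the last coordinate of $\bar v$ already lies in $[0, R)$ by \autoref{bar-convention} together with the defining condition of $\Linkhat^<$, so $a = 0$ and $r(v) = v$. The entire argument is essentially one sustained application of \autoref{ObservationColumnOperations}, and I do not anticipate a genuine obstacle at this stage. The real difficulty, and the reason $r$ must be reworked in the subsequent subsections, is that subtracting multiples of $\bar w$ no longer preserves $2$-additive, $3$-additive, double-double, or double-triple relations on the nose; this is the source of the carrying phenomena described in the discussion following \autoref{retraction}, which force one to subdivide and to construct only a topological, rather than simplicial, retraction.
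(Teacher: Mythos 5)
Your argument is correct, and it is essentially the proof the paper delegates to Church--Putman \cite[Lemma 4.5]{CP}: the paper gives no proof of \autoref{Bretraction} itself, and the intended argument is exactly the repeated application of \autoref{ObservationColumnOperations} to show that subtracting multiples of $\bar w$ preserves partial bases (hence standard simplices), together with the observation that $r$ fixes vertices whose last coordinate already lies in $[0,R)$. Your closing remark correctly identifies why this clean simplicial argument breaks down for the augmented simplices treated in the rest of \autoref{Sec4}.
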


We now explain how Church--Putman extended the \emph{simplicial} retraction of $\Linkhat_{\B_n^m}(w)$ onto $\Linkhat^<_{\B_n^m}(w)$ over $2$-additive simplices to a \emph{topological} retraction between the following two simplicial complexes. 

\begin{definition}
  Let $\Linkhat_{\BA_n^m}(w)$ and $\Linkhat_{\BA_n^m}^{<}(w)$ denote the subcomplexes of $\Linkhat_{\BAA_n^m}(w)$ and $\Linkhat_{\BAA_n^m}^{<}(w)$ respectively that consist of all standard and $2$-additive simplices in the sense of \autoref{relative-simplex-types}.
\end{definition}

The following definition captures the reason why \autoref{DefRetB} does not induce a \emph{simplicial} retraction $r\colon  \Linkhat_{\BA_n^m}(w) \twoheadrightarrow \Linkhat_{\BA_n^m}^{<}(w)$ as one might initially hope.

\begin{definition} \label{carrying-2-additive}
  Let $\sigma = \tau_1 \ast \tau_2$ be a $2$-additive simplex in $\Linkhat_{\BAA_n^m}(w)$, where $\tau_1$ is a minimal $2$-additive simplex and $\tau_2$ is a standard simplex. $\sigma$ is called \emph{carrying} if one of the following equivalent conditions holds
  \begin{enumerate}
  \item The set $r(\tau_1)$ does not span a simplex in $\Linkhat_{\BA_n^m}^{<}(w)$. (Note the $\BA^m_n$ subscript).
  \item $\tau_1 = \{v_0, v_1, v_2 = \langle \bar v_0 + \bar v_1 \rangle\}$ is internally $2$-additive with $b_0 + b_1 \in [R,2R)$, where $a_i R + b_i$ with $a_i \geq 0$ and $b_i \in [0, R)$ is the last coordinate of $\bar v_i$. 
    \end{enumerate}
\end{definition}
We remark that in Condition ii), $v_2$ is the unique vertex in $\tau_1$ with last coordinate of $\bar v_i$  maximal and $r(\tau_1) = \{r(v_0), r(v_1), r(v_2) = \langle \overline{r(v_0)} + \overline{r(v_1)} - \bar w \rangle \}$ by Part iii) of \autoref{lem_properties_r}. The equivalence of i) and ii) in \autoref{carrying-2-additive} follows from \cite[§4.4.\ Claim 1-4 and the discussion on p.\ 1022]{CP}.

\begin{remark}
  In the first condition of \autoref{carrying-2-additive}, we highlighted the $\BA^m_n$ subscript because $r(\tau_1)$ does form a $w$-related $3$-additive simplex in $\Linkhat_{\BAA_n^m}^{<}(w)$, as visible in the second condition.
\end{remark}

\begin{example}
  Let $\bar w=\begin{bmatrix} 1\\ 0\\ 0\\ 10 \end{bmatrix}$, $\bar v_0 = \begin{bmatrix} 0\\1 \\ 0\\ 9\end{bmatrix}$, and $\bar v_1=\begin{bmatrix} 0\\ 0\\ 1\\ 6\end{bmatrix}$. Then $\{v_0,v_1, v_2 = \langle \bar v_0+ \bar v_1 \rangle \}$
    is a $2$-additive simplex in $\Linkhat_{\BAA^1_3}(w)$. However,
    $$r(\{v_0,v_1, v_2\}) = \{v_0,v_1, r(v_2) = \langle \bar v_1 + \bar v_2 - \bar w \rangle \}$$
    is not a simplex of $\Linkhat^<_{\BA^1_3}(w)$ and therefore $\{v_0, v_1, v_2\}$ is an example of a carrying $2$-additive simplex.
\end{example}

To circumvent this problem and, instead, construct a \emph{topological} retraction
$$r\colon  \Linkhat_{\BA_n^m}(w) \twoheadrightarrow \Linkhat_{\BA_n^m}^{<}(w),$$
Church--Putman modify the definition of $r$ on all carrying $2$-additive simplices. To do this, they pass to the following subdivision of $\Linkhat_{\BA_n^m}(w)$.

\begin{definition} \label{definition-sd-BA}
  Let $\sd(\Linkhat_{\BA_n^m}(w))$ denote the coarsest subdivision of $\Linkhat_{\BA_n^m}(w)$, where every carrying minimal $2$-additive simplex $\tau_1 = \{v_0,v_1, v_2 = \langle \bar v_0 + \bar v_1 \rangle\}$ is subdivided by inserting a single vertex $t(\tau_1)$ at the barycentre of $\tau_1$.
\end{definition}

Concretely, $\sd(\Linkhat_{\BA_n^m}(w))$ in \autoref{definition-sd-BA} is constructed as follows: Let $\sigma = \tau_1 \ast \tau_2$ be a $2$-additive simplex of $\Linkhat_{\BA_n^m}(w)$, where $\tau_1 = \{v_0,v_1, v_2 = \langle \bar v_0 + \bar v_1 \rangle\}$ is a carrying minimal $2$-additive simplex and $\tau_2$ is standard. Then, when passing from $\Linkhat_{\BA_n^m}(w)$ to $\sd(\Linkhat_{\BA_n^m}(w))$, each such simplex $\sigma$ is replaced by $\sd(\sigma)$, its subdivision into the three simplices $\{v_0, \ldots, \hat{v}_i, \ldots, v_2,  t(\tau_1) \} \ast \tau_2$ for $i=0,1,2$. Here the notation $\hat{v}_i$ means $v_i$ is omitted. 
Note that $\Linkhat_{\B_n^m}(w)$ and $\Linkhat_{\BA^m_n}^{<}(w)$ are subcomplexes of $\sd(\Linkhat_{\BA_n^m}(w))$.

The following is the main technical result of Church--Putman \cite{CP}, and the key input for their proof that $\BA^m_n$ is a Cohen--Macaulay complex of dimension $n$ \cite[Theorem C']{CP}.

\begin{proposition}[{\cite[Proposition 4.17.]{CP}}] \label{BAretraction}
  Let $n \geq 2$ and $m \geq 0$. Then, the simplicial map constructed in \autoref{Bretraction}
  $$r\colon  \Linkhat_{\B_n^m}(w) \twoheadrightarrow \Linkhat^<_{\B_n^m}(w) \hookrightarrow \Linkhat_{\BAA_n^m}^{<}(w)$$
  extends to a simplicial map
  $$r\colon  \sd(\Linkhat_{\BA_n^m}(w)) \twoheadrightarrow \Linkhat^<_{\BA_n^m}(w) \hookrightarrow \Linkhat_{\BAA_n^m}^{<}(w)$$
  that restricts to the inclusion on the subcomplex $\Linkhat^<_{\BA_n^m}(w)$ of $\sd(\Linkhat_{\BA_n^m}(w))$. The value of $r$ on the barycentre $t(\tau_1)$ of a carrying minimal $2$-additive simplex $\tau_1 = \{v_0, v_1, v_2 = \langle \bar v_0 + \bar v_1 \rangle \}$ is defined by the formula
  $$r(t(\tau_1)) = \langle \overline{r(v_l)} - \bar w \rangle$$
  where $l \in \{0,1\}$ is arbitrarily chosen, i.e.\ $v_l$ is one of the two lines in $\tau_1$ with the property that the last coordinate of $\bar v_l$ is not maximal. In particular, it follows that $\Linkhat^<_{\BA_n^m}(w)$ is a \emph{topological} retract of $\Linkhat_{\BA_n^m}(w) \cong \sd(\Linkhat_{\BA_n^m}(w))$.
\end{proposition}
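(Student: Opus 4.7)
The plan is to extend the simplicial retraction of \autoref{Bretraction} on $\Linkhat_{\B^m_n}(w)$ across the subdivision $\sd(\Linkhat_{\BA^m_n}(w))$ by defining $r(t(\tau_1))$ on each new barycenter vertex according to the stated formula (fixing a convention, say always $l = 0$), and then verifying simplex-by-simplex that the extension lands in $\Linkhat^<_{\BA^m_n}(w)$ and restricts to the identity on it.

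First, the plan is to confirm that the line $\langle \overline{r(v_0)} - \bar w \rangle$ is a valid vertex of $\Linkhat^<_{\BA^m_n}(w)$. The carrying condition $b_0 + b_1 \in [R,2R)$ with $b_0, b_1 \in [0,R)$ forces both $b_0, b_1 > 0$, so the primitive representative $\bar w - \overline{r(v_0)}$ of the line has nonnegative last coordinate $R - b_0 \in (0, R)$, placing it in $\Linkhat^<$. Primitivity and the link property then follow from \autoref{ObservationColumnOperations} applied to the partial basis $\{\vec e_1, \dots, \vec e_m, \overline{r(v_0)}, \overline{r(v_1)}, \bar w\}$ underlying $\tau_1$.

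The main work is a case analysis of the three triangles produced when the carrying core $\tau_1 = \{v_0, v_1, v_2\}$ is subdivided. Setting $u = \overline{r(v_0)} - \bar w$ and using the relation $\overline{r(v_2)} = \overline{r(v_0)} + \overline{r(v_1)} - \bar w$ from part iii) of \autoref{lem_properties_r}, one computes directly that $\overline{r(v_2)} = \overline{r(v_1)} + u$, making $\{r(v_1), r(v_2), r(t(\tau_1))\}$ a 2-additive simplex in $\Linkhat^<_{\BA^m_n}(w)$. For the remaining two triangles $\{r(v_0), r(v_1), r(t(\tau_1))\}$ and $\{r(v_0), r(v_2), r(t(\tau_1))\}$, the elementary column operation $\bar w \mapsto \bar w - \overline{r(v_0)} = -u$ in the ambient partial basis, justified by \autoref{ObservationColumnOperations}, shows both become standard simplices. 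A general carrying 2-additive simplex $\sigma = \tau_1 \ast \tau_2$ is handled by joining these three triangles with the image $r(\tau_2)$, which by \autoref{Bretraction} is already a standard simplex in $\Linkhat^<_{\B^m_n}(w)$; the join preserves linear independence by the same elementary-basis arguments.

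To finish, the plan is to verify compatibility and the retraction property. Because the subdivision in \autoref{definition-sd-BA} introduces a single barycenter only at each minimal carrying core, which is unique by \autoref{minimal-simplices-and-additive-core}, there is no ambiguity when adjacent simplices share faces. The subcomplex $\Linkhat^<_{\BA^m_n}(w)$ contains no carrying simplex, since in any carrying $\tau_1$ the vertex $v_2$ has last coordinate $b_0 + b_1 \geq R$; consequently the subdivision is trivial on this subcomplex, and $r$ restricts to the identity on it by part iii) of \autoref{lem_properties_r}. The main obstacle is the three-case verification in the previous paragraph: the formula for $r(t(\tau_1))$ must convert the stubborn 3-term relation $\overline{r(v_0)} + \overline{r(v_1)} = \overline{r(v_2)} + \bar w$ into admissible 2-additive or standard relations on each of the three sub-triangles simultaneously, and the ``obvious'' alternative candidates for the image of $t(\tau_1)$ fail one of the three sub-simplex checks.
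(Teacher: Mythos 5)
The paper does not reprove this statement: it is quoted from Church--Putman \cite[Proposition 4.17]{CP}, and the argument you reconstruct is essentially theirs (and is partly recapitulated in the paper's proof of \autoref{DDretraction}, case ii): subdivide each carrying core at its barycentre, send the new vertex to $\langle \overline{r(v_l)} - \bar w\rangle$, and check that the three resulting triangles, joined with $r(\tau_2)$, land in $\Linkhat^<_{\BA_n^m}(w)$. Your key verifications are sound: the new vertex is a legitimate vertex of $\Linkhat^<_{\BA_n^m}(w)$ because the carrying condition forces $b_0, b_1 \in (0,R)$; the triangle $\{r(v_1), r(v_2), r(t(\tau_1))\}$ is internally $2$-additive via $\overline{r(v_2)} = \overline{r(v_1)} + u$ with $u = \overline{r(v_0)} - \bar w$; the join with $r(\tau_2)$ is handled as in \autoref{lem_key_extend_from_additive_core}; and the restriction to $\Linkhat^<_{\BA_n^m}(w)$ is the identity since no carrying core lies in that subcomplex.

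One correction is needed. The two triangles containing both $r(v_0)$ and $r(t(\tau_1))$ are not standard simplices of $\Linkhat_{\BAA_n^m}(w)$; they are $w$-related $2$-additive, which is exactly how the paper records them in the proof of \autoref{DDretraction}. The type of a simplex $\sigma$ of $\Linkhat_{\BAA_n^m}(w)$ is determined by the underlying simplex $\sigma \ast \{e_1, \dots, e_m, w\}$ with $w$ held \emph{fixed} (\autoref{relative-simplex-types}), so the column operation $\bar w \mapsto \bar w - \overline{r(v_0)}$ cannot be used to reclassify them: the relation $u = \overline{r(v_0)} - \bar w$ persists and makes each underlying simplex $2$-additive, not standard. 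For the present proposition this is harmless, since $2$-additive simplices belong to $\Linkhat^<_{\BA_n^m}(w)$ and the map is still simplicial; but the precise types are used downstream (e.g.\ in \autoref{DDretraction} and \autoref{TA-non-carrying}), so they should be stated correctly. Finally, you silently rely on the fact that the image of every non-carrying $2$-additive core spans a simplex; this is Claims 1--4 of \cite[Section 4.4]{CP}, absorbed in this paper into the asserted equivalence of \autoref{carrying-2-additive}, and that dependence should be made explicit rather than left under ``simplex-by-simplex verification.''
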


 This completes our discussion of the definition of $r$ on vertices, and standard and $2$-additive simplices. We close this subsection by presenting a proof of the following lemma. It will frequently be used to reduce the question of whether the map $r$ extends over a simplex $\sigma = \tau_1 \ast \tau_2$ with additive core $\tau_1$ to the question whether $r$ extends over the additive core $\tau_1$. To shorten notation, we write $\langle \nu \rangle \coloneqq \langle \vec v \mid \langle \vec v \rangle \in \nu \rangle$ for the $\Z$-linear span of a set of lines $\nu$ in $\Z^{m+n}$.

\begin{lemma}
\label{lem_key_extend_from_additive_core}
  Let $\sigma = \tau_1 \ast \tau_2$ be a simplex of $\Linkhat_{\BAA_n^m}(w)$ such that the additive core of $\sigma$ is contained in $\tau_1$ and $\tau_2$ is a standard simplex. Let $\nu$ be a set of lines in $\Z^{m+n}$ such that $\langle \nu \rangle \subseteq \langle \tau_1 \cup \ls e_1, \ldots, e_m, w \rs\rangle$.
If $\nu$ spans a simplex in $\Linkhat_{\BAA_n^m}^<(w)$, then $\nu \ast r(\tau_2)$ spans a simplex of the same type.
\end{lemma}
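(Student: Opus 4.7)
The plan is to use the column-operation observation \autoref{ObservationColumnOperations} to show that $r(\tau_2)$ is a standard simplex of $\Linkhat^<_{\BAA_n^m}(w)$ whose span meets $L := \langle \tau_1 \cup \{e_1, \ldots, e_m, w\}\rangle$ trivially, and then to deduce that joining this transverse standard simplex to $\nu$ (whose span lies in $L$) preserves both the additive core and the standardness conditions that define the type.

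\textbf{Transversality of $r(\tau_2)$.} Because the additive core of $\sigma$ lies in $\tau_1$ and $\tau_2$ is standard, the additive core of the underlying simplex $\tau_1 \ast \tau_2 \ast \{e_1, \ldots, e_m, w\}$ of $\BAA_{m+n}$ is contained in $\tau_1 \cup \{e_1, \ldots, e_m, w\}$. Deleting from this simplex the at most two vertices that \autoref{relative-simplex-types} requires us to remove in order to extract a standard subsimplex yields a partial basis $P_\sigma \supseteq \tau_2$, and $B_0 := P_\sigma \cap L$ is a basis of $L$ as a summand of $\Z^{m+n}$. Changing the basis of $L$ shows that $B \sqcup \tau_2$ is a partial basis for every basis $B$ of $L$. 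Choosing $B$ to contain $\bar w$ and iteratively applying \autoref{ObservationColumnOperations} to replace each $\bar u \in \tau_2$ by $\overline{r(u)} = \bar u - a_u \bar w$ yields that $B \cup r(\tau_2)$ is still a partial basis; another change of basis extends this to arbitrary bases $B$ of $L$. Specialising to $B \supseteq \{\bar e_1, \ldots, \bar e_m, \bar w\}$ confirms that $r(\tau_2)$ is a standard simplex of $\Linkhat_{\BAA_n^m}(w)$, and the defining condition of $r$ places it in $\Linkhat^<_{\BAA_n^m}(w)$.

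\textbf{Preservation of type.} Let $\hat\nu := \nu \ast \{e_1, \ldots, e_m, w\}$, a simplex of $\BAA_{m+n}$ of the same type $T$ as $\nu$, with additive core $c_\nu$, and let $P_\nu$ denote the partial basis obtained from $\hat\nu$ by deleting its at most two dependent vertices. Both $c_\nu$ and $P_\nu$ lie in $L$ since $\langle \nu \rangle \subseteq L$. Extending $P_\nu$ to a basis $B$ of $L$, the first paragraph gives that $B \cup r(\tau_2)$, and hence its subset $P_\nu \cup r(\tau_2)$, is a partial basis of $\Z^{m+n}$. In particular $\langle r(\tau_2)\rangle \cap L = 0$, so $\nu$ and $r(\tau_2)$ are disjoint as sets of lines. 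To verify that no new additive relations arise in $\hat\nu \ast r(\tau_2)$, consider any relation $\bar v = \pm \bar x \pm \bar y$ with $v \in \hat\nu$: since $\bar v \in L$ and $L \cap \langle P_\nu \cup r(\tau_2)\rangle = \langle P_\nu\rangle$ by the direct sum decomposition, the unique representation of $\bar v$ in the partial basis $P_\nu \cup r(\tau_2)$ uses only $P_\nu$-vectors, so $\bar x, \bar y \in P_\nu \subseteq \hat\nu$ and the relation already occurs in $\hat\nu$. Hence the additive core of $\hat\nu \ast r(\tau_2)$ is still $c_\nu$ and its type is still $T$; the partial basis obtained by deleting the dependent vertices of $\hat\nu \ast r(\tau_2)$ is exactly $P_\nu \cup r(\tau_2)$. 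Therefore $\nu \ast r(\tau_2)$ spans a simplex of type $T$ in $\Linkhat^<_{\BAA_n^m}(w)$.

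The main obstacle is verifying that joining with $r(\tau_2)$ introduces no spurious additive relations that would change the type; this is resolved by the direct sum decomposition $L \cap \langle P_\nu \cup r(\tau_2)\rangle = \langle P_\nu\rangle$, which confines all relations involving vertices of $\hat\nu$ to the existing partial basis $P_\nu$. Apart from this, the argument is a careful unpacking of the type conventions of \autoref{relative-simplex-types} combined with iterated applications of \autoref{ObservationColumnOperations} to partial bases.
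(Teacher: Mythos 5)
Your proof is correct and follows essentially the same route as the paper's: both arguments reduce to showing that $\langle \tau_1 \cup \{e_1,\dots,e_m,w\}\rangle \oplus \langle \tau_2\rangle = \langle \tau_1 \cup \{e_1,\dots,e_m,w\}\rangle \oplus \langle r(\tau_2)\rangle$ is a direct summand (via \autoref{ObservationColumnOperations}), that the standard part of $\nu$ together with $r(\tau_2)$ is therefore a partial basis, and that the dependent vertices of $\nu$ carry their relations over unchanged. The only differences are cosmetic — you re-derive the standardness of $r(\tau_2)$ instead of citing \autoref{Bretraction}, and you spell out the "no new relations" point that the paper leaves implicit.
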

\begin{proof}
  Since $\nu$ is a simplex in $\Linkhat_{\BAA_n^m}^<(w)$, there exists a (not necessarily unique) maximal standard simplex $\nu' \subseteq \nu$ that is contained in $\nu$. Observe that $\langle \nu' \rangle \oplus \langle \ls e_1, \dots, e_m, w \rs\rangle = \langle \nu \cup \ls e_1, \dots, e_m, w \rs\rangle$ is a direct summand of $\Z^{m+n}$. Since $\tau_1 \ast \tau_2$ is a simplex in $\Linkhat_{\BAA_n^m}(w)$ and the additive core of it is contained in $\tau_1$, it follows that $\langle \tau_1 \cup \ls e_1, \ldots, e_m, w \rs \rangle \oplus \langle \tau_2 \rangle$ is a direct summand of $\Z^{m+n}$.  The assumption that $\langle \nu \rangle \subseteq \langle \tau_1 \cup \ls e_1, \ldots, e_m, w \rs\rangle$ implies that $\langle \nu' \rangle \oplus \langle \ls e_1, \dots, e_m, w \rs\rangle \subseteq \langle \tau_1 \cup \ls e_1, \dots, e_m, w \rs\rangle$. We conclude that $\langle \nu' \rangle \oplus \langle \ls e_1, \dots, e_m, w \rs \rangle  \oplus \langle \tau_2 \rangle$ is a direct summand of $\Z^{m+n}$ as well, using e.g.\ \cite[Lemma 2.6]{CP}. \autoref{Bretraction} implies that $r(\tau_2)$ is a standard simplex and \autoref{ObservationColumnOperations} yields
  \begin{equation*}
    \langle \ls e_1, \dots, e_m, w\rs \rangle \oplus \langle \tau_2 \rangle = \langle \ls e_1, \dots, e_m, w \rs \rangle \oplus \langle r(\tau_2) \rangle.
  \end{equation*}
  Hence, $\langle \nu' \rangle \oplus \langle \ls e_1, \dots, e_m, w \rs \rangle  \oplus \langle r(\tau_2) \rangle$ is a direct summand of $\Z^{m+n}$. It follows that $\nu' \ast r(\tau_2)$ is a standard simplex in $\Linkhat_{\BAA_n^m}^<(w)$. The fact that $\nu$ is a simplex in $\Linkhat_{\BAA_n^m}^<(w)$ means that the vertices in $\nu \setminus \nu'$ can in an appropriate way be written as sums of the vectors spanning the lines $\nu' \cup \ls e_1, \ldots, e_m, w \rs$. Therefore, $\nu \ast r(\tau_2)$ spans a simplex of the same type as $\nu$ in $\Linkhat_{\BAA_n^m}^<(w)$.
\end{proof}

\subsection{Extending over double-double simplices}

The goal of this subsection is to extend the map $$r\colon  \sd(\Linkhat_{\BA_n^m}(w)) \m \Linkhat^{<}_{\BAA_n^m}(w)$$ defined in \autoref{BAretraction} over all double-double simplices. For this, we need to study minimal double-double simplices in the sense of \autoref{minimal-simplices-and-additive-core}.

\begin{observation}
  A minimal double-double simplex $\tau_1$ in $\Linkhat_{\BAA^m_n}(w)$ is the join $\tau_{1,1} \ast \tau_{1,2}$ of two minimal $2$-additive simplices in $\Linkhat_{\BAA^m_n}(w)$. In particular, any facet of $\tau_1$ is $2$-additive.
\end{observation}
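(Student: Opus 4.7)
The plan is to peel back the definitions directly. Let $\tau_1$ be a minimal double-double simplex in $\Linkhat_{\BAA^m_n}(w)$ and write $\tau_1' = \tau_1 \ast \{e_1, \dots, e_m, w\}$ for its underlying simplex in $\BAA_{m+n}$. By \autoref{relative-simplex-types} and \autoref{def:BAA-simplices}, there exist six pairwise distinct vertices $v_0, v_1, v_2, v_3, v_4, v_5$ in $\tau_1'$, with $v_0, v_1 \in \tau_1$, satisfying $\vec v_0 = \pm \vec v_2 \pm \vec v_3$, $\vec v_1 = \pm \vec v_4 \pm \vec v_5$, and such that $\tau_1' \setminus \{v_0, v_1\}$ is standard. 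I would then define $\tau_{1,1} := \{v_0, v_2, v_3\} \cap \tau_1$ and $\tau_{1,2} := \{v_1, v_4, v_5\} \cap \tau_1$, dropping any summand vertex that happens to equal some $e_i$ or $w$. Each $\tau_{1,i}$ is a $2$-additive simplex in $\Linkhat_{\BAA^m_n}(w)$: its underlying simplex in $\BAA_{m+n}$ carries the relevant $2$-additive relation, and the complement of the reduced vertex is a subset of the standard simplex $\tau_1' \setminus \{v_0, v_1\}$, hence standard.

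The pairwise distinctness of $v_0, \dots, v_5$ forces the sets $\{v_0, v_2, v_3\}$ and $\{v_1, v_4, v_5\}$ to be disjoint, so $\tau_{1,1} \cap \tau_{1,2} = \emptyset$ and $\tau_{1,1} \ast \tau_{1,2}$ is a well-defined face of $\tau_1$. Since it retains both $2$-additive relations, its underlying simplex is double-double, and hence $\tau_{1,1} \ast \tau_{1,2}$ is itself a double-double simplex in $\Linkhat_{\BAA^m_n}(w)$. Minimality of $\tau_1$ then forces $\tau_1 = \tau_{1,1} \ast \tau_{1,2}$. A symmetric argument shows each factor is minimal $2$-additive: if $\tau_{1,1}$ admitted a proper $2$-additive subface $\tau_{1,1}'$, then $\tau_{1,1}' \ast \tau_{1,2}$ would be a proper double-double face of $\tau_1$, contradicting minimality.

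For the facet claim, any facet of $\tau_1 = \tau_{1,1} \ast \tau_{1,2}$ removes a single vertex $v$ from exactly one factor, say $v \in \tau_{1,1}$. The resulting facet $(\tau_{1,1} \setminus \{v\}) \ast \tau_{1,2}$ contains the minimal $2$-additive simplex $\tau_{1,2}$, providing a $2$-additive relation. Since $\tau_{1,1}$ was minimal $2$-additive, $\tau_{1,1} \setminus \{v\}$ is standard (using that $\tau_1' \setminus \{v_0, v_1\}$ is standard, together with a change-of-basis argument in the case where $v$ is a summand), so removing the reduced vertex of $\tau_{1,2}$ from the facet yields a standard subsimplex. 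Hence the facet is $2$-additive. The main obstacle is the bookkeeping between $\Linkhat_{\BAA^m_n}(w)$-simplices and their underlying $\BAA_{m+n}$-simplices, since the summand vertices $v_2, \dots, v_5$ may lie in $\{e_1, \dots, e_m, w\}$ rather than in $\tau_1$; however, the distinctness of the six indices built into the double-double definition makes this transfer to $\Linkhat_{\BAA^m_n}(w)$ automatic.
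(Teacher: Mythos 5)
Your argument is correct, and it is essentially the definition-unwinding the authors intend: the paper states this as an \emph{Observation} with no proof, and your decomposition of $\tau_1$ as the intersection of the two (disjoint) additive triples of the underlying simplex with $\tau_1$, followed by the minimality and base-change arguments, is the natural way to make it precise. The only step you assert rather than prove is that the two ``sum'' vertices $v_0,v_1$ may be taken in $\tau_1$; this follows because $\{e_1,\dots,e_m,w\}$ is a partial basis, so each additive triple meets $\tau_1$, and re-indexing the relation to isolate such a vertex leaves the span (hence the standardness of the complement) unchanged — the same exchange argument you already invoke for the facet claim.
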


If one of the two minimal $2$-additive simplices in a double-double simplex $\sigma$ in $\Linkhat_{\BAA_n^m}(w)$ is carrying, then the set $r(\sigma)$ might or might not span a simplex in $\Linkhat_{\BAA_n^m}(w)$. This is illustrated in the next example.

\begin{example}
  Consider a minimal double-double simplex $\tau_1$ in $\Linkhat_{\BAA^m_n}(w)$ of the form
  $$\tau_1 = \{v_0, v_1, v_2 = \langle \bar v_0 + \bar v_1 \rangle, v_3, \langle \bar v_3 + \epsilon \cdot \bar w \rangle\}$$ for $\epsilon \in \{+1,-1\}$. Assume that $\{v_0, v_1, v_2 = \langle \bar v_0 + \bar v_1 \rangle\}$ is carrying. If $\epsilon = +1$, then \autoref{lem_properties_r} implies that  $r(\tau_1) = \{r(v_0), r(v_1), \langle \overline{r(v_0)} + \overline{r(v_1)} - \bar w \rangle, r(v_3)\}$, which spans a $w$-related $3$-additive simplex in $\Linkhat_{\BAA^m_n}(w)$. If $\epsilon = -1$ and the last coordinate of $\bar v_3$ is contained in $(0, R)$, then \autoref{lem_properties_r} implies that $r(\tau_1) = \{r(v_0), r(v_1), \langle \overline{r(v_0)} + \overline{r(v_1)} - \bar w \rangle, r(v_3), \langle \bar w - \overline{r(v_3)} \rangle \}$, which does not define a simplex in $\Linkhat_{\BAA^m_n}(w)$.
\end{example}

Because we decided to construct the retraction maps $r$ for $\BAA^m_n$ occurring in \autoref{retraction} as extensions of the retraction maps that Church--Putman defined for $\BA^m_n$ (compare with \autoref{BAretraction}), we nevertheless subdivide \emph{every} minimal double-double simplex that contains a carrying $2$-additive face. This leads us to the following definition.

\begin{definition}
  Let $\sigma = \tau_1 \ast \tau_2$ be a double-double simplex of $\Linkhat_{\BAA_n^m}(w)$, where $\tau_1 = \tau_{1,1} \ast \tau_{1,2}$ is a minimal double-double simplex and $\tau_2$ is a standard simplex. Then $\sigma$ is called \emph{carrying} if one of the following equivalent conditions holds.
  \begin{enumerate}
  \item $\tau_1$ has a carrying facet.
  \item One of the two $2$-additive simplices $\tau_{1,1}$ or $\tau_{1,2}$ is carrying in the sense of \autoref{carrying-2-additive}.
  \end{enumerate}
\end{definition}

Since any carrying $2$-additive simplex has been subdivided in $\sd(\Linkhat_{\BA_n^m}(w))$, we need to subdivide every carrying double-double simplex in a compatible fashion. This is done in the next definition.

\begin{definition} \label{definition-DD}
  Let $\Linkhat_{\DD_n^m}(w)$ and $\Linkhat_{\DD_n^m}^{<}(w)$ denote the subcomplexes of $\Linkhat_{\BAA_n^m}(w)$ and $\Linkhat_{\BAA_n^m}^{<}(w)$, respectively, consisting of all simplices that are standard, $2$-additive, or of type double-double in the sense of \autoref{relative-simplex-types}. Let $\sd(\Linkhat_{\DD_n^m}(w))$ denote the coarsest subdivision of $\Linkhat_{\DD_n^m}(w)$ that contains $\sd(\Linkhat_{\BA_n^m}(w)) $ as a subcomplex.
\end{definition}

Concretely, $\sd(\Linkhat_{\DD_n^m}(w))$ in \autoref{definition-DD} is constructed as follows: Let $\sigma = \tau_1 \ast \tau_2$ be a double-double simplex of $\Linkhat_{\DD_n^m}(w)$, where $\tau_1 = \tau_{1,1} \ast \tau_{1,2}$ is a carrying minimal double-double simplex and $\tau_2$ is standard. Then, when passing from $\Linkhat_{\DD_n^m}(w)$ to $\sd(\Linkhat_{\DD_n^m}(w))$, each such simplex $\sigma$ is replaced by the simplicial join $$\sd(\sigma) = \sd(\tau_{1,1}) \ast \sd(\tau_{1,2}) \ast \tau_2,$$ where $\sd(\tau_{1,i})$ for $i \in \{1,2\}$ denotes the subdivision of the $2$-additive simplex $\tau_{1,i}$ (see \autoref{definition-sd-BA}) if it is carrying, and $\sd(\tau_{1,i}) = \tau_{1,i}$ if it is not carrying. 
Note that $\Linkhat_{\DD_n^m}^{<}(w)$ and $\sd(\Linkhat_{\BA_n^m}(w))$ are subcomplexes of $\sd(\Linkhat_{\DD_n^m}(w))$.

The main result of this subsection is the following proposition.

\begin{proposition} \label{DDretraction}
  The simplicial map constructed in \autoref{BAretraction}
  $$r\colon  \sd(\Linkhat_{\BA_n^m}(w))  \to \Linkhat^{<}_{\BAA_n^m}(w)$$
  extends to a simplicial map
  $$r\colon  \sd(\Linkhat_{\DD_n^m}(w))  \to \Linkhat^{<}_{\BAA_n^m}(w)$$
  that restricts to the inclusion $$\Linkhat^{<}_{\DD_n^m}(w) \hookrightarrow \Linkhat_{\BAA_n^m}^{<}(w)$$ on the subcomplex $\Linkhat^{<}_{\DD_n^m}(w)$ of $\sd(\Linkhat_{\DD_n^m}(w))$.
\end{proposition}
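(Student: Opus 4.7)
The plan is to exploit the fact that a minimal double-double simplex factors as a simplicial join $\tau_1 = \tau_{1,1} \ast \tau_{1,2}$ of two minimal $2$-additive simplices, and that the subdivision introduced in \autoref{definition-DD} is precisely the join of the Church--Putman subdivisions of the carrying factors: $\sd(\tau_1) = \sd(\tau_{1,1}) \ast \sd(\tau_{1,2})$. Consequently, $r$ is already defined on every vertex of $\sd(\Linkhat_{\DD_n^m}(w))$ by \autoref{BAretraction} --- including at the barycentres of the carrying minimal $2$-additive faces --- and the task reduces to verifying that this vertex-wise assignment extends simplicially to a map landing in $\Linkhat^{<}_{\BAA_n^m}(w)$.

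By \autoref{lem_key_extend_from_additive_core}, it suffices to show that for every simplex $\tau_1 \ast \tau_2$ of $\sd(\Linkhat_{\DD_n^m}(w))$ whose additive core is contained in $\tau_1$, the set $r(\tau_1)$ spans a simplex in $\Linkhat^{<}_{\BAA_n^m}(w)$. The case where $\tau_1$ is standard or $2$-additive is already handled by \autoref{BAretraction}; otherwise $\tau_1$ decomposes as $\rho_1 \ast \rho_2$ with $\rho_i \subseteq \sd(\tau_{1,i})$, and each $r(\rho_i)$ is, by \autoref{BAretraction}, a standard or $2$-additive simplex of $\Linkhat^{<}_{\BA_n^m}(w)$. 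The crux is then to argue that the join $r(\rho_1) \ast r(\rho_2)$ is itself a simplex of $\Linkhat^{<}_{\BAA_n^m}(w)$: since $\tau_{1,1}$ and $\tau_{1,2}$ span $\Z$-independent summands relative to $\langle \bar w \rangle$, and since $r$ alters vectors only by multiples of $\bar w$ (see \autoref{DefRetB} and \autoref{ObservationColumnOperations}), the images $r(\rho_1)$ and $r(\rho_2)$ remain independent modulo $\langle \bar w\rangle$, so the additive relations certifying the type of each $r(\rho_i)$ persist in the join. The restriction statement then follows because simplices in $\Linkhat^{<}_{\DD_n^m}(w)$ contain only non-carrying $2$-additive and double-double faces, on which $r$ acts as the identity on vertices by \autoref{lem_properties_r}.

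The main obstacle I expect is the bookkeeping in the join step: each factor $r(\rho_i)$ may contribute zero or one additive relation, which is either internally $2$-additive, externally augmented by some $e_j$, or $w$-related (the latter arising from the barycentre formula $\langle \overline{r(v_l)} - \bar w\rangle$ of \autoref{BAretraction}). The join therefore carries up to two additive relations whose combined configuration must be matched against the allowed simplex types of \autoref{def:BAA-simplices}: standard, $2$-additive, $3$-additive, double-double, or double-triple. In particular, when one carrying factor produces a $w$-related $3$-additive relation and the other factor contributes a relation sharing the common vertex $\overline{r(v_l)}$, the join becomes a $w$-related double-triple rather than a double-double. Verifying for each sign pattern and span-configuration that the resulting simplex indeed lies in $\Linkhat^{<}_{\BAA_n^m}(w)$ --- and in particular that the prescribed vectors span the appropriate summand of $\Z^{m+n}$ --- is the combinatorial work that remains.
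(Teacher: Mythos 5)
Your proposal is correct and follows essentially the same route as the paper: reduce to the minimal double-double core via \autoref{lem_key_extend_from_additive_core}, use that $\sd(\tau_1)=\sd(\tau_{1,1})\ast\sd(\tau_{1,2})$ so each factor's image is controlled by \autoref{BAretraction}, show the two spans stay independent because $r$ only shifts by multiples of $\bar w$, and then sort the joined configuration into the allowed types, with the double-triple outcome as the subtle case. The one imprecision is in your description of that subtle case: it arises when \emph{both} factors' images are $w$-related $2$-additive (the shared vertex of the two relations being $w$ itself, not $\overline{r(v_l)}$), which is exactly when the join is a $w$-related double-triple.
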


\begin{proof} Our goal is to check that $r$ is simplicial on all (possibly subdivided) double-double simplices.
  Let $\sigma = \tau_1 \ast \tau_2$ be a double-double simplex of $\Linkhat_{\DD_n^m}(w)$, where $\tau_1 = \tau_{1,1} \ast \tau_{1,2}$ is a minimal double-double simplex and $\tau_2$ is standard. We need to argue that $r$ extends over its subdivision $\sd(\sigma) = \sd(\tau_1) \ast \tau_2$. We will show that if $\alpha \subseteq \sd(\tau_1) = \sd(\tau_{1,1}) \ast \sd(\tau_{1,2})$ is a simplex, then $r(\alpha)$ is a simplex in $\Linkhat^{<}_{\BAA_n^m}(w)$. An application of \autoref{lem_key_extend_from_additive_core} for $\sigma = \tau_1 \ast \tau_2$ and $\nu = r(\alpha)$ then yields that $r(\alpha \ast \tau_2) = r(\alpha) \ast r(\tau_2)$ is a simplex of $\Linkhat_{\BAA^m_n}^{<}(w)$ as well and the claim follows. The use of \autoref{lem_key_extend_from_additive_core} is justified because the definition of $r$ on carrying $2$-additive simplices (compare with \autoref{BAretraction}) implies that $\nu = r(\alpha) \subset \langle \tau_1, e_1, \dots, e_m, w \rangle$.
  
  Firstly, assume that $\tau_1$ is not carrying. Then, $\sd(\tau_{1,1}) = \tau_{1,1}$ and $\sd(\tau_{1,2}) = \tau_{1,2}$, i.e.\ neither of the two $2$-additive simplices $\tau_{1,1}$ and $\tau_{1,2}$ is subdivided. We will show that $r$ extends over $\tau_1 = \alpha$. Because $\tau_1$ is a double-double simplex, it is impossible that $\tau_{1,1}$ and $\tau_{1,2}$ are both $w$-related or that $\tau_{1,1}$ and $\tau_{1,2}$ are both externally $2$-additive involving the same $e_j$. The following is verified in the proof of \cite[Section 4.4, Claim 2-4]{CP}. If $\tau_{1,i}$ is $\dots$
  \begin{itemize}
  \item $w$-related $2$-additive, then $r(\tau_{1,i})$ is a $w$-related $2$-additive or standard simplex,
  \item externally $2$-additive involving $e_j$, then $r(\tau_{1,i})$ is externally $2$-additive involving $e_j$,
  \item internally $2$-additive, then $r(\tau_{1,i})$ is internally $2$-additive.
  \end{itemize}
  This implies that it also is impossible that the simplices $r(\tau_{1,1})$ and $r(\tau_{1,2})$ are both $w$-related $2$-additive or that $r(\tau_{1,1})$ and $r(\tau_{1,2})$ are both externally $2$-additive involving the same $e_j$. We now compute and compare the two summands $\langle r(\tau_{1,i}) \rangle$ of $\Z^{m+n}$ obtained for $i \in \{1,2\}$. Let $\eta_{1,i} \subset \tau_{1,i}$ be a maximal standard simplex for $i \in \{1,2\}$. Then $\eta_1 = \eta_{1,1} \ast \eta_{1,2}$ is a maximal standard simplex in $\tau_1$. By \autoref{ObservationColumnOperations}, it holds that $r(\eta_1) = r(\eta_{1,1}) \ast r(\eta_{1,2})$ is a standard simplex in $\Linkhat^{<}_{\BAA_n^m}(w)$ of the same dimension as $\eta_1$. In particular, $\langle r(\eta_{1,1}) \rangle \oplus \langle r(\eta_{1,2}) \rangle \oplus w \oplus \langle \bar e_1, \dots, \bar e_m \rangle$ is a direct summand of $\Z^{m+n}$. The summand $\langle r(\tau_{1,i}) \rangle$ is equal to $\dots$
  \begin{itemize}
  \item $\langle r(\eta_{1,i}) \rangle \oplus w$ if $r(\tau_{1,i})$ is $w$-related $2$-additive,
  \item $\langle r(\eta_{1,i}) \rangle \oplus e_j$ if $r(\tau_{1,i})$ is externally additive involving $e_j$, and
  \item $\langle r(\eta_{1,i}) \rangle$ if $r(\tau_{1,i})$ is standard or internally $2$-additive.
  \end{itemize}
  Hence, the previous conclusion implies that the two summands $\langle r(\tau_{1,1}) \rangle$ and $\langle r(\tau_{1,2}) \rangle$ of $\Z^{m+n}$ intersect trivially. Since at least one of the two simplices $r(\tau_{1,1})$ and $r(\tau_{1,2})$ is $2$-additive and the other one is either a standard simplex or $2$-additive as well, we conclude that $r(\tau_1) = r(\tau_{1,1}) \ast r(\tau_{1,2})$ spans a $2$-additive or double-double simplex.
  
  Secondly, assume that $\tau_1$ is carrying such that $\tau_{1,1}$ is a carrying $2$-additive simplex and $\tau_{1,2}$ is carrying or not. Then, $\tau_{1,1} = \{v_0, v_1, v_2 = \langle \bar v_0 + \bar v_1 \rangle\}$ and $r(\sd(\tau_{1,1}))$ consists of the following three simplices where we write $\{l,l'\} = \{0,1\}$,
  \begin{itemize}
  \item $\{r(v_l), r(v_{l'}), r(t(\tau_{1,1})) = \langle \overline{r(v_l)} - \bar w \rangle\}$, which is $w$-related $2$-additive,
  \item $\{r(v_l), r(t(\tau_{1,1})) = \langle \overline{r(v_l)} - \bar w \rangle, r(v_2) = \langle \overline{r(v_l)} + \overline{r(v_{l'})} - \bar w \rangle\}$, which is $w$-related $2$-additive,
  \item $\{r(t(\tau_{1,1})) = \langle \overline{r(v_l)} - \bar w \rangle, r(v_{l'}), r(v_2) = \langle \overline{r(v_l)} + \overline{r(v_{l'})} - \bar w \rangle\}$,which is internally $2$-additive.
  \end{itemize}
  Let $\alpha_1 \subset \sd(\tau_{1,1})$ and $\alpha_2 \subset \sd(\tau_{1,2})$ be simplices of maximal dimension. We show that $r$ extends over $\alpha = \alpha_1 \ast \alpha_2$. If it is not the case that both $r(\alpha_1)$ and $r(\alpha_2)$ are $w$-related $2$-additive, we can argue as in the first part to see that the two summands $\langle r(\alpha_1) \rangle$ and $\langle r(\alpha_2) \rangle$ of $\Z^{m+n}$ intersect trivially and conclude that $r(\alpha) = r(\alpha_1) \ast r(\alpha_2)$ spans a $2$-additive or double-double simplex. If both $r(\alpha_1)$ and $r(\alpha_2)$ are $w$-related $2$-additive, then they are of the form $\{v, \langle \bar v \pm \bar w \rangle, v' \}$ and $\{u, \langle \bar{u} \pm \bar w \rangle, u'\}$ where $\{v, v', u, u'\}$ is a standard simplex in $\Linkhat_{\BAA^m_n}^<(w)$ or of the form $\{v, \langle \bar v \pm \bar w \rangle, v' \}$ and $\{u, \langle \bar{u} \pm \bar w \rangle\}$ where $\{v, v', u\}$ is a standard simplex in $\Linkhat_{\BAA^m_n}^<(w)$. In both cases it follows that $r(\alpha_1 \ast \alpha_2) = r(\alpha_1) \ast r(\alpha_2)$ is a $w$-related double-triple simplex. \qedhere
\end{proof}

\subsection{Extending over 3-additive simplices}

The goal of this subsection is to extend the map $$r \colon  \sd(\Linkhat_{\DD_n^m}(w)) \m \Linkhat^{<}_{\BAA_n^m}(w)$$ defined in the previous subsection over all $3$-additive simplices. For this, we need to study minimal $3$-additive simplices in the sense of \autoref{minimal-simplices-and-additive-core}.

\begin{observation} \label{minimal-3-additive}
  A $3$-additive simplex of $\Linkhat_{\BAA_n^m}(w)$ is minimal if all of its facets are standard.
\end{observation}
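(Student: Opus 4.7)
My plan for \autoref{minimal-3-additive} is to interpret the statement as a characterization of minimal $3$-additive simplices and prove both directions. The forward direction is almost immediate and the reverse requires a small computation with the $3$-additive relation.

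\textbf{Easy direction.} Suppose every facet of the $3$-additive simplex $\sigma$ in $\Linkhat_{\BAA_n^m}(w)$ is standard. Any proper face $\sigma' \subsetneq \sigma$ sits inside some facet $\tau$, so the underlying simplex $\sigma' \ast \{e_1,\ldots,e_m,w\}$ is contained in the standard simplex $\tau \ast \{e_1,\ldots,e_m,w\}$ in $\BAA_{m+n}$. Since a subset of a basis of a direct summand of $\Z^{m+n}$ is itself a basis of a smaller direct summand, $\sigma'$ is a standard simplex of $\Linkhat_{\BAA_n^m}(w)$ in the sense of \autoref{relative-simplex-types}. In particular no proper face of $\sigma$ is $3$-additive, which is exactly what is required by \autoref{minimal-simplices-and-additive-core}.

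\textbf{Converse.} Suppose $\sigma$ is minimal $3$-additive and set $\hat\sigma = \sigma \ast \{e_1,\ldots,e_m,w\}$. Choose the $3$-additive relation $\vec u_0 = \pm \vec u_1 \pm \vec u_2 \pm \vec u_3$ in $\hat\sigma$ with $\hat\sigma \setminus \{u_0\}$ standard in $\BAA_{m+n}$. If some vertex $v \in \sigma$ did not belong to $\{u_0,u_1,u_2,u_3\}$, one could remove $v$ from $\sigma$ without affecting the relation or the standardness of $\hat\sigma \setminus \{u_0\}$, contradicting minimality; hence $\sigma \subseteq \{u_0,u_1,u_2,u_3\}$. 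To check that each facet $\sigma \setminus \{u_i\}$ is standard, the case $i=0$ is our hypothesis, and for $i \in \{1,2,3\}$ we rearrange the relation to obtain $\vec u_i \in \langle \vec u_0, \vec u_j, \vec u_k\rangle_\Z$ with $\{j,k\} = \{1,2,3\}\setminus\{i\}$. This gives $\langle \hat\sigma \setminus \{u_i\}\rangle_\Z = \langle \hat\sigma \setminus \{u_0\}\rangle_\Z$, which is a direct summand of $\Z^{m+n}$. Since $\hat\sigma \setminus \{u_i\}$ has the same cardinality as the standard simplex $\hat\sigma \setminus \{u_0\}$, namely the rank of this summand, it is necessarily a $\Z$-basis of the summand, and hence a standard simplex.

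The only delicate point, which does not really pose an obstacle, is that some of the $u_i$ may lie in $\{e_1,\ldots,e_m,w\}$ rather than being vertices of $\sigma$. This is harmless because the whole argument takes place at the level of underlying simplices in $\BAA_{m+n}$, where this distinction plays no role.
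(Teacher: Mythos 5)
Your argument is correct, and it supplies the routine verification that the paper leaves implicit (the statement appears as an unproved Observation): the key points — that a proper face of a standard simplex is standard, that minimality forces $\sigma$ into the additive core, and that $\hat\sigma\setminus\{u_i\}$ spans the same summand as the partial basis $\hat\sigma\setminus\{u_0\}$ with the same cardinality and is hence itself a basis — are exactly the intended justification. No gaps.
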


As in the $2$-additive case, studied by Church--Putman \cite{CP}, the difficulty is to extend $r$ over carrying simplices; that is $3$-additive simplices in $\Linkhat_{\BAA_n^m}(w)$ whose image under $r$ is not a simplex in the target $\Linkhat^{<}_{\BAA_n^m}(w)$.

\begin{definition}
  Let $\sigma = \tau_1 \ast \tau_2$ be a $3$-additive simplex in $\Linkhat_{\BAA_n^m}(w)$, where $\tau_1$ is a minimal $3$-additive simplex and $\tau_2$ is a standard simplex. $\sigma$ is called \emph{carrying} if the set $r(\tau_1)$ does not span a simplex of $\Linkhat^{<}_{\BAA_n^m}(w)$.
\end{definition}

As part of our discussion in this subsection, we will find the following characterisation of carrying $3$-additive simplices.

\begin{lemma} \label{carrying-3-additive-simplices}
  Let $\sigma = \tau_1 \ast \tau_2$ be a $3$-additive simplex of $\Linkhat_{\BAA_n^m}(w)$ such that $\tau_1$ is minimal $3$-additive and $\tau_2$ is a standard simplex. For any vertex $v_i = \langle \bar v_i \rangle$, write the last coordinate of $\bar{v_i}$ as $a_i R + b_i$ with $a_i \geq 0$ and $0 \leq b_i < R$. Then $\sigma$ is carrying if and only if $\tau_1$ is of one of the following two types for some $\epsilon \in \{-1,+1\}$:
  \begin{enumerate}
  \item $\tau_1 = \{v_0 , v_1, v_2 = \langle \bar v_0 + \bar v_1 + \epsilon \bar e_i \rangle\}$ is minimal externally $3$-additive and $b_0 + b_1 \not\in [0, R)$.
  \item $\tau_1 = \{v_0 , v_1, v_2, v_3 = \langle \bar v_0 + \bar v_1 + \epsilon \bar v_2 \rangle\}$ is minimal internally $3$-additive and $b_0 + b_1 + \epsilon b_2 \not\in [0, R)$.
  \end{enumerate}
\end{lemma}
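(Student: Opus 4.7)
The plan is to perform a case analysis on the structure of the minimal $3$-additive simplex $\tau_1$, computing $r(\tau_1)$ via \autoref{lem_properties_r} and checking in each case whether the result spans a simplex of $\Linkhat^<_{\BAA_n^m}(w)$. By \autoref{minimal-3-additive} and \autoref{relative-simplex-types}, the defining $3$-additive relation lies in the underlying simplex $\tau_1 \ast \{e_1,\ldots,e_m,w\}$ of $\BAA_{m+n}$, so I classify cases by how many of the four vectors in that relation come from $\tau_1$ itself. The possibilities are: all four from $\tau_1$ (internally $3$-additive); three from $\tau_1$ with the fourth being some $\bar e_i$ or $\bar w$ (externally or $w$-relatedly $3$-additive); or two from $\tau_1$ with two external. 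Having only one vector from $\tau_1$ is impossible, since it would force $\vec v_0 \in \langle \vec e_1,\ldots,\vec e_m,\vec w\rangle$, contradicting that $v_0$ is a vertex of $\Linkhat_{\BAA_n^m}(w)$.

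With the notation $a_i R + b_i$ for the last coordinate of $\bar v_i$ (where $b_i \in [0, R)$), each case reduces to a direct application of parts i)--iii) of \autoref{lem_properties_r}. In the externally $3$-additive case $\vec v_2 = \bar v_0 + \bar v_1 + \epsilon \bar e_i$, combining parts i) and iii) shows that $r(v_2) = \langle \overline{r(v_0)} + \overline{r(v_1)} + \epsilon \bar e_i\rangle$ when $b_0 + b_1 \in [0,R)$ (still externally $3$-additive, hence valid), while otherwise $r(v_2) = \langle \overline{r(v_0)} + \overline{r(v_1)} - \bar w + \epsilon \bar e_i\rangle$, producing a five-term relation which is not a simplex type in $\BAA$. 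The internally $3$-additive case is analogous: $r(v_3)$ equals $\langle \overline{r(v_0)} + \overline{r(v_1)} + \epsilon \overline{r(v_2)}\rangle$ exactly when $b_0 + b_1 + \epsilon b_2 \in [0,R)$, and otherwise picks up a nonzero multiple of $\bar w$, again giving an invalid five-term relation.

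The other cases must be shown to be never carrying. In the $w$-related case $\vec v_2 = \bar v_0 + \bar v_1 + \epsilon \bar w$, part ii) of \autoref{lem_properties_r} yields either the internally $2$-additive simplex $\{r(v_0), r(v_1), \langle \overline{r(v_0)} + \overline{r(v_1)}\rangle\}$ or the $w$-related $3$-additive simplex $\{r(v_0), r(v_1), \langle \overline{r(v_0)} + \overline{r(v_1)} - \bar w\rangle\}$, both valid. In the two-external cases $\vec v_1 = \bar v_0 + \epsilon_1 \bar e_i + \epsilon_2 \bar e_j$ and $\vec v_1 = \bar v_0 + \epsilon_1 \bar e_i + \epsilon_2 \bar w$, a direct computation gives $r(v_1) = \langle \overline{r(v_0)} + \epsilon_1 \bar e_i + \epsilon_2 \bar e_j\rangle$ and $r(v_1) = \langle \overline{r(v_0)} + \epsilon_1 \bar e_i\rangle$, respectively; both span valid ($3$-additive or $2$-additive) simplices. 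The main subtlety is not any individual computation but rather verifying that the ``failure'' images involving an extra $\pm\bar w$ term genuinely cannot be realised as any simplex type of $\BAA_{m+n}$; this follows because the vectors $\overline{r(v_0)}, \overline{r(v_1)}$ (together with $\overline{r(v_2)}$ or the relevant $\bar e_i$) already span a standard simplex of full rank, so the coefficient of $\bar w$ in any resulting relation is forced to be nonzero, producing a genuine five-term expression not allowed by the definition of $\BAA_{m+n}$.
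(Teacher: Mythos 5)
Your proposal is correct and follows essentially the same route as the paper, which establishes this lemma by exactly the case analysis you describe (split across Lemmas \ref{TA-non-carrying}, \ref{TA-externally} and \ref{TA-internally}): classify the minimal $3$-additive simplex by how many vectors of its defining relation lie in $\tau_1$, compute $r(\tau_1)$ via Lemma \ref{lem_properties_r}, and observe that only the fully external (dimension-two) and fully internal cases can produce a leftover $\pm\bar w$ (or $-2\bar w$) term that cannot be absorbed into any admissible simplex type. The only caveat is a small imprecision in your two-external case $\vec v_0 = \vec v_1 \pm \vec e_i \pm \vec w$, where $r$ can also yield the $w$-related externally $3$-additive edge $\{\langle \overline{r(v_1)} - \bar w + \epsilon_3 \bar e_i\rangle, r(v_1)\}$ rather than only the $2$-additive one you wrote down; this does not affect the conclusion, since that edge is still a valid simplex of $\Linkhat^{<}_{\BAA_n^m}(w)$.
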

This lemma follows from \autoref{TA-non-carrying}, \autoref{TA-externally} and \autoref{TA-internally}, which are proved below.

To extend the map $r$ over these carrying $3$-additive simplices, we need to subdivide them. This leads us to the definition of the complex $\sd(\Linkhat_{\TA_n^m}(w))$ that will serve as the new domain of the map $r$ when extending over $3$-additive simplices.

\begin{definition} \label{DefTA}
  Let $\Linkhat_{\TA_n^m}(w)$ and $\Linkhat_{\TA_n^m}^{<}(w)$ denote the subcomplexes of $\Linkhat_{\BAA_n^m}(w)$ and $\Linkhat_{\BAA_n^m}^{<}(w)$, respectively, consisting of all simplices that are standard, $2$-additive, double-double, or $3$-additive. Let $\sd(\Linkhat_{\TA_n^m}(w))$ denote the coarsest subdivision of $\Linkhat_{\TA_n^m}(w)$ that contains $\sd(\Linkhat_{\DD_n^m}(w))$ as a subcomplex and that subdivides every carrying minimal $3$-additive simplex $\tau_1$ by inserting a single vertex $t(\tau_1)$ at its barycentre.
\end{definition}

Using \autoref{carrying-3-additive-simplices}, this means that $\sd(\Linkhat_{\DD_n^m}(w))$ in \autoref{DefTA} is constructed as follows: Let $\sigma = \tau_1 \ast \tau_2$ be a $3$-additive simplex of $\Linkhat_{\TA_n^m}(w)$, where $\tau_1$ is a carrying minimal $3$-additive simplex and $\tau_2$ is standard. Then, when passing from $\Linkhat_{\TA_n^m}(w)$ to $\sd(\Linkhat_{\TA_n^m}(w))$, each such simplex $\sigma$ is replaced as follows.
\begin{itemize}
\item If $\tau_1 = \{v_0, v_1, v_2\}$ is a carrying minimal \emph{externally} $3$-additive simplex, we replace $\sigma$ by $\sd(\sigma) = \sd(\tau_1) \ast \tau_2$, the subdivision of $\sigma$ into the  three simplices $\{v_0, \ldots, \hat{v}_i, \ldots, v_2, t(\tau_1)\} \ast \tau_2$ for $i=0,1,2$. 
\item If $\tau_1 = \{v_0, v_1, v_2, v_3\}$ is a carrying minimal \emph{internally} $3$-additive simplex, we replace $\sigma$ by $\sd(\sigma) = \sd(\tau_1) \ast \tau_2$, the subdivision of $\sigma$ into the  four simplices $\{v_0, \ldots, \hat{v}_i, \ldots, v_3, t(\tau_1)\} \ast \tau_2$ for $i=0,1,2,3$. 
\end{itemize}
In addition, the subdivisions described in \autoref{definition-DD} are performed on the subcomplex $\Linkhat_{\DD_n^m}(w)$ of $\Linkhat_{\TA_n^m}(w)$. Note that $\sd(\Linkhat_{\DD_n^m}(w))$ and $\Linkhat_{\TA_n^m}^{<}(w)$ are subcomplexes of $\sd(\Linkhat_{\TA_n^m}(w))$.

The main result of this subsection is the following proposition.

\begin{proposition} \label{TAretraction}
  The simplicial map constructed in \autoref{DDretraction}
  $$r\colon  \sd(\Linkhat_{\DD_n^m}(w))  \m \Linkhat^{<}_{\BAA_n^m}(w)$$
  extends to a simplicial map
  $$r\colon  \sd(\Linkhat_{\TA_n^m}(w))  \m \Linkhat^{<}_{\BAA_n^m}(w)$$
  that restricts to the inclusion $$\Linkhat^{<}_{\TA_n^m}(w) \hookrightarrow \Linkhat_{\BAA_n^m}^{<}(w)$$ on the subcomplex $\Linkhat^{<}_{\TA_n^m}(w)$ of $\sd(\Linkhat_{\TA_n^m}(w))$.
\end{proposition}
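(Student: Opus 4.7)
The proof will follow the pattern of \autoref{DDretraction}. Given any $3$-additive simplex $\sigma = \tau_1 \ast \tau_2$ of $\Linkhat_{\TA_n^m}(w)$ with $\tau_1$ a minimal $3$-additive simplex and $\tau_2$ standard, \autoref{lem_key_extend_from_additive_core} reduces the problem to checking that the image under (the extension of) $r$ of every simplex $\alpha$ of $\sd(\tau_1)$ is a simplex $\nu$ of $\Linkhat_{\BAA_n^m}^{<}(w)$ with $\langle \nu \rangle \subseteq \langle \tau_1 \cup \{e_1, \dots, e_m, w\}\rangle$. After this reduction, one has to analyse $r$ on the subdivision of a single minimal $3$-additive simplex. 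Compatibility with the subdivision already performed in $\sd(\Linkhat_{\DD_n^m}(w))$ is automatic, because by \autoref{minimal-3-additive} a minimal $3$-additive simplex has no $2$-additive face, so the subdivisions in the two complexes are disjoint.

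By \autoref{minimal-3-additive} combined with \autoref{external-wrelated-internal-simplices}, each such $\tau_1$ is $w$-related, external, or internal. As a prerequisite, I would prove \autoref{carrying-3-additive-simplices} by computing $r(\tau_1)$ directly from \autoref{lem_properties_r}: in the $w$-related case $\tau_1 = \{v_0, v_1, \langle \bar v_0 + \bar v_1 \pm \bar w \rangle\}$, both values of $r(v_2)$ yield either an internally $2$-additive or a $w$-related $3$-additive relation, so $\tau_1$ is never carrying; in the external and internal cases, carrying is equivalent to the sum of the $b_i$'s leaving $[0,R)$, since a ``carry'' introduces an extra $-\bar w$ term and the resulting $4$- or $5$-term relation cannot span a simplex of $\BAA_{m+n}$. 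When $\tau_1$ is non-carrying, $r(\tau_1)$ is immediately a $3$-additive simplex by direct verification, completely analogous to step (i) in the proof of \autoref{DDretraction}.

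For the carrying externally $3$-additive case $\tau_1 = \{v_0, v_1, v_2 = \langle \bar v_0 + \bar v_1 \pm \bar e_i \rangle\}$ with $b_0 + b_1 \in [R, 2R)$, I would define
\begin{equation*}
r(t(\tau_1)) = \langle \overline{r(v_0)} + \overline{r(v_1)} - \bar w \rangle.
\end{equation*}
The three triangles of $\sd(\tau_1)$ then map to, respectively, a $w$-related $3$-additive simplex $\{r(v_0), r(v_1), r(t(\tau_1))\}$ and two externally $2$-additive simplices $\{r(v_l), r(v_2), r(t(\tau_1))\}$ for $l \in \{0,1\}$ (the relation being $r(v_2) = r(t(\tau_1)) \pm \bar e_i$). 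The required partial-basis conditions follow from the one on $\{v_0, v_1, e_1, \dots, e_m, w\}$ after a single column operation using \autoref{ObservationColumnOperations}.

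The main obstacle is the carrying internally $3$-additive case $\tau_1 = \{v_0, v_1, v_2, v_3 = \langle \bar v_0 + \bar v_1 + \epsilon \bar v_2 \rangle\}$. Here $\tau_1$ is a tetrahedron subdivided by $t(\tau_1)$ into four smaller tetrahedra, and the value of $r(t(\tau_1))$ must be chosen so that every one of them lands in a valid simplex of $\Linkhat^{<}_{\BAA_n^m}(w)$, simultaneously across several carrying subcases indexed by which of the intervals $[R, 2R)$ or $[2R, 3R)$ the quantity $b_0 + b_1 + \epsilon b_2$ lies in (and the analogous subcases when $\epsilon = -1$). The natural candidate is
\begin{equation*}
r(t(\tau_1)) = \langle \overline{r(v_0)} + \overline{r(v_1)} + \epsilon \overline{r(v_2)} - q\bar w \rangle,
\end{equation*}
where $q$ is the unique integer placing the last coordinate of the representative in $[0, R)$. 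Three of the four subdivided tetrahedra then land in $3$-additive or double-triple simplices by a computation parallel to case (ii) of \autoref{DDretraction}. The remaining tetrahedron $\{v_0, v_1, v_2, t(\tau_1)\}$, which omits the original carrying vertex $v_3$, requires verifying that the resulting four-vertex relation among $r(v_0)$, $r(v_1)$, $r(v_2)$ and $r(t(\tau_1))$ falls into the double-double or double-triple family; this splits further into subcases according to which pair $b_i + b_j$ exceeds $R$, and is the most delicate step of the argument.
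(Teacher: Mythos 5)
Your overall architecture (reduce to the additive core via \autoref{lem_key_extend_from_additive_core}, split by $w$-related/external/internal, subdivide only the carrying simplices) matches the paper, and your treatment of the external case --- including the choice $r(t(\tau_1)) = \langle \overline{r(v_0)} + \overline{r(v_1)} - \bar w\rangle$ for the two non-maximal vertices --- agrees with \autoref{TA-externally}. However, your choice of $r(t(\tau_1))$ in the carrying \emph{internal} case is wrong, and the step you defer as ``the most delicate'' actually fails with that choice. Writing $\bar v_3 = \bar v_0 + \bar v_1 + \epsilon\bar v_2$, your candidate $\langle \overline{r(v_0)} + \overline{r(v_1)} + \epsilon\overline{r(v_2)} - q\bar w\rangle$ with $q$ normalising the last coordinate into $[0,R)$ is nothing other than $r(v_3)$ itself. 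In every carrying subcase one has $q \in \{-1,1,2\}$ (by \autoref{TA-internally}, cases (i), (ii), (iv), (vi)), so the set $\{r(v_0), r(v_1), r(v_2), r(t(\tau_1))\}$ --- the image of the subdivided tetrahedron omitting $v_3$ --- satisfies only the relation $\overline{r(t(\tau_1))} = \overline{r(v_0)} + \overline{r(v_1)} + \epsilon\overline{r(v_2)} - q\bar w$. This is a sum of \emph{four} vectors of the underlying partial basis (or has a coefficient $-2$ on $\bar w$ when $q=2$), and no simplex type of $\BAA_{m+n}$ admits such a relation: it is not $3$-additive, and it cannot be double-triple or double-double because none of the four vertices is a two-term sum of the others. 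So this tetrahedron does not land in $\Linkhat^{<}_{\BAA_n^m}(w)$.

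The fix, which is what the paper does, is to set $r(t(\tau_1)) = r(\langle \bar v_0 + \bar v_1\rangle)$, the reduction of a \emph{two-term} partial sum (with $v_0, v_1$ the two vertices appearing with sign $+$). Then $\overline{r(t(\tau_1))} = \overline{r(v_0)} + \overline{r(v_1)} - \delta\bar w$ with $\delta \in \{0,1\}$, so $\{r(v_0), r(v_1), r(v_2), r(t(\tau_1))\}$ is $w$-related $3$-additive or internally $2$-additive (joined with the standard vertex $r(v_2)$), and the tetrahedra containing $v_3$ work because $\overline{r(v_3)} = \overline{r(t(\tau_1))} + \epsilon\overline{r(v_2)} + (\text{small multiple of }\bar w)$, giving $2$-additive or $w$-related $3$-additive images. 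One further caveat: even with the correct choice one must verify case by case (as in \autoref{TA-internally}) that the residual multiple of $\bar w$ in each of the four images is $0$ or $\pm 1$, which forces the case analysis on whether $b_0+b_1$ lies in $[0,R)$ or $[R,2R)$; your proposal does not engage with this, and it is where the sign bookkeeping actually lives.
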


The proof of this proposition and the definition of the extension of $r$ is split into several lemmas, which we present below. 
We start by proving that $r$ extends over all $3$-additive simplices that cannot be possibly carrying (compare \autoref{carrying-3-additive-simplices}).

\begin{lemma} \label{TA-non-carrying}
  The map $r$ in \autoref{TAretraction} extends over all $3$-additive simplices $\sigma = \tau_1 \ast \tau_2$ of $\Linkhat_{\BAA_n^m}(w)$, where $\tau_2$ is a standard simplex and $\tau_1$ is a minimal $3$-additive simplex that is not internally $3$-additive or externally $3$-additive of dimension two. In these cases, the set $r(\sigma) = r(\tau_1) \ast r(\tau_2)$ is a simplex of $\Linkhat^{<}_{\BAA_n^m}(w)$, so in particular, $\sigma$ is not carrying.
\end{lemma}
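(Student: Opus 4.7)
The plan is to reduce via \autoref{lem_key_extend_from_additive_core} to showing that $r(\tau_1)$ is a simplex of $\Linkhat^{<}_{\BAA_n^m}(w)$: given that, the lemma applied with $\nu = r(\tau_1)$ gives $r(\sigma) = r(\tau_1) \ast r(\tau_2)$ as a simplex of the same type, so $r$ extends simplicially over $\sigma$. The hypothesis $\langle \nu \rangle \subseteq \langle \tau_1 \cup \{e_1,\ldots,e_m,w\}\rangle$ will be transparent from the formulas for $r(v_0)$ computed below.

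I would next classify the minimal $3$-additive $\tau_1$ by its defining relation $\vec v_0 = \pm \vec u_1 \pm \vec u_2 \pm \vec u_3$: each $u_i$ is either a vertex of $\tau_1$ or lies in $\{e_1,\ldots,e_m,w\}$, and $\dim \tau_1$ equals the number of $u_i$ that are vertices. The cases excluded from the lemma are precisely those in which no $u_i$ equals $w$ and at most one $u_i$ lies in $\{e_1,\ldots,e_m\}$. Hence the remaining cases are covered (with overlap) by: (A) at least two of the $u_i$ lie in $\{e_1,\ldots,e_m\}$, or (B) at least one $u_i$ equals $w$.

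In case (A), any two $\pm \vec e_j$-summands have zero last coordinate, so running \autoref{DefRetB} directly yields $\overline{r(v_0)} = \epsilon_1 \overline{r(v_1)} \pm \vec e_j \pm \vec e_k$ when $\dim \tau_1 = 1$, and $r(v_0) = v_0$ when $\dim \tau_1 = 0$. In case (B), I would write $\vec v_0 = \vec s + \epsilon \vec w$ where $\vec s$ collects the remaining summands, and observe that the $\pm \vec w$-term merely shifts the Euclidean quotient for $v_0$ by an integer; depending on the sign $\epsilon$ and on whether the residues of the remaining summands cause an overflow, the result is either $\overline{r(v_0)} = \vec s' \pm \bar w$ or $\overline{r(v_0)} = \vec s'$, where $\vec s'$ is obtained from $\vec s$ by replacing every $\vec v_i$ occurring in it by $\overline{r(v_i)}$. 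In every subcase the underlying simplex of $r(\tau_1)$ in $\BAA_{m+n}$ is standard, $2$-additive, or $3$-additive, and the spanning property is verified by invoking \autoref{Bretraction} together with \autoref{ObservationColumnOperations} to see that the relevant spans remain summands of $\Z^{m+n}$. Hence $r(\tau_1)$ is a simplex of $\Linkhat^{<}_{\BAA_n^m}(w)$.

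The main obstacle is the bookkeeping of signs and residue overflows in case (B), where each combination of signs and overflow pattern must be dispatched separately, although pairs related by overall negation of $\vec v_0$ represent the same line and so the genuinely distinct configurations are few. The structural reason the excluded cases truly fail while the cases handled here succeed is that a Euclidean overflow contributes an extra $\pm\bar w$-summand to $\overline{r(v_0)}$: this extra term can be absorbed into an existing $\pm \vec w$-summand (case (B)), or it is unnecessary because the relevant summands have zero last coordinate (case (A)); but when both of these escape routes are unavailable, as in the excluded internal and external-dimension-two cases, the overflow produces a spurious $4$-additive relation, which is not a simplex of $\BAA_{m+n}$. Once the case analysis is complete, \autoref{lem_key_extend_from_additive_core} upgrades $r(\tau_1)$ being a simplex to $r(\sigma) = r(\tau_1) \ast r(\tau_2)$ being a simplex of the same type in $\Linkhat^{<}_{\BAA_n^m}(w)$, and in particular $\sigma$ is not carrying.
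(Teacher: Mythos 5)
Your plan is essentially the paper's proof: reduce via \autoref{lem_key_extend_from_additive_core} to showing that $r(\tau_1)$ itself spans a simplex, then analyse cases according to how the additive core of the underlying simplex meets $\{e_1,\dots,e_m,w\}$; your cases (A) and (B) are exactly the paper's cases i)(a), i)(b) and ii) (the spurious $\dim\tau_1=0$ subcase of (A) never occurs, since such a $v_0$ would lie in $\langle \vec e_1,\dots,\vec e_m,\vec w\rangle$). The one place where your assertion outruns its justification is the claim in case (B) that $\overline{r(v_0)}$ always equals $\vec s'$ or $\vec s'\pm\bar w$. In the dimension-two $w$-related case, $r(v_0)$ is computed in two stages: first $r(v_0')$ for $v_0'=\langle \bar v_1+\epsilon_2\bar v_2\rangle$, which already picks up a $-\bar w$ when $\{v_1,v_2,v_0'\}$ is a carrying $2$-additive simplex, and then \autoref{lem_properties_r}~ii), whose sign-flip branch replaces $\overline{r(v_0')}$ by $\bar w-\overline{r(v_0')}$. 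A priori these two overflows could compound to give $r(v_0)=\langle \vec s'-2\bar w\rangle$, which would \emph{not} span a simplex with $r(v_1),r(v_2)$ — this is exactly the failure mode you correctly diagnose for the excluded internal and external cases, and "merely shifts the Euclidean quotient by an integer" does not by itself rule it out here. The paper closes this in case ii)(b)(2) of its proof: when the internal reduction overflows with $v_0'$ maximal, the last coordinate of $\bar v_1+\bar v_2$ is at least $R$, so the sign-flip branch of \autoref{lem_properties_r}~ii) cannot also fire. With that observation supplied (and the routine spanning checks you delegate to \autoref{Bretraction} and \autoref{ObservationColumnOperations}), your case analysis goes through.
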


\begin{proof}
  Any minimal $3$-additive simplex $\tau_1 = \{v_0, \dots, v_{\dim(\tau_1)}\}$ satisfies $1 \leq \dim(\tau_1) \leq 3$. The underlying simplex of $\tau_1$ in $\BAA_{m+n}$ 
  is a subset $\{v_0, v_1, v_2, v_3\} \subseteq \{e_1, \dots, e_m, w, v_0, v_1, \dots, v_{\dim( \tau_1)}\}$, where $\{e_1, \dots, e_m, w, v_1, \dots, v_{\dim (\tau_1)}\}$ is a standard simplex and $v_0 = \langle \bar v_1 + \epsilon_2 \bar v_2 + \epsilon_3 \bar v_3 \rangle$ for some choice of signs $\epsilon_2, \epsilon_3 \in \{-1,+1\}$. We consider the possible minimal $3$-additive simplices $\tau_1$, one after the other, to prove this lemma.
  
  Firstly, assume that $\dim (\tau_1) = 1$ and write $\tau_1 = \{v_0, v_1\}$. Then there are two cases.
  
  \emph{Case (a)}: If $v_2 = e_i, v_3 = e_j$ for some $1 \leq i \neq j \leq m$, then  $ v_0 = \langle \bar v_1 + \epsilon_2 \bar e_i + \epsilon_3 \bar e_j \rangle$. Hence, $r(v_0)= \langle \overline{ r(v_1) } + \epsilon_2 \bar e_i + \epsilon_3 \bar e_j \rangle$  by \autoref{lem_properties_r} and it follows that $r(\tau_1)$ is a $3$-additive edge in $\Linkhat^{<}_{\BAA_n^m}(w)$ as well. Hence by \autoref{lem_key_extend_from_additive_core}, $r(\sigma) = r(\tau_1) \ast r(\tau_2)$ is a $3$-additive simplex.
  
  \emph{Case (b)}: If $v_2 = w, v_3 = e_i$ for some $1 \leq i \leq m$, then $v_0  = \langle \bar v_1 + \epsilon_2 \bar w + \epsilon_3 \bar e_i \rangle$ and, by \autoref{lem_properties_r}, it holds that $r(v_0) = \langle \epsilon \overline{ r(\langle \bar v_1 + \epsilon_2 \bar w \rangle) } + \epsilon_3 \bar e_i \rangle$ where $\epsilon = -1$ if the last coordinate of $\bar v_1 + \epsilon_2 \bar w$ is negative, and $\epsilon = +1$ otherwise. By \autoref{lem_properties_r} we furthermore have that $r(\langle \bar v_1 + \epsilon_2 \bar w \rangle) \in \{ r(v_1), \langle \bar w - \overline{ r(v_1) } \rangle \}$. Note that $r(\langle \bar v_1 + \epsilon_2 \bar w \rangle) = \langle \bar w - \overline{ r(v_1) } \rangle$ requires that $\epsilon_2 = -1$ and that $\epsilon = -1$. Resolving the signs, it follows that $r(\tau_1) = \{r(v_0), r(v_1)\}$ is either an externally 2-additive edge $\{\langle \overline{r(v_1)} + \epsilon_3 \bar e_i \rangle, r(v_1) \}$ or an externally $w$-related 3-additive edge $\{\langle \overline{r(v_1)} - \bar w  + \epsilon_3 \bar e_i \rangle, r(v_1)\}$. Hence by \autoref{lem_key_extend_from_additive_core}, $r(\sigma) = r(\tau_1) \ast r(\tau_2)$ is a $2$-additive or $3$-additive simplex in $\Linkhat^{<}_{\BAA_n^m}(w)$.

  Secondly, assume that $\dim (\tau_1) = 2$ and write $\tau_1 = \{v_0, v_1, v_2\}$. Assume further that $\tau_1$ is $w$-related, i.e.~$v_3 = w$. Then, $$r(v_0) = r(\langle \bar v_1 + \epsilon_2 \bar v_2 + \epsilon_3 \bar w \rangle)  \in \{ r(\langle \bar v_1 + \epsilon_2 \bar v_2 \rangle), \langle \bar w - \overline{ r(\langle \bar v_1 + \epsilon_2 \bar v_2 \rangle)} \rangle \}$$ by \autoref{lem_properties_r}. Note that the value of $r(v_0)$ depends on the last coordinate of $\bar v_1 + \epsilon_2 \bar v_2$, which might be negative, and the sign $\epsilon_3$ (compare \autoref{lem_properties_r}).
  There are different cases that can occur, depending on how $\overline{ r(\langle \bar v_1 + \epsilon_2 \bar v_2 \rangle)}$  compares to $r(v_1)$ and $r(v_2)$. We use the internally $2$-additive simplex $\{v_1, v_2, v_0' = \langle \bar v_1 + \epsilon_2 \bar v_2 \rangle\}$ to list these cases.
  
  \emph{Case (a)}: If $\{v_1, v_2, v_0' = \langle \bar v_1 + \epsilon_2 \bar v_2 \rangle\}$ is not carrying, then its image under $r$ is internally $2$-additive \cite[Section 4.4, Claim 4]{CP}. It follows that $r(\tau_1)$ is internally $2$-additive if $r(v_0) =  r(v_0')$, and $w$-related $3$-additive if $ r(v_0) =  \langle \bar w - \overline{r(v_0')} \rangle$. Hence by \autoref{lem_key_extend_from_additive_core}, $r(\sigma) = r(\tau_1) \ast r(\tau_2)$ is a $2$-additive or $3$-additive simplex.
  
  \emph{Case (b)}: If $\{v_1, v_2, v_0' = \langle \bar v_1 + \epsilon_2 \bar v_2 \rangle\}$ is carrying, then it contains a unique vertex whose coordinate is maximal in absolute value (see \autoref{carrying-2-additive} et seq.). We consider two subcases.
  
  \emph{Case (b.1)}: If the absolute value of the last coordinate of $v_1$ (or similarly $v_2$) is maximal among $\{v_1, v_2, v_0'\}$, then $\overline {r(v_1)} = \overline{ r(v_2) } + \overline { r(v_0') } - \bar w$ (see \autoref{carrying-2-additive} et seq.). In the case where $ r(v_0) =  r(v_0')$, it follows that $f(\tau_1)$ is $w$-related $3$-additive. In the case $r(v_0) = \langle \bar w - \overline{r(v_0')} \rangle$, it follows that $f(\tau_1)$ is $2$-additive. Hence by \autoref{lem_key_extend_from_additive_core}, $r(\sigma) = r(\tau_1) \ast r(\tau_2)$ is a $3$-additive or $2$-additive simplex.
  
  \emph{Case (b.2)}: If on the other hand, the absolute value of the last coordinate of $v_0'$ is maximal among $\{v_1, v_2, v_0'\}$, then we must have $\epsilon_2 = +1$ and $\overline {r(v_0')} = \overline{ r(v_1) } + \overline{ r(v_2) } - \bar w$. In the case where $ r(v_0) = r(v_0')$, it follows that $f(\tau_1)$ is $w$-related $3$-additive. Hence by \autoref{lem_key_extend_from_additive_core}, $r(\sigma) = r(\tau_1) \ast r(\tau_2)$ is a $3$-additive simplex. The case where $r(v_0) = \langle \bar w - \overline{ r(v_0')} \rangle$ cannot occur, because this only happens if the last coordinate of $\overline{ v_0'} = \bar v_1 + \bar v_2$ in $(0, R)$ (see \autoref{lem_properties_r}), which is impossible under the assumption that the last coordinate of $\overline{ v_0'} $ is the maximum of the carrying simplex $\{v_1, v_2, v_0'\}$.

  Thirdly and lastly, the remaining two possibilities are those where $\tau_1$ has dimension two and is externally $3$-additive and the one where $\tau_1$ has dimension three, which is equivalently to it being internally 3-additive. These are the cases we excluded in this lemma.
\end{proof}

We now deal with minimal externally $3$-additive simplices of type $\tau_1 = \{v_0 = \langle \bar v_1 \pm \bar v_2 \pm \bar e_i \rangle, v_1, v_2\}$.

\begin{lemma} \label{TA-externally}
  The map $r$ in \autoref{TAretraction} extends over all externally $3$-additive simplices $\sigma = \tau_1 \ast \tau_2$ of $\Linkhat_{\BAA_n^m}(w)$, where $\tau_2$ is a standard simplex and $\tau_1$ is minimal externally $3$-additive of dimension two.
\end{lemma} 

More precisely, in the proof of \autoref{TA-externally} we check that the map
$$r\colon  \sd(\Linkhat_{\DD_n^m}(w) ) \m \Linkhat^{<}_{\BAA_n^m}(w)$$
in \autoref{TAretraction} extends over the simplex $\sigma = \tau_1 \ast \tau_2$ if it is not carrying, and over the subdivision $\sd (\sigma)$ described in \autoref{DefTA} if it is carrying. The carrying case occurs if and only if $\tau_1$ is as case $i)$ of \autoref{carrying-3-additive-simplices}; we then define $r(t(\tau_1)) = r(\langle \bar v_1 + \bar v_2 \rangle) = \langle \overline {r(v_1)} + \overline{r(v_2)} - \bar w \rangle$, where $v_1,v_2 \in \tau_1$ are the two unique vertices whose last coordinate is not maximal in absolute value.

\begin{proof}
	There is an ordering of the vertices of $\tau_1$ such that $\tau_1 = \ls v_0, v_1, v_2 \rs$, where
\begin{equation*}
	\bar v_0 = \bar v_1 +\bar v_2 \pm \bar e_i
\end{equation*}	
for some  $1 \leq i \leq m$, an appropriate choice of sign and where $v_0$ is a (possibly not unique) vertex of $\tau_1$ whose last coordinate is maximal in absolute value.
	
Let $R_i$ denote the last coordinate of $\bar v_i$ and write $R_i=R a_i+b_i$ with $b_i \in [0,R)$. Note that $R_0 = R_1+R_2$. There are two cases: either $b_0 = b_1+b_2 \in [0, R)$ or $b_0 + R = b_1 + b_2 \not\in [0, R)$. In the following we use that $r(v_i)=\langle \bar v_i -a_i \bar w\rangle$ and $r(\langle \bar u \pm \bar e_i \rangle)=\langle \overline{r(u)} \pm \bar e_i \rangle$ for all lines $u$.
  
Firstly, assume that $b_0=b_1+b_2 \in [0, R)$. Then \[ r(\tau_1) = \{ \langle \overline{r(v_1)} + \overline{r(v_2)} \pm \bar e_i \rangle, r(v_1) , r(v_2) \} \] forms an externally $3$-additive simplex. Hence it follows from \autoref{lem_key_extend_from_additive_core} that $r(\sigma) = r(\tau_1) \ast r(\tau_2)$ is a $3$-additive simplex in $\Linkhat^{<}_{\BAA_n^m}(w)$.
  
Secondly, assume that $b_0+ R = b_1 + b_2 \not\in [0, R)$. Then \[ \overline{r(v_0)}  =  \langle (\bar v_1 - a_1 \bar w ) + ( \bar v_2 - a_2 \bar w ) \pm \bar e_i -\bar w \rangle =  \langle \overline{r(v_1)}+\overline{r(v_2)} \pm e_i- \bar w \rangle.\] and $r(\tau_1)$ does not form a simplex in $\Linkhat^{<}_{\BAA_n^m}(w)$. Hence, $\tau_1$ is a carrying minimal externally $3$-additive simplex (compare \autoref{carrying-3-additive-simplices}) and, in $\sd(\Linkhat_{\TA_n^m}(w))$, the simplex $\sigma = \tau_1 \ast \tau_2$ has been subdivided as $\sd(\sigma) = \sd(\tau_1) \ast \tau_2$ into three simplices (compare \autoref{DefTA}) $$\alpha_i \ast \tau_2 = \{v_0, \ldots, \hat{v}_i, \ldots, v_2, t(\tau_1)\} \ast \tau_2 \quad \text{for $i=0,1,2$.}$$
Observe that $b_0+R = b_1 + b_2 \in [R, 2R)$ implies that all $\bar v_i$ have nonzero last coordinate and hence that $v_0$ is the unique vertex in $\tau_1$ whose last coordinate is maximal in absolute value. To see that $r$ extends over $\sd(\sigma)$ by defining $r(t(\tau_1)) = r(\langle \bar v_1 + \bar v_2 \rangle) = \langle \overline {r(v_1)} + \overline {r(v_2)} - \bar w \rangle$, we first observe that the three sets $r(\alpha_i)$ span $2$-simplices in $\Linkhat^{<}_{\BAA_n^m}(w)$. Indeed,
\begin{itemize}
\item $r(\alpha_2) = \{{\color{cyan} \langle \overline{r(v_1)}+ \overline{r(v_2)} \pm \bar e_i-\bar w \rangle}, r(v_1), {\color{cyan} \langle \overline {r(v_1)} + \overline {r(v_2)} - \bar w \rangle } \}$ is externally $2$-additive\footnote{For better readability, we highlight the vertices that are contained in the additive core of the simplex in light blue.},
\item $r(\alpha_1) = \{{\color{cyan} \langle \overline{r(v_1)}+ \overline{r(v_2)} \pm \bar e_i-\bar w \rangle}, r(v_2), {\color{cyan} \langle \overline {r(v_1)} + \overline {r(v_2)} - \bar w \rangle } \}$ is externally $2$-additive, and
\item $r(\alpha_0) = \{{\color{cyan} r(v_1) ,r(v_2), \langle \overline {r(v_1)} + \overline {r(v_2)} - \bar w \rangle} \}$ is a $w$-related $3$-additive in $\Linkhat^{<}_{\BAA_n^m}(w)$.
\end{itemize}
Then, we invoke \autoref{lem_key_extend_from_additive_core} for $\sigma = \tau_1 \ast \tau_2$ to conclude that $r(\alpha_i \ast \tau_2) = r(\alpha_i) \ast r(\tau_2)$ spans a simplex of the same type. \qedhere
\end{proof}

We are left with proving that we can extend over internally $3$-additive simplices. This is done in the next lemma, whose proof also yields a description of the possible values that the vertices of a carrying internally $3$-additive simplex can take under $r$.

\begin{lemma} \label{TA-internally}
  The map $r$ in \autoref{TAretraction} extends over all internally $3$-additive simplices $\sigma$ of $\Linkhat_{\BAA_n^m}(w)$.
\end{lemma}

More precisely, in \autoref{TA-internally} we consider an internally $3$-additive simplex $\sigma = \tau_1 \ast \tau_2$ of $\Linkhat_{\BAA_n^m}(w)$ such that $\tau_1 = \{v_0,v_1,v_2, v_3\}$ is a minimal internally $3$-additive simplex and $\tau_2$ is a standard simplex. We may assume that the last coordinate of $\bar{v}_3$ is maximal (perhaps not uniquely). Then, the proof of \autoref{TA-internally} establishes the following sequence of claims: Possibly after reordering, we have that
  $$\bar{v}_3 = \bar v_0 + \bar v_1 + \bar v_2 \text{ or } \bar{v}_3 = \bar v_0 + \bar v_1 - \bar v_2.$$
  Letting $R_i = a_i R + b_i$ for $a_i \geq 0$ and $b_i \in [0,R)$ denote the last coordinate of $\bar v_i$, one of the following is true:
  \begin{enumerate}
  \begin{minipage}{0.45\linewidth}  
    \item $\overline{ r(v_3)} = \overline{ r(v_0)} + \overline{ r(v_1)} + \overline{ r(v_2)} - 2 \bar w$ \\and $b_0+b_1 \in [R,2R)$,
    \item $\overline{ r(v_3)} = \overline{ r(v_0)} + \overline{ r(v_1)} + \overline{ r(v_2)} - \bar w$,
    \item $\overline{ r(v_3)} = \overline{ r(v_0)} + \overline{ r(v_1)} + \overline{ r(v_2)}$,
    \end{minipage}
     \begin{minipage}{0.45\linewidth}  
    \item $\overline{ r(v_3)} = \overline{ r(v_0)} + \overline{ r(v_1)} - \overline{ r(v_2)} - \bar w$,
    \item $\overline{ r(v_3)} = \overline{ r(v_0)} + \overline{ r(v_1)} - \overline{ r(v_2)}$, or
    \item $\overline{ r(v_3)} = \overline{ r(v_0)} + \overline{ r(v_1)} - \overline{ r(v_2)} + \bar w$ \\ and $b_0+b_1 \in [0,R)$.
    \end{minipage}
  \end{enumerate}
In case $iii)$ and case $v)$, it holds that $b_0+b_1\pm b_2 \in [0, R)$,\footnote{Here and in the following sentence, ``$\pm$'' is to be understood as ``the same sign as the one in front of $\overline{ r(v_2)}$''.} that the set $r(\tau_1)$ forms a simplex in $\Linkhat^<_{\BAA_n^m}(w)$ and that $r$ extends over the simplex $\sigma = \tau_1 \ast \tau_2$. In all other cases, it holds that $b_0+b_1 \pm b_2 \not\in [0, R)$, that $r(\tau_1)$ is not a simplex (i.e.~$\sigma$ is carrying) and that $r$ extends over the subdivision $\sd(\sigma)$ by defining $r(t(\tau_1)) = r(\langle \bar v_0 + \bar v_1 \rangle)$. Here, $r(t(\tau_1))$ is equal to $\langle \overline{r(v_0)} + \overline{r(v_1)} - \bar w \rangle$ if $b_0+b_1 \in [R, 2R)$ or $\langle \overline{r(v_0)} + \overline{r(v_1)}\rangle$ if $b_0+b_1 \in [0, R)$. In particular, the definition of $r(t(\tau_1))$ depends on a choice of $v_0$ and $v_1$ as above.

\begin{proof}
  Firstly, assume that $v_i$ has last coordinate zero for some $0 \leq i \leq 3$. Possibly after reordering we may assume that $\bar v_2$ has last coordinate zero and that $\bar v_3$ has maximal last coordinate (perhaps not uniquely). It follows that $\bar v_3 = \bar v_0 + \bar v_1 \pm \bar v_2$ and \autoref{lem_properties_r} implies that $\overline{ r(v_3) } = \langle \overline {r(\bar v_0 + \bar v_1 \rangle)} \pm \bar v_2 \rangle$. There are two subcases.
  \begin{enumerate}[(1)]
  \item If $b_0 + b_1 = b_0 + b_1 \pm b_2 \in [0,R)$, then $\overline {r(v_3)} = \overline {r(v_0)} + \overline {r(v_1)} \pm \overline {r(v_2)}$. It follows that the set $r(\tau_1)$ is an internally $3$-additive simplex in $\Linkhat^{<}_{\BAA_n^m}(w)$ and hence by \autoref{lem_key_extend_from_additive_core} that $r(\sigma) = r(\tau_1) \ast r(\tau_2)$ is a $3$-additive simplex as well.
  \item If $b_0 + b_1 = b_0 + b_1 \pm b_2 \in [R,2R)$, then $\overline {r(v_3)} = \overline {r(v_0)} + \overline {r(v_1)} \pm \overline {r(v_2)} - \overline w$ and $r(\tau_1)$ is not a simplex in $\Linkhat^{<}_{\BAA_n^m}(w)$. At the end of this proof we will discuss how $r$ can be extended over $\sd(\sigma) = \sd(\tau_1) \ast \tau_2$ in this case.
  \end{enumerate}

  Secondly, assume that the last coordinate of all $\bar v_i$ for $0 \leq i \leq 3$ is nonzero. Let us assume that $\bar v_3$ has maximal last coordinate (possibly after reordering and perhaps not uniquely). By the definition of $3$-additive, it follows that $\bar{v}_3 = \pm \bar v_0 \pm \bar v_1 \pm \bar v_2$ for some choice of signs. Then, there are three cases up to reordering
  \begin{enumerate}[a)]
  \item \label{item_no_minus}$\bar{v}_3 = \bar v_0 + \bar v_1 + \bar v_2$ (no minus signs)
  \item \label{item_one_minus}$\bar v_3 = \bar v_0 + \bar v_1 - \bar v_2$ (one minus sign) or
  \item $\bar v_3 = \bar v_0 - \bar v_1 - \bar v_2$  (two minus signs).
  \end{enumerate}
  The last case cannot occur because then the last coordinate of $\bar v_0$ is bigger than the last coordinate of $\bar v_3$ (violating the assumption that the last coordinate of $\overline v_3$ is maximal). It follows that either $$\bar{v}_3 = \bar v_0 + \bar v_1 + \bar v_2 \text{ or } \bar{v}_3 = \bar v_0 + \bar v_1 - \bar v_2.$$
  Observe that $b_0+b_1+b_2 \in [0, 3R)$ and $b_0+b_1-b_2 \in (-R, 2R)$.
    \begin{enumerate}[(1)]\addtocounter{enumi}{2}
    \item In case \ref{item_no_minus}) and if $b_0+b_1+b_2 \in [2R, 3R)$, it follows that $\overline{ r(v_3)} = \overline{ r(v_0)} + \overline{ r(v_1)} + \overline{ r(v_2)} - 2 w$ and that $r(\tau_1)$ is not a simplex in $\Linkhat^{<}_{\BAA_n^m}(w)$. Observe that we must have $b_0 + b_1 \in [R,2R)$ in this case.
    \item In case \ref{item_no_minus}) and if $b_0+b_1+b_2 \in [R, 2R)$, it follows that $\overline{ r(v_3)} = \overline{ r(v_0)} + \overline{ r(v_1)} + \overline{ r(v_2)} - w$ and that $r(\tau_1)$ is not a simplex in $\Linkhat^{<}_{\BAA_n^m}(w)$.
    \item In case \ref{item_no_minus}) and if $b_0+b_1+b_2 \in [0, R)$, it follows that $\overline{ r(v_3)} = \overline{ r(v_0)} + \overline{ r(v_1)} + \overline{ r(v_2)}$, that $r(\tau_1)$ is an internally $3$-additive simplex and hence by \autoref{lem_key_extend_from_additive_core} that $r(\sigma) = r(\tau_1) \ast r(\tau_2)$ is a $3$-additive simplex in $\Linkhat^{<}_{\BAA_n^m}(w)$ as well.
    \item In case \ref{item_one_minus}) and if $b_0+b_1-b_2 \in [R, 2R)$, it follows that $\overline{ r(v_3)} = \overline{ r(v_0)} + \overline{ r(v_1)} - \overline{ r(v_2)} - w$ and that $r(\tau_1)$ is not a simplex in $\Linkhat^{<}_{\BAA_n^m}(w)$.
    \item In case \ref{item_one_minus}) and if $b_0+b_1-b_2 \in [0, R)$, it follows that $\overline{ r(v_3)} = \overline{ r(v_0)} + \overline{ r(v_1)} - \overline{ r(v_2)}$, that $r(\tau_1)$ is an internally $3$-additive simplex and hence by \autoref{lem_key_extend_from_additive_core} that $r(\sigma) = r(\tau_1) \ast r(\tau_2)$ is a $3$-additive simplex in $\Linkhat^{<}_{\BAA_n^m}(w)$ as well.
    \item In case \ref{item_one_minus}) and if $b_0+b_1-b_2 \in (-R, 0)$, it follows that $\overline{ r(v_3)} = \overline{ r(v_0)} + \overline{ r(v_1)} - \overline{ r(v_2)} + w$ and that $r(\tau_1)$ is not a simplex in $\Linkhat^{<}_{\BAA_n^m}(w)$. Observe that we must have $b_0 + b_1 \in [0,R)$ in this case.
    \end{enumerate}

    This establishes the first three claims in the paragraph after \autoref{TA-internally}. To finish, we are left with proving that the map extends over $\sd(\sigma)$ whenever $\sigma = \tau_1 \ast \tau_2$ is carrying, i.e.\ in the situations (2),(3),(4),(6) and (8). In $\sd(\Linkhat_{\TA^m_n}(w))$ 
    the simplex $\sigma$ has been subdivided as $\sd(\sigma) = \sd(\tau_1) \ast \tau_2$ into four simplices
    $$\alpha_i \ast \tau_2 = \{v_0, \ldots, \hat{v}_i, \ldots, v_3, t(\tau_1)\} \ast \tau_2 \quad \text{for $i=0,1,2,3$.}$$
    To see that $r$ extends over $\alpha_i \ast \tau_2$ by defining $r(t(\tau_1)) = r(\langle \bar v_0 + \bar v_1 \rangle)$, we first note that $r(\langle \bar v_0 + \bar v_1 \rangle) \in \Linkhat^{<}_{\BAA_n^m}(w)$ by definition, and that hence all elements in the set $r(\alpha_i \ast \tau_2)$ are vertices of $\Linkhat^{<}_{\BAA_n^m}(w)$. We only need to check that they from simplices. For this, we distinguish two cases depending on whether $b_0 + b_1 \in [0,R)$ or $b_0+b_1 \in [R, 2R)$.

    \textsl{Assume that $b_0+b_1 \in [R, 2R)$}. Observe that this is always true in the situation (2) and (3), might happen in situation (4) and (6), and is impossible in situation (8) described above. By \autoref{lem_properties_r} we have that $r(\langle \overline v_0 + \overline v_1 \rangle) = \langle \overline{r(v_0)} + \overline{r(v_1)} - \bar w \rangle$. With the value of $r(v_3)$ calculated above, it follows that in $\Linkhat^{<}_{\BAA_n^m}(w)$,
    \begin{itemize}
    \item $r(\alpha_3) = \{{\color{cyan} r(v_0),r(v_1)}, r(v_2) ,{\color{cyan}\langle \overline{r(v_0)} + \overline{r(v_1)} - \bar w \rangle}\}$ is $w$-related $3$-additive,
    \item $r(\alpha_2) = \{{\color{cyan} r(v_0),r(v_1)}, r(v_3) , {\color{cyan} \langle \overline{r(v_0)} + \overline{r(v_1)} - \bar w \rangle}\}$ is $w$-related $3$-additive,
    \item $r(\alpha_1) = \{r(v_0),{\color{cyan} r(v_2), r(v_3) , \langle \overline{r(v_0)} + \overline{r(v_1)} - \bar w \rangle}\}$ is $w$-related $3$-additive in situation (3) or $2$-additive in situation (2), (4) and (6), and
    \item $r(\alpha_0) = \{r(v_1),{\color{cyan}r(v_2), r(v_3) , \langle \overline{r(v_0)} + \overline{r(v_1)} - \bar w \rangle}\}$ is $w$-related $3$-additive in situation (3) or $2$-additive in situation (2), (4) and (6).
    \end{itemize}
    Invoking \autoref{lem_key_extend_from_additive_core} for $\sigma = \tau_1 \ast \tau_2$, we conclude that $r(\alpha_i \ast \tau_2) = r(\alpha_i) \ast r(\tau_2)$ spans a simplex of the same type. Hence, we can extend over $\sd(\sigma)$ in this case.

    \textsl{Assume that $b_0+b_1 \in [0,R)$}. Observe that this is always true in the situation (8), might happen in situation (4) and (6), and is impossible in situation (2) and (3) described above. By \autoref{lem_properties_r} we have that $r(\langle \overline v_0 + \overline v_1 \rangle) = \langle \overline{r(v_0)} + \overline{r(v_1)}\rangle$. With the value of $r(v_3)$ calculated above, it follows that in $\Linkhat^{<}_{\BAA_n^m}(w)$,
    \begin{itemize}
    \item $r(\alpha_3) = \{{\color{cyan}r(v_0),r(v_1)}, r(v_2) , {\color{cyan}\langle \overline{r(v_0)} + \overline{r(v_1)}\rangle}\}$ is $2$-additive,
    \item $r(\alpha_2) = \{{\color{cyan}r(v_0),r(v_1)}, r(v_3) , {\color{cyan}\langle \overline{r(v_0)} + \overline{r(v_1)}\rangle}\}$ is $2$-additive,
    \item $r(\alpha_1) = \{r(v_0),{\color{cyan} r(v_2), r(v_3) , \langle \overline{r(v_0)} + \overline{r(v_1)}\rangle}\}$ is $w$-related $3$-additive, and
    \item $r(\alpha_0) = \{r(v_1),{\color{cyan} r(v_2), r(v_3) , \langle \overline{r(v_0)} + \overline{r(v_1)}\rangle}\}$ is $w$-related $3$-additive.
    \end{itemize}
    Invoking \autoref{lem_key_extend_from_additive_core} for $\sigma = \tau_1 \ast \tau_2$, we conclude that $r(\alpha_i \ast \tau_2) = r(\alpha_i) \ast r(\tau_2)$ spans a simplex of the same type. Hence, we can extend over $\sd(\sigma)$ in this case as well. \qedhere
\end{proof}

\autoref{TA-non-carrying}, \autoref{TA-externally} and \autoref{TA-internally} imply \autoref{TAretraction} and \autoref{carrying-3-additive-simplices}, so this concludes our discussion of $3$-additive simplices.

\subsection{Extending over double-triple simplices}

The goal of this subsection is to extend the map $$r\colon  \sd(\Linkhat_{\TA_n^m}(w)) \m \Linkhat^{<}_{\BAA_n^m}(w)$$ defined in the previous subsection over all double-triple simplices. For this, we need to study minimal double-triple simplices in the sense of \autoref{minimal-simplices-and-additive-core}.

\begin{observation} \label{minimal-double-triple}
  A double-triple simplex of $\BAA_n^m$ is \emph{minimal} if all of its facets are $2$-additive or $3$-additive.
\end{observation}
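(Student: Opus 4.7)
The plan is to prove the implication by unwinding the definition of minimality and analyzing how additive relations behave under taking faces. Recall from \autoref{minimal-simplices-and-additive-core} that a double-triple simplex $\sigma$ is minimal precisely when no proper face of $\sigma$ is itself double-triple. Since every proper face of $\sigma$ is contained in at least one facet of $\sigma$, it suffices to show that the hypothesis---every facet is $2$-additive or $3$-additive---forces every proper face to fail to be double-triple.

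The key step is therefore the following claim: no face of a $2$-additive or $3$-additive simplex of $\BAA_n^m$ is double-triple. To prove this I would work with the underlying simplex in $\BAA_{m+n}$, where a $k$-additive simplex (for $k=2$ or $3$) has the form $\tau = \{v_0, u_1, \dots, u_k, w_1, \dots, w_s\}$ with $\vec v_0 = \pm \vec u_1 \pm \cdots \pm \vec u_k$ and $\{\vec u_1, \ldots, \vec u_k, \vec w_1, \ldots, \vec w_s\}$ a partial basis of $\Z^{m+n}$. A face $\tau' \subseteq \tau$ is then governed by whether $v_0 \in \tau'$: if $v_0 \notin \tau'$, then $\tau'$ is a subset of a partial basis and hence standard; if $v_0 \in \tau'$ but $\tau'$ omits some $u_i$, then $\tau'$ is again standard; and if $v_0$ together with all the $u_i$ lies in $\tau'$, then $\tau'$ carries the single additive relation and is $k$-additive. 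In no case does $\tau'$ carry two independent nontrivial linear relations of additive type, since the $u_i$ and $w_j$ are linearly independent. But a double-triple simplex by \autoref{def:BAA-simplices} requires precisely two such independent additive relations, so no face of $\tau$ can be double-triple.

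With this claim in hand, the observation is immediate. Let $\tau$ be any proper face of $\sigma$, and choose a vertex $v \in \sigma \setminus \tau$. Then $\tau$ is contained in the facet $\sigma \setminus \{v\}$, which by hypothesis is $2$-additive or $3$-additive, so by the claim $\tau$ is not double-triple. Hence $\sigma$ contains no proper double-triple face, and is minimal.

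The only potential obstacle is the case analysis in the key claim, particularly ensuring that the linear independence of the non-additive vertices together with $\vec e_1, \ldots, \vec e_m$ genuinely rules out a second ``accidental'' additive relation of the form required to witness a double-triple structure. This is handled by invoking the partial-basis condition built into the definition of $k$-additive simplex (\autoref{def:BA-simplices} and \autoref{def:BAA-simplices}), which guarantees that the only $\Z$-linear dependence among the vectors involved is the single declared one.
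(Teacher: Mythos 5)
Your proof is correct. Note first that the paper offers no argument for this Observation at all---it is stated as self-evident---so there is nothing to compare against line by line; your write-up supplies the justification the authors left implicit, and it is consistent with the facet-type bookkeeping they record later in \autoref{obs_list_facet_types}. Your reduction (minimality means no proper double-triple face; every proper face lies in a facet; faces of $2$- or $3$-additive simplices are never double-triple) is the right skeleton, and your case analysis of faces of a $k$-additive simplex correctly handles the relative setting by passing to the underlying simplex in $\BAA_{m+n}$, which automatically contains $e_1,\dots,e_m$. The worry you flag at the end about an ``accidental'' second relation can be dispatched in one line by rank-counting: a double-triple $j$-simplex spans a rank-$(j-1)$ summand, whereas any face of a $2$- or $3$-additive simplex with $j+1$ vertices spans a summand of rank $j$ or $j+1$, so the types cannot coincide. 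One further remark: the Observation is phrased with ``if'' but is surely intended as a characterisation; the converse direction (a minimal double-triple simplex has all facets $2$- or $3$-additive) is exactly what \autoref{obs_list_facet_types} records, so nothing is lost by your proving only the stated implication.
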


The difficulty is to extend $r$ over carrying double-triple simplices, i.e.\ double-triple simplices that have a carrying facet.

\begin{definition}
  Let $\sigma = \tau_1 \ast \tau_2$ be a double-triple simplex in $\Linkhat_{\BAA^m_n}(w)$, where $\tau_1$ is a minimal double-triple simplex and $\tau_2$ is a standard simplex. The simplex $\sigma$ is called \emph{carrying} if $\tau_1$ has a carrying facet.
\end{definition}

We use the following characterisation of carrying double-triple simplices.

\begin{lemma} \label{carrying-double-triple-simplices}
  Let $\sigma = \tau_1 \ast \tau_2$ be a double-triple simplex of $\Linkhat_{\BAA_n^m}(w)$ such that $\tau_1$ is a minimal double-triple and $\tau_2$ is a standard simplex. For any vertex $v_i = \langle \bar v_i \rangle$, write the last coordinate of $\bar{v_i}$ as $a_i R + b_i$ with $a_i \geq 0$ and $0 \leq b_i < R$. Then $\sigma$ is carrying if and only if $\tau_1$ is of one of the following types for some $\epsilon \in \{-1, +1\}$:
  \begin{itemize}
  \item $\tau_1 = \{ v_0,v_1, \langle \bar v_0 + \bar v_1 + \epsilon \bar w \rangle, \langle \bar v_0 + \bar v_1 \rangle\} $ and $b_0 + b_1 \notin [0, R)$,
  \item $\tau_1 = \{ v_0,v_1,\langle \bar v_0 + \bar v_1 + \epsilon \bar e_i \rangle, \langle \bar v_0 + \epsilon \bar e_i \rangle  \} $ for some $i \leq m$, $b_0 + b_1 \notin [0, R)$,
  \item $\tau_1 = \{ v_0,v_1,\langle \bar v_0 + \bar v_1 + \epsilon \bar e_i \rangle, \langle \bar v_0 + \bar v_1 \rangle \} $ for some $i \leq m$ and $b_0 + b_1 \notin [0, R)$, or
  \item $\tau_1 = \{ v_0,v_1, v_2, \langle \bar v_0 + \bar v_1 + \epsilon \bar v_2 \rangle, \langle \bar v_0 + \bar v_1 \rangle \} $ and $b_0 + b_1 \notin [0, R)$ or $b_0 + b_1 + \epsilon b_2 \notin [0, R)$.
  \end{itemize}
\end{lemma}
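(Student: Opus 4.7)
The plan is to reduce the claim to the carrying criteria for $2$-additive and $3$-additive simplices (\autoref{carrying-2-additive} and \autoref{carrying-3-additive-simplices}) by enumerating the minimal double-triple simplices of $\Linkhat_{\BAA^m_n}(w)$ and then inspecting their facets. By definition $\sigma = \tau_1 \ast \tau_2$ is carrying if and only if $\tau_1$ has a carrying facet, and by \autoref{minimal-double-triple} every facet of $\tau_1$ is either $2$-additive or $3$-additive.

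I will first classify the minimal double-triple simplices. The underlying simplex $\tau_1 \cup \{e_1, \ldots, e_m, w\}$ in $\BAA_{m+n}$ carries two $2$-term additive relations sharing a common summand. In each of the four families listed in the statement there is a vertex of the form $b = \langle \bar v_0 + \bar v_1 \rangle$ together with a second additive vertex $a = \langle \bar v_0 + \bar v_1 + \epsilon \bar x \rangle$, where $\bar x$ is $\bar w$ (Type~1), some $\bar e_i$ with the second relation being either $\bar b = \bar v_0 + \bar v_1$ (Type~3) or $\bar b = \bar v_0 + \epsilon \bar e_i$ (Type~2), or an internal vector $\bar v_2 \in \tau_1$ (Type~4). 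After reindexing (for instance writing $\bar v_0 = \bar b - \bar v_1$, so that $b$ plays the role of the shared summand in the double-triple structure), these are exactly the minimal double-triples admitting a facet that is internally $2$-additive, internally $3$-additive, or externally $3$-additive. Any other minimal double-triple has all its facets $w$-related or externally $2$-additive, and hence by \autoref{carrying-2-additive} and \autoref{carrying-3-additive-simplices} none of its facets can be carrying.

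For each of the four families I will then enumerate the facets obtained by removing a single vertex and read off their types ($w$-related, external, or internal; $2$- or $3$-additive). In Type~1 the unique facet that can possibly be carrying is the internally $2$-additive facet $\{v_0, v_1, b\}$, which is carrying iff $b_0 + b_1 \in [R, 2R)$, i.e.\ $b_0 + b_1 \notin [0, R)$. In Types~2 and~3 one finds one internally $2$-additive facet together with one externally $3$-additive facet, both carrying exactly when $b_0 + b_1 \notin [0, R)$. In Type~4, which has five vertices and thus five facets, the internally $2$-additive facets involving the relation $\bar b = \bar v_0 + \bar v_1$ contribute the condition $b_0 + b_1 \notin [0, R)$, while the internally $3$-additive facet $\{v_0, v_1, v_2, a\}$ contributes $b_0 + b_1 + \epsilon b_2 \notin [0, R)$ via \autoref{carrying-3-additive-simplices}. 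The remaining two internally $2$-additive facets (via $\bar a = \bar b + \epsilon \bar v_2$) yield the same condition as the $3$-additive facet after a direct computation with last coordinates.

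The main obstacle is to ensure completeness of the classification, i.e.\ that these four families truly exhaust the minimal double-triple simplices admitting a carrying facet. This involves a case analysis over the ways the two $2$-term relations of the underlying double-triple can distribute among $\tau_1$, $\{e_1, \ldots, e_m\}$ and $\{w\}$, combined with a check that reindexing does not produce genuinely new types, and care with the signs arising from our convention that representative vectors have nonnegative last coordinate. Once this bookkeeping is done, the facet-by-facet computation of carrying conditions is routine.
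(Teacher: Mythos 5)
Your overall strategy is sound and is, in substance, the one the paper uses: since a double-triple simplex is \emph{defined} to be carrying precisely when $\tau_1$ has a carrying facet, the lemma does reduce to (a) classifying the minimal double-triple simplices of $\Linkhat_{\BAA^m_n}(w)$ and (b) applying the facet-level criteria of \autoref{carrying-2-additive} and \autoref{carrying-3-additive-simplices}. The paper packages this inside \autoref{lem:extending-over-dt-2-dimensional-case}--\autoref{lem:extending-over-dt-internal-case}, where the classification is organized by writing $\tau_1 = \gamma \ast z$ with $\gamma$ the unique $3$-additive facet and splitting on $\dim \tau_1 \in \{2,3,4\}$ and on whether $\gamma$ is $w$-related, external, or internal; the ``which facets can be carrying'' step is exactly the content of the observations preceding \autoref{lem:extending-over-dt-w-related-case} and \autoref{carrying-facets-of-dt-external-gamma}. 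So you are not taking a genuinely different route, only decoupling the characterisation from the construction of the extension of $r$. The part you defer as ``the main obstacle'' --- completeness of the classification --- is where essentially all of the work sits, and the decomposition $\tau_1 = \gamma \ast z$ (with $z$ lying in the additive core of every $2$-additive facet) is the device that makes that enumeration manageable; without carrying it out your argument is a plan rather than a proof.

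One concrete inaccuracy: in Type~4 you assert that the two internally $2$-additive facets coming from $\bar a = \bar b + \epsilon \bar v_2$ (with $b = \langle \bar v_0 + \bar v_1\rangle$) ``yield the same condition as the $3$-additive facet.'' They do not. The carrying condition of \autoref{carrying-2-additive} for such a facet is stated in terms of the \emph{reduced} last coordinate of $\bar b$, i.e.\ $(b_0+b_1) \bmod R$, not $b_0+b_1$ itself; for instance with $R=10$, $b_0=b_1=7$, $b_2=1$, $\epsilon=+1$ the $3$-additive facet is carrying ($b_0+b_1+b_2=15 \notin [0,R)$) while the facet with core $\{v_2,b,a\}$ is not ($4+1 \in [0,R)$). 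The stated disjunction in the lemma survives because a carrying facet of that shape always forces one of the two listed disjuncts (if $b_0+b_1 \ge R$ the first holds outright; otherwise the reduced coordinate equals $b_0+b_1$ and the second holds), but you should verify this implication rather than claim the conditions coincide.
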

  This follows from \autoref{lem:extending-over-dt-2-dimensional-case}, \autoref{lem:extending-over-dt-w-related-case}, \autoref{lem:extending-over-dt-external-case} and \autoref{lem:extending-over-dt-internal-case}, which are proved below.

Since all carrying $2$-additive and $3$-additive simplices have been subdivided in $\sd(\Linkhat_{\TA^m_n}(w))$, we will need to subdivide every double-triple simplex in a compatible fashion. The general type of subdivision of $\Linkhat_{\BAA^m_n}(w)$ that we will be considering is described in the next definition. The construction of such a subdivision will be part of the proof of the main result of this subsection.

\begin{definition} \label{DefDT}
  Assume that for every carrying minimal double-triple simplex $\tau_1$ in $\Linkhat_{\BAA^m_n}(w)$, we are given a simplicial disc $\sd(\tau_1)$ whose boundary sphere is exactly the subcomplex $\sd(\partial \tau_1)$ of $\sd(\Linkhat_{\TA^m_n}(w))$. Let $\sd(\Linkhat_{\BAA_n^m}(w))$ denote the coarsest subdivision of $\Linkhat_{\BAA_n^m}(w)$ that contains $\sd(\Linkhat_{\TA_n^m}(w))$ as a subcomplex and that subdivides every carrying minimal double-triple simplex $\tau_1$ according to $\sd(\tau_1)$.
\end{definition}

Concretely, $\sd(\Linkhat_{\BAA_n^m}(w))$ in \autoref{DefDT} is constructed as follows: In addition to the subdivisions described in \autoref{DefTA} on the subcomplex $\Linkhat_{\TA^m_n}(w)$ of $\Linkhat_{\BAA^m_n}(w)$, we subdivide carrying double-triple simplices of $\Linkhat_{\BAA_n^m}(w)$ in the following fashion. Let $\sigma = \tau_1 \ast \tau_2$ be a double-triple simplex of $\Linkhat_{\BAA_n^m}(w)$, where $\tau_1$ is a carrying minimal double-triple simplex and $\tau_2$ is standard. Then, when passing from $\Linkhat_{\BAA_n^m}(w)$ to $\sd(\Linkhat_{\BAA_n^m}(w))$, each such simplex is replaced by the simplicial join $$\sd(\sigma) = \sd(\tau_1) \ast \tau_2$$
  where $\sd(\tau_1)$ is the simplicial disc associated to $\tau_1$ that we fixed before. Note that $\sd(\Linkhat_{\TA^m_n}(w))$ and $\Linkhat_{\BAA^m_n}^<(w)$ are subcomplexes of $\sd(\Linkhat_{\BAA^m_n}(w))$.

The main result of this subsection is the following proposition, which implies \autoref{retraction}.

\begin{proposition} \label{DTretraction}
  There exists a subdivision $\sd(\Linkhat_{\BAA_n^m}(w))$ of $\Linkhat_{\BAA_n^m}(w)$ as in \autoref{DefDT} such that the simplicial map constructed in \autoref{TAretraction}
  $$r\colon  \sd(\Linkhat_{\TA_n^m}(w))  \m \Linkhat^{<}_{\BAA_n^m}(w)$$
  extends to a simplicial map
  $$r\colon  \sd(\Linkhat_{\BAA_n^m}(w))  \m \Linkhat^{<}_{\BAA_n^m}(w)$$
  that restricts to the identity map on the subcomplex $\Linkhat_{\BAA_n^m}^{<}(w)$ of $\sd(\Linkhat_{\BAA_n^m}(w))$.
\end{proposition}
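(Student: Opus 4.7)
The plan is to reduce the proof of \autoref{DTretraction} to a case analysis on the type of minimal double-triple simplex $\tau_1$. For a double-triple simplex $\sigma = \tau_1 \ast \tau_2$ with $\tau_2$ standard, \autoref{lem_key_extend_from_additive_core} allows me to concentrate on $\tau_1$: if I can extend $r$ over (a suitable subdivision of) $\tau_1$ with image in $\Linkhat^<_{\BAA_n^m}(w)$ and contained in $\langle \tau_1 \cup \{e_1,\ldots,e_m,w\}\rangle$, then the extension over $\sigma = \tau_1 \ast \tau_2$ is automatic after also replacing $\tau_2$ by $r(\tau_2)$. Using \autoref{minimal-double-triple}, I would split minimal double-triple simplices into the four types listed in \autoref{carrying-double-triple-simplices}: those of dimension two (where the underlying simplex is $w$-related or purely external), the general $w$-related type, the external type involving some $e_i$, and the internal type. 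Each case gets its own lemma (corresponding to \texttt{lem:extending-over-dt-2-dimensional-case} through \texttt{lem:extending-over-dt-internal-case}).

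For the non-carrying sub-cases, the strategy is to compute $r$ on the vertices of $\tau_1$ using \autoref{lem_properties_r}, together with the formulas already obtained in \autoref{BAretraction}, \autoref{DDretraction}, and \autoref{TAretraction} for its $2$-additive and $3$-additive facets, and verify directly that $r(\tau_1)$ spans a double-triple simplex in $\Linkhat^<_{\BAA_n^m}(w)$. For the carrying cases of types (1)--(3) in \autoref{carrying-double-triple-simplices}, I would handle them by hand: the boundary $\partial \tau_1$ has already been subdivided in $\sd(\Linkhat_{\TA^m_n}(w))$ because certain $2$-additive and/or $3$-additive facets are carrying, and the natural move is to insert a single barycenter $t(\tau_1)$ together with a few cone subdivisions, then set $r(t(\tau_1))$ equal to a candidate line of the form $\langle \overline{r(v_0)} + \overline{r(v_1)} - \bar w\rangle$ (possibly adjusted by $\pm \bar e_i$) that is already a vertex of $\Linkhat^<_{\BAA_n^m}(w)$. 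Verifying that each top-dimensional simplex of the resulting subdivision maps to a simplex of $\Linkhat^<_{\BAA_n^m}(w)$ is then a finite book-keeping exercise using the same kind of case analysis on last coordinates as in \autoref{TA-internally}.

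The main obstacle will be the internal case, where $\tau_1 = \{v_0,v_1,v_2,\langle \bar v_0+\bar v_1+\epsilon\bar v_2\rangle,\langle \bar v_0+\bar v_1\rangle\}$ is $4$-dimensional and may simultaneously contain a carrying internal $3$-additive facet and a carrying internal $2$-additive facet, each of which has already been subdivided in $\sd(\Linkhat_{\TA^m_n}(w))$. Consequently $\sd(\partial \tau_1)$ is combinatorially intricate and no canonical barycentric subdivision of $\tau_1$ extends it in an obvious way. The plan for this case is to defer to the computer calculation in \autoref{Sec3}: one reformulates the problem as showing that the simplicial map $r|_{\sd(\partial \tau_1)}$ into a certain finite subcomplex of $\Linkhat^<_{\BAA_n^m}(w)$ (determined by the vectors in $\tau_1 \cup \{e_1,\ldots,e_m,w\}$, and which is a subcomplex of a $\BAA_4$-type complex) is nullhomotopic, because that subcomplex is verified to be sufficiently highly connected by the computer check. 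Any such nullhomotopy supplies both the simplicial disc $\sd(\tau_1)$ and the extension of $r$ to its interior vertices, completing the construction of the subdivision in \autoref{DefDT} and hence the proof of \autoref{DTretraction} and of \autoref{retraction}.
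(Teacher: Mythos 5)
Your proposal follows essentially the same route as the paper: reduce to the additive core via \autoref{lem_key_extend_from_additive_core}, case-split minimal double-triple simplices by dimension and by the type of the unique $3$-additive facet, handle the non-carrying and low-dimensional carrying cases by explicit computation with \autoref{lem_properties_r}, and dispose of the $4$-dimensional internal case by mapping $\sd(\partial\tau_1)$ into a finite complex $Q^m_n(\vec v_1,\vec v_2,\vec v_3;\vec w)$ whose computer-verified $3$-connectivity supplies the filling disc. The only cosmetic difference is that for the $3$-dimensional carrying cases the paper builds explicit discs without a new interior vertex (coning the subdivided facet off $v_0$, resp.\ the five-simplex disc of \autoref{subdivision-carrying-dt-external-gamma}, together with \autoref{cor:changing-r-on-subdivision} to absorb the choice in $r(t(\beta))$), rather than inserting a barycentre of $\tau_1$ itself.
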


The proof of this proposition, the precise definition of $\sd(\Linkhat_{\BAA_n^m}(w))$, and the definition of the extension of $r$ is split into several lemmas, which we present below.

Recall that the extension of the simplicial map $r$ over carrying $2$-additive and $3$-additive simplices involved a subdivision as well as a choice of vertices. This is the main source of difficulty in this section. The following discussion shows that for carrying $2$-additive simplices the two possible extensions of $r$ are ``homotopic''.

\subsubsection{Different extensions of $r$ over $2$-additive simplices are ``homotopic''}

Let $\beta \in \Linkhat_{\BAA^m_n}(w)$ be a minimal carrying $2$-additive simplex. Then, \autoref{carrying-2-additive} implies that $$\beta = \{v_0 , v_1, v_2 = \langle \bar v_0 + \bar v_1 \rangle \}.$$ 
The definition of the map $r$ in \autoref{BAretraction} on the subdivision $\sd(\beta)$ of $\beta$ involves a choice $v_l \in \{v_0, v_1\}$. This choice allowed Church--Putman \cite{CP} to specify $r$ on the new vertex $t(\beta) \in \sd(\beta)$, the barycentre of $\beta$, by the formula
$$r(t(\beta)) = \langle \overline{r(v_l)} - \bar w \rangle.$$
Let us write $r_0\colon  \sd(\beta) \to \Linkhat_{\BAA^m_n}(w)$ for the map defined using $l = 0$ and $r_1\colon  \sd(\beta) \to \Linkhat_{\BAA^m_n}(w)$ for the map defined using $l = 1$. The next lemma shows that these two maps are homotopic relative to the boundary $\partial \beta$ in $\Linkhat_{\BAA^m_n}^<(w)$.

\begin{lemma} \label{lem:choice-homotopy-2-additive}
  Let $\beta = \{v_0, v_1, v_2 = \langle \bar v_0 + \bar v_1 \rangle\} \in \Linkhat_{\BAA^m_n}(w)$ be a carrying $2$-additive simplex. Then, the two maps $r_0$ and $r_1$, where $r_l$ for $l = 0,1$ is as above, are homotopic relative to $\partial \beta$ via the simplicial map $$h\colon  \Delta^2 \ast \{t_0(\beta), t_1(\beta)\} \to \Linkhat_{\BAA^m_n}^<(w),$$ where $h(t_0(\beta)) = r_0(t(\beta))$ and $h(t_1(\beta)) = r_1(t(\beta))$ (see \autoref{FigureHomotopy2AdditiveChoices}). 
\end{lemma}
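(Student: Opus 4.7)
The plan is to reduce the claim to verifying that the vertex assignment defining $h$ extends to a well-defined simplicial map, after which the homotopy assertion is immediate. First I would unpack the combinatorial structure of the domain: $\Delta^2 \ast \{t_0(\beta), t_1(\beta)\}$ is the union of two $3$-simplices $\Delta^2 \ast t_0(\beta)$ and $\Delta^2 \ast t_1(\beta)$ glued along the common face $\Delta^2$. Inside each $\Delta^2 \ast t_i(\beta)$, the three facets containing $t_i(\beta)$ realise a copy of $\sd(\beta)$ on which $h$ reduces to $r_i$ by construction, while $h$ restricts to $r$ on $\Delta^2 = \partial \beta$. It therefore only remains to verify that each of the two top-dimensional tetrahedra $\Delta^2 \ast t_i(\beta)$ maps to a simplex of $\Linkhat_{\BAA^m_n}^<(w)$.

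The key computation is for $i = 0$; the case $i = 1$ is symmetric after swapping $v_0$ and $v_1$. Using the description of $r(v_2)$ on a carrying $2$-additive simplex from \autoref{carrying-2-additive}, the image of this tetrahedron is
\[
\{r(v_0),\; r(v_1),\; \langle \overline{r(v_0)} + \overline{r(v_1)} - \bar w \rangle,\; \langle \overline{r(v_0)} - \bar w \rangle\}.
\]
I would then observe, via \autoref{ObservationColumnOperations}, that $\{r(v_0), r(v_1), w, e_1, \dots, e_m\}$ remains a partial basis of $\Z^{m+n}$. In the underlying simplex of $\BAA_{m+n}$, the non-standard vertices are $\langle \overline{r(v_0)} - \bar w \rangle$, which is a $2$-additive relation in $r(v_0)$ and $w$, and $r(v_2)$, which is a $3$-additive relation in $r(v_0)$, $r(v_1)$, $w$ whose decomposition contains $\overline{r(v_0)} - \bar w$ as a sub-sum. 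This matches exactly the alternative presentation of a double-triple simplex given in the remark after \autoref{def:BAA-simplices}, so the image is a $w$-related double-triple simplex of $\Linkhat_{\BAA^m_n}^<(w)$.

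The principal, and admittedly mild, obstacle is recognising this configuration as a double-triple simplex at all, since it appears at first glance to be a $w$-related $2$-additive vertex combined with a $w$-related $3$-additive vertex, a pattern not among the primary types of \autoref{def:BAA-simplices}. One can make the equivalence explicit by changing basis in the rank-$(m+2)$ ambient summand: declaring $\bar u := \overline{r(v_0)} - \bar w$ to be a new basis vector (the change of basis matrix from $(\overline{r(v_0)}, \overline{r(v_1)}, \bar w)$ to $(\overline{r(v_0)}, \overline{r(v_1)}, \bar u)$ has determinant $\pm 1$), both $w$ and $r(v_2)$ become $2$-additive sums sharing the common line $\langle \bar u \rangle$, matching the primary form of a double-triple. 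Once simpliciality of $h$ is established, the homotopy property is automatic, since $h$ restricts to $r_i$ on the copy of $\sd(\beta)$ inside $\Delta^2 \ast t_i(\beta)$ for $i = 0, 1$, and to the common map $r$ on $\partial \beta$.
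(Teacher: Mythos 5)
Your argument is correct and matches the paper's proof, which likewise reduces to observing that each set $r_l(\sd(\beta)) = \{r(v_0), r(v_1), \langle \overline{r(v_0)} + \overline{r(v_1)} - \bar w \rangle, \langle \overline{r(v_l)} - \bar w \rangle\}$ spans a ($w$-related) double-triple simplex, the two sharing the $3$-additive facet $\{r(v_0), r(v_1), \langle \overline{r(v_0)} + \overline{r(v_1)} - \bar w \rangle\}$. Your explicit change-of-basis check that this configuration really is double-triple is a correct elaboration of what the paper leaves as an observation.
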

\begin{figure}[h!]
\begin{center}
  \includegraphics[scale=6]{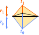} \qquad   \includegraphics[scale=6]{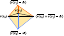}
\end{center}
\caption{ The simplices  $\Delta^2 \ast \{t_0(\beta), t_1(\beta)\}$ and their image under $h$.}
\label{FigureHomotopy2AdditiveChoices}
\end{figure} 

\begin{proof}
  Observe that the set of vertices in $$r_l(\sd(\beta)) = \{r(v_0), r(v_1), \langle \overline{r(v_0)} + \overline{r(v_1)} - \bar w \rangle, \langle \overline{r(v_l)} - \bar w \rangle\}$$ spans a double-triple simplex for each of the two choices, $l = 0$ and $l = 1$, and that the two simplices share their $3$-additive facet $\{r(v_0), r(v_1), \langle \overline{r(v_0)} + \overline{r(v_1)} - \bar w \rangle\}$. 
\end{proof}

This observation allow us to perform the following construction, which we will use later.

\begin{corollary} 
  \label{cor:changing-r-on-subdivision}
  Let $\tau_1$ be a carrying minimal double-triple simplex of $\Linkhat_{\BAA^m_n}(w)$ that contains a $2$-dimensional carrying \emph{facet} $\beta$ that is $2$-additive.
  Then,
  $$r|_{\partial \tau_1} \cup h\colon  \sd(\partial \tau_1) \cup_{\sd(\beta)} D(\beta) \to \Linkhat_{\BAA^m_n}^<(w)$$
  defines a simplicial homotopy between the two possible definitions of the map $r$ on $\sd(\partial \tau_1)$
, one obtained from $r_0$ and one obtained from $r_1$ as discussed above. Here, $D(\beta) = \Delta^2 \ast \{t_0(\beta), t_1(\beta)\}$ is the domain of the homotopy defined in \autoref{lem:choice-homotopy-2-additive}.
\end{corollary}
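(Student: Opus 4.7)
The plan is to glue the local homotopy $h$ from \autoref{lem:choice-homotopy-2-additive} onto the rest of the map $r$ defined on $\sd(\partial \tau_1)$ via \autoref{TAretraction}, exploiting the fact that the two ``possible definitions'' of $r$ on $\sd(\partial \tau_1)$ differ only at the single vertex $t(\beta)$. First I would observe that the choice of $v_l \in \{v_0, v_1\}$ used to define $r$ at the barycentre $t(\beta)$ of the carrying $2$-additive facet $\beta \subset \tau_1$ is independent of any other choices made when defining $r$ on the other carrying minimal faces of $\tau_1$. Fixing all other choices identically and varying only this one produces two simplicial maps $r^0, r^1 \colon \sd(\partial \tau_1) \to \Linkhat^<_{\BAA^m_n}(w)$ that agree on the subcomplex not containing $t(\beta)$ and differ only in the value at $t(\beta)$, namely $r^j(t(\beta)) = r_j(t(\beta))$ for $j \in \{0,1\}$.

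Next I would unpack the glued domain. The complex $D(\beta) = \Delta^2 \ast \{t_0(\beta), t_1(\beta)\}$ contains two subcomplexes isomorphic to $\sd(\beta)$, namely $\partial \beta \ast \{t_0(\beta)\}$ and $\partial \beta \ast \{t_1(\beta)\}$. The pushout $\sd(\partial \tau_1) \cup_{\sd(\beta)} D(\beta)$ is taken with respect to the identification of $\sd(\beta) \subset \sd(\partial \tau_1)$ with the $t_0$-copy, so that the barycentre $t(\beta)$ and $t_0(\beta)$ become identified. With this identification, I would verify that the restriction of $h$ to the $t_0$-copy of $\sd(\beta)$ agrees with $r^0|_{\sd(\beta)}$: both maps coincide with $r$ on $\partial \beta$, and both send $t_0(\beta) = t(\beta)$ to $r_0(t(\beta)) = \langle \overline{r(v_0)} - \bar w \rangle$. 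Hence $r|_{\partial \tau_1} \cup h$ is a well-defined simplicial map on the glued complex.

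Finally, I would observe that the $t_1$-copy of $\sd(\beta)$ inside $D(\beta)$, together with the unchanged part $\sd(\partial \tau_1) \setminus \sd(\beta)$, forms another subcomplex of the glued domain isomorphic to $\sd(\partial \tau_1)$, and that $r|_{\partial \tau_1} \cup h$ restricts there to precisely $r^1$, since $h(t_1(\beta)) = r_1(t(\beta))$ and $r^0 = r^1$ away from $t(\beta)$. The complex $D(\beta)$ is a join with a $0$-sphere, so it interpolates combinatorially between its two copies of $\sd(\beta)$; this is what is meant by a simplicial homotopy between $r^0$ and $r^1$ in the present setting. The only real content is the compatibility check in the second paragraph, and once one notices that $h$ is built from the same formulas as those defining $r^0|_{\sd(\beta)}$, the rest of the corollary is bookkeeping.
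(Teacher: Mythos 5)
Your argument is correct and is exactly the (implicit) reasoning of the paper, which states this corollary as an immediate consequence of \autoref{lem:choice-homotopy-2-additive}: the two definitions of $r$ on $\sd(\partial\tau_1)$ differ only at the barycentre $t(\beta)$, the two copies $\partial\beta \ast \{t_i(\beta)\}$ of $\sd(\beta)$ inside $D(\beta)$ carry the maps $r_0$ and $r_1$ respectively, and the gluing along the $t_0$-copy is compatible because $h$ and $r$ are given by the same formulas there. Your unpacking of the pushout and the well-definedness check is the intended bookkeeping; nothing is missing.
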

The homotopy in \autoref{cor:changing-r-on-subdivision} is illustrated in \autoref{FigureChangingSubdivisionType} for $\tau_1$ a 3-dimensional double-triple simplex with a carrying 2-additive face. 

\begin{figure}[h!]
\begin{center}
  \includegraphics[scale=5]{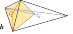}
\end{center}
\caption{The homotopy $r|_{\partial \tau_1} \cup h$}
\label{FigureChangingSubdivisionType}
\end{figure} 

We now start proving the main results of this subsection.

\subsubsection{Proof of \autoref{DTretraction} and \autoref{carrying-double-triple-simplices}}

Note that any minimal double-triple simplex $\tau_1$ in $\Linkhat_{\BAA^m_n}(w)$ satisfies $2 \leq \dim (\tau_1) \leq 4$, that any minimal double-triple simplex has a unique $3$-additive face and all other faces are $2$-additive (see \autoref{obs_list_facet_types}).

\begin{convention}
  Let $\tau_1$ be a minimal double-triple simplex $\tau_1$ in $\Linkhat_{\BAA^m_n}(w)$. In the remainder of this section, the unique $3$-additive face of $\tau_1$ will be denoted by $\gamma$. The simplex $\tau_1$ can then be written as a join $\gamma \ast z$, where $z \in \tau_1$ is a vertex. We remark that the vertex $z$ has the property that it is contained in the additive core of every $2$-additive facet of $\tau_1$.
\end{convention}

The next lemma shows that the map $r$ can be extended over minimal double-triple simplices of dimension two without any subdivisions.

\begin{lemma}
  \label{lem:extending-over-dt-2-dimensional-case}
  The map $r\colon  \Linkhat_{\TA^m_n}(w) \to \Linkhat_{\BAA^m_n}^<(w)$ in \autoref{TAretraction} extends simplicially over all double-triple simplices $\sigma = \tau_1 \ast \tau_2$ in $\Linkhat_{\BAA^m_n}(w)$, where $\tau_1$ is a minimal double-triple simplex of dimension two and $\tau_2$ is a standard simplex. I.e.\ for any such simplex it holds that $r(\sigma) = r(\tau_1) \ast r(\tau_2)$ forms a simplex in $\Linkhat_{\BAA^m_n}^<(w)$, so $\sigma$ is not carrying.
\end{lemma}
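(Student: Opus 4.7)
The plan is to apply \autoref{lem_key_extend_from_additive_core} with $\nu = r(\tau_1)$, which reduces the lemma to showing that $r(\tau_1)$ spans a simplex in $\Linkhat^<_{\BAA^m_n}(w)$. The containment hypothesis $\langle r(\tau_1)\rangle \subseteq \langle \tau_1 \cup \{e_1,\dots,e_m,w\}\rangle$ of \autoref{lem_key_extend_from_additive_core} is immediate because $r(v) = \langle \bar v - a\bar w\rangle$ lies in $\langle v,w\rangle$ for every vertex $v \in \Vr(\Linkhat_{\BAA_n^m}(w))$. Thus the entire work of the proof is to identify the simplex type of $r(\tau_1)$.

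To understand the possible structures of a $2$-dimensional minimal double-triple $\tau_1$, I will use the convention $\tau_1 = \gamma \ast z$, where $\gamma$ is the unique $3$-additive face and $z$ is the remaining vertex; since $\dim\tau_1 = 2$, $\gamma$ is an edge and $z$ is a single vertex. Unpacking the double-triple relation in the underlying simplex $\tau_1 \ast \{e_1,\dots,e_m,w\}$ gives two $2$-additive relations sharing a summand, involving five distinguished vertices drawn from $\tau_1 \cup \{e_1,\dots,e_m,w\}$. In the typical configuration, where the two sum-vertices $v_0, v_1$ and the shared summand $v_2$ all lie in $\tau_1$, this forces, after re-indexing and up to signs,
\[
\tau_1 = \{v,\ \langle \bar v + \epsilon_1 \bar x\rangle,\ \langle \bar v + \epsilon_2 \bar y\rangle\}
\]
for a vertex $v$, signs $\epsilon_1,\epsilon_2 \in \{-1,+1\}$, and two distinct elements $x,y \in \{e_1,\dots,e_m,w\}$. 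The few remaining atypical configurations (e.g.~one of $v_0, v_1, v_2$ lying in the base $\{e_1,\dots,e_m,w\}$) are highly constrained by primitivity of the vertices involved and by the standardness condition $\sigma \setminus \{v_0,v_1\}$, and fall to essentially the same argument.

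The computation of $r(\tau_1)$ then splits on whether $w$ appears among $\{x, y\}$. If $w\notin\{x,y\}$, then $\bar x, \bar y$ both have last coordinate zero, and \autoref{lem_properties_r}\autoref{Observation-r-externally-additive} gives $r(\tau_1) = \{r(v),\ \langle\overline{r(v)} + \epsilon_1\bar x\rangle,\ \langle\overline{r(v)} + \epsilon_2\bar y\rangle\}$, which is again a $2$-dimensional double-triple simplex of $\Linkhat^<_{\BAA^m_n}(w)$. If $w\in\{x,y\}$, say $x = w$, then \autoref{lem_properties_r}\autoref{r(v+-w)} implies $r(\langle\bar v + \epsilon_1\bar w\rangle)$ is either $r(v)$, in which case $r(\tau_1)$ collapses to the $2$-additive edge $\{r(v), \langle\overline{r(v)} + \epsilon_2\bar y\rangle\}$, or equal to $\langle\bar w - \bar v\rangle$ (this occurs precisely when $\epsilon_1 = -1$ and the last coordinate of $\bar v$ lies in $(0,R)$, forcing $r(v) = v$), in which case $r(\tau_1) = \{v,\ \langle\bar w - \bar v\rangle,\ \langle\bar v + \epsilon_2\bar y\rangle\}$ remains a $2$-dimensional double-triple, now with a $w$-related $2$-additive face replacing the original. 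In every sub-case $r(\tau_1)$ is a valid simplex of $\Linkhat^<_{\BAA^m_n}(w)$, so by \autoref{lem_key_extend_from_additive_core} the set $r(\sigma) = r(\tau_1) \ast r(\tau_2)$ spans a simplex of the same type, and $\sigma$ is not carrying.

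The main obstacle I expect is being exhaustive about the structural enumeration of $\tau_1$, in particular verifying that atypical configurations (where base vertices play the role of a sum-vertex or the shared summand of the double-triple) either do not occur for dimensional reasons or reduce to the same calculation. Once the enumeration is pinned down, each individual case is a direct application of \autoref{lem_properties_r} together with the observation that $r$ preserves the summand structure (cf.\ \autoref{ObservationColumnOperations}).
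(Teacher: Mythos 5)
Your proposal is correct and rests on the same two pillars as the paper's proof: the reduction from $\sigma = \tau_1 \ast \tau_2$ to the additive core via \autoref{lem_key_extend_from_additive_core}, and the explicit evaluation of $r$ on the core via \autoref{lem_properties_r}. The difference is organizational. The paper parametrizes $\tau_1 = \gamma \ast z$ by its unique $3$-additive facet $\gamma = \{v_0,v_1\}$ with $v_0 = \langle \bar v_1 + \epsilon_2 \bar v_2 + \epsilon_3 \bar v_3 \rangle$ and splits on whether $\{v_2,v_3\} = \{e_i,e_j\}$ or $\{w,e_i\}$, with further sub-cases for $z$; you instead normalize $\tau_1$ to a ``star'' $\{v, \langle \bar v + \epsilon_1 \bar x\rangle, \langle \bar v + \epsilon_2 \bar y\rangle\}$ centered at the shared summand of the two $2$-additive relations and split only on whether $w \in \{x,y\}$. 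Your normal form is in fact fully general, not merely ``typical'': distributing the five core vertices $\{u_0 = u_2+u_3,\ u_1 = u_2+u_4,\ u_2,\ u_3,\ u_4\}$ between $\tau_1$ and the base $\{e_1,\dots,e_m,w\}$, one checks that exactly two lie in the base, that the shared summand $u_2$ never does (that would force another vertex of $\tau_1$ into $\langle \vec e_1,\dots,\vec e_m,\vec w\rangle$), and that in every admissible distribution the two remaining vertices of $\tau_1$ differ from $\pm \bar u_2$ by a single base vector --- e.g.\ the paper's case (a) simplex $\{v_1, \langle \bar v_1+\bar e_i\rangle, \langle \bar v_1 + \bar e_i + \bar e_j\rangle\}$ is $\{z, \langle \bar z - \bar e_i\rangle, \langle \bar z + \bar e_j\rangle\}$ for $z = \langle \bar v_1 + \bar e_i\rangle$. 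This is the one soft spot in your write-up: you assert that the ``atypical'' configurations ``fall to essentially the same argument'' without carrying out this enumeration, and it is precisely that enumeration which makes your two-case computation exhaustive. Once it is supplied, your branches match the paper's outputs exactly (a double-triple, a degenerate $2$-additive edge when the image collapses, or the $w$-related double-triple $\{v, \langle \bar w - \bar v\rangle, \langle \bar v + \epsilon_2 \bar e_j\rangle\}$), so there is no gap in substance.
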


\begin{proof}
  Let $\tau_1 = \gamma \ast z$, where $\gamma = \{v_0, v_1\}$ is the unique $3$-additive facet of $\tau_1$. In this proof, we will use exactly the same strategy as in the proof of \autoref{TA-non-carrying} and consider the possible minimal $3$-additive simplices $\gamma$, one after the other. The underlying simplex of $\gamma$ in $\BAA_{m+n}$ 
  is a subset $\{v_0, v_1, v_2, v_3\} \subseteq \{e_1, \dots, e_m, w, v_0, v_1\}$, where $\{e_1, \dots, e_m, w, v_1\}$ is a standard simplex and $v_0 = \langle \bar v_1 + \epsilon_2 \bar v_2 + \epsilon_3 \bar v_3 \rangle$ for some choice of signs $\epsilon_2, \epsilon_3 \in \{-1,+1\}$. As in the first part of the proof of \autoref{TA-non-carrying}, we need to consider two cases.
  
  \emph{Case (a)}: If $v_2 = e_i$ and $v_3 = e_j$ for some $0 \leq i \neq j \leq m$, then $v_0 = \langle \bar v_1 + \epsilon_2 \bar e_i + \epsilon_3 \bar e_j \rangle$ and $ z = \langle \bar v_1  + \epsilon_2 \bar e_i \rangle$ or $z = \langle \bar v_1 + \epsilon_3 \bar e_j \rangle$. Hence, $r(v_0)= \langle \overline{ r(v_1) } + \epsilon_2 \bar e_i + \epsilon_3 \bar e_j \rangle$ and $r(z) = \langle \overline{r(v_1)} + \epsilon_2 \bar e_i \rangle$ or $r(z) = \langle \overline{r(v_1)} + \epsilon_3 \bar e_j \rangle$ by \autoref{lem_properties_r}. It follows that $r(\tau_1)$ is a double-triple simplex in $\Linkhat^{<}_{\BAA_n^m}(w)$. An application of \autoref{lem_key_extend_from_additive_core} implies that $r(\sigma) = r(\tau_1) \ast r(\tau_2)$ is a double-triple simplex as well.
  
  \emph{Case (b)}: If $v_2 = w, v_3 = e_i$ for some $1 \leq i \leq m$, then $v_0  = \langle \bar v_1 + \epsilon_2 \bar w + \epsilon_3  \bar e_i \rangle$ and  $z = \langle \bar v_1 + \epsilon_2 \bar w \rangle$ or $z = \langle \bar v_1 + \epsilon_3 \bar e_i \rangle$. By \autoref{lem_properties_r},  $r(v_0) = \langle \epsilon \overline{ r(\langle \bar v_1 + \epsilon_2 \bar w \rangle) } + \epsilon_3 \bar e_i \rangle$ where $\epsilon = -1$ if the last coordinate of $\bar v_1 + \epsilon_2 \bar w$ is negative, and $\epsilon = +1$ otherwise. \autoref{lem_properties_r} also implies that $r(\langle \bar v_1 + \epsilon_3 \bar e_i \rangle) = \langle \overline{r(v_1)} + \epsilon_3  \bar e_i \rangle$ and that $r(\langle \bar v_1 + \epsilon_2 \bar w \rangle) \in \{r(v_1), \langle \bar w - \overline{r(v_1)} \rangle \}$. Note that $r(\langle \bar v_1 + \epsilon_2 \bar w \rangle) = \langle \bar w - \overline{r(v_1)} \rangle$ requires that $\epsilon_2 = -1$ and that $\epsilon = -1$ (compare with \autoref{lem_properties_r}). Resolving the signs, it follows that if $z = \langle \bar v_1 + \epsilon_3 \bar e_i \rangle$, then 
    \begin{equation*} r(\tau_1) = \ls r(v_0),r(v_1), r(z) \rs = \qquad
      \begin{rcases}
        \langle \overline{ r(v_1) } + \epsilon_3 \bar e_i \rangle\\
        \langle \overline{r(v_1)} - \bar w  + \epsilon_3 \bar e_i \rangle 
      \end{rcases}
      \ast r(v_1) \ast \langle \overline{r(v_1)} + \epsilon_3 \bar e_i \rangle
    \end{equation*}
     is a $2$-additive or double-triple simplex. If $z = \langle \bar v_1 + \epsilon_2 \bar w \rangle$, then $r(\tau_1) = \ls r(v_0),r(v_1), r(z) \rs$ is equal to either
    $$\{\langle \overline{ r(v_1) } + \epsilon_3 \bar e_i \rangle, r(v_1), r(v_1)\},$$ which is externally $2$-additive, or
    $$\{\langle \overline{r(v_1)} - \bar w  + \epsilon_3 \bar e_i \rangle, r(v_1), \langle \bar w - \overline{r(v_1)} \rangle\},$$ which is a double-triple simplex.
    Hence $r(\tau_1)$ forms a simplex in each case. By \autoref{lem_key_extend_from_additive_core} we therefore conclude that $f(\sigma) = f(\tau_1) \ast f(\tau_2)$ is a simplex of the same type in $\Linkhat^{<}_{\BAA_n^m}(w)$. \qedhere
\end{proof}

We now work towards extending the retraction over minimal double-triple simplices that are $3$-dimensional. The unique $3$-additive facet of such simplices are either $w$-related or externally $3$-additive. We start by considering $3$-dimensional double-triple simplices whose unique $3$-additive facet is $w$-related. The next observation explains why such minimal double-triple simplices can only have one carrying facet.

\begin{observation}
  Let $\tau_1 = \gamma \ast z$ be a minimal double-triple simplex of dimension $3$ whose unique $3$-additive facet $\gamma = \{v_0  = \langle \bar v_1 + \epsilon_2 \bar v_2 + \epsilon_3 \bar w \rangle, v_1, v_2 \}$ is $w$-related, where $\epsilon_2, \epsilon_3 \in \{-1,+1\}$. If $z = \langle \epsilon_k \bar v_k + \epsilon_3 \bar w \rangle$ for $k \in \{1,2\}$ and $\epsilon_1 \coloneq +1$, then $\tau_1$ cannot have any carrying facet. If $z = \langle \bar v_1 + \epsilon_2 \bar v_2 \rangle$, then there exists a unique $2$-additive facet $\{v_1, v_2, z\}$ that might be carrying, as in \autoref{FigureDoubleTripleCarrying}. 
\end{observation}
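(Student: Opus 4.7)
The plan is to enumerate the four facets of the $3$-simplex $\tau_1 = \{v_0, v_1, v_2, z\}$ and check each against the carrying classifications from \autoref{carrying-2-additive} (for $2$-additive facets) and \autoref{carrying-3-additive-simplices} (for $3$-additive facets). The unique $3$-additive face $\gamma$ is $w$-related by hypothesis, and since the classification in \autoref{carrying-3-additive-simplices} only permits externally or internally $3$-additive simplices to be carrying, $\gamma$ is immediately ruled out. The analysis thus reduces to the three $2$-additive facets containing $z$, namely $\{v_0, v_1, z\}$, $\{v_0, v_2, z\}$ and $\{v_1, v_2, z\}$.

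For each such facet I would combine the $3$-additive relation $\bar v_0 = \bar v_1 + \epsilon_2 \bar v_2 + \epsilon_3 \bar w$ for $\gamma$ with the given expression for $\bar z$ to read off the defining additive relation and classify the facet as either $w$-related or internally $2$-additive. Since \autoref{carrying-2-additive} allows only internally $2$-additive simplices to be carrying, any $w$-related facet is automatically non-carrying.

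In the case $z = \langle \bar v_1 + \epsilon_2 \bar v_2 \rangle$, substitution yields the derived relation $\bar v_0 = \bar z + \epsilon_3 \bar w$, which makes both $\{v_0, v_1, z\}$ and $\{v_0, v_2, z\}$ $w$-related $2$-additive (with additive core $\{v_0, z\}$ together with $w$) and hence non-carrying; meanwhile $\{v_1, v_2, z\}$ supports the internal relation $\bar z = \bar v_1 + \epsilon_2 \bar v_2$, making it the unique facet that might be carrying, as claimed.

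In the case $z = \langle \epsilon_k \bar v_k + \epsilon_3 \bar w \rangle$ the defining relation $\bar z = \epsilon_k \bar v_k + \epsilon_3 \bar w$ is itself $w$-related, so the two facets of $\tau_1$ that contain $v_k$ inherit a $w$-related $2$-additive structure (core $\{v_k, z\}$ together with $w$) and are non-carrying. The main obstacle is the third facet, which does not contain $v_k$, and acquires the derived relation $\bar v_0 = \bar z + \epsilon_{3-k} \bar v_{3-k}$, so it looks internally $2$-additive; the plan is to track the decomposition $R_z = a_z R + b_z$ through the sign cases of $\epsilon_3$ (using that $R_z$ equals $R_{v_k} + R$, $R_{v_k} - R$, or $R - R_{v_k}$ depending on which normalisation of $\bar z$ has nonnegative last coordinate), and exploit the structural coupling between $\bar z$ and $\bar w$ to verify that $b_z$ and $b_{v_{3-k}}$ together cannot fall in the carrying window $[R, 2R)$ from \autoref{carrying-2-additive}.
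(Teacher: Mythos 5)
Your enumeration of the facets, your treatment of $\gamma$, and your analysis of the case $z = \langle \bar v_1 + \epsilon_2 \bar v_2\rangle$ all match the paper's (very short) argument, which rests on the fact that $w$-related faces cannot be carrying by \autoref{carrying-2-additive} and \autoref{carrying-3-additive-simplices}. Where you diverge is instructive: the paper asserts that when $z = \langle \epsilon_k \bar v_k + \epsilon_3 \bar w\rangle$ \emph{every} facet of $\tau_1$ is $w$-related, whereas you correctly observe that the facet omitting $v_k$ carries the internal relation $\vec v_0 = \vec z + \epsilon_{k'} \bar v_{k'}$ (where $\{k,k'\}=\{1,2\}$), so its additive core is $\{v_0, v_{k'}, z\}$, which does not contain $w$; it is internally $2$-additive, not $w$-related. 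You have correctly located the crux.

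The gap is that your final step --- showing $b_z + b_{k'}$ cannot land in $[R,2R)$ --- fails, and no sign-chasing will rescue it. Take $\epsilon_2 = \epsilon_3 = +1$, $k=1$, and last coordinates $R=10$ for $\bar w$, $6$ for $\bar v_1$, $7$ for $\bar v_2$ (realised, e.g., by $\bar w = (1,0,0,10)$, $\bar v_1 = (0,1,0,6)$, $\bar v_2 = (0,0,1,7)$ in $\Linkhat_{\BAA^0_4}(w)$). Then $\bar z = \bar v_1 + \bar w$ has last coordinate $16$, so $b_z = 6$, and $\bar v_0 = \bar z + \bar v_2$ with all three representatives normalised, with $b_z + b_2 = 13 \in [R,2R)$; by \autoref{carrying-2-additive} the facet $\{v_0, v_2, z\}$ is carrying (indeed $r$ sends it to the $w$-related $3$-additive set $\{r(v_1), r(v_2), \langle \overline{r(v_1)} + \overline{r(v_2)} - \bar w\rangle\}$, which is not a simplex of $\Linkhat^{<}_{\BA^m_n}(w)$). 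In general, for $\epsilon_2=\epsilon_3 = +1$ one has $b_z = b_1$, so this facet is carrying precisely when $\{v_1, v_2, \langle \bar v_1 + \bar v_2\rangle\}$ is --- a situation the surrounding lemma explicitly permits. So the obstacle you flagged is not a postponed computation but a genuine counterexample to the first assertion of the observation as stated; your more careful facet classification has in fact exposed the error in the paper's own one-line proof, which wrongly labels $\{v_0, v_{k'}, z\}$ as $w$-related.
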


\begin{figure}[h!]
\begin{center}
  \includegraphics[scale=3.5]{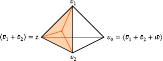}
\end{center}
\caption{The subdivision of $\partial \tau_1$ in the carrying case}
\label{FigureDoubleTripleCarrying}
\end{figure}

\begin{proof}
  Recall that $z$ is contained in the additive core of any $2$-additive facet of $\tau_1$. If $z = \langle \epsilon_k \bar v_k + \epsilon_3 \bar w \rangle$ for $k \in \{1,2\}$, then any facet of $\tau_1$ is $w$-related and it follows from \autoref{carrying-2-additive} and \autoref{carrying-3-additive-simplices} that no such simplex can be carrying. If $z = \langle \bar v_1 + \epsilon_2 \bar v_2 \rangle$, then all facets but the $2$-additive facet $\{v_1, v_2, z\}$ are $w$-related. Hence, it follows from \autoref{carrying-2-additive} and \autoref{carrying-3-additive-simplices} that $\{v_1, v_2, z\}$ is the unique possibly carrying facet.
\end{proof}

We now extend the retraction over the first type of minimal double-triple simplex of dimension $3$.

\begin{lemma}
  \label{lem:extending-over-dt-w-related-case}
  The map $r$ introduced in \autoref{TAretraction} extends over all double-triple simplices $\sigma = \tau_1 \ast \tau_2$ in $\Linkhat_{\BAA^m_n}(w)$, where $\tau_1 = \gamma \ast z$ is a minimal double-triple simplex of dimension $3$ with $w$-related $3$-additive facet $\gamma$ and $\tau_2$ is a standard simplex.
\end{lemma}

More precisely, in the proof of \autoref{lem:extending-over-dt-w-related-case} we check that the map
$$r\colon  \sd(\Linkhat_{\TA_n^m}(w) ) \m \Linkhat^{<}_{\BAA_n^m}(w)$$
in \autoref{TAretraction} extends over the simplex $\sigma = \tau_1 \ast \tau_2$ if the simplex is not carrying, and over a subdivision $\sd(\sigma) = \sd(\tau_1) \ast \tau_2$ if the simplex is carrying. Here, $\sd(\tau_1) = \sd(\eta) \ast v_0$ is the coarsest subdivision of $\tau_1$ that is compatible with the subdivision of its unique carrying $2$-additive facet $\eta = \{v_1, v_2, z = \langle \bar v_1 + \bar v_2 \rangle \}$ described in \autoref{definition-sd-BA}. The carrying case (illustrated in \autoref{FigureSubdivisionCarryingDouble-Triple}) occurs if and only if $\tau_1 = \{v_0 = \langle \bar v_1 + \bar v_2 \pm \bar w \rangle, v_1, v_2, z = \langle \bar v_1 + \bar v_2 \rangle\}$ contains a unique carrying $2$-additive facet $\{v_1, v_2, z = \langle \bar v_1 + \bar v_2 \rangle\}$.

\begin{figure}[h!]
\begin{center}
  \includegraphics[scale=3.5]{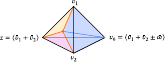}
\end{center}
\caption{ The subdivision of  $\tau_1 = \gamma \ast z$ in the carrying case }
\label{FigureSubdivisionCarryingDouble-Triple}
\end{figure} 

\begin{proof}
  Let $\tau_1 = \gamma \ast z$ and $\gamma = \{v_0, v_1, v_2\}$ with $v_0 = \langle \bar v_1 + \epsilon_2 \bar v_2 + \epsilon_3 \bar w \rangle$ for two signs $\epsilon_2, \epsilon_3 \in\{-1,+1\}$. Then, $z \in \{\langle \bar v_1 + \epsilon_2 \bar v_2 \rangle, \langle \bar v_1 + \epsilon_3 \bar w \rangle, \langle \epsilon_2 \bar v_2 + \epsilon_3 \bar w \rangle\}$. We will sometimes use the convention that $\epsilon_1 \coloneq +1$. In this proof, we use exactly the same strategy as in the proof of \autoref{TA-non-carrying}. The underlying simplex of $\gamma$ in $\BAA_{m+n}$ is the set $\{v_0, v_1, v_2, v_3 = w\}$. We need to consider three cases, which are similar to the cases in the second part of the proof of \autoref{TA-non-carrying}: We again set $v_0' = \langle \bar v_1 + \epsilon_2 \bar v_2 \rangle$ and recall that $$r(v_0) = r(\langle \bar v_1 + \epsilon_2 \bar v_2 + \epsilon_3 \bar w \rangle)  \in \{ r(v_0'), \langle \bar w - \overline{ r(v_0')} \rangle \}$$ by \autoref{lem_properties_r}. Furthermore, we note that the value of $r(v_0)$ depends on the last coordinate of $\bar v_1 + \epsilon_2 \bar v_2$, which might be negative, and the sign $\epsilon_3$ (compare with  \autoref{lem_properties_r}). There are three subcases.
  
  \emph{Case (a):} Assume that $\{v_1, v_2, v_0' = \langle \bar v_1 + \epsilon_2 \bar v_2 \rangle\}$ is not carrying and hence $r(v_0') = \langle \overline{r(v_1)} + \epsilon_2 \overline{r(v_2)} \rangle$. Using \autoref{lem_properties_r} to calculate $r(v_0)$ and $r(z)$, the possible values of $r(\tau_1) = \{r(v_0), r(v_1), r(v_2), r(z)\}$ are of the following form. If $z = \langle \bar v_1 + \epsilon_2 \bar v_2 \rangle$, then $r(\tau_1)$ is
     \begin{align*}
      \begin{rcases}
        \langle \overline{r(v_1)} + \epsilon_2 \overline{r(v_2)} \rangle\\
        \langle \overline{r(v_1)} + \epsilon_2 \overline{r(v_2)} + \epsilon_3 \bar w \rangle 
      \end{rcases}
      \ast r(v_1) \ast r(v_2) \ast \langle  \overline{r(v_1)} + \epsilon_2 \overline{r(v_2)} \rangle \text{ which is $2$-additive or double-triple.}
    \end{align*}
    Let $k \in \{1,2\}$ and $\epsilon_1 \coloneq +1$. If $z = \langle \epsilon_k \bar v_k + \epsilon_3 \bar w \rangle$ and $r(z) = r(v_k)$, then $r(\tau_1)$ is
    \begin{align*}
      \begin{rcases}
        \langle \overline{r(v_1)} + \epsilon_2 \overline{r(v_2)} \rangle\\
        \langle \overline{r(v_1)} + \epsilon_2 \overline{r(v_2)} + \epsilon_3 \bar w \rangle 
      \end{rcases}
      \ast r(v_1) \ast r(v_2) \ast r(v_k) \text{ which is $2$-additive or $3$-additive.}
    \end{align*}
    If $z = \langle \epsilon_k \bar v_k + \epsilon_3 \bar w \rangle$ and $r(z) = \langle \epsilon_k \overline{r(v_k)} + \epsilon_3 \bar w \rangle$, then $r(\tau_1) $ is
    \begin{align*}    
      \begin{rcases}
        \langle \overline{r(v_1)} + \epsilon_2 \overline{r(v_2)} \rangle\\
        \langle \overline{r(v_1)} + \epsilon_2 \overline{r(v_2)} + \epsilon_3 \bar w \rangle 
      \end{rcases}
      \ast r(v_1) \ast r(v_2) \ast \langle \epsilon_k \overline{r(v_k)} + \epsilon_3 \bar w \rangle \text{ which is double-triple.}
    \end{align*}
    It follows that $r(\tau_1)$ is a simplex in $\Linkhat^{<}_{\BAA_n^m}(w)$. Then, an application of \autoref{lem_key_extend_from_additive_core} implies that $r(\sigma) = r(\tau_1) \ast r(\tau_2)$ is a simplex in $\Linkhat^{<}_{\BAA_n^m}(w)$ as claimed.
    
  \emph{Case (b)}: Assume that $\{v_1, v_2, v_0' = \langle \bar v_1 + \epsilon_2 \bar v_2 \rangle\}$ is carrying and that $z \in \{\langle \bar v_1 + \epsilon_3 \bar w \rangle, \langle \epsilon_2 \bar v_2 + \epsilon_3 \bar w \rangle\}$. We start by recording two observations.
    \begin{itemize}
    \item If the absolute value of the last coordinate of $v_k$ is maximal among $\{v_1, v_2, v_0'\}$ for $k \in \{1,2\} = \{k,k'\}$, then one obtains the relation $\overline{r(v_k)} = \overline{r(v_{k'})} + \overline{r(v_0')} - \bar w$. Note that in this case, the last coordinate of $\bar v_k$ is contained in $[R, \infty)$ and hence that, if $\epsilon_k \neq \epsilon_3$, it is impossible that $r(\langle \epsilon_k \bar v_k + \epsilon_3 \bar w \rangle) = \langle \bar w - \overline{r(v_k)} \rangle$ (compare with \autoref{lem_properties_r}). Therefore, this case will not be considered below.
    \item If the absolute value of the last coordinate of $v_0'$ is maximal among $\{v_1, v_2, v_0'\}$, then one obtains the relation $\overline{r(v_0')} = \overline{r(v_1)} + \overline{r(v_2)} - \bar w$. As observed in case (b.2) in the proof of \autoref{TA-non-carrying}, it is impossible that $v_0'$ is maximal and $r(v_0) = \langle \bar w - \overline{r(v_0')} \rangle$. Hence, this case will not be considered below.
    \end{itemize}
    Using \autoref{lem_properties_r} to calculate $r(v_0)$ and $r(z)$, the possible values of $r(\tau_1) = \{r(v_0), r(v_1), r(v_2), r(z)\}$ are of the following form. Let $l \in \{1,2\}$. If $r(v_0) = r(v_0')$, then
    \begin{align*}
      &\begin{rcases}
        \langle \overline{ r(v_k) } - \overline{ r(v_{k'})} + \bar w  \rangle\\
        \langle \overline{ r(v_1) } + \overline{ r(v_{2}) } - \bar w \rangle 
      \end{rcases}
      \ast r(v_1) \ast r(v_2) \ast r(v_l) \text{ is $3$-additive or $2$-additive,}\\
      &\{\langle \overline{ r(v_k) } - \overline{ r(v_{k'})} + \bar w  \rangle, r(v_1), r(v_2), \langle \bar w - \overline{r(v_{k'})} \rangle \} \text{ is double-triple, or}\\
      &\{\langle \overline{ r(v_1) } + \overline{ r(v_{2})} - \bar w  \rangle, r(v_1), r(v_2), \langle \bar w - \overline{r(v_{l})} \} \rangle \} \text{ is double-triple.}
    \end{align*}
    If $r(v_0) = \langle \bar w - \overline{r(v_0')} \rangle$, then
    \begin{align*}
      \langle \overline{ r(v_k) } - \overline{ r(v_{k'})} \rangle \ast r(v_1) \ast r(v_2) \ast
      \begin{cases}
        r(v_l) \text{ is $2$-additive, or}\\
        \langle \bar w - r(v_l) \rangle \text{ is double-triple.}
      \end{cases}
    \end{align*}
    It follows that $r(\tau_1)$ is a simplex in $\Linkhat^{<}_{\BAA_n^m}(w)$. Hence \autoref{lem_key_extend_from_additive_core} implies that $r(\sigma) = r(\tau_1) \ast r(\tau_2)$ is a simplex in $\Linkhat^{<}_{\BAA_n^m}(w)$ as claimed.
    
  \emph{Case (c)}: Assume that $\eta = \{v_1, v_2, v_0' = \langle \bar v_1 + \epsilon_2 \bar v_2 \rangle\}$ is carrying and that $z = \langle \bar v_1 + \epsilon_2 \bar v_2 \rangle$. Then, the carrying facet $\eta = \{v_1,v_2,z\}$ of $\tau_1$ is subdivided into three simplices in $\sd(\Linkhat_{\TA_n^m}(w))$ and we extend this subdivision to $\tau_1$ by replacing $\tau_1$ with the simplicial join $\sd(\tau_1) = \sd(\eta) \ast v_0$. The resulting subdivision $\sd(\tau_1)$ of $\tau_1$ consists of the following three $3$-simplices
    \begin{equation*}
      \{ v_0, t(\eta)\} \ast
      \begin{cases}
        \{v_1, v_2\},\\
        \{v_1, \langle \bar v_1 + \epsilon_2 \bar v_2 \rangle \} \text{ and }\\
        \{v_2, \langle \bar v_1 + \epsilon_2 \bar v_2 \rangle \}.
      \end{cases}
    \end{equation*}
    Recall that the barycentre $t(\eta)$ is mapped to $r(t(\eta)) = \langle \overline{r(v_l)} - \bar w \rangle$ for some choice $l \in \{1, 2\}$. Note that we must have $r(v_0) = r(v_0')$ if $\eta$ is carrying, i.e.\ we can't have $r(v_0) = \langle \bar w - \overline{r(v_0)} \rangle$ (compare with \autoref{lem_properties_r}). The images of these simplices under $r$ are therefore given by the following.
    \begin{equation*}     
      \{\langle \overline{r(v_1)} + \overline{r(v_2)} - \bar w \rangle, \langle \overline{r(v_l)} - \bar w \rangle\} \ast
      \begin{cases}
        \{r(v_1), r(v_2)\} \text{ is double-triple,}\\
        \{r(v_1), \langle \overline{r(v_1)} + \overline{r(v_2)} - \bar w \rangle \} \text{ is $2$-additive and }\\
        \{r(v_2), \langle \overline{r(v_1)} + \overline{r(v_2)} - \bar w \rangle \} \text{ is $2$-additive.}
      \end{cases}
    \end{equation*}
    It follows that $r$ extends over $\sd(\tau_1)$. Hence, \autoref{lem_key_extend_from_additive_core} implies that $r$ extends over any simplex in $\sd(\sigma) = \sd(\tau_1) \ast \tau_2 = \sd(\eta) \ast v_0 \ast \tau_2$. \qedhere
\end{proof}

In the next step, we extend the retraction over all minimal double-triple simplices of dimension $3$ whose unique $3$-additive facet is externally $3$-additive. The next observation records that if such a simplex is carrying, then it has exactly two carrying facets.

\begin{observation} \label{carrying-facets-of-dt-external-gamma}
  Let $\tau_1 = \gamma \ast z$ be a minimal double-triple simplex of dimension $3$ whose unique $3$-additive facet $\gamma$ is externally $3$-additive. Assuming that $\bar v_0$ has maximal last coordinate, we get that $\gamma = \{v_0  = \langle \bar v_1 + \epsilon_2 \bar v_2 + \epsilon_3 \bar e_i \rangle, v_1, v_2\}$. If $\tau_1$ is carrying, then $\tau_1$ has exactly two carrying facets and it holds that $v_0 = \langle \bar v_1 + \bar v_2 + \epsilon_3 \bar e_i \rangle$, i.e. $\epsilon_2 = +1$. See an illustration in \autoref{FigureDoubleTriple2Carrying}. In this case, the $3$-additive facet $\gamma$ has to be carrying and the second carrying facet $\beta$ is the unique internally $2$-additive facet of $\tau_1$, which is one of the following $$\beta = \{v_0,  v_{k'}, z = \langle \bar v_k + \epsilon_3 \bar e_i \rangle \} \text{ for } \{k,k'\} = \{1,2\} \text{ or } \beta = \{v_1,  v_2, z = \langle \bar v_1 + \bar v_2 \rangle \}.$$
\end{observation}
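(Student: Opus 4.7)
The plan is to enumerate the possible forms of the vertex $z$, classify the facets of $\tau_1 = \gamma \ast z$ in each case, and then apply the carrying criteria from \autoref{carrying-2-additive} and \autoref{carrying-3-additive-simplices}.

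First, I would pin down the structural possibilities for $z$. Using the alternative description of double-triple simplices given in the remark following \autoref{def:BAA-simplices} (a $2$-additive relation and a $3$-additive relation sharing two of their vectors), $z$ must realize a $2$-additive relation that shares two vectors with the $3$-additive relation $\bar v_0 = \bar v_1 + \epsilon_2 \bar v_2 + \epsilon_3 \bar e_i$ defining $\gamma$. Up to the symmetry between $v_1$ and $v_2$, this yields two essentially different cases: (a) $z = \langle \bar v_1 + \epsilon_2 \bar v_2 \rangle$ and (b) $z = \langle \bar v_k + \epsilon_3 \bar e_i \rangle$ for some $k \in \{1,2\}$.

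Next, in each case I would list the four facets of $\tau_1$ (namely $\gamma$ and the three subsets $\{v_i, v_j, z\}$) and determine their types. A direct substitution shows that among the three non-$\gamma$ facets, exactly two are externally $2$-additive (their additive core contains $e_i$) while exactly one is internally $2$-additive. Since externally $2$-additive simplices are never carrying by \autoref{carrying-2-additive}, this identifies the only facets that can be carrying, namely $\gamma$ and the unique internally $2$-additive facet $\beta$, and it also identifies $\beta$ in each case as one of the forms listed in the observation.

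The third step is to show that when $\tau_1$ is carrying, $\gamma$ and $\beta$ are carrying simultaneously, both under the single condition $b_1 + b_2 \in [R, 2R)$. For $\gamma$ this is the externally $3$-additive carrying criterion from \autoref{carrying-3-additive-simplices}. For $\beta$ it follows by computing the last coordinate of $\bar z$ in each case (equal to $b_1 + b_2$, $b_1$, or $b_2$ after the appropriate reduction) and then checking that the internally $2$-additive carrying condition from \autoref{carrying-2-additive} collapses to the same inequality. This establishes that $\tau_1$ has exactly two carrying facets and that $\gamma$ must be one of them.

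Finally, I would justify $\epsilon_2 = +1$. Comparing last coordinates in $\bar v_0 = \bar v_1 + \epsilon_2 \bar v_2 + \epsilon_3 \bar e_i$, using that all $\bar v_i$ have nonnegative last coordinate with $\bar v_0$ maximal, the sign $\epsilon_2 = -1$ forces $a_2 = b_2 = 0$. But then $b_1 + b_2 = b_1 < R$, so by the previous step no facet of $\tau_1$ can be carrying, contradicting the hypothesis. Hence $\epsilon_2 = +1$. The main delicacy of the argument is keeping sign conventions consistent in the boundary cases where some $b_i$ vanishes, since then $\bar v_i$ is only defined up to sign; this is handled cleanly by absorbing any such sign ambiguity into the choice of representative before invoking the carrying criteria.
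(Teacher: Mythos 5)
Your proposal is correct and follows essentially the same route as the paper: enumerate the possible $z$, observe via \autoref{carrying-2-additive} that only the unique internally $2$-additive facet $\beta$ and the $3$-additive facet $\gamma$ can be carrying, and check that both carrying conditions reduce to $b_1+b_2\in[R,2R)$ because $\bar e_i$ has last coordinate zero. The only (minor) divergence is how $\epsilon_2=+1$ is obtained — you derive it from maximality of the last coordinate of $\bar v_0$ forcing $b_2=0$ when $\epsilon_2=-1$, whereas the paper reads it off directly from the carrying characterisation in \autoref{carrying-3-additive-simplices} — but both arguments are valid.
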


\begin{figure}[h!]
\begin{center}
  \includegraphics[scale=3.5]{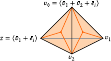}
\end{center}
\caption{The subdivision of $\partial \tau_1$ in the carrying case.}
\label{FigureDoubleTriple2Carrying}
\end{figure}

\begin{proof}
  If $\gamma$ is carrying, then it follows from \autoref{carrying-3-additive-simplices} that $\gamma = \{v_0  = \langle \bar v_1 + \bar v_2 + \epsilon_3 \bar e_i \rangle, v_1, v_2\}$ with $b_1 + b_2 \in [R, 2R)$. It follows that $z \in \{ \langle \bar v_l + \epsilon_3 \bar e_i \rangle, \langle \bar v_1 + \bar v_2 \rangle\}$ for $l \in \{1,2\}$. Recall from \autoref{carrying-2-additive} that the only carrying $2$-additive simplices are internally $2$-additive. $\tau_1$ contains a unique internally $2$-additive facet $\beta$ spanned by $\beta = \{v_0,  v_{k'}, z = \langle \bar v_k + \epsilon_3 \bar e_i \rangle \}$ for $\{k,k'\} = \{1,2\}$ or $\beta = \{v_1,  v_2, z = \langle \bar v_1 + \bar v_2 \rangle \}$. Observe that $\beta$ has to be carrying, because $b_1 + b_2 \in [R, 2R)$. If $\gamma$ is not carrying, then the unique internally $2$-additive facet $\beta$ cannot be carrying since adding or subtracting $\bar e_i$ does not change the last coordinate (compare with \autoref{lem_properties_r}).
\end{proof}

We now finish our discussion on how to extend the retraction over minimal double-triple simplices of dimension $3$. The following simplicial $3$-disc and \autoref{cor:changing-r-on-subdivision} will be used to describe the subdivision $\sd(\tau_1)$ of a carrying minimal double-triple simplex with externally $3$-additive facet.

\begin{definition} \label{subdivision-carrying-dt-external-gamma}
  Let $\sd(\partial \Delta^3)$ be subdivision of the standard simplicial $2$-sphere $\partial \Delta^3$ on the vertex set $\{w,x,y,z\}$ obtained by subdividing the facet $\gamma = \{w,x,y\}$ by placing the vertex $t(\gamma)$ at its barycentre and the facet $\beta = \{x,y,z\}$ by placing the vertex $t(\beta)$ at its barycentre. Let $\sd(\Delta^3)$ be the simplicial $3$-disc that is obtained by extending the subdivision of $\sd(\partial \Delta^3)$ to a subdivision of the $3$-simplex $\Delta^3$ using the following five $3$-simplices (shown in \autoref{FigureExtendingOver-AbstractLabels}), 
 $$\{t(\gamma), t(\beta), x,y\}, \quad 
  \{t(\gamma), t(\beta), w,x\}, \quad 
  \{t(\gamma), t(\beta), w,y\},
  \quad  \{t(\beta), w,x,z \}, \quad  \text{ and }
\quad  \{t(\beta), w,y,z \}.$$ 

\begin{figure}[h!]
\begin{center}
   \includegraphics[scale=3.5]{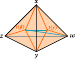}
\end{center}
\caption{The subdivision $\sd(\Delta^3)$}
\label{FigureExtendingOver-AbstractLabels}
\end{figure}
\end{definition}

\begin{lemma}
  \label{lem:extending-over-dt-external-case}
  The map $r$ in \autoref{TAretraction} extends over all double-triple simplices $\sigma = \tau_1 \ast \tau_2$ in $\Linkhat_{\BAA^m_n}(w)$, where $\tau_1 = \gamma \ast z$ is a minimal double-triple simplex of dimension $3$ with externally $3$-additive facet $\gamma$ and $\tau_2$ is a standard simplex.
\end{lemma}

More precisely, in the proof of \autoref{lem:extending-over-dt-external-case} we check that the map
$$r\colon  \sd(\Linkhat_{\TA_n^m}(w) ) \m \Linkhat^{<}_{\BAA_n^m}(w)$$
in \autoref{TAretraction} extends over the simplex $\sigma = \tau_1 \ast \tau_2$ if the simplex is not carrying, and over a subdivision $\sd (\sigma) = \sd(\tau_1) \ast \tau_2$ of $\sigma$ if the simplex is carrying. Here,
the subdivision $\sd(\tau_1)$ is of the form $\sd(\Delta^3)$ described in \autoref{subdivision-carrying-dt-external-gamma} or of the form $\sd(\Delta^3) \cup_{\sd(\beta)} D(\beta)$ using \autoref{subdivision-carrying-dt-external-gamma} and applying \autoref{cor:changing-r-on-subdivision} once to the internally $2$-additive carrying facet $\beta$ of $\tau_1$. The carrying case occurs if and only if $\tau_1 = \{ v_1,v_2,\langle \bar v_1 + \bar v_2 + \epsilon_3 \bar e_i \rangle, \langle \bar v_1 + \epsilon_3 \bar e_i \rangle  \} $ for some $i \leq m$ and $\epsilon_3 \in \{+1,-1\}$ or $\tau_1 = \{ v_1,v_2,\langle \bar v_1 + \bar v_2 + \epsilon_3 \bar e_i \rangle, \langle \bar v_1 + \bar v_2 \rangle \} $ for some $i \leq m$ and $\epsilon_3 \in \{+1,-1\}$. These cases are illustrated in \autoref{ExtendingOver}. 

\begin{figure}[h!]
\begin{center}
  \includegraphics[scale=3.5]{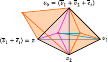} \qquad
    \includegraphics[scale=3.5]{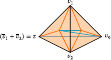}
\end{center}
\caption{The carrying cases of \autoref{lem:extending-over-dt-external-case}}
\label{ExtendingOver}
\end{figure}

\begin{proof}
  Let $\tau_1 = \gamma \ast z$ and $\gamma = \{v_0, v_1, v_2\}$ with $v_0 = \langle \bar v_1 + \epsilon_2 \bar v_2 + \epsilon_3 \bar e_i \rangle$ for $\epsilon_2, \epsilon_3 \in \{+1,-1\}$. Then, $z \in \{\langle \bar v_1 + \epsilon_2 \bar v_2 \rangle, \langle \bar v_1 + \epsilon_3 \bar e_i \rangle, \langle \epsilon_2 \bar v_2 + \epsilon_3 \bar e_i \rangle\}$.
  
  Firstly, assume that $\tau_1$ is not carrying. Then, it holds that $r(v_0) = \langle \overline{r(v_1)} + \epsilon_2 \overline{r(v_2)} + \epsilon_3 \bar e_i \rangle$ and $r(z) \in \{\langle \overline{r(v_1)} + \epsilon_2 \overline{r(v_2)} \rangle, \langle \overline{r(v_1)} + \epsilon_3 \bar e_i \rangle, \langle \epsilon_2 \overline{r(v_2)} + \epsilon_3 \bar e_i \rangle\}$ using \autoref{lem_properties_r}. It follows that
    \begin{equation*}
      \{\langle \overline{r(v_1)} + \epsilon_2 \overline{r(v_2)} + \epsilon_3 \bar e_i \rangle, r(v_1), r(v_2) \} \ast
      \begin{cases}
        \langle \overline{r(v_1)} + \epsilon_2 \overline{r(v_2)} \rangle \text{ is double-triple,}\\
        \langle \overline{r(v_1)} + \epsilon_3 \bar e_i \rangle \text{ is double-triple, and}\\
        \langle \epsilon_2 \overline{r(v_2)} + \epsilon_3 \bar e_i \rangle \text{ is double-triple.}\\
      \end{cases}
    \end{equation*}
    Hence, $r(\tau_1)$ spans a simplex in $\Linkhat_{\BAA^m_n}^<(w)$ and, by \autoref{lem_key_extend_from_additive_core}, it therefore follows that $r(\sigma) = r(\tau_1) \ast r(\tau_2)$ is a simplex in $\Linkhat^{<}_{\BAA_n^m}(w)$ as claimed.
  
  Secondly, assume that $\tau_1$ is carrying. Then, it holds by \autoref{carrying-facets-of-dt-external-gamma} that $\tau_1$ contains exactly two carrying facets and that we may assume $v_0 = \langle \bar v_1 + \bar v_2 + \epsilon_3 \bar e_i \rangle$, i.e.\ $\epsilon_2 = +1$. It follows that $r(v_0) = \langle \overline{r(v_1)} + \overline{r(v_2)} - \bar w + \epsilon_3 \bar e_i \rangle$.
  
  \emph{Case (a)}: Assume that $z = \langle \bar v_1 + \bar v_2 \rangle$. \autoref{carrying-facets-of-dt-external-gamma} implies that the two carrying facets of $\tau_1$ are the $3$-additive facet $\gamma$ and the unique internally $2$-additive facet $\beta = \{v_1, v_2, z\}$. Since $\beta$ is carrying,  we have that $\overline{r(z)} = \overline{r(v_1)} + \overline{r(v_2)} - \bar w$ (compare with \autoref{carrying-2-additive}). The two facets $\gamma = \{v_0, v_1, v_2\}$ and $\beta = \{v_1, v_2,z\}$ of $\tau_1$ have been subdivided in $\Linkhat_{\TA^m_n}(w)$. Applying \autoref{cor:changing-r-on-subdivision} once, it suffices to show that $r$ extends over the subdivision $\sd(\tau_1)$ of $\tau_1$ that extends $\sd(\partial \tau_1)$ as described in \autoref{subdivision-carrying-dt-external-gamma}, for the case $r(t(\gamma)) = \langle \overline{r(v_1)} + \overline{r(v_2)} - \bar w \rangle$ and $r(t(\beta)) = \langle \overline{r(v_1)} - \bar w \rangle$\footnote{The other choice for $r(t(\beta))$ is $\langle \overline{r(v_2)} - \bar w \rangle$.}. The following shows that the image of every simplex in $\sd(\tau_1)$ (compare with \autoref{subdivision-carrying-dt-external-gamma}) is a simplex of $\Linkhat^<_{\BAA^m_n}(w)$,
      \begin{equation*}
        \{r(t(\gamma)), r(t(\beta))\} \ast \begin{cases} 
          \{r(v_1), r(v_2)\} \text{ is double-triple,}\\
          \{r(v_0) = \langle \overline{r(t(\gamma))} + \epsilon_3 \bar e_i \rangle, r(v_1)\} \text{ is double-double,}\\
          \{r(v_0) = \langle \overline{r(t(\gamma))} + \epsilon_3 \bar e_i \rangle , r(v_2)\} \text{ is double-triple.}
        \end{cases}
      \end{equation*}
      and,
      \begin{equation*}
        \{r(t(\beta)), r(z) = \langle \overline{r(v_1)} + \overline{r(v_2)} - \bar w \rangle\} \ast
        \begin{cases}
          \{r(v_0), r(v_1)  \} \text{ is double-double,}\\
          \{r(v_0), r(v_2)\} \text{ is double-triple.}
        \end{cases}
      \end{equation*}
      It follows that the map extends the subdivision $\sd(\tau_1)$.  By \autoref{lem_key_extend_from_additive_core}, it therefore follows that $r$ extends over any simplex in $\sd(\sigma) = \sd(\tau_1) \ast \tau_2$.
  
  \emph{Case (b)}: Assume that $z = \langle  \bar v_k + \epsilon_3 \bar e_i \rangle$ for $k \in \{1,2\} = \{k, k'\}$. \autoref{carrying-facets-of-dt-external-gamma} implies that the two carrying facets of $\tau_1$ are the $3$-additive facet $\gamma$ and the unique internally $2$-additive facet $\beta = \{v_0, v_{k'}, z\}$. The two facets $\gamma = \{v_0, v_1, v_2\}$ and $\beta = \{v_0, v_{k'},z\}$ of $\tau_1$ have been subdivided in $\Linkhat_{\TA^m_n}(w)$. Applying \autoref{cor:changing-r-on-subdivision} once, it suffices to show that $r$ extends over the subdivision $\sd(\tau_1)$ of $\tau_1$ that extends $\sd(\partial \tau_1)$ as described in \autoref{subdivision-carrying-dt-external-gamma}, for the case $r(t(\gamma)) = \langle \overline{r(v_1)} + \overline{r(v_2)} - \bar w \rangle$ and $r(t(\beta)) = \langle \overline{r(v_{k'})} - \bar w \rangle$\footnote{The other choice for $r(t(\beta))$ is $\langle \overline{r(v_{k})} + \epsilon_3 \bar e_i - \bar w \rangle $.}. The following shows that the image of every simplex in $\sd(\tau_1)$ 
  is a simplex of $\Linkhat^<_{\BAA^m_n}(w)$,
      \begin{equation*}
        \{r(t(\gamma)), r(t(\beta))\} \ast \begin{cases}
          \{r(v_1), r(v_2)\} \text{ is double-triple,}\\
          \{r(v_0) = \langle \overline{r(t(\gamma))} + \epsilon_3 \bar e_i \rangle, r(v_{k'})\} \text{ is double-double,}\\
          \{r(v_0) = \langle \overline{r(t(\gamma))} + \epsilon_3 \bar e_i \rangle, r(v_k)\} \text{ is double-triple.}
        \end{cases}
      \end{equation*}
      and,
      \begin{equation*}
        \{r(t(\beta)), r(z) = \langle \overline{r(v_k)} + \epsilon_3 \bar e_i \rangle\} \ast \begin{cases}
          \{r(v_k), r(v_{k'}) \} \text{ is double-double,}\\
          \{r(v_0), r(v_k) \} \text{ is double-triple.}
        \end{cases}
      \end{equation*}
      It follows that the map extends the subdivision $\sd(\tau_1)$.  By \autoref{lem_key_extend_from_additive_core}, it therefore follows that $r$ extends over any simplex in $\sd(\sigma) = \sd(\tau_1) \ast \tau_2$. \qedhere
\end{proof}

In the final step, we prove that the retraction extends over minimal double-triple simplices of dimension $4$, i.e.\ these are internally double-triple simplices. The next observation gives a large class of examples of such double-triple simplices with have the property that every facet is carrying.

\begin{observation}
  Let $\tau_1 = \gamma \ast z$ be a double-triple simplex of dimension $4$. Then any face of $\tau_1$ can be a carrying simplex. This is for example the case for
  $$\{v_0, v_1, v_2, \langle \bar v_0 + \bar v_1 \rangle, \langle \bar v_0 + \bar v_1 + \bar v_2 \rangle\} \text{ with } b_0 + b_1 + b_2 \in [2R, 3R).$$
\end{observation}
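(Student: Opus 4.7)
The plan is to verify directly that the concrete example exhibits the claimed pathology: every non-standard face is carrying. Let me write $v_3 := \langle \bar v_0 + \bar v_1 \rangle$ and $v_4 := \langle \bar v_0 + \bar v_1 + \bar v_2 \rangle$, so $\tau_1 = \{v_0,v_1,v_2,v_3,v_4\}$ with $\gamma = \{v_0,v_1,v_2,v_4\}$ its unique $3$-additive facet and $z = v_3$. I would first enumerate the additive cores of the non-standard faces of $\tau_1$, using only the relations $\bar v_3 = \bar v_0 + \bar v_1$, $\bar v_4 = \bar v_0 + \bar v_1 + \bar v_2 = \bar v_3 + \bar v_2$. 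The possible additive cores among the faces of $\tau_1$ are exactly: the two minimal internally $2$-additive triangles $\{v_0,v_1,v_3\}$ and $\{v_2,v_3,v_4\}$, the minimal internally $3$-additive tetrahedron $\gamma$, and the double-triple simplex $\tau_1$ itself. Every other non-standard face has one of these as its additive core.

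Next I would carry out the numerical checks using the hypothesis $b_0 + b_1 + b_2 \in [2R,3R)$. Since $b_2 < R$, the hypothesis immediately forces $b_0 + b_1 > R$, and $b_i < R$ forces $b_0 + b_1 < 2R$; hence $b_0 + b_1 \in [R,2R)$. By \autoref{carrying-2-additive}, this is exactly the condition for $\{v_0,v_1,v_3\}$ to be carrying. The same inequality ensures that the last coordinate of $\bar v_3 = \bar v_0 + \bar v_1$ has the form $(a_0+a_1+1)R + b_3$ with $b_3 = b_0 + b_1 - R \in [0,R)$, so the ``$b$-value'' of $v_3$ is $b_3 = b_0+b_1-R$. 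Therefore $b_2 + b_3 = b_0+b_1+b_2 - R \in [R,2R)$, and again by \autoref{carrying-2-additive} the second triangle $\{v_2,v_3,v_4\}$ is carrying.

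For the $3$-additive facet $\gamma$, the relation $\bar v_4 = \bar v_0 + \bar v_1 + \bar v_2$ matches case \textit{ii)} of \autoref{carrying-3-additive-simplices}, and $b_0+b_1+b_2 \geq 2R \notin [0,R)$, so $\gamma$ is carrying. Finally, for $\tau_1$ itself, the last bullet of \autoref{carrying-double-triple-simplices} applies with $\epsilon = +1$: both disjunctive conditions $b_0 + b_1 \notin [0,R)$ and $b_0+b_1+b_2 \notin [0,R)$ hold, so $\tau_1$ is carrying as a double-triple simplex. Since any non-standard face of $\tau_1$ inherits the type of its additive core, this shows that every non-standard face of $\tau_1$ is carrying, proving the observation.

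The only part requiring genuine care is the bookkeeping of the derived $b$-value $b_3$ for $v_3$ and the verification that the four listed additive cores really do exhaust the non-standard possibilities among faces of $\tau_1$; beyond that, everything is a direct invocation of the characterisations already established in \autoref{carrying-2-additive}, \autoref{carrying-3-additive-simplices}, and \autoref{carrying-double-triple-simplices}.
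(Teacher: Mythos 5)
Your verification is correct, and the paper in fact states this observation without proof, so your write-up supplies exactly the missing check: the enumeration of the four possible additive cores ($\{v_0,v_1,v_3\}$, $\{v_2,v_3,v_4\}$, $\gamma$, and $\tau_1$ itself) is exhaustive, the bookkeeping $b_3 = b_0+b_1-R$ and $b_2+b_3 = b_0+b_1+b_2-R \in [R,2R)$ is right, and since carrying-ness depends only on the additive core, every non-standard face is indeed carrying under the hypothesis $b_0+b_1+b_2 \in [2R,3R)$. This is precisely the (implicit) argument the authors intend via \autoref{carrying-2-additive}, \autoref{carrying-3-additive-simplices} and \autoref{carrying-double-triple-simplices}.
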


Up to this point, the construction of the retraction involved explicit subdivisions. For this last case, the complexity is great enough that we will resort to computer calculations. In particular, we will use computers to check high connectivity of the following simplicial complexes, which will aid in our construction of the retraction.

\begin{definition}
\label{def_Q}
Let $n \geq 4$ and $\vec v_1,\vec v_2,\vec v_3,\vec w \in \Z^{m+n}$ be a partial basis such that $\{v_1,v_2,v_3,w\}$ is a simplex of $B_n^m$. Assume that the last coordinate $R$ of $\vec w$ is positive and that the last coordinates of $\vec v_1,\vec v_2,\vec v_3$ have absolute value smaller than $R$, that is, $ v_i \in \Linkhat^{<}_{\BAA_n^m}(w)$.
Let $Q_n^m(\vec v_1,\vec v_2,\vec v_3;\vec w)$ be the full subcomplex of $\Linkhat^{<}_{\BAA_n^m}(w)$ on the set of lines spanned by vectors of the form
\begin{multicols}{3} 
\begin{enumerate}
\item $\vec v_1+a_1 \vec w$,
\item $\vec v_2+a_2\vec w$,
\item $\vec v_3+a_3\vec w$,
\item $\vec v_1+\vec v_2+a_{12}\vec w$,
\item $\vec v_1+\vec v_2+\vec v_3+a_{123}\vec w$, or
\item $\vec v_1+\vec v_3+a_{13}\vec w$
\end{enumerate} 
\end{multicols}
for $a_i \in \Z$.
\end{definition}

\begin{theorem}\label{Qconn}
  The complexes $Q^m_n(\vec v_1,\vec v_2,\vec v_3;\vec w)$ are $3$-connected for all $m,n \in \N$ satisfying $m \geq 0$ and $n \geq 4$.
\end{theorem}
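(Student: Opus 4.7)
The complex $Q^m_n(\vec v_1, \vec v_2, \vec v_3; \vec w)$ can be analyzed by reducing it to a problem about finite subcomplexes of $\BAA_4$. The first observation is that every vertex of $Q^m_n$ is a line supported in the rank-$4$ direct summand $L \coloneqq \langle \vec v_1, \vec v_2, \vec v_3, \vec w \rangle$ of $\Z^{m+n}$. Since none of these vertices has a nonzero $\vec e_i$-component, no external additive relation of the forms in \autoref{def:BA-simplices} or \autoref{def:BAA-simplices} can occur between vertices of $Q^m_n$ and the standard basis vectors $\vec e_1, \ldots, \vec e_m$; the simplicial structure of $Q^m_n$ is therefore determined entirely internally to $L$, so the complex is independent of both $m$ and $n$ (for $n \geq 4$). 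After choosing a basis of $L$, we may view $Q^m_n(\vec v_1, \vec v_2, \vec v_3; \vec w)$ as a subcomplex of $\Linkhat^<_{\BAA_4}(w)$.

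Next, I would verify that this subcomplex is finite and has a combinatorial description governed by a small set of discrete parameters. The $\Linkhat^<$-condition bounds the last coordinate of any vertex representative to lie in $(-R, R)$, which constrains each integer shift $a_i$ (or $a_{12}, a_{13}, a_{123}$) from \autoref{def_Q} to at most two values, depending on the residue of the last coordinate of $\vec v_i$ modulo $R$. Consequently each of the six vertex families contributes at most two vertices, so $Q^m_n$ has between $6$ and $12$ vertices. Up to isomorphism the complex is determined by $R$ together with the residues of the components of $\vec w$ in the $\vec v_i$-basis; the evident symmetries permuting $\vec v_2$ and $\vec v_3$ and acting by independent sign changes further cut down the list of cases that must be verified.

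Finally, $3$-connectivity of each such finite complex is verified by computer, using the code described in \autoref{Sec3}. The algorithm enumerates the vertices, assembles all standard, $2$-additive, $3$-additive, double-double, and double-triple simplices satisfying the partial-basis and determinant conditions, and then checks that the reduced simplicial homology vanishes in degrees $0, 1, 2, 3$ and that the fundamental group is trivial; $3$-connectivity then follows by the Hurewicz theorem. The main obstacle is that $R$ ranges over an infinite set, so an exhaustive computer check is impossible on its own. To close this gap, one must supplement the enumeration with a uniformity argument — for instance, a retraction or stabilisation result showing that the combinatorial isomorphism type of $Q^m_n$ for $R \geq R_0$ can be matched with that at a bounded $R_0$, perhaps by using the same kind of Euclidean-algorithm reduction on the ``shift'' parameters $a_i$ that drives \autoref{retraction} — thereby reducing the verification to the finite list of base cases actually handled by the code.
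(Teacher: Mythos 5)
Your outline follows the paper's strategy — restrict to the rank-$4$ summand $\langle \vec v_1,\vec v_2,\vec v_3,\vec w\rangle$ to view $Q^m_n$ as a finite subcomplex of (a link in) $\BAA_4$ with at most $12$ vertices, verify vanishing of $\widetilde H_i$ for $i\le 3$ and simple connectivity by computer, and conclude by Hurewicz. But there is a genuine gap at the crucial step, and you name it yourself without closing it: you observe that ``$R$ ranges over an infinite set, so an exhaustive computer check is impossible on its own'' and say that ``one must supplement the enumeration with a uniformity argument,'' which you then only gesture at. That uniformity argument is precisely the mathematical content of this part of the paper. The paper's solution is not a retraction or Euclidean-algorithm stabilisation in $R$; it is the introduction of the discrete invariant $\inter_R(\cdot)$ (\autoref{def:r-interval}) together with the proof (\autoref{iso_type_by_Rint}) that the isomorphism type of $Q^m_n(\vec v_1,\vec v_2,\vec v_3;\vec w)$ depends \emph{only} on the six-tuple
\begin{equation*}
\bigl(\inter_R(\vec v_1),\inter_R(\vec v_2),\inter_R(\vec v_3),\inter_R(\vec v_1+\vec v_2),\inter_R(\vec v_1+\vec v_2+\vec v_3),\inter_R(\vec v_1+\vec v_3)\bigr).
\end{equation*}
Since the first three entries lie in $\{-1,0,1\}$ and the near-additivity of $\inter_R$ (\autoref{R_int_additivity}) tightly constrains the remaining three, only finitely many tuples occur; after quotienting by the symmetries of \autoref{isomorphic_Qs} one arrives at the $48$ cases of \autoref{table_iso_types} that the code actually checks. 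Your alternative parametrisation ``by $R$ together with the residues of the components of $\vec w$ in the $\vec v_i$-basis'' is both imprecise ($\vec w$ need not lie in the span of the $\vec v_i$) and, more importantly, not shown to take finitely many values up to isomorphism, so the reduction to a finite computation is never actually achieved.

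A secondary, smaller point: you say the computer ``checks that the fundamental group is trivial,'' but simple connectivity is not something the homology computation gives for free. The paper handles it with a separate density criterion (\autoref{simple_conn_by_density}): every pair of vertices spans an edge and the number of non-$2$-simplex triples is small relative to the vertex count, which forces every triangle in the $1$-skeleton to be coned off. Some such argument (or an explicit presentation computation) is needed before Hurewicz can be invoked.
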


This theorem will be shown in \autoref{Sec3} with the help of computer calculations. We will assume it for now to deal with the last case for defining the retraction:

\begin{lemma}
  \label{lem:extending-over-dt-internal-case}
  The map $r$ in \autoref{TAretraction} extends over all double-triple simplices $\sigma = \tau_1 \ast \tau_2$ in $\Linkhat_{\BAA^m_n}(w)$ where $\tau_1$ is a minimal double-triple simplex of dimension $4$ and $\tau_2$ is a standard simplex.
\end{lemma}

More precisely, in the proof of \autoref{lem:extending-over-dt-internal-case} we check that the map
$$r\colon  \sd(\Linkhat_{\TA_n^m}(w) ) \to \Linkhat^{<}_{\BAA_n^m}(w)$$
in \autoref{TAretraction} extends over the simplex $\sigma = \tau_1 \ast \tau_2$ if the simplex is not carrying, and over a subdivision $\sd (\sigma) = \sd(\tau_1) \ast \tau_2$ of $\sigma$ if the simplex is carrying. On the subdivision $\sd(\tau_1)$ of $\tau_1$ the extension takes values in one of the complexes $Q^m_n(\vec a,\vec b,\vec c;\vec w)$ introduced in \autoref{def_Q}. The carrying case occurs if and only if $\tau_1 = \{ v_0,v_1, v_2, \langle \bar v_0 + \bar v_1 \pm \bar v_2 \rangle, \langle \bar v_0 + \bar v_1 \rangle \} $ and $b_0 + b_1 \notin [0, R)$ or $b_0 + b_1 \pm b_2 \notin [0, R)$.

\begin{proof}
  Let $\tau_1$ be a minimal internal double-triple simplex. Let $\sd(\partial \tau_1)$ be the subdivision of $\partial \tau_1$ in $\Linkhat_{\TA_n^m}(w)$. By \autoref{TAretraction} we obtain a map
  $$r\colon  \sd(\partial \tau_1) \to \Linkhat_{\BAA^m_n}^<(w).$$
  Let $\gamma \subset \tau_1$ be the unique $3$-additive facet. As in \autoref{TA-internally} et seq., we fix three lines $\{v_0, v_1, v_2\} \subseteq \gamma$ such that
  $$\gamma = \{v_0, v_1, v_2, v_3 = \langle \bar v_0 + \bar v_1 + (\epsilon_2 \bar v_2) \rangle\}$$
  for the choice of a sign $\epsilon_2 \in \{-1, 1\}$ and where the absolute value of the last coordinate of $\bar v_3$ is maximal in $\gamma$. Then,
  $\tau_1 = \gamma \ast v_4$ with 
  \begin{equation*}
    v_4 = \begin{cases}
      \langle \bar v_0 + \bar v_1 \rangle,\\
      \langle \bar v_1 + (\epsilon_2 \bar v_2) \rangle,\\
      \langle \bar v_0 + (\epsilon_2 \bar v_2) \rangle. 
    \end{cases}
  \end{equation*}

  Firstly, assume that $\tau_1$ is not carrying. Then, $\overline{r(v_3)} = \overline{r(v_0)} + \overline{r(v_1)} + \epsilon_2 \overline{r(v_2)}$ and
    \begin{equation*}
      \pm \overline{r(v_4)} = \begin{cases}
        \overline{r(v_0)} + \overline{r(v_1)},\\
        \overline{r(v_0)} + \epsilon_2 \overline{r(v_2)},\\
        \overline{r(v_1)} + \epsilon_2 \overline{r(v_2)}.
      \end{cases}
    \end{equation*}
    It follows that $r(\tau_1)$ is a double-triple simplex. By \autoref{lem_key_extend_from_additive_core}, it follows that $r(\sigma) = r(\tau_1) \ast r(\tau_2)$ is a double-triple simplex as well.
    
  Secondly, assume that $\tau_1$ is carrying. Consider the complexes $Q_{01} = Q^m_n(\overline{r(v_0)}, \overline{r(v_1)}, \epsilon_2 \cdot \overline{r(v_2)}; \bar w)$ and $Q_{10} = Q^m_n(\overline{r(v_1)}, \overline{r(v_0)}, \epsilon_2 \cdot \overline{r(v_2)}; \bar w)$ introduced in \autoref{def_Q}. We claim that $r(\sd(\partial \tau_1))$ is contained in $Q = Q_{01}$ or $Q = Q_{10}$. To see this, it suffices to check that the image of every vertex in $\sd(\partial \tau_1)$ is contained in this complex. We will explain how to choose between $Q_{01}$ and $Q_{10}$ in the first step of the proof of this claim.
  
  \emph{Step (a)}: Every vertex in $r(\partial \tau_1)$ is contained in $Q$. Indeed, observe that
      \begin{equation*}
        \pm \overline{r(v_4)} = \begin{cases}
          \overline{r(v_0)} + \overline{r(v_1)} + a_{01} \bar w,\\
          \overline{r(v_1)} + (\epsilon_2 \overline{r(v_2)}) + a_{12} \bar w,\\
          \overline{r(v_0)} + (\epsilon_2 \overline{r(v_2)}) + a_{02} \bar w,
        \end{cases}
      \end{equation*}
      for $a_{01}, a_{12}, a_{02} \in \{-1, 0, 1\}$. Hence, $r(v_4) \in Q_{10}$ in the first two cases and $r(v_4) \in Q_{01}$ in the third case. Fix this choice of $Q$. Observe furthermore that $r(v_0), r(v_1), r(v_2) \in Q$ and that $r(v_3) \in Q$, since $\overline{r(v_3)} = \overline{r(v_0)} + \overline{r(v_1)} + \epsilon_2 \overline{r(v_2)} + a_{012} \cdot \bar w$ for $a_{012} \in \{-2, -1, 0, 1\}$ by \autoref{TA-internally} et seq.
      
   \emph{Step (b)}: Assume that the unique $3$-additive facet $\gamma$ of $\tau_1$ is carrying and hence subdivided in $\sd(\partial \tau_1)$ using the new vertex $t(\gamma)$. \autoref{TA-internally} et seq.\ shows that $r(t(\gamma)) = r(\langle \bar v_0 + \bar v_1 \rangle)$ is equal to $\langle \overline{r(v_0)} + \overline{r(v_1)} - \bar w \rangle$ or $\langle \overline{r(v_0)} + \overline{r(v_1)}\rangle$. In either case, $r(t(\gamma))$ is contained in $Q = Q_{01}$ and $Q = Q_{10}$.
   
   \emph{Step (c)}: Assume that one of the $2$-additive facets $\alpha$ of $\tau_1$ is carrying and hence subdivided in $\sd(\partial \tau_1)$. Let $\eta \subset \alpha$ denote the minimal $2$-additive simplex that has been barycentrically subdivided using the new vertex $t(\eta)$. Then,
      $$r(t(\eta)) = \langle \overline{r(v_l)} - \bar w \rangle \text{ for some vertex } v_l \in \eta \subset \tau_1.$$
      By Step $(a)$ it holds that $r(v_l) \in Q$ for any vertex $v_l \in \tau_1$ and since the last coordinate of $\overline{r(v_l)} - \bar w$ is contained in $(-R,0)$, it therefore holds that
      $$r(t(\eta)) = \langle \overline{r(v_l)} - \bar w \rangle = \langle - \overline{r(v_l)} + \bar w \rangle \in Q.$$
    This completes the proof of the claim that $r(\sd(\partial \tau_1))$ is contained in $Q = Q_{01}$ or $Q = Q_{10}$.
    
    It follows that
    $$r\colon  \sd(\partial \tau_1) \to Q \hookrightarrow \Linkhat^{<}_{\BAA_n^m}(w).$$
    Recall that $\sd(\partial \tau_1)$ is a simplicial $3$-sphere. By \autoref{Qconn} the complex $Q$ is $3$-connected. It follows, that there exists a simplicial pair $(\sd(\tau_1), \sd(\partial \tau_1)) \cong (D^4, S^3)$ and a simplicial extension $r_{\tau_1}\colon  \sd(\tau_1) \to Q$ of $r\colon  \sd(\partial \tau_1) \to Q$. 
    Subdivide every simplex $\sigma = \tau_1 \ast \tau_2$ by using the coarsest simplicial structure $\sd(\sigma)$ on $\sigma$ that is compatible with the simplicial structure specified by $\sd(\tau_1)$ on $\tau_1$. I.e.\ this is defined by replacing the internal double-triple simplex $\sigma = \tau_1 \ast \tau_2$ by the collection of simplices
    $$\{\nu \ast \tau_2 \mid \nu \text{ a simplex of } \sd(\tau_1)\}.$$
    An application of \autoref{lem_key_extend_from_additive_core} for $\sigma = \tau_1 \ast \tau_2$ implies that we can extend the map
    $$ r_{\tau_1}\colon  \sd(\tau_1) \to \Linkhat_{\BAA^m_n}(w)$$
    to a map
    $$ r_{\tau_1} \ast r\colon  \sd(\sigma) = \sd(\tau_1) \ast \tau_2 \to \Linkhat_{\BAA^m_n}(w).$$
    This completes the proof.
\end{proof}

\autoref{lem:extending-over-dt-2-dimensional-case}, \autoref{lem:extending-over-dt-w-related-case}, \autoref{lem:extending-over-dt-external-case} and \autoref{lem:extending-over-dt-internal-case} imply \autoref{DTretraction} and \autoref{carrying-double-triple-simplices}, so this concludes our discussion of double-triple simplices. Furthermore, the proof of \autoref{DTretraction} completes the construction of the retraction map and establishes the main result of this section, \autoref{retraction}.

\section{High connectivity of the complexes $Q^m_n(\vec v_1,\vec v_2,\vec v_3;\vec w)$}

\label{Sec3}
The aim of this section is to prove \autoref{Qconn}, which states that the complexes $Q^m_n(\vec v_1,\vec v_2,\vec v_3;\vec w)$ introduced in \autoref{def_Q} are $3$-connected for all $m,n \in \N$. This was used to define the retraction on double-triple simplices. Throughout this section, we assume that $n\geq 4$.

To prove this theorem, we will first observe that all of these complexes are finite and then show that there is a finite list that contains all of their isomorphism types.
Afterwards, we use a computer to verify that the reduced homology of this finite list of finite simplicial complexes vanishes in homological degrees $i \leq 3$ and that each complex is simply connected. The result then follows from Hurewicz's theorem. 

\subsection{Listing the isomorphism types}

We start by introducing notation that will be useful for studying the isomorphism types of $Q^m_n(\vec v_1,\vec v_2,\vec v_3;\vec w)$. It slightly differs from similar notation used in previous sections, but allows for an easy formalisation on a computer.

\begin{definition} \label{def:r-interval}
Let $R\in \mathbb{Z}_{\geq 1}$ and $z \in \Z$. We define the \emph{$R$-interval} of $z$ as
\begin{equation*}
\inter_R(z)\coloneqq 
\begin{cases}
  2  k, & \text{ if } z = k  R \text{ for some } k \in \Z;\\
  2 k+1, & \text{ if } k  R < z < (k+1)  R \text{ for some } k \geq 0;\\
  -(2 k+1), & \text{ if } -(k+1) R < z < - k R \text{ for some } k \geq 0.          
\end{cases}
\end{equation*}
If $\vec v \in \Z^{m+n}$ is a vector with last coordinate equal to $z \in \Z$, we write $\inter_R(\vec v) \coloneqq \inter_R(z)$.
\end{definition}

In other words, for $k\in \Z$, the $R$-interval of $z$ is $2k+1$ if $z$ lies in the open interval between $kR$ and $(k+1)R$ and it is equal to $2k$ if $z$ is equal to $kR$. 

\begin{example} The $3$-intervals and $4$-intervals of some integers are labelled in \autoref{Figure4Int}.
\begin{figure}[h!]
\labellist
\Large \hair 0pt
\pinlabel { \color{black} ${z}$} [tr] at -10 8
\pinlabel { \color{black} ${\inter_3(z)}$} [tr] at 0 48
\endlabellist
\qquad
\includegraphics[scale=.7]{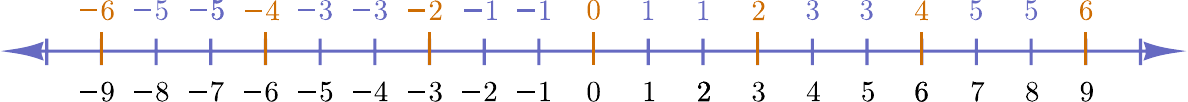}   \\[2em]
\labellist
\Large \hair 0pt
\pinlabel { \color{black} ${z}$} [tr] at -10 8
\pinlabel { \color{black} ${\inter_4(z)}$} [tr] at 0 48
\endlabellist
\qquad
\includegraphics[scale=.7]{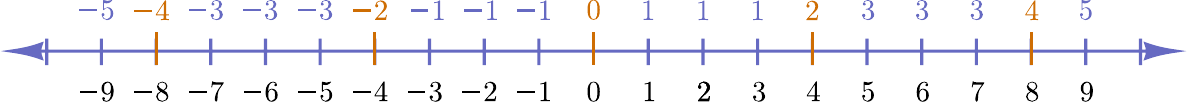}   \\[.2em]
\caption{Some values of $\inter_3(z)$ and $\inter_4(z)$.} 
\label{Figure4Int}
\end{figure}
\end{example}

The next lemma contains three elementary observations about the $R$-interval function (with $R$ fixed) that say that it is close to being linear: It commutes with scalar multiplication by $-1$, it is always close to being additive and it is actually additive if one of the inputs is a multiple of $R$. See \autoref{lem_properties_r}, which states similar results in a slightly different language.

\begin{lemma}
\label{R_int_additivity}
Let $R\in \mathbb{Z}_{\geq 1}$ and  $z, z_1,\, z_2\in \Z$. Then
\begin{enumerate}
\item \label{item_negative_R_int} $\inter_R(-z) = -\inter_R(z)$,
\item $\inter_R(z_1+z_2) \in  \ls \inter_R(z_1)+\inter_R(z_2)-1,\inter_R(z_1)+\inter_R(z_2), \inter_R(z_1)+\inter_R(z_2)+1\rs$, and
\item \label{item_sum_R_int_even} $\inter_R(z_1+z_2) = \inter_R(z_1)+\inter_R(z_2)$ if at least one of $\inter_R(z_1),\, \inter_R(z_2)$ is even.
\end{enumerate}
\end{lemma}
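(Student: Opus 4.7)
The plan is to reduce all three statements to elementary arithmetic by recasting the definition of $\inter_R$ in a case-free form. Using Euclidean division, write any $z \in \Z$ as $z = qR + r$ with $q \in \Z$ and $0 \leq r < R$. A short inspection of the three bullets of \autoref{def:r-interval} shows that
\[
\inter_R(z) = 2q \text{ if } r = 0, \qquad \inter_R(z) = 2q+1 \text{ if } r > 0,
\]
uniformly in the sign of $q$: the first bullet handles $r=0$, the second handles $r>0,\, q\geq 0$, and the third handles $r>0,\, q<0$ after writing $q = -(k+1)$ to match the $k$ appearing there. In particular, $\inter_R(z)$ is even if and only if the remainder $r$ vanishes.

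Part (i) is then immediate: if $r > 0$ one has $-z = (-q-1)R + (R-r)$ with $R-r \in (0, R)$, and if $r = 0$ one has $-z = (-q)R$. Applying the uniform formula to both gives $\inter_R(-z) = -\inter_R(z)$.

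For parts (ii) and (iii), write $z_i = q_iR + r_i$ so that $z_1 + z_2 = (q_1+q_2)R + (r_1+r_2)$ with $r_1 + r_2 \in [0, 2R)$. I would split into the four ranges $r_1+r_2 = 0$, $0 < r_1+r_2 < R$, $r_1+r_2 = R$ and $R < r_1 + r_2 < 2R$, in the latter two of which the Euclidean quotient carries to $q_1+q_2+1$. Within each range one further distinguishes whether any $r_i$ vanishes. A direct comparison of $\inter_R(z_1+z_2)$ with $\inter_R(z_1)+\inter_R(z_2)$ via the uniform formula shows their difference always lies in $\{-1, 0, +1\}$, proving (ii). For (iii), if some $r_i = 0$ then $r_1+r_2 = r_{3-i} < R$, so no carry occurs, and the relevant subcases of the first two ranges give precisely the claimed equality.

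The argument is purely a case analysis; the only subtlety is correctly matching the definition of $\inter_R$ on negative inputs to Euclidean division, which is absorbed once and for all into the uniform formula of the first paragraph.
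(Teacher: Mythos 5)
Your proof is correct, and it takes the only route available: the paper's own proof is the single line ``All three claims follow immediately from the definitions,'' so your argument is simply the careful write-out of that verification. The uniform reformulation $\inter_R(qR+r)=2q$ or $2q+1$ according to whether the Euclidean remainder $r$ vanishes is accurate (including the matching $q=-(k+1)$ on negative inputs), and the four-range case analysis on $r_1+r_2\in[0,2R)$ correctly yields all three parts.
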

\begin{proof}
All three claims follow immediately from the definitions.
\end{proof}

The reason that we use these $R$-intervals is that they allow us to give a formal description of the vertex set of $Q^m_n(\vec v_1,\vec v_2,\vec v_3;\vec w)$. Using \autoref{R_int_additivity}, it is easy to deduce the following properties:
\begin{lemma} \label{parity-of-r-interval}
  Let $\vec v \in \Z^{m+n}$, let $\vec w \in \Z^{m+n}$ be a vector with last coordinate equal to $R \geq 1$ and $a\in \Z$.

  \begin{enumerate}
  	\item \label{item_last_coordinate_by_Rinter}The last coordinate of $\vec v$ has absolute value strictly smaller than $R$ if and only if $\inter_R(\vec v) \in \{-1, 0, 1\}$.
    \item If $\inter_R(\vec v) = 2k$ is even, then the last coordinate of $\vec v + a \vec w$ has absolute value strictly smaller than $R$ if and only if $ a = -k$. 
  \item If $\inter_R(\vec v) = 2k+1$ is odd,  then the last coordinate of $\vec v + a \vec w$ has absolute value strictly smaller than $R$ if and only if $ a \in \ls  -(k+1),-k \rs$.
  \end{enumerate}
\end{lemma}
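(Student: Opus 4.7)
The strategy is to write $z$ for the last coordinate of $\vec v$, so that the last coordinate of $\vec v + a \vec w$ becomes $z + a R$, and then to translate each part into the question of which integers $a$ satisfy $|z + a R| < R$.

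For part (i), I will simply expand the three cases defining $\inter_R(z) \in \{-1, 0, 1\}$ from \autoref{def:r-interval} and observe that their union is exactly the open interval $(-R, R)$; this is the interval of $z$ characterised by $|z|<R$. Part (ii) handles the case $\inter_R(z) = 2k$, where the first case of \autoref{def:r-interval} forces $z = k R$, so $z + a R = (k+a) R$. Since a nonzero integer multiple of $R$ has absolute value at least $R$, the inequality $|(k+a) R| < R$ is equivalent to $k + a = 0$, i.e.\ $a = -k$.

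For part (iii), the key preliminary step is a uniform reformulation: whenever $\inter_R(z) = 2k+1$ for some $k \in \Z$, the value $z$ lies in the open interval $(k R, (k+1) R)$. For $k \geq 0$ this is the direct content of \autoref{def:r-interval}. For $k < 0$ I will rewrite $2k+1 = -(2k'+1)$ with $k' = -k - 1 \geq 0$ and invoke the third case of \autoref{def:r-interval} to conclude that $z$ lies in $(-(k'+1) R, -k' R) = (k R, (k+1) R)$. Once this uniform description is in hand, $z + a R$ lies in $((k+a) R, (k+1+a) R)$, and a direct case analysis shows that this open interval intersects $(-R, R)$ exactly when $a = -k$ or $a = -(k+1)$; moreover, plugging in either value genuinely yields $|z + aR| < R$.

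There is no serious obstacle; the only mild subtlety is reconciling the two separate cases of \autoref{def:r-interval} that produce an odd $R$-interval into the single uniform statement used in part (iii). After that, the entire proof reduces to elementary arithmetic with $R$-intervals, in the same spirit as \autoref{R_int_additivity}.
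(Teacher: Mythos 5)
Your proof is correct and follows essentially the same elementary route as the paper: both reduce parts (ii) and (iii) to locating $z+aR$ in $(-R,R)$, the paper doing so by invoking \autoref{R_int_additivity}~\ref{item_sum_R_int_even} together with $\inter_R(\vec w)=2$ to get $\inter_R(\vec v + a\vec w)=\inter_R(\vec v)+2a$ and then applying part (i), while you carry out the equivalent translation-of-intervals computation directly from \autoref{def:r-interval}. Your uniform reformulation of the odd case as $z\in(kR,(k+1)R)$ for arbitrary $k\in\Z$ is exactly the right reconciliation of the two sign cases in the definition.
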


A consequence of \autoref{parity-of-r-interval} is that all the $Q^m_n(\vec v_1,\vec v_2,\vec v_3;\vec w)$ are finite simplicial complexes on at most $12$ vertices.
Our next aim is to create an explicit (finite) list of simplicial complexes such that $Q^m_n(\vec v_1,\vec v_2,\vec v_3;\vec w)$ is isomorphic to one of these for every list of elements $(\vec v_1,\vec v_2,\vec v_3;\vec w)$ . This list will consist of complexes of the following form.

\begin{definition}
\label{def_Q_integers}
Let $\vec e_1, \vec e_2,\vec e_3,\vec e_4$ denote the standard basis of $\Z^4$.
We write $Q(r_1,r_2,r_3,r_{12},r_{123},r_{23})$ for the full subcomplex of $\Link_{\BAA_4}(e_4)$ on all lines spanned by vertices of the form 
\begin{multicols}{3} 
\begin{enumerate}
\item $\vec e_1+a_1 \vec e_4$,
\item $\vec e_2+a_2 \vec e_4$,
\item $\vec e_3+a_3 \vec e_4$,
\item $\vec e_1+\vec e_2+a_{12}\vec e_4$,
\item $\vec e_1+\vec e_2+\vec e_3+a_{123}\vec e_4$, or
\item $\vec e_1+\vec e_3+a_{13}\vec e_4$,
\end{enumerate}
\end{multicols} 
where $a_i = -k$ if $r_i = 2k$ is even and $ a_i \in \ls  -(k+1),-k \rs$ if  $r_i = 2k+1$ is odd.
\end{definition}

The following key proposition tells us that the isomorphism type of any complex $Q^m_n(\vec v_1,\vec v_2,\vec v_3 ;\vec w)$ is determined by six integers, namely the $R$-intervals of $\vec v_1,\vec v_2, \vec v_3, \vec v_1+\vec v_2, \vec v_1+\vec v_2+\vec v_3$ and $\vec v_1+\vec v_3$.

\begin{proposition}
\label{iso_type_by_Rint}
Let $\vec v_1,\vec v_2,\vec v_3$ and $\vec w$ be as in \autoref{def_Q} and let 
\begin{equation*}
(r_1,r_2,r_3,r_{12},r_{123},r_{13}) \coloneqq (\inter_R(\vec v_1),\inter_R(\vec v_2), \inter_R(\vec v_3), \inter_R(\vec v_1+ \vec  v_2), \inter_R(\vec v_1+ \vec v_2+ \vec v_3), \inter_R(\vec v_1 + \vec v_3)).
\end{equation*}
Then there is an isomorphism
\begin{equation*}
	\phi \colon Q^m_n(\vec v_1,\vec v_2,\vec v_3;\vec w) \to Q(r_1,r_2,r_3,r_{12},r_{123},r_{13}).
\end{equation*}
In particular, the isomorphism type of $Q^m_n(\vec v_1,\vec v_2,\vec v_3;\vec w)$ only depends on the six-tuple of integers $(r_1,r_2,r_3,r_{12},r_{123},r_{13})$.
\end{proposition}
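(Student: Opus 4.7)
The plan is to transfer everything to the ambient $\Z^4$ by means of the partial basis $\{\vec v_1, \vec v_2, \vec v_3, \vec w\}$. Since $\{\vec v_1, \vec v_2, \vec v_3, \vec w\}$ together with $\vec e_1, \ldots, \vec e_m$ forms a partial basis of $\Z^{m+n}$, the subgroup $\Lambda := \Z\langle \vec v_1, \vec v_2, \vec v_3, \vec w \rangle$ is a rank-$4$ summand, and the assignment $\vec v_i \mapsto \vec e_i$ for $i=1,2,3$, $\vec w \mapsto \vec e_4$ extends to a $\Z$-linear isomorphism $\pi\colon \Lambda \xrightarrow{\cong} \Z^4$. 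Every vertex of $Q^m_n(\vec v_1,\vec v_2,\vec v_3;\vec w)$ is spanned by a vector in $\Lambda$, so $\pi$ descends to a map $\phi$ on lines, and the aim is to show that $\phi$ is a simplicial isomorphism onto $Q(r_1,r_2,r_3,r_{12},r_{123},r_{13})$.

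First I would check that $\phi$ is a bijection of vertex sets. By \autoref{parity-of-r-interval}, the scalars $a_i \in \Z$ for which $\vec v_i + a_i \vec w$ has last coordinate of absolute value $< R$ are precisely those prescribed in \autoref{def_Q_integers} in terms of $r_i = \inter_R(\vec v_i)$. For the mixed forms such as $\vec v_1 + \vec v_2 + a_{12}\vec w$, \autoref{R_int_additivity} yields $\inter_R(\vec v_1 + \vec v_2 + a_{12}\vec w) = r_{12} + 2a_{12}$, since $\inter_R(a_{12}\vec w) = 2a_{12}$ is even; \autoref{parity-of-r-interval} then shows that the admissible $a_{12}$ are exactly those listed in \autoref{def_Q_integers}, and analogous calculations handle the other three mixed forms. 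Since $\pi$ is injective on $\Lambda$, this gives a bijection from the vertex set of $Q^m_n(\vec v_1,\vec v_2,\vec v_3;\vec w)$ onto that of $Q(r_1,r_2,r_3,r_{12},r_{123},r_{13})$.

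Next I would check that $\phi$ preserves simplices. A subset $\sigma$ of vertices spans a simplex in $\Linkhat^{<}_{\BAA_n^m}(w)$ exactly when $\sigma \cup \{e_1,\ldots,e_m, w\}$ is a simplex of $\BAA_{m+n}$ in the sense of \autoref{def:BA-simplices} and \autoref{def:BAA-simplices}, and its type depends only on $\Z$-linear relations with coefficients in $\{\pm 1\}$ among the chosen vector representatives together with a standard-simplex (i.e.\ partial-basis) condition on the remaining vertices. All representatives of vertices of $Q^m_n(\vec v_1,\vec v_2,\vec v_3;\vec w)$ lie in $\Lambda$, and because $\Lambda \oplus \Z\langle \vec e_1,\ldots,\vec e_m\rangle$ is a direct summand of $\Z^{m+n}$, the partial-basis condition reduces to a partial-basis condition inside $\Lambda$, which $\pi$ transfers faithfully to the analogous condition inside $\Z^4$ with respect to $\vec e_4$. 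The additive relations are preserved verbatim by the $\Z$-linearity of $\pi$. Hence $\sigma$ is a simplex of a given type in $\Linkhat^{<}_{\BAA_n^m}(w)$ if and only if $\phi(\sigma)$ is a simplex of the same type in $\Link_{\BAA_4}(e_4)$, and $\phi$ is a simplicial isomorphism.

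The main obstacle is the bookkeeping needed to translate the $\Linkhat^<$ condition into the $R$-interval combinatorics and to confirm that each of the five simplex types behaves uniformly under $\pi$; neither step introduces genuine new difficulties, but both must be carried out carefully for every form of vertex that can appear, so that all $2$-additive, $3$-additive, double-double, and double-triple configurations among the listed six families of vectors translate cleanly under $\pi$.
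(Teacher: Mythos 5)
Your proposal is correct and follows essentially the same route as the paper: the vertex bijection via \autoref{parity-of-r-interval} (with \autoref{R_int_additivity} handling the mixed sums), and the reduction of all simplex conditions to the rank-$4$ summand $\langle \vec v_1,\vec v_2,\vec v_3,\vec w\rangle$ using that it, together with $\vec e_1,\ldots,\vec e_m$, forms a partial basis of $\Z^{m+n}$. The paper phrases the latter step as splitting an explicit linear dependency rather than invoking the direct-summand decomposition, but the content is the same.
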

\begin{proof}
To shorten notation in this proof, we set $\vec v_{12} \coloneqq \vec v_1+ \vec  v_2$, $\vec v_{123} \coloneqq \vec v_1+ \vec v_2+ \vec v_3$, $\vec v_{13} \coloneqq \vec v_1+ \vec  v_3$ and $\vec e_{12} \coloneqq \vec e_1+ \vec  e_2$, $\vec e_{123} \coloneqq \vec e_1+ \vec e_2+ \vec e_3$, $\vec e_{13} \coloneqq \vec e_1+ \vec  e_3$. With this, we have $\inter_R(\vec v_i) = r_i$ and by \autoref{parity-of-r-interval}, the span of $\vec v_i + a \vec w$ is a vertex in $Q^m_n(\vec v_1,\vec v_2,\vec v_3;\vec w)$ if and only if the span of $\vec e_i + a \vec e_4$ is a vertex in $Q(r_1,r_2,r_3,r_{12},r_{123},r_{13})$. This gives rise to an obvious bijection $\phi$ between the vertex sets of the two complexes.

We want to show that $\phi$ induces a simplicial isomorphism. Let $ l_0, \ldots, l_k \in Q^m_n(\vec v_1,\vec v_2,\vec v_3;\vec w)$. We need to show that $ l_0, \ldots, l_k $ form a simplex if and only if their images $ \phi(l_0), \ldots, \phi(l_k)$ do. Spelling out the definitions, one sees that $\ls l_0, \ldots, l_k \rs$ is a simplex in $Q^m_n(\vec v_1,\vec v_2,\vec v_3;\vec w)$ if and only if $\ls l_0, \ldots, l_k, w, e_1, \ldots, e_m \rs$ is a simplex in $\BAA_{m+n}$ and none of the $\vec l_i$ is in the span $\left\langle \vec e_1, \ldots, \vec e_m, \vec w \right\rangle$ (with a slight abuse of notation, we use the same symbols $\vec e_i$ to denote the standard basis of $\Z^{m+n}$ and $\Z^4$).
A set of vectors gives rise to a simplex in $\BAA_i$ if up to two of them are certain linear combinations of the others and the others from a partial basis; the form of the linear combinations depends on the type of simplex, see the definitions in \autoref{sec_definitions}. Assume that there is a linear dependency between $\ls \vec l_0, \ldots, \vec l_k, \vec w, \vec e_1, \ldots,\vec e_m \rs$, i.e.~there are $c_i,d_j$ such that
\begin{equation}
\label{eq_linear_dependency}
\underbrace{\sum_{i=0}^k c_i \vec l_i + c_{k+1} \vec w}_{\in \left\langle \vec v_1, \vec v_2, \vec v_3, \vec w \right\rangle} + \underbrace{\sum_{j=1}^m d_j \vec e_j}_{\in \left\langle \vec e_1, \ldots, \vec e_m \right\rangle} = 0.
\end{equation}
By assumption, $\ls v_1, v_2, v_3, w\rs$ is a simplex in $B_n^m$, which means that $\ls \vec v_1, \vec v_2, \vec v_3, \vec w, \vec e_1, \ldots, \vec e_m \rs$ is a partial basis. Hence, \autoref{eq_linear_dependency} implies $\sum_{i=0}^k c_i \vec l_i + c_{k+1} \vec w = 0$. It follows that $\ls l_0, \ldots, l_k, w, e_1, \ldots, e_m \rs$ is a simplex in $\BAA_{m+n}$ if and only if $\ls l_0, \ldots, l_k, w\rs$ is a simplex of the same type in 
the full subcomplex of $\BAA_{m+n}$ on all lines that are contained in $\left\langle \vec v_1, \vec v_2, \vec v_3, \vec w \right\rangle \cong \Z^4$. The latter is clearly equivalent to saying that $\ls \phi(l_0), \ldots, \phi(l_k), \phi(w)=e_4\rs$ is a simplex in $\BAA_4$, i.e.~that $\ls \phi(l_0), \ldots, \phi(l_k)\rs$ is a simplex in $Q(r_1,r_2,r_3,r_{12},r_{123},r_{13})$.
\end{proof}

By the above proposition, we can produce a list with all isomorphism types of the complexes $Q^m_n(\vec v_1,\vec v_2,\vec v_3;\vec w)$ by listing the possible combinations of $R$-intervals of $\vec v_1,\vec v_2, \vec v_3, \vec v_1+\vec v_2, \vec v_1+\vec v_2+\vec v_3$ and $\vec v_1+\vec v_3$. Before we do this in \autoref{listing_all_Qs}, we record in the following lemma isomorphisms between these complexes. These are easy to show and allow us to reduce the size of the list of isomorphism types, which is helpful for the computer calculations we want to perform.

\begin{lemma}
\label{isomorphic_Qs}
Let $r_1,r_2,r_3,r_{12},r_{123},r_{13} \in \Z$. We have the following identities:
\begin{enumerate}
\item \label{item_k1_k2} $Q(r_1,r_2,r_3,r_{12},r_{123},r_{13}) \cong Q(r_1,r_3,r_2,r_{13},r_{123},r_{12})$;
\item \label{item_k_minus_k}$Q(r_1,r_2,r_3,r_{12},r_{123},r_{13}) \cong Q(-r_1,-r_2,-r_3,-r_{12},-r_{123},-r_{13})$;
\end{enumerate}
\end{lemma}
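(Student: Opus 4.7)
The plan is to exhibit both isomorphisms as arising from automorphisms of $\Z^4$ that fix the line $e_4 = \langle \vec e_4 \rangle$, and hence induce simplicial automorphisms of $\Link_{\BAA_4}(e_4)$. Once we have such an automorphism, checking that it carries $Q(r_1,r_2,r_3,r_{12},r_{123},r_{13})$ onto the claimed complex will amount to tracking how it permutes the six families of vertices in \autoref{def_Q_integers} and verifying that the parameter constraints match up.

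For part \autoref{item_k1_k2}, I would take the $\GL_4(\Z)$-automorphism $\phi$ that swaps $\vec e_2 \leftrightarrow \vec e_3$ and fixes $\vec e_1, \vec e_4$. Because $\phi$ is a lattice isomorphism fixing $\vec e_4$, it induces an automorphism of $\Link_{\BAA_4}(e_4)$ that preserves simplex types (standard, $2$-additive, $3$-additive, double-double, double-triple), since these are defined purely in terms of linear combinations and the condition of spanning a summand. On vertices, $\phi$ fixes families 1 and 5, swaps families 2 and 3, and swaps families 4 and 6; on each family the parameter $a_i \in \Z$ (the coefficient of $\vec e_4$) is preserved. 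Hence the image of $Q(r_1,r_2,r_3,r_{12},r_{123},r_{13})$ is exactly the full subcomplex described by the tuple $(r_1,r_3,r_2,r_{13},r_{123},r_{12})$.

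For part \autoref{item_k_minus_k}, I would use the $\GL_4(\Z)$-automorphism $\psi$ that fixes $\vec e_1, \vec e_2, \vec e_3$ and sends $\vec e_4 \mapsto -\vec e_4$. Again $\psi$ fixes the line $e_4$ and preserves simplex types in $\BAA_4$, so it induces an automorphism of $\Link_{\BAA_4}(e_4)$. On each of the six families, $\psi$ sends the line with coefficient $a$ on $\vec e_4$ to the line with coefficient $-a$. The only thing to check is that the set of allowed values of $a$ attached to parameter $r$ in \autoref{def_Q_integers} transforms, under $a \mapsto -a$, into the set of allowed values attached to parameter $-r$. This is a two-line case analysis: if $r = 2k$ then $-a = k = -(-k)$ and $-r = 2(-k)$; if $r = 2k+1$ then $-a \in \{k, k+1\} = \{-(-k-1+1),\, -(-k-1)\}$ and $-r = 2(-k-1)+1$. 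This matches the rule in \autoref{def_Q_integers} applied to $-r$, and is essentially the content of \autoref{R_int_additivity}\autoref{item_negative_R_int}. Thus $\psi$ carries $Q(r_1,\dots,r_{13})$ onto $Q(-r_1,\dots,-r_{13})$.

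The only real bookkeeping step, and the closest thing to an obstacle, is the parity case-distinction in the second part; everything else follows formally from the fact that the two chosen lattice automorphisms fix the distinguished line $e_4$ and therefore act on every structure used to define the $Q$-complexes.
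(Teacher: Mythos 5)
Your proposal is correct and matches the paper's proof: part (i) is obtained there from the swap $\vec e_2 \leftrightarrow \vec e_3$ and part (ii) from the observation that $\vec e_i + a_i\vec e_4$ is a vertex of one complex iff $\vec e_i - a_i\vec e_4$ is a vertex of the other, i.e.\ exactly your automorphisms $\phi$ and $\psi$. Your explicit parity check for part (ii) and the framing via lattice automorphisms fixing the line $e_4$ are just a slightly more detailed write-up of what the paper leaves as "not hard to see".
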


We would like to remark that these are not the only isomorphisms that exist between complexes $Q(r_1,r_2,r_3,r_{12},r_{123},r_{13})$ and $Q(r'_1,r'_2,r'_3,r'_{12},r'_{123},r'_{13})$. However, they are sufficient to reduce the list of isomorphism types to a size that is small enough to allow computer calculations.

\begin{corollary}
\label{listing_all_Qs}
Let $\vec v_1,\vec v_2,\vec v_3,\vec w$ as in \autoref{def_Q}. Then
\begin{equation*}
Q(\inter_R(\vec v_1),\inter_R(\vec v_2), \inter_R(\vec v_3), \inter_R(\vec v_1+ \vec  v_2), \inter_R(\vec v_1+ \vec v_2+ \vec v_3), \inter_R(\vec v_1 + \vec v_3))
\end{equation*}
agrees with $Q(r_1,r_2,r_3,r_{12},r_{123},r_{13})$ for one of the tuples $(r_1,r_2,r_3,r_{12},r_{123},r_{13})$ listed in \autoref{table_iso_types}.
\end{corollary}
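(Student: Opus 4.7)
The plan is to enumerate all six-tuples $(r_1, r_2, r_3, r_{12}, r_{123}, r_{13})$ that can arise as $R$-intervals of $(\vec v_1, \vec v_2, \vec v_3, \vec v_1+\vec v_2, \vec v_1+\vec v_2+\vec v_3, \vec v_1+\vec v_3)$ for valid choices of $(\vec v_1, \vec v_2, \vec v_3, \vec w)$, and then reduce modulo the isomorphisms of \autoref{isomorphic_Qs} to match \autoref{table_iso_types}.

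First I would invoke the hypothesis of \autoref{def_Q} that $\vec v_i$ lies in $\Linkhat^{<}_{\BAA_n^m}(w)$, so the last coordinate of each $\vec v_i$ has absolute value strictly less than $R$. By the first part of \autoref{parity-of-r-interval}, this forces $r_1, r_2, r_3 \in \{-1, 0, 1\}$, cutting the initial search space down to the $27$ triples in $\{-1,0,1\}^3$.

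Next, for each such $(r_1, r_2, r_3)$, I would determine the admissible values of the three remaining entries using \autoref{R_int_additivity}. Part (ii) forces $r_{12} \in \{r_1+r_2-1, r_1+r_2, r_1+r_2+1\}$ and likewise for $r_{13}$, while $r_{123}$ lies within $\pm 1$ of either $r_{12}+r_3$ or $r_1+r_2+r_3$. Part (iii) sharpens this considerably: whenever one of the summands has even $R$-interval (i.e.\ equals $0$), the sum is exactly additive. For example, if $r_1=0$ then $r_{12}=r_2$, $r_{13}=r_3$, and the freedom in $r_{123}$ is governed purely by $r_2+r_3$. This rules out most \emph{a priori} possibilities and leaves a concrete finite list of admissible six-tuples to be tabulated.

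Finally I would quotient this list by the group of symmetries provided by \autoref{isomorphic_Qs}, namely the swap $(r_1,r_2,r_3,r_{12},r_{123},r_{13}) \mapsto (r_1,r_3,r_2,r_{13},r_{123},r_{12})$ and the global negation $(r_1,\ldots,r_{13}) \mapsto (-r_1,\ldots,-r_{13})$. These two commuting involutions generate a group of order at most $4$, and after choosing a canonical representative in each orbit I would compare the surviving list against \autoref{table_iso_types}, verifying both inclusions (every admissible orbit appears, and every row of the table is realised). The main obstacle is purely combinatorial book-keeping: simultaneously tracking the $\pm 1$ slack in $r_{12}$, $r_{13}$ and $r_{123}$ while respecting the even-parity rigidity of \autoref{R_int_additivity}, and then systematically reducing by the symmetries. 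This casework is tedious but entirely finite, and is a natural candidate to cross-check with the accompanying computer code.
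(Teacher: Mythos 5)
Your proposal follows essentially the same route as the paper: restrict $r_1,r_2,r_3$ to $\{-1,0,1\}$ via \autoref{parity-of-r-interval}, bound $r_{12},r_{13},r_{123}$ using \autoref{R_int_additivity}, reduce by the symmetries of \autoref{isomorphic_Qs}, and offload the finite book-keeping to the computer. The only detail worth adding is that the paper also exploits the second decomposition $z_1+z_3=(z_1+z_2+z_3)-z_2$, so that $r_{13}$ is constrained to lie within one of \emph{both} $r_1+r_3$ and $r_{123}-r_2$; without this extra pruning the enumeration is a strict superset of the realisable tuples and would not match the $48$ rows of \autoref{table_iso_types}.
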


\begin{table}
\begin{tabular}{cccccc}
$r_1$ &$r_2$ &$r_3$ &$r_{12}$ &$r_{123}$ & $r_{13}$\\
-1 & -1 & -1 & -3 & -5 & -3 \\
-1 & -1 & -1 & -3 & -4 & -3 \\
-1 & -1 & -1 & -3 & -3 & -3 \\
-1 & -1 & -1 & -3 & -3 & -2 \\
-1 & -1 & -1 & -3 & -3 & -1 \\
-1 & -1 & -1 & -2 & -3 & -2 \\
-1 & -1 & -1 & -2 & -3 & -1 \\
-1 & -1 & -1 & -1 & -3 & -1 \\
-1 & -1 & -1 & -1 & -2 & -1 \\
-1 & -1 & -1 & -1 & -1 & -1 \\
-1 & -1 & 0 & -3 & -3 & -1 \\
-1 & -1 & 0 & -2 & -2 & -1 \\
-1 & -1 & 0 & -1 & -1 & -1 \\
-1 & -1 & 1 & -3 & -3 & -1 \\
-1 & -1 & 1 & -3 & -2 & -1 \\
-1 & -1 & 1 & -3 & -1 & -1 \\
-1 & -1 & 1 & -3 & -1 & 0 \\
-1 & -1 & 1 & -3 & -1 & 1 \\
-1 & -1 & 1 & -2 & -1 & -1 \\
-1 & -1 & 1 & -2 & -1 & 0 \\
-1 & -1 & 1 & -2 & -1 & 1 \\
-1 & -1 & 1 & -1 & -1 & -1 \\
-1 & -1 & 1 & -1 & -1 & 0 \\
-1 & -1 & 1 & -1 & -1 & 1 
\end{tabular}
\hspace{1cm}
\begin{tabular}{cccccc}
$r_1$ &$r_2$ &$r_3$ &$r_{12}$ &$r_{123}$ & $r_{13}$\\
-1 & -1 & 1 & -1 & 0 & 1 \\
-1 & -1 & 1 & -1 & 1 & 1 \\
-1 & 0 & 0 & -1 & -1 & -1 \\
-1 & 0 & 1 & -1 & -1 & -1 \\
-1 & 0 & 1 & -1 & 0 & 0 \\
-1 & 0 & 1 & -1 & 1 & 1 \\
-1 & 1 & 1 & -1 & -1 & -1 \\
-1 & 1 & 1 & -1 & 0 & -1 \\
-1 & 1 & 1 & -1 & 1 & -1 \\
-1 & 1 & 1 & 0 & 1 & -1 \\
-1 & 1 & 1 & 0 & 1 & 0 \\
-1 & 1 & 1 & 0 & 1 & 1 \\
-1 & 1 & 1 & 1 & 1 & -1 \\
-1 & 1 & 1 & 1 & 1 & 1 \\
-1 & 1 & 1 & 1 & 2 & 1 \\
-1 & 1 & 1 & 1 & 3 & 1 \\
0 & -1 & -1 & -1 & -3 & -1 \\
0 & -1 & -1 & -1 & -2 & -1 \\
0 & -1 & -1 & -1 & -1 & -1 \\
0 & -1 & 0 & -1 & -1 & 0 \\
0 & -1 & 1 & -1 & -1 & 1 \\
0 & -1 & 1 & -1 & 0 & 1 \\
0 & -1 & 1 & -1 & 1 & 1 \\
0 & 0 & 0 & 0 & 0 & 0 
\end{tabular}
\caption{A list of 48 tuples $(r_1,r_2,r_3,r_{12},r_{123},r_{13})$ containing (at least) one representative for each isomorphism type $Q(r_1,r_2,r_3,r_{12},r_{123},r_{13})$.}
\label{table_iso_types}
\end{table}

\begin{proof}
Let $\vec v_{12} \coloneqq \vec v_1+ \vec  v_2$, $\vec v_{123} \coloneqq \vec v_1+ \vec v_2+ \vec v_3$, $\vec v_{13} \coloneqq \vec v_1+ \vec  v_3$ and let $z_i$ denote the last coordinate of $v_i$.
As the last coordinates of $\vec v_1, \vec v_2, \vec v_3$ are smaller than $R$, we have
\begin{equation*}
\inter_R(z_i) \in \ls -1,0,1 \rs \text{ for } 1\leq i \leq 3.
\end{equation*}
Furthermore, by \autoref{R_int_additivity}, we know that $\inter_R(z_{12}) = \inter_R(z_1+z_2)$ is either equal to the sum $\inter_R(z_1)+\inter_R(z_2)$ or differs from it by at most $1$, depending on the parity of $\inter_R(z_1)$ and $\inter_R(z_2)$. Similarly, $\inter_R(z_{123}) = \inter_R(z_1+z_2+z_3)$ can differ from the sum $\inter_R(z_{12})+\inter_R(z_3)$ by at most one.
Also, $z_{13} = z_1+z_3$ can be written both as $(z_1)+(z_3)$ and as $(z_1+z_2+z_3)-(z_2)$. Hence, its $R$-interval differs both from $\inter_R(z_1)+\inter_R(z_3)$ and from $\inter_R(z_{123})-\inter_R(z_2)$ by at most one. 

These rules allow one to generate a list with all possible tuples that can occur as
\begin{equation*}
(\inter_R(z_1),\inter_R(z_2), \inter_R(z_3), \inter_R(z_{12}), \inter_R(z_{123}), \inter_R(z_{13}) ).
\end{equation*}
The list can be further shortened by using the identities of \autoref{isomorphic_Qs}. We did this using computer calculations (available under \url{https://github.com/benjaminbrueck/codim2_cohomology_SLnZ/blob/main/Connectivity%20Q%20complexes.ipynb}) and the result was \autoref{table_iso_types}.
\end{proof}

\subsection{Computer implementation of the complexes}
To show that all the complexes obtained in \autoref{listing_all_Qs} are indeed 3-connected, we use computer calculations. These are made available under the following link \url{https://github.com/benjaminbrueck/codim2_cohomology_SLnZ}.

The core of the calculations is a function, written in python, that takes as an input a set of vectors in $\Z^n$ and returns the subcomplex of $\BAA_n$ that is spanned by these vectors. This simplicial complex is implemented using the \texttt{Simplex Tree} module of  {\sc gudhi} \cite{gudhi:FilteredComplexes}. The {\sc gudhi} library was developed for topological data analysis. It allows to conveniently work with filtered simplicial complex and we used the filtration functionality to keep track of the type of the simplices (standard, 2-additive, 3-additive, double-triple or double-double). However, one cannot compute homology with integral coefficients in {\sc gudhi}. For performing these homology computations, we use the \texttt{SimplicialComplex} class of {\sc SageMath} \cite{sagemath}.

\subsubsection{Simplices by facet type}
One fact that we used for building subcomplexes of $\BAA_n$ on a computer is that for many simplices, it is sufficient to know what types of simplices their facets form. This is used in the computations to check whether a set of vertices forms a simplex.

\begin{definition}
Let $S = \ls v_0, \ldots, v_d \rs$ be a set of vertices of $\BAA_n$ such that every $d$-element subset of $S$ forms a simplex in $\BAA$. Then the \emph{facet type} of $S$ is the multiset of simplex types that arise among these $d$-element subsets.
\end{definition}

With slight abuse of notation, call a subset of size $k$ a facet of $S = \ls v_0, \ldots, v_k \rs$ (even if it does not necessarily form a simplex in $\BAA_n^m$).

\begin{example}
Let $e_1, e_2, e_3, e_4$ be the standard basis of $\Z^4$ and let $S = \ls e_1, e_2, e_3, \langle \vec e_1+ \vec e_2 \rangle, \langle \vec e_1+ \vec e_3 \rangle, e_4\rs$. Then the facet type of $S$ is 
\begin{equation*}
	\ls \text{3-additive}, \text{2-additive}, \text{2-additive}, \text{2-additive}, \text{2-additive}, \text{double-triple} \rs.
\end{equation*}
\end{example}

\begin{observation}
\label{obs_list_facet_types}
If $\tau$ is one of the types of simplices defined in \autoref{sec_definitions} and $d\in \Z$, then every set that forms a $d$-dimensional simplex of type $\tau$ has the same facet type. These types are as follows:
Let $S$ be a set of vertices of $\BAA$.
\begin{enumerate}
\item If $S$ forms a standard simplex then its facet type is
	\begin{equation*}
		\ls \text{standard}, \ldots , \text{standard}\rs.
	\end{equation*}
\item If $S$ forms a 2-additive simplex, then its facet type is
	\begin{equation*}
		\ls \text{standard}, \text{standard}, \text{standard}, \text{2-additive}, \ldots , \text{2-additive} \rs.
	\end{equation*}
\item If $S$ forms a 3-additive simplex, then its facet type is
	\begin{equation*}
		\ls \text{standard}, \text{standard}, \text{standard}, \text{standard}, \text{3-additive}, \ldots , \text{3-additive} \rs.
	\end{equation*}
\item If $S$ forms a double-triple simplex, then its facet type is
	\begin{equation*}
		\ls \text{2-additive}, \text{2-additive},  \text{3-additive}, \text{2-additive}, \text{2-additive}, \text{double-triple}, \ldots, \text{double-triple} \rs.
	\end{equation*}
\item If $S$ forms a double-double simplex, then its facet type is
	\begin{equation*}
		\ls \text{2-additive}, \text{2-additive}, \text{2-additive}, \text{2-additive}, \text{2-additive}, \text{2-additive}, \text{double-double}, \ldots, \text{double-double} \rs.
	\end{equation*}	
\end{enumerate}
\end{observation}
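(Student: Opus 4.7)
My plan is to verify the observation by case analysis on the simplex type $\tau$. In every case, I will exploit the fact that a simplex of type $\tau$ has a canonical \emph{additive core} (the minimal subset that encodes all additive relations, as in \autoref{minimal-simplices-and-additive-core}) together with a complementary standard part $\{v_k, \dots, v_d\}$. The observation that removing any vertex from the complementary standard part leaves all additive relations intact and preserves the partial-basis structure of the remaining standard part (cf.\ the elementary \autoref{ObservationColumnOperations}) will take care of all the ``trailing'' entries in each facet type; the genuine work lies in analysing what happens when a vertex of the additive core is removed.

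For standard simplices the claim is immediate: every face of a standard simplex is standard by \cite[Lemma 2.6]{CP}. For $2$-additive simplices with core $\{v_0,v_1,v_2\}$ and relation $\vec v_0=\pm\vec v_1\pm\vec v_2$, removing $v_0$ leaves the standard simplex $\{v_1,\dots,v_d\}$, while removing $v_1$ or $v_2$ leaves a set whose span equals $\langle\vec v_1,\vec v_2,\dots,\vec v_d\rangle$ and whose size makes it a standard simplex; this gives the claimed $3$ standard + $(d-2)$ 2-additive facets. The $3$-additive case is strictly analogous, yielding $4$ standard facets from the four core vertices and $(d-3)$ 3-additive facets from the rest.

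The double-double case requires the most bookkeeping but no new idea: with disjoint cores $\{v_0,v_2,v_3\}$ and $\{v_1,v_4,v_5\}$ governed by the relations $\vec v_0=\pm\vec v_2\pm\vec v_3$ and $\vec v_1=\pm\vec v_4\pm\vec v_5$, removing any one of the six core vertices destroys exactly one relation while the other survives, so the facet is $2$-additive; removing any other vertex is double-double. The double-triple case, which I expect to be the main obstacle, is the subtle one because its two $2$-additive relations $\vec v_0=\pm\vec v_2\pm\vec v_3$ and $\vec v_1=\pm\vec v_2\pm\vec v_4$ \emph{share} the vertex $v_2$. Consequently, removing $v_0$, $v_1$, $v_3$, or $v_4$ breaks only one of the two relations, leaving a $2$-additive facet (after checking, using \autoref{ObservationColumnOperations}, that the standard part remains a partial basis). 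Removing $v_2$, however, forces us to combine the two relations: solving $\vec v_0-\vec v_1$ gives an expression of the form $\vec v_1=\pm\vec v_0\pm\vec v_3\pm\vec v_4$, which is an honest $3$-additive relation over the standard complement $\{v_3,v_4,v_5,\dots,v_d\}$; hence the corresponding facet is $3$-additive. This is exactly the asymmetric pattern $2,2,3,2,2$ recorded in the observation.

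To assemble the proof I will, for each of the five simplex types, produce the case-by-case table just sketched, verify at each step that (i) the resulting subset really is a simplex of $\BAA_n^m$ (i.e.\ the non-core vertices remain a partial basis, which follows from iterated column operations as in \autoref{ObservationColumnOperations}) and (ii) the resulting additive relation, if any, places it in the asserted type. The multisets of facet types obtained this way match the ones stated, and since the argument depends only on $\tau$ and on the existence of the additive core (not on the specific choice of signs or non-core vertices), the facet type is indeed an invariant of $\tau$ alone. This establishes the observation.
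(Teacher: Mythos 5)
Your case analysis is correct and complete; the paper states this as an Observation without any written proof, and your verification (peeling off the additive core vertex by vertex, using column operations as in \autoref{ObservationColumnOperations} to confirm the remaining standard part stays a partial basis, and noting that removing the shared vertex $v_2$ of a double-triple core merges the two $2$-additive relations into the single $3$-additive relation $\vec v_1 = \pm\vec v_0 \pm \vec v_3 \pm \vec v_4$) is exactly the routine check the authors leave to the reader. I see no gaps.
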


We will see that in most cases, the converse of this is true as well, i.e.~if we have a set of vertices whose facet type agrees with one of the types of the list above, then it already forms a simplex of the corresponding type.

\begin{definition}
Let $\tau$ be one of the types of simplices defined in \autoref{sec_definitions} and let $d\in \Z$. We say that \emph{$\tau$ is determined by its facet type in dimension $d$} if the following is true: Given a set of vertices $S = \ls v_0, \ldots, v_d \rs$ of $\BAA_n$ such that every $d$-element subset of $S$ forms a simplex in $\BAA_n$. Then $S$ forms a simplex of type $\tau$ if and only if it has the same facet type as a $d$-dimensional simplex of type $\tau$.
\end{definition}

It is not hard to check that only 2- and 3-additive simplices are determined by their facet type if they are not minimal and that double-triple and double-double simplices are even determined by their facet types in all possible dimensions. We use these properties for the computer implementation of the complexes. We record them in the following lemma, but omit the (elementary) proofs.

\begin{lemma}
\label{double-triple_determined_by_facets}
\label{double-double_determined_by_facets}
The following hold:
\begin{enumerate}
\item An $m$-additive simplex is determined by its facet type in all dimensions $d>m$.
\item A double-triple simplex is determined by its facet type in all dimensions $d\geq 4$.
\item A double-double simplex is determined by its facet type in all dimensions $d\geq 5$.
\end{enumerate}
\end{lemma}

\subsection{Results of the homology calculations and simple connectivity}

In addition to computing the homology of the complexes $Q^m_n(\vec v_1,\vec v_2,\vec v_3;\vec w)$, we need to show that they are simply connected. We will do this by showing that they are very ``dense'' and using the following criterion:

\begin{lemma}
\label{simple_conn_by_density}
Let $Q$ be a simplicial complex with $k$ vertices. Assume that every pair of vertices forms an edge in $Q$ and that there are only $m$ triples of vertices that do not form a two-simplex. If $m<k-2$, then $Q$ is simply connected.
\end{lemma}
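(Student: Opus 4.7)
The plan is to reduce the statement to showing that the boundary loop of each missing triangle is nullhomotopic in $Q$, and then to fill that loop by producing a fourth vertex whose three triangles with the missing triple all lie in $Q$. The hypothesis $m < k-2$ will be precisely what a simple counting argument needs.

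First I would compare $Q$ with the full $2$-skeleton $Q^+$ of the simplex on $V(Q)$. Since $Q^+$ is the $2$-skeleton of a contractible simplicial complex, it is simply connected, and $Q^+$ is obtained from $Q$ by attaching $m$ two-cells along the boundary loops $\partial\{u,v,w\}$ of the missing triples. The cofibration sequence (or van Kampen) gives a surjection $\pi_1(Q) \twoheadrightarrow \pi_1(Q^+)=0$ whose kernel is the normal closure of these attaching loops. Hence it suffices to show that, for every missing triple $T = \{u,v,w\}$, the loop $u \to v \to w \to u$ is already nullhomotopic in $Q$.

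For the filling step, fix a missing triple $T = \{u,v,w\}$ and look for a vertex $x \in V(Q) \setminus T$ such that all three triples $\{u,v,x\}$, $\{u,w,x\}$, $\{v,w,x\}$ span $2$-simplices of $Q$. If such an $x$ exists, then these three $2$-simplices form a topological disc in $Q$ whose boundary is the loop $\partial T$, witnessing that the loop is nullhomotopic in $Q$. Call a candidate vertex $x \in V(Q)\setminus T$ \emph{bad} if at least one of these three triples is not in $Q$. I would then count: for each missing triple $T' \neq T$, the triple $T'$ can make some $x$ bad only when $T' \in \{\{u,v,x\},\{u,w,x\},\{v,w,x\}\}$, which forces $|T' \cap T| = 2$ and determines $x$ uniquely as the single element of $T'\setminus T$. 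Thus each $T' \neq T$ contributes at most one bad vertex, so there are at most $m-1$ bad vertices among the $k-3$ candidates. The assumption $m < k - 2$ is exactly $m - 1 < k - 3$, so a good vertex $x$ exists, completing the argument.

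The only potentially delicate point is the bookkeeping in the counting step, specifically checking that a missing triple $T'$ can obstruct at most one candidate $x$; the topological reduction and the disc-filling are entirely standard. The edge cases $k \leq 3$ are immediate since then the hypothesis forces $m = 0$, so $Q$ is either a point, a segment, or a filled $2$-simplex, hence contractible.
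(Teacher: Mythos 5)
Your proof is correct and the heart of it — finding a cone point for a triangle by noting that each of the $m$ missing triples obstructs at most one of the $k-3$ candidate vertices, so $m-1<k-3$ suffices — is exactly the paper's argument. The only difference is cosmetic: the paper reduces to killing the generators $\Delta(x,u,v)$ of the free group $\pi_1$ of the complete $1$-skeleton, whereas you reduce via van Kampen on the full $2$-skeleton to filling only the boundaries of the $m$ missing triangles; both reductions are standard and lead to the same counting step.
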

\begin{proof}
Let $x$ be a vertex of $Q$ and let $\pi_1(Q,x)$ denote the fundamental group of $Q$ with base point $x$. The inclusion of the $1$-skeleton $Q^{(1)}\hookrightarrow Q$ induces a surjection $f \colon  \pi_1(Q^{(1)},x) \twoheadrightarrow \pi_1(Q,x)$. Hence, it is sufficient to show that $f$ has trivial image.

The $1$-skeleton $Q^{(1)}$ is the full graph with vertex set $Q^{(0)}$. This implies that $\pi_1(Q^{(1)},x)$ is a free group with generating set given by
$\ls \Delta(x,u,v) | u,v \in Q^{(0)}\setminus \{x\} \rs$, where $\Delta(x,u,v)$ is the triangle consisting of the three (oriented) edges from $x$ to $u$, $u$ to $v$ and $v$ to $x$. We need to show that $\Delta(x,u,v)$ is trivial in $\pi_1(Q,x)$. This is definitely true if $\ls x,u,v \rs$ forms a 2-simplex in $Q$, so we can assume that this is not the case. It then suffices to show that there is a vertex $w$ such that $\ls x,u,w \rs$, $\ls x,v,w \rs$ and $\ls u,v,w \rs$ are all 2-simplices in $Q$; such a $w$ would form a cone point for the triangle $\Delta(x,u,v)$, showing that it is trivial in $\pi_1(Q,x)$. 

Now by assumption, there are at most $m-1$ triples other than $\ls x,u,v \rs$ that do not form a 2-simplex. Hence, if there are more than $(m-1+3) = m+2$ vertices, there is at least one $w$ with the desired properties.
\end{proof}

The preceding \autoref{simple_conn_by_density} also follows from \cite[Lemma 2.1]{Babson2011}.

\begin{lemma}
\label{Q_simply_conn}
For all tuples $(r_1,r_2,r_3,r_{12},r_{123},r_{13})$ in \autoref{table_iso_types}, the complex $Q(r_1,r_2,r_3,r_{12},r_{123},r_{13})$ is simply connected.
\end{lemma}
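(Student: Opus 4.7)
The plan is to verify simple connectivity for each of the $48$ tuples in \autoref{table_iso_types} by computer calculation, using the density criterion established in \autoref{simple_conn_by_density}. Concretely, for each tuple $(r_1,r_2,r_3,r_{12},r_{123},r_{13})$ from the table, I would build the corresponding complex $Q(r_1,r_2,r_3,r_{12},r_{123},r_{13})$ using the same \texttt{Simplex Tree} construction described for the homology computations, and then, rather than computing $\pi_1$ directly, compute two integers: the number $k$ of vertices and the number $m$ of $2$-element subsets of vertices that fail to span an edge, plus the number of $3$-element subsets that fail to span a $2$-simplex. If the edge count shows that $Q$ is a complete graph on its vertices and the number of missing triangles is strictly less than $k-2$, then \autoref{simple_conn_by_density} immediately yields simple connectivity.

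The first step is to check that every pair of vertices in $Q$ spans an edge. This is a completely local combinatorial check on the presentation given in \autoref{def_Q_integers}: any two vertices $u,v$ from the six listed families, with coefficients on $\vec e_4$ determined by the allowed lifts from \autoref{parity-of-r-interval}, give a pair $\{u,v\}$ whose span together with $\{e_4\}$ sits inside $\Z\langle \vec e_1,\vec e_2,\vec e_3,\vec e_4\rangle$ and can be recognised as standard, $2$-additive, or $3$-additive by examining the facet type, using \autoref{obs_list_facet_types} and the supporting lemmas for higher-dimensional simplices. Since there are at most $12$ vertices, this is a tiny enumeration.

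Second, I would count the missing $2$-simplices. Here each complex has at most $\binom{12}{3}=220$ triples, and experimentally most of them form simplices of some type; the relatively few that fail can be counted exactly. If for every tuple in the table the number of missing triangles is less than $k-2$, we are done via \autoref{simple_conn_by_density}. I expect this bound to hold comfortably because the complexes in the table encode nearly all of the allowed sum relations among the six families, and the Cohen--Macaulay-type density we are aiming for in $\BAA_n$ should already be visible at this level.

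The main obstacle I anticipate is the possibility that a handful of the $48$ tuples yield complexes that are too sparse for \autoref{simple_conn_by_density} to apply directly, for example cases where $k$ is at its minimum. For those exceptional cases, I would fall back on a direct inspection: either exhibit an explicit cone vertex (a vertex $w$ adjacent to all others such that every $\{w,u,v\}$ is a $2$-simplex), which forces contractibility of the $2$-skeleton's $1$-skeleton closure, or explicitly contract each loop in a small generating set of $\pi_1(Q^{(1)})$ using the triangles present, analogously to the argument in the proof of \autoref{simple_conn_by_density}. Since each problematic complex has at most $12$ vertices, such a case-by-case verification, again automated, will terminate. The accompanying code will output, for each tuple, either a certificate of the form ``$k$ vertices, $m$ missing triangles, $m<k-2$'' or an explicit list of triangles witnessing contractibility of a generating set of $\pi_1$.
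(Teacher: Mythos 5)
Your proposal is correct and follows essentially the same route as the paper: the paper's proof of this lemma consists precisely of applying \autoref{simple_conn_by_density} to each of the $48$ complexes, with the completeness of the $1$-skeleton and the count of missing triangles verified by the accompanying computer calculations. The fallback you describe for potentially sparse cases turns out to be unnecessary, since the density criterion applies to every tuple in \autoref{table_iso_types}.
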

\begin{proof}
This follows from \autoref{simple_conn_by_density} using the computer calculations in the jupyter notebook \url{https://github.com/benjaminbrueck/codim2_cohomology_SLnZ/blob/main/Connectivity%20Q%20complexes.ipynb}.
\end{proof}

We are now ready to show that every complex $Q^m_n(\vec v_1,\vec v_2,\vec v_3;\vec w)$ is $3$-connected.
\begin{proof}[Proof of \autoref{Qconn}]
By \autoref{iso_type_by_Rint} and \autoref{listing_all_Qs}, it suffices to find for every tuple $(r_1,r_2,r_3,r_{12},r_{123},r_{13})$ in \autoref{table_iso_types} vectors $\vec v_1,\vec v_2,\vec v_3,\vec w \in \Z^4$ with last entries $z_1,z_2,z_3,R$ such that $(r_1,r_2,r_3,r_{12},r_{123},r_{13})$ is given by 
\begin{equation*}
(\inter_R(z_1),\inter_R(z_2), \inter_R(z_3), \inter_R(z_1+z_2), \inter_R(z_1+z_2+z_3), \inter_R(z_1+z_3) )
\end{equation*}
and to show that $Q_4^0(\vec v_1,\vec v_2,\vec v_3;\vec w)$ is 3-connected.

As this complex is always simply connected (\autoref{Q_simply_conn}), Hurewicz's theorem implies that it is sufficient to show that its integral homology vanishes in degrees $2$ and $3$. This reduces the proof to computing the homology of a finite list of finite simplicial complexes. We performed these calculations with a computer, the results can be found in the following notebook \url{https://github.com/benjaminbrueck/codim2_cohomology_SLnZ/blob/main/Connectivity%20Q%20complexes.ipynb}.
\end{proof}

\subsection{Resource consumption, runtime and verifiability of the computer calculations}
\label{sec_details_on_computer_calculations}
All of the used algorithms are exact and guaranteed to terminate. The entire computations take less than one minute on a mid-class laptop and have negligible memory consumption. 

There are four steps in this section where we use computer calculations.
Firstly, to find the list of isomorphism types of the complexes $Q(r_1,r_2,r_3,r_{12},r_{123},r_{13})$ given in \autoref{listing_all_Qs}. Finding this list, i.e.~creating \autoref{table_iso_types}, amounts in a simple application of the relations given in \autoref{R_int_additivity} and \autoref{isomorphic_Qs}. This is done by a sequence of case distinctions. While this is a tedious task and we believe that the computer is less likely to make mistakes, this can also be verified by hand.
Secondly, to find a representative for each such isomorphism type, i.e.~to find for each tuple of integers $(r_1,r_2,r_3,r_{12},r_{123},r_{13})$ in \autoref{table_iso_types} a basis $\vec v_1,\vec v_2,\vec v_3,\vec w$ of $\Z^4$ such that 
\begin{equation*}
(\inter_R(\vec v_1),\inter_R(\vec v_2), \inter_R(\vec v_3), \inter_R(\vec v_1+ \vec  v_2), \inter_R(\vec v_1+ \vec v_2+ \vec v_3), \inter_R(\vec v_1 + \vec v_3)) = (r_1,r_2,r_3,r_{12},r_{123},r_{13}).
\end{equation*}
For this, the computer needs to calculate $R$-intervals and to check whether a set of vectors forms a basis of $\Z^4$.
It is easy to verify by hand (also just in examples) that the vectors given by the computer actually form a basis and do have the correct $R$-intervals.
Thirdly, to calculate the set of simplices for each of the 48 complexes $Q_4^0(\vec v_1,\vec v_2,\vec v_3;\vec w)$. This is done by iterating through increasingly big subsets of the vertex set and for each such subset checking whether it forms a simplex in $\BAA_4^0$. Using \autoref{double-triple_determined_by_facets}, it is sufficient to do these checks for standard, 2-additive and 3-additive simplices. This amounts in verifying whether a set of lines in $\Z^4$ forms a partial basis or satisfies a certain linear relation. The code for this, together with comments giving further explanations, is contained in the files \texttt{complex\_constructor.py} and \texttt{simplex\_constructor.py} in the repository \url{https://github.com/benjaminbrueck/codim2_cohomology_SLnZ}.
Lastly, the computer calculates the homology of the 48 given complexes $Q_4^0(\vec v_1,\vec v_2,\vec v_3;\vec w)$ and counts their simplices in order to show that they are simply connected by \autoref{simple_conn_by_density}. These homology calculations are performed with established software (the \texttt{SimplicialComplex} class of {\sc SageMath} \cite{sagemath}) and can also be verified with different existing or self-written code. The efficiency of the used software here is not very important as the complexes are all comparably small (they each have between 62 and 1097 simplices).

\section{Towards the connectivity of $\BAA_n^m$}
\label{Sec5}
In this section, we prepare for proving that the complexes $\BAA_n^m$ are Cohen--Macaulay (\autoref{BAAnmCM}). We study links and certain subcomplexes of the links. We prove the case $n=1$, which will be our induction base case, and we show some auxiliary results that will be used in the induction step. Throughout this section, we assume that $n\geq 1$ and $n+m \geq 3$.

\subsection{Description of $\Link$, $\Linkhat$ and the Cohen--Macaulay property}

In this subsection, we show that the complexes $\BAA_n^m$ are Cohen--Macaulay, provided that they are highly-connected.

\begin{definition}
\label{def_J}
Let $\sigma$ be 3-additive simplex of $\BAA_n^m$. We can write $\sigma=\ls v_0,v_1,\ldots,v_k\rs$, where $\{ \vec v_1,\ldots, \vec v_k,  \vec e_1, \dots, \vec e_m\}$ is a partial basis and $\vec v_0 = \vec w_1 + \vec w_2 + \vec w_3$ for certain $w_1, w_2, w_3 \in \{ v_1,\ldots,v_k,e_1,\ldots,e_m \}$. 
Let $J(\sigma)$ be the set of vertices of $\BAA_n^m$ that are lines spanned by a vector of the form $\{ \vec w_1+\vec w_2, \vec w_1+\vec w_3, \vec w_2+\vec w_3 \}$.
\end{definition}

Note that $J(\sigma)$ might contain less than three elements (e.g.~if $\vec v_0 = \vec v_1+ \vec e_1 + \vec e_2$, because $\ls \langle \vec e_1 + \vec e_2 \rangle \rs \not\in \BAA_m^n$), but it is always nonempty. Going thorugh the definitions of different simplex types, one obtains the following:

\begin{lemma}
\label{links_of_BAA_augmentations}
Let $\sigma$ be a simplex of $\BAA_m^n$.
\begin{enumerate}
\item If $\sigma$  is a standard simplex of dimension $k$, there is an isomorphism $\Linkhat_{\BAA_n^m}(\sigma) \cong \BAA_{n-k-1}^{m+k+1}$.
\item If $\sigma$ is a 2-additive simplex, we can write $\sigma=\ls v_0,v_1,\ldots,v_k\rs$ with $\ls \vec v_1,\ldots, \vec v_k\rs$ a partial basis. We then have $\Linkhat_{\BAA_n^m}(\sigma) = \Linkhat_{\BA_n^m}(\ls v_1,\ldots,v_k\rs)$.
\item If $\sigma$ is a 3-additive simplex, we can write $\sigma=\ls v_0,v_1,\ldots,v_k\rs$ with $\ls \vec v_1,\ldots, \vec v_k\rs$ a partial basis. We then have 
	\begin{equation*}
		\Linkhat_{\BAA_n^m}(\sigma) = \Link_{\B_n^m}(\ls v_1,\ldots,v_k\rs) \text{ and } \Link_{\BAA_n^m}(\sigma) = \Link_{B_n^m}(\ls v_1,\ldots,v_k\rs)\ast J(\sigma),
	\end{equation*}
where $J(\sigma)$ is seen as a 0-dimensional complex.
\item If $\sigma$ is a double-triple or double-double simplex, we can write $\sigma = \ls v_0,v_1,\ldots,v_k\rs$ with $\ls \vec v_2,\ldots, \vec v_k\rs$ a partial basis. We then have 
	\begin{equation*}
		\Link_{\BAA_n^m}(\sigma) = \Linkhat_{\BAA_n^m}(\sigma) = \Link_{\B_n^m}(\ls v_2,\ldots,v_k\rs).
	\end{equation*}
\end{enumerate}
\end{lemma}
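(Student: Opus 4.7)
The plan is to proceed by case analysis on the type of $\sigma$, translating each item into a question about which vertices (and which higher-dimensional simplices on those vertices) can be joined to $\sigma$ to form a simplex of $\BAA_n^m$, and which of them are excluded by the $\Linkhat$ condition. A preliminary observation that unifies cases (ii)--(iv) is the span identity $\langle \vec e_1, \ldots, \vec e_m, \sigma \rangle = \langle \vec e_1, \ldots, \vec e_m, \vec v_a, \ldots, \vec v_k \rangle$, where $\{\vec v_a, \ldots, \vec v_k\}$ is the partial-basis part of $\sigma$ ($a=1$ for 2- and 3-additive, $a=2$ for double-double and double-triple); this holds because the additive vertices of $\sigma$ are, by definition, linear combinations of the basis part together with the $\vec e_j$'s. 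This reduces the $\Linkhat$ exclusion to the analogous exclusion for the basis part and lets us compare $\Linkhat_{\BAA_n^m}(\sigma)$ directly with links of standard simplices.

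For case (i), I would use transitivity of $\GL_{m+n}(\Z)$ on ordered partial bases of $\Z^{m+n}$: a change of basis sends $\{\vec v_0,\ldots,\vec v_k,\vec e_1,\ldots,\vec e_m\}$ to $\{\vec e_1,\ldots,\vec e_{m+k+1}\}$, and because the simplex types in $\BAA_{m+n}$ are $\GL_{m+n}(\Z)$-invariant, we obtain $\Linkhat_{\BAA_n^m}(\sigma) \cong \Linkhat_{\BAA_{m+n}}(\{e_1,\ldots,e_{m+k+1}\}) = \BAA_{n-k-1}^{m+k+1}$. For cases (ii)--(iv) the key combinatorial fact from \autoref{def:BAA-simplices} is that every simplex type in $\BAA_{m+n}$ has \emph{at most two additive vertices}. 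When $\sigma$ already has one additive vertex (cases (ii), (iii)), any extension can introduce at most one further additive vertex; when $\sigma$ has two (case (iv)), no further additive vertex is allowed.

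I would then carry out the case-specific verification. In case (ii), adding any $u \in \Linkhat_{\BAA_n^m}(\sigma)$ cannot introduce an additional additive relation involving $u$ (such a relation would force $\vec u$ into the excluded span), so either $u$ extends the standard part of $\sigma$ (yielding a 2-additive simplex) or $u$ is itself 2-additive against the basis part (yielding a double-double or double-triple), and both possibilities match exactly the vertices of $\Linkhat_{\BA_n^m}(\{v_1,\ldots,v_k\})$; the join structure at higher dimensions follows by the same argument. In case (iii), the analogous analysis shows that no $u \in \Linkhat$ can be additive against $\sigma$, so $\Linkhat$ is just the standard link $\Link_{\B_n^m}(\{v_1,\ldots,v_k\})$. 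For the full link, the vertices in $\Link \setminus \Linkhat$ must create a valid (second-additive) extension from inside the span; checking the simplex types, the only possibility is $\vec u = \vec w_i + \vec w_j$, which together with the 3-additive relation on $\vec v_0$ gives the 2-additive + 3-additive double-triple pattern from the remark after \autoref{def:BAA-simplices} — these are exactly the $J(\sigma)$ vertices. Case (iv) is then immediate: the two-additive-vertex cap forces $u$ to extend the standard part, so $\Link = \Linkhat = \Link_{\B_n^m}(\{v_2,\ldots,v_k\})$.

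The main obstacle is the bookkeeping in case (iii). One has to rule out all other vectors $\vec u$ in the span $\langle \vec e_1, \ldots, \vec e_m, \vec v_1, \ldots, \vec v_k \rangle$ as potential link vertices — for instance, $\vec u = \vec w_i + \vec v_c$ with $v_c \notin \{w_1,w_2,w_3\}$ shares only one vector with the 3-additive relation on $\vec v_0$ and therefore fails the double-triple pattern, so $\sigma \cup \{u\}$ is not a simplex of $\BAA_n^m$. One also needs to verify that two distinct vertices of $J(\sigma)$ cannot co-occur in any link simplex, which again follows from the at-most-two-additive-vertices principle applied to the three prospective additive vertices $v_0, u_1, u_2$, and hence $J(\sigma)$ sits inside $\Link_{\BAA_n^m}(\sigma)$ as a $0$-dimensional subcomplex joined with $\Link_{\B_n^m}(\{v_1,\ldots,v_k\})$ exactly as claimed.
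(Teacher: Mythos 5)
Your proposal is correct and follows essentially the same route as the paper: case (i) via the change-of-basis isomorphism onto standard vectors, and cases (ii)--(iv) via the span identity for the additive vertices together with the classification of which simplex types (equivalently, which configurations of at most two additive vertices) can contain $\sigma$ as a face, with $J(\sigma)$ accounting exactly for the $\Link\setminus\Linkhat$ vertices in the 3-additive case. The paper organizes the same combinatorics through its facet-type observation, but the substance of the argument is identical.
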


The description of the links in the following lemma is easy to see and will both be used in \autoref{sec_connectivity_BAA} and in the  proof of  \autoref{prop_connecitivity_links} below.

\begin{lemma}
\label{link_linkhat_linkhat}
Let $\sigma$ be a simplex of $\BAA_n^m$ and $\tau \in \Link_{\BAA_n^m}(\sigma)$ such that no vertex of $\tau$ is in $\Linkhat_{\BAA_n^m}(\sigma)$. Then $\Link_{\Link_{\BAA_n^m}(\sigma)}(\tau)\cap \Linkhat_{\BAA_n^m}(\sigma) = \Linkhat_{\BAA_n^m}(\sigma\cup \tau)$.
\end{lemma}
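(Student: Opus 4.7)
The plan is to prove the equality of the two sides by unraveling the definitions, where everything reduces to a single key observation about spans.

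Write $\sigma = \{w_0, \dots, w_k\}$ and $\tau = \{u_0, \dots, u_j\}$. The hypothesis that no vertex of $\tau$ lies in $\Linkhat_{\BAA_n^m}(\sigma)$, combined with the fact that the vertices of $\tau$ all lie in $\Link_{\BAA_n^m}(\sigma)$, means (by the definition of $\Linkhat$) that each representative $\vec u_i$ satisfies $\vec u_i \in \langle \vec e_1, \dots, \vec e_m, \vec w_0, \dots, \vec w_k \rangle$. The key observation is therefore that
\begin{equation*}
\langle \vec e_1, \dots, \vec e_m, \vec w_0, \dots, \vec w_k \rangle = \langle \vec e_1, \dots, \vec e_m, \vec w_0, \dots, \vec w_k, \vec u_0, \dots, \vec u_j \rangle.
\end{equation*}

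With this in hand, I would simply translate both sides into conditions on a simplex $\mu$. For the left-hand side, $\mu \in \Link_{\Link_{\BAA_n^m}(\sigma)}(\tau) \cap \Linkhat_{\BAA_n^m}(\sigma)$ unwinds to: (a) $\sigma \cup \tau \cup \mu$ is a simplex of $\BAA_n^m$; (b) $\mu$ is disjoint from $\sigma \cup \tau$; and (c) every vertex $v$ of $\mu$ has $\vec v \notin \langle \vec e_1, \dots, \vec e_m, \vec w_0, \dots, \vec w_k \rangle$. For the right-hand side, $\mu \in \Linkhat_{\BAA_n^m}(\sigma \cup \tau)$ unwinds to: (a$'$) $\sigma \cup \tau \cup \mu$ is a simplex of $\BAA_n^m$; (b$'$) $\mu$ is disjoint from $\sigma \cup \tau$; and (c$'$) every vertex $v$ of $\mu$ has $\vec v \notin \langle \vec e_1, \dots, \vec e_m, \vec w_0, \dots, \vec w_k, \vec u_0, \dots, \vec u_j \rangle$. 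Conditions (a) and (a$'$) coincide, as do (b) and (b$'$), while the key observation above makes (c) equivalent to (c$'$).

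There is no genuine obstacle in this argument — it is entirely a matter of chasing through the definitions of $\Link$ and $\Linkhat$ in the $\BAA_n^m$ setting. The only subtle point, and the one worth emphasising in the write-up, is to be careful that the condition defining $\Linkhat$ depends on the spans of \emph{vectors} representing the simplex (not the lines themselves), so that the identification of spans above is what lets the ``hat" conditions for $\sigma$ and $\sigma \cup \tau$ coincide.
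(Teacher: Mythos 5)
Your proof is correct and takes essentially the same route as the paper: both arguments identify the two sides as full subcomplexes of $\Link_{\BAA_n^m}(\sigma\cup\tau)$ and then observe that the two ``hat'' conditions coincide because the hypothesis on $\tau$ forces $\langle \vec e_1, \dots, \vec e_m, \vec w_0, \dots, \vec w_k \rangle = \langle \vec e_1, \dots, \vec e_m, \vec w_0, \dots, \vec w_k, \vec u_0, \dots, \vec u_j \rangle$. Your emphasis on the span-of-representatives subtlety is exactly the point the paper's proof also turns on.
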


Once we show connectivity of $\BAA_n^m$,  \autoref{prop_connecitivity_links} below implies that the complex is Cohen--Macaulay. To prove this proposition, we will use the following lemma. 

\begin{lemma}[{Galatius--Randal-Williams \cite[Proposition 2.5]{GRWI}}] \label{GRW-SubcomplexLemma}
Let $X$ be a simplicial complex, and $Y \subseteq X$ be a full subcomplex.
Let $N$ be an integer with the property that for each $p$-simplex $\tau$ in $X$ having no
vertex in $Y$, the complex $Y \cap \Lk_X (\tau)$ is $(N - p - 1)$-connected. Then the inclusion
$|Y |\to |X|$ is $N$-connected.
\end{lemma}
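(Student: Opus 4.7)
The plan is to prove the lemma by a standard cell-by-cell attachment argument, filtering $X$ by the "depth" to which simplices stick outside $Y$. Since $Y$ is a full subcomplex, every simplex $\sigma$ of $X$ admits a unique decomposition $\sigma = \tau \ast \alpha$, where all vertices of $\tau$ lie in $X \setminus Y$ and $\alpha \in Y$ (either factor possibly empty). Define
\[
Y = X_{-1} \subseteq X_0 \subseteq X_1 \subseteq \cdots \subseteq X_{\dim X} = X,
\]
where $X_p$ is the subcomplex consisting of all simplices $\tau \ast \alpha$ as above with $\dim \tau \leq p$. The plan is then to show that each inclusion $X_{p-1} \hookrightarrow X_p$ is $N$-connected, and to compose to obtain the conclusion.

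For the key step, I would fix $p \geq 0$ and observe that $X_p$ is obtained from $X_{p-1}$ by attaching, for every $p$-simplex $\tau$ of $X$ whose vertices lie in $X \setminus Y$, the join $\tau \ast L_\tau$ along its boundary $\partial \tau \ast L_\tau$, where $L_\tau \coloneqq Y \cap \Lk_X(\tau)$. The fullness of $Y$ is exactly what ensures that $(\tau \ast L_\tau) \cap X_{p-1} = \partial \tau \ast L_\tau$, and that distinct choices of $\tau$ only meet inside $X_{p-1}$. Hence $X_p$ fits into a pushout square
\[
\begin{tikzcd}
\bigsqcup_{\tau} \partial \tau \ast L_\tau \arrow[r, hook] \arrow[d, hook] & X_{p-1} \arrow[d, hook] \\
\bigsqcup_{\tau} \tau \ast L_\tau \arrow[r] & X_p
\end{tikzcd}
\]
with the left vertical map a cofibration. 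Since pushouts along cofibrations preserve $N$-connectivity of maps, it suffices to check that each individual inclusion $\partial \tau \ast L_\tau \hookrightarrow \tau \ast L_\tau$ is $N$-connected.

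Now $\tau \ast L_\tau$ is contractible (a cone on $L_\tau$), so the pair is $N$-connected exactly when $\partial \tau \ast L_\tau$ is $(N-1)$-connected. If $L_\tau \neq \emptyset$, then by hypothesis $L_\tau$ is $(N-p-1)$-connected, and the standard join formula gives
\[
\partial \tau \ast L_\tau \simeq S^{p-1} \ast L_\tau \simeq \Sigma^p L_\tau,
\]
which is $(N-1)$-connected. If instead $L_\tau = \emptyset$, then the hypothesis forces $N-p-1 \leq -2$, i.e.\ $p \geq N+1$, and the pair $(\tau, \partial \tau) \cong (D^p, S^{p-1})$ is $(p-1)$-connected $\geq N$-connected. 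Either way, the attaching inclusion is $N$-connected, finishing the induction step and the proof.

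The only real subtlety I expect is the bookkeeping in the pushout step, especially verifying carefully that the cells indexed by different $p$-simplices $\tau$ only intersect in $X_{p-1}$ (which is where fullness of $Y$ enters) and confirming the $N$-connectivity of a map obtained from simultaneous attachments; however, both points are standard and follow from the cofibration structure together with the fact that the attachments can be performed one $\tau$ at a time if desired. There is no homotopical obstruction beyond this.
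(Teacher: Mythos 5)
Your proof is correct. The paper itself gives no argument for this lemma — it is quoted verbatim from Galatius--Randal-Williams \cite[Proposition 2.5]{GRWI} — and your filtration of $X$ by the dimension of the ``outside'' factor $\tau$ in the decomposition $\sigma = \tau \ast \alpha$ (which exists and is unique precisely because $Y$ is full) is essentially the standard argument, matching the one in the cited source. The two points that need care — that distinct $p$-simplices $\tau$ only meet inside $X_{p-1}$ so that the square is genuinely a pushout along a cofibration, and the degenerate cases $L_\tau = \emptyset$ and $p=0$ in the join-connectivity computation $S^{p-1} \ast L_\tau \simeq \Sigma^p L_\tau$ — are both handled correctly.
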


\begin{proposition}
\label{prop_connecitivity_links}
If $\BAA_n^m$ is $n$-connected for all $n \geq 1$ and $m+n \geq 3$,
then for every $k$-simplex $\sigma$, the link $\Link_{\BAA_n^m}(\sigma)$ is $(n-k-1)$-connected.
\end{proposition}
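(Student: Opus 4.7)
The plan is to apply the Galatius–Randal-Williams subcomplex lemma (\autoref{GRW-SubcomplexLemma}) to compare $X = \Link_{\BAA_n^m}(\sigma)$ with its full subcomplex $Y = \Linkhat_{\BAA_n^m}(\sigma)$. If we can show that $Y$ is $(n-k-1)$-connected and that the inclusion $Y \hookrightarrow X$ is $(n-k-1)$-connected, then $X$ is $(n-k-1)$-connected as desired.

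The key intermediate step is the following auxiliary claim: \emph{for every simplex $\mu$ of dimension $d$ in $\BAA_n^m$, the complex $\Linkhat_{\BAA_n^m}(\mu)$ is $(n-d-1)$-connected.} I would establish this by case analysis on the type of $\mu$, using the descriptions recorded in \autoref{links_of_BAA_augmentations}. If $\mu$ is standard, then $\Linkhat_{\BAA_n^m}(\mu) \cong \BAA_{n-d-1}^{m+d+1}$, which is $(n-d-1)$-connected by the hypothesis applied to smaller parameters. If $\mu$ is $2$-additive, $\Linkhat_{\BAA_n^m}(\mu) = \Linkhat_{\BA_n^m}(\{v_1,\ldots,v_d\})$, which is $(n-d-1)$-connected by the Cohen–Macaulay property of $\BA_n^m$ from Church–Putman \cite[Theorem C']{CP}. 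In the double-triple and double-double cases, $\Link_{\BAA_n^m}(\mu) = \Linkhat_{\BAA_n^m}(\mu) = \Link_{\B_n^m}(\{v_2,\ldots,v_d\})$, which is even better connected by Maazen's Cohen–Macaulay property of $\B_n^m$. The $3$-additive case is the most subtle: $\Linkhat_{\BAA_n^m}(\mu) = \Link_{\B_n^m}(\{v_1,\ldots,v_d\})$ is only $(n-d-2)$-connected from Maazen, but $\Link_{\BAA_n^m}(\mu)$ itself is the join with the nonempty set $J(\mu)$, which raises connectivity by one via the standard connectivity formula for joins.

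Given the auxiliary claim, the proposition follows quickly. For a $p$-simplex $\tau$ in $X$ whose vertices all lie outside $Y$, \autoref{link_linkhat_linkhat} identifies $Y \cap \Link_X(\tau) = \Linkhat_{\BAA_n^m}(\sigma \cup \tau)$. Since $\sigma \cup \tau$ has dimension $k+p+1$, the auxiliary claim gives $(n-k-p-2)$-connectivity, which is exactly $(N - p - 1)$ with $N = n-k-1$. So GRW's lemma yields that $Y \hookrightarrow X$ is $(n-k-1)$-connected. Combined with the auxiliary claim applied to $\mu = \sigma$ (giving that $Y$ itself is $(n-k-1)$-connected), it follows that $X$ is $(n-k-1)$-connected.

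The main obstacle I expect is careful bookkeeping in the auxiliary claim, especially verifying that the join-with-$J(\mu)$ contribution in the $3$-additive case produces exactly the required single extra degree of connectivity (not more or less), and handling the boundary-dimension cases where $\Linkhat_{\BAA_n^m}(\sigma)$ degenerates. In particular, when $\sigma$ is a standard simplex of dimension $n-1$, the complex $\BAA_0^{m+n}$ appearing as $\Linkhat$ has empty vertex set, so the reduction to the hypothesis does not directly apply, and one must give a direct argument that $\Link_{\BAA_n^m}(\sigma)$ is $0$-connected by analyzing its finitely many $2$- and $3$-additive extensions. Similar small checks are needed for $\sigma$ of maximal dimension.
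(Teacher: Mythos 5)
Your overall strategy (direct computation via \autoref{links_of_BAA_augmentations} for the augmented types, and \autoref{GRW-SubcomplexLemma} to pass from $\Linkhat$ to $\Link$ otherwise) is the same as the paper's, but your single application of GRW has a genuine gap. Your auxiliary claim --- that $\Linkhat_{\BAA_n^m}(\mu)$ is $(n-d-1)$-connected for \emph{every} $d$-simplex $\mu$ --- is false when $\mu$ is $3$-additive: as you yourself compute, $\Linkhat_{\BAA_n^m}(\mu)=\Link_{\B_n^m}(\{v_1,\dots,v_d\})$ is only $(n-d-2)$-connected, and the join with $J(\mu)$ repairs $\Link$ but not $\Linkhat$. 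This failure is not confined to the case $\mu=\sigma$ (which you could handle directly by the join formula); it also poisons the GRW step when $\sigma$ is a \emph{standard} $k$-simplex. There, $\tau$ can be the single vertex $\langle \vec w_1+\vec w_2+\vec w_3\rangle$, so $\sigma\cup\tau$ is a $3$-additive $(k+1)$-simplex and $Y\cap\Link_X(\tau)=\Linkhat_{\BAA_n^m}(\sigma\cup\tau)\cong\B_{n-k-1}^{m+k+1}$ is only $(n-k-3)$-connected, one degree short of the $(N-p-1)=(n-k-2)$ that GRW requires with $N=n-k-1$ and $p=0$. So the hypothesis of \autoref{GRW-SubcomplexLemma} fails exactly for these $\tau$, and the conclusion that $Y\hookrightarrow X$ is $(n-k-1)$-connected does not follow.

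The paper's remedy is a two-step filtration $\Linkhat_{\BAA_n^m}(\sigma)\subseteq Z\subseteq \Link_{\BAA_n^m}(\sigma)$, where $Z$ is spanned by $\Linkhat$ together with only the vertices of the form $\langle\vec w_1+\vec w_2\rangle$. In the first inclusion every relevant $\sigma\cup\tau$ is $2$-additive, double-triple or double-double, so the hatted links are sufficiently connected; in the second inclusion $\sigma\cup\tau$ is $3$-additive, but the relevant intersection $\Link_{\Link(\sigma)}(\tau)\cap Z$ is the \emph{full} link $\Link_{\BAA_n^m}(\sigma\cup\tau)=\Link_{\B_n^m}(\cdot)\ast J(\sigma\cup\tau)$, and the join with the nonempty $J$ supplies the missing degree of connectivity. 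You should adopt this intermediate complex (or some equivalent device) to close the gap; your treatment of the $2$-additive, double-triple, double-double and $3$-additive cases of $\sigma$ itself, and the degenerate top-dimensional checks you flag, otherwise match the paper.
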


\begin{proof}
\noindent {\bf Case 1: $\sigma$ is a 3-additive, double-double or double-triple simplex. } 
By the work of Church--Putman, there is an isomorphism $\Link_{B_n^m}(\ls v_0,\ldots,v_\ell \rs)\cong \B_{n-\ell-1}^{m+\ell+1}$ \cite[Lemma 4.3]{CP} and this complex is $(n-\ell-3)$-connected for all $n,m\geq 0$  and $0 \leq \ell  \leq n-1$ \cite[Theorem 4.2]{CP}.
Combining this with \autoref{links_of_BAA_augmentations}, we obtain the claim if $\sigma$ is a 3-additive, double-double or double-triple simplex. \\

\noindent {\bf Case 2: $\sigma$ is a 2-additive simplex.} 
Next assume that $\sigma$ is 2-additive. If $k=n$, then we must check that  $\Link_{\BAA_n^m}(\sigma)$ is nonempty. First suppose $\sigma$ has the form $\ls \langle \vec v_1 + \vec v_2 \rangle, v_1,\ldots,v_n\rs$, where $\{\vec v_1,\ldots,\vec v_n, \vec e_1, \ldots \vec e_m\}$ is a basis for $\Z^{m+n}$.  Since $m+n \geq 3$, either $m \geq 1$ or  $m=0$ and $n\geq 3$. In these two cases, either $\langle \vec v_1 + \vec e_1 \rangle$ or $\langle \vec v_1 + \vec v_3 \rangle$, respectively, is an element of $\Link_{\BAA_n^m}(\sigma)$. Alternatively suppose $\sigma$ has the form $\ls \langle \vec v_1 + \vec e_1 \rangle, v_1,\ldots,v_n\rs$, where $\{\vec v_1,\ldots,\vec v_n, \vec e_1, \ldots \vec e_m\}$ is a basis for $\Z^{m+n}$. Now $m+n \geq 3$ implies either $m\geq 2$ or $n \geq 2$. But then at least one of $\langle \vec v_1 + \vec e_2 \rangle$ or $\langle \vec v_1 + \vec v_2 \rangle$ must be a vertex in $\Link_{\BAA_n^m}(\sigma)$. 
 
Now suppose that $k\not= n$. By \autoref{links_of_BAA_augmentations}, we can write $\sigma=\ls v_0,v_1,\ldots,v_k\rs$ with $\ls \vec v_1,\ldots, \vec v_k\rs$ a partial basis and  $$\Linkhat_{\BAA_n^m}(\sigma) = \Linkhat_{\BA_n^m}(\ls v_1,\ldots,v_k\rs).$$
  
By \cite[Lemma 4.12(b)]{CP}, $\Linkhat_{\BA_n^m}(\ls v_1,\ldots,v_k\rs)$ is isomorphic to $\BA_{n-k}^{m+k}$ and this complex is $(n-k-1)$-connected by \cite[Theorem C']{CP}.
We want to extend this to $\Link_{\BAA_n^m}(\sigma)\supseteq \Linkhat_{\BAA_n^m}(\sigma)$. 
We will apply \autoref{GRW-SubcomplexLemma} with $N=n-k-1$. 
We need to show that for every $\tau \in \Link_{\BAA_n^m}(\sigma)$ that has no vertex in $\Linkhat_{\BAA_n^m}(\sigma)$, the intersection $$\Link_{\Link_{\BAA_n^m}(\sigma)}(\tau)\cap \Linkhat_{\BAA_n^m}(\sigma)$$ is $(n-k-\dim(\tau)-2))$-connected. By \autoref{link_linkhat_linkhat}, this complex is equal to $\Linkhat_{\BAA_n^m}(\sigma\cup \tau)$.
Every vertex $v$ of $\tau$ satisfies $\vec v \in \langle \vec v_0, \ldots, \vec v_k, \vec e_1, \ldots, \vec e_m \rangle$, so in particular, it is contained in the additive core of $\sigma\cup \tau$. Hence, $\sigma\cup \tau$ is a double-double or double-triple simplex of dimension $k+\dim(\tau)+1$.\footnote{In fact, it follows that $\dim(\tau) = 0$ here.} As observed in \autoref{links_of_BAA_augmentations},  $\Linkhat_{\BAA_n^m}(\sigma\cup \tau) = \Link_{\BAA_n^m}(\sigma\cup \tau)$ for such simplices. In Case 1 we showed that this link is $(n-k-\dim(\tau ) -2)$-connected as claimed. \\

\noindent {\bf Case 3: $\sigma$ is a standard simplex. }  
If $\sigma$ is a standard simplex, we apply the same argument in two steps. 
By \autoref{links_of_BAA_augmentations}, $\Linkhat_{\BAA_n^m}(\sigma)$ is isomorphic to $\BAA_{n-k-1}^{m+k+1}$ and this is $(n-k-1)$-connected by our assumption.
Furthermore, every vertex in $\Link_{\BAA_n^m}(\sigma)\setminus \Linkhat_{\BAA_n^m}(\sigma)$ is either of the form $\langle \vec w_1+ \vec w_2 \rangle$ or $\langle \vec w_1+\vec w_2+\vec w_3 \rangle$ for some $w_1,w_2,w_3 \in \ls v_0, \ldots, v_k, e_1, \ldots, e_m\rs$.
We will apply \autoref{GRW-SubcomplexLemma} twice to the chain subcomplexes $$\Linkhat_{\BAA_n^m}(\sigma)\subseteq Z \subseteq \Link_{\BAA_n^m}(\sigma),$$ where $Z$ is spanned by $\Linkhat_{\BAA_n^m}(\sigma)$ and all vertices of the form $\langle \vec w_1+ \vec w_2 \rangle$ as above. We first consider the inclusion $\Linkhat_{\BAA_n^m}(\sigma)\hookrightarrow Z$ and consider \autoref{GRW-SubcomplexLemma} with $N=n-k-1$. Let $\tau$ be a simplex of $Z$ that has no vertex in $\Linkhat_{\BAA_n^m}(\sigma)$. Then using \autoref{link_linkhat_linkhat}, 
\begin{equation*}
\Link_{Z}(\tau)\cap \Linkhat_{\BAA_n^m}(\sigma) = \Linkhat_{\BAA_n^m}(\sigma\cup \tau).
\end{equation*}
Depending on the form of $\tau$, the simplex  $\sigma\cup \tau$ is 2-additive, a double-triple simplex or a double-double simplex. 
For each possibility, we have already seen in Case 2 that $\Linkhat_{\BAA_n^m}(\sigma\cup \tau)$ is $(n-k-\dim(\tau)-2)$-connected.
 It follows that $Z$ is $(n-k-1)$-connected.

Now we apply  \autoref{GRW-SubcomplexLemma} to the inclusion $Z \hookrightarrow \Link_{\BAA_n^m}(\sigma)$ and again let $N=n-k-1$.  Let $\tau$ be a simplex of $\Link_{\BAA_n^m}(\sigma)$ that has no vertex in $Z$. Then $\tau$ has the form $\ls \langle w_1 + w_2 +w_3 \rangle \rs$ and $\sigma \cup \tau$ is 3-additive. It follows from \autoref{link_linkhat_linkhat} that
\begin{equation*}
	\Link_{\Link_{\BAA_n^m}(\sigma)}(\tau)\cap Z = \Link_{\BAA_n^m}(\sigma\cup \tau).
\end{equation*}
We already demonstrated in Case 1 that this 3-additive simplex's link is $(n-k-\dim(\tau)-2)$-connected. This implies that $\Link_{\BAA_n^m}(\sigma)$ is $(n-k-1)$-connected, and concludes the final case in the proof. \end{proof}

\begin{remark}
The preceding proof shows that for a standard, 2-additive, double-triple and double-double simplex $\sigma$, not only $\Link_{\BAA_n^m}(\sigma)$, but also $\Linkhat_{\BAA_n^m}(\sigma)$ is $(n-\dim(\sigma)-1)$-connected. The latter is however not the case for 3-additive simplices.
\end{remark}

\subsection{Description of $\Link^<$, $\Linkhat^<$}

We next describe the structure of certain links in $\BAA_n^m$. As before, we omit the proofs of a few statements that simply follow by spelling out the definitions.

\begin{lemma}
\label{equaility_link_linkhat}
Let $v \in \BAA_n^m$ be a vertex with nonzero last coordinate. Then
\begin{enumerate}
\item $\Link^{<}_{\BAA_n^m}(v) = \Linkhat^{<}_{\BAA_n^m}(v) $, and
\item $\Link_{\BAA_n^m}^{<}(\ls v,\langle \vec v \pm \vec e_i\rangle\rs) = \Linkhat_{\BAA_n^m}^{<}(\ls v,\langle \vec v \pm \vec e_i\rangle\rs)$ for all $1\leq i \leq m$.
\end{enumerate}
\end{lemma}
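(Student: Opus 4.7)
The plan is to show that both equalities follow from a single observation: any primitive vector $\vec u$ that lies in $\langle \vec e_1,\ldots,\vec e_m,\vec v\rangle$ and whose line $u$ is a vertex of $\BAA_n^m$ must have last coordinate of absolute value at least $R$ (where $R>0$ is the absolute value of the last coordinate of $\vec v$). Indeed, such a $\vec u$ can be written as $\vec u = a_1 \vec e_1 + \cdots + a_m \vec e_m + b\vec v$ for some integers $a_i, b$. Since $u \in \BAA_n^m = \Linkhat_{\BAA_{m+n}}(\{e_1,\ldots,e_m\})$, we have $\vec u \notin \langle \vec e_1,\ldots,\vec e_m\rangle$, forcing $b \neq 0$. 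As the last coordinate of each $\vec e_j$ ($1 \leq j \leq m$) is zero, the last coordinate of $\vec u$ is $bR$ with $|b|\geq 1$, so its absolute value is at least $R$.

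For part (i), the inclusion $\Linkhat^{<}_{\BAA_n^m}(v) \subseteq \Link^{<}_{\BAA_n^m}(v)$ is immediate from the definitions, since $\Linkhat$ is a full subcomplex of $\Link$ and both restrictions impose the same bound on the last coordinate. For the reverse inclusion, let $u$ be a vertex of $\Link^{<}_{\BAA_n^m}(v)$. Then the last coordinate of $\vec u$ has absolute value strictly less than $R$, so by the observation above, $\vec u \notin \langle \vec e_1,\ldots,\vec e_m,\vec v\rangle$. This is exactly the vertex condition defining $\Linkhat_{\BAA_n^m}(v)$, so $u \in \Linkhat^{<}_{\BAA_n^m}(v)$. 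Since both are full subcomplexes of $\Link_{\BAA_n^m}(v)$ with the same vertex set, they coincide.

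For part (ii), set $\sigma = \{v, \langle \vec v \pm \vec e_i\rangle\}$. Note that the last coordinate of $\vec v \pm \vec e_i$ equals the last coordinate of $\vec v$ (namely $\pm R$) because $i\leq m$, so the defining constant for $\Link^<$ and $\Linkhat^<$ is again $R$. The key point is that
\[
\langle \vec e_1,\ldots,\vec e_m, \vec v, \vec v \pm \vec e_i\rangle = \langle \vec e_1,\ldots,\vec e_m,\vec v\rangle,
\]
so the vertex condition defining $\Linkhat_{\BAA_n^m}(\sigma)$ relative to $\Link_{\BAA_n^m}(\sigma)$ is identical to the one in part (i). Repeating the argument verbatim with $\sigma$ in place of $v$ yields the claimed equality.

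There is no real obstacle here; the only subtle point to keep straight is that the constant $R$ appearing in the definition of $\Linkhat^<$ for $\sigma = \{v,\langle\vec v \pm \vec e_i\rangle\}$ is the same as the one for $\sigma = \{v\}$, which follows because $\vec e_i$ contributes zero to the last coordinate.
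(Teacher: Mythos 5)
Your proof is correct and follows essentially the same strategy as the paper's: both reduce the claim to showing that every vertex of $\Link^{<}$ automatically satisfies the span condition defining $\Linkhat$, because any vertex of $\BAA_n^m$ lying in $\langle \vec e_1,\ldots,\vec e_m,\vec v\rangle$ has last coordinate of absolute value at least $R$. The only (harmless) difference is in how that key fact is justified: the paper enumerates the possible simplex types to conclude such a vertex must be $\langle \vec v \pm \vec e_j\rangle$ or $\langle \vec v \pm \vec e_i \pm \vec e_j\rangle$ with last coordinate exactly $\pm R$, whereas you write the vertex as $\sum a_j\vec e_j + b\vec v$ with $b\neq 0$ and read off the bound directly, which is slightly more general but amounts to the same thing.
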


Later on, we will need to know that $\Link_{\BAA_n^m}^{<}(\ls v,\langle \vec v \pm \vec e_i\rangle\rs) = \Linkhat_{\BAA^m_n}^{<}(\ls v,\langle \vec v \pm \vec e_i\rangle\rs)$ is highly-connected. To prepare for this, we compare this complex to $\Linkhat_{\BA^m_n}^{<}(v)$:

\begin{lemma}
\label{lem_compare_linkhatBA_and_linhatBAA}
Let $v \in \BAA_n^m$ be a vertex with nonzero last coordinate and $1\leq i \leq m$.
Let $\sigma$ be a set of vectors in $\Z^{m+n}$. Then the following hold. 
\begin{enumerate}
\item  \label{it_standard} The simplex $\sigma$ is a standard simplex of $\Linkhat_{\BA^m_n}^{<}(v)$ if and only if it is a $2$-additive simplex of $\Linkhat_{\BAA^m_n}^{<}(\{v,\langle \vec v \pm \vec e_i\rangle\})$. 
\item \label{it_2add_doubledouble} The simplex  $\sigma$ is a $2$-additive simplex of $\Linkhat_{\BA^m_n}^{<}(v)$ such that the additive core of $\ls e_1, \dots, e_m\rs \cup \ls v\rs \cup \sigma$ does not contain $v$ or $e_i$ if and only $\sigma$ is a double-double simplex of $\Linkhat_{\BAA^m_n}^{<}(\{v,\langle \vec v \pm \vec e_i\rangle\})$.
\item \label{it_2add_doubletriple} If $\sigma$ is a $2$-additive simplex of $\Linkhat_{\BA^m_n}^{<}(v)$ such that the additive core of $\ls e_1, \dots, e_m\rs \cup \ls v\rs \cup \sigma$ contains $v$ or $e_i$, then $\sigma$ is a double-triple simplex of $\Linkhat_{\BAA^m_n}^{<}(\{v,\langle \vec v \pm \vec e_i\rangle\})$.
\item \label{it_doubletriple} If $\sigma$ is a double-triple simplex of $\Linkhat_{\BAA^m_n}^{<}(\{v,\langle \vec v \pm \vec e_i\rangle\})$, then it is a simplex of $\Linkhat_{\BA^m_n}^{<}(v)$ except if it is of the form $\sigma = \ls w, \langle \vec v \pm \vec e_i+ \vec w\rangle, v_2, \ldots, v_k \rs$.
\item \label{it_3add} No simplex of $\Linkhat_{\BAA^m_n}^{<}(\{v,\langle \vec v \pm \vec e_i\rangle\})$ is $3$-additive.
\end{enumerate}
In particular, $\Linkhat_{\BA^m_n}^{<}(v)$ is a subcomplex of $\Linkhat_{\BAA^m_n}^{<}(\{v,\langle \vec v \pm \vec e_i\rangle\}) \subseteq \Link_{\BAA_n^m}(\{v,\langle \vec v \pm \vec e_i\rangle\})$ and every simplex of $\Linkhat_{\BAA^m_n}^{<}(\{v,\langle \vec v \pm \vec e_i\rangle\})$ that is not contained in $\Linkhat_{\BA^m_n}^{<}(v)$ is of type double-triple.
\end{lemma}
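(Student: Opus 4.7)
The plan is to prove all five items by carefully unpacking the relative simplex-type conventions from \autoref{relative-simplex-types}, using one guiding principle throughout: the vertex $\langle \vec v \pm \vec e_i \rangle$ is forced to play the role of an additive vertex in any decomposition of $\sigma \cup \{e_1,\dots,e_m,v,\langle \vec v \pm \vec e_i \rangle\}$. The reason is that $\{\vec v, \vec e_i, \vec v \pm \vec e_i\}$ is linearly dependent, so these three lines cannot simultaneously lie in the ``standard'' (partial basis) part of any 2-additive, 3-additive, double-double, or double-triple decomposition. Before treating the items, I would first record that the $<$ and $\Linkhat$ conditions agree on both sides: $v$ and $\langle \vec v \pm \vec e_i\rangle$ have the same absolute last coordinate $R$, and $\langle \vec e_1,\ldots,\vec e_m, \vec v, \vec v \pm \vec e_i\rangle=\langle \vec e_1,\ldots,\vec e_m, \vec v\rangle$.

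For \autoref{it_standard}, the forward direction is immediate by adjoining the relation $\vec v \pm \vec e_i = \vec v \pm \vec e_i$ to the standard simplex $\sigma \cup \{e_1,\dots,e_m,v\}$, producing a 2-additive simplex whose additive vertex is $\langle \vec v \pm \vec e_i\rangle$. For the converse, if $\sigma \cup \{e_1,\dots,e_m,v,\langle \vec v \pm \vec e_i\rangle\}$ is 2-additive, the additive vertex must be $\langle \vec v \pm \vec e_i\rangle$ (otherwise $v,e_i,\langle \vec v \pm \vec e_i\rangle$ would all sit in a partial basis, impossible by linear dependence), and removing it gives the required standard simplex. The same structural observation drives \autoref{it_2add_doubledouble} and \autoref{it_2add_doubletriple}: adjoining the forced relation produces a second 2-additive relation involving the disjoint triple $\{v,e_i,\langle \vec v \pm \vec e_i\rangle\}$, and whether this is a double-double or a double-triple is dictated precisely by whether the additive core of the original 2-additive structure is disjoint from $\{v,e_i\}$ or shares $v$ or $e_i$ with it.

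For \autoref{it_doubletriple}, given a double-triple simplex, one of the two 2-additive relations is again forced to be $\vec v \pm \vec e_i = \vec v \pm \vec e_i$. The shared vertex between the two relations is either $v$, $e_i$, or $\langle \vec v \pm \vec e_i\rangle$; in the first two cases, removing $\langle \vec v \pm \vec e_i\rangle$ directly produces a 2-additive simplex in $\Linkhat_{\BA^m_n}^<(v)$. In the third case, the second relation reads $\vec y = \pm(\vec v \pm \vec e_i) \pm \vec w = \pm \vec v \pm \vec e_i \pm \vec w$, which is a genuine 3-additive relation in $\BA$-terms; this is exactly the exceptional form $\sigma = \{w,\langle \vec v \pm \vec e_i + \vec w \rangle, v_2,\dots,v_k\}$. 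For \autoref{it_3add}, I rule out 3-additive simplices by noting that the 3-additive vertex in $\sigma \cup \{e_1,\dots,e_m,v,\langle \vec v \pm \vec e_i\rangle\}$ cannot be $\langle \vec v \pm \vec e_i\rangle$ (which is a sum of only two primitives in the remaining partial basis), cannot be $v$ (else $\langle \vec v \pm \vec e_i\rangle,v,e_i$ lie in a partial basis), cannot be $e_j$ (standard basis vectors are not 3-additive), and cannot be in $\sigma$ (else the standard part would contain $v,e_i,\langle \vec v \pm \vec e_i\rangle$).

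The final ``In particular'' sentence then follows by combining the items: every simplex of $\Linkhat_{\BAA^m_n}^<(\{v,\langle \vec v \pm \vec e_i\rangle\})$ is either 2-additive, double-double, or double-triple, all coming from standard or 2-additive simplices of $\Linkhat_{\BA^m_n}^<(v)$ except for the exceptional double-triple form in \autoref{it_doubletriple}. The main obstacle I anticipate is bookkeeping in case (iv) -- making sure the three cases for the shared vertex of the two 2-additive relations are exhaustive and that the third case really produces the stated normal form $\{w,\langle \vec v \pm \vec e_i + \vec w \rangle,\dots\}$ up to sign conventions -- along with consistently keeping track of why $\langle \vec v \pm \vec e_i\rangle$ is the forced additive vertex in every case.
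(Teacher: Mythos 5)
Your proposal is correct and follows essentially the same route as the paper: both hinge on the observation that $\{e_i, v, \langle \vec v \pm \vec e_i\rangle\}$ is a forced $2$-additive core of the underlying simplex, and both run the same case analysis on which vertex the second relation shares with this core (the paper's proof of part \ref{it_3add} is marginally slicker, citing \autoref{obs_list_facet_types} that every face of a $3$-additive simplex is standard or $3$-additive, rather than excluding candidate additive vertices one by one, but your version works). No gaps.
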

\begin{proof}
Throughout this proof, we will use the observation that for $1\leq j,k\leq m$, the lines $\langle \vec v \pm \vec e_j\rangle$, $\langle \vec v \pm \vec e_i \pm \vec e_j \rangle$, $\langle  \vec e_j \pm \vec e_k \rangle$ or $\langle \vec e_i \pm \vec e_j \pm \vec e_k \rangle$ are {\bf not} vertices of $\Linkhat_{\BA^m_n}^{<}(v)$ or $\Linkhat_{\BAA^m_n}^{<}(v)$. This follows because their last coordinate is equal to that of $v$ or they lie in $\langle \vec e_1, \ldots, \vec e_m \rangle$.

\autoref{it_standard} is immediate.
\autoref{it_2add_doubledouble}  follows because, if the additive core of $\ls e_1, \dots, e_m\rs \cup \ls v\rs \cup \sigma$ does not contain $v$ or $e_i$, then $\ls e_1, \dots, e_m\rs \cup \ls v, \langle \vec v \pm \vec e_i\rangle\ \rs \cup \sigma$ contains two disjoint 2-additive faces.

If $\sigma$ is a $2$-additive simplex of $\Linkhat_{\BA^m_n}^{<}(v)$ such that the additive core of $\ls e_1, \dots, e_m\rs \cup \ls v\rs \cup \sigma$ contains $v$ or $e_i$, then this additive core must be of the form $\ls v, w, \langle \vec v + \vec w \rangle \rs$ or $\ls e_i, w, \langle \vec e_i + \vec w \rangle \rs$ for some $w \in \sigma$. This implies 
\autoref{it_2add_doubletriple}.

For \autoref{it_doubletriple} note that if $\sigma$ is a double-triple simplex of $\Linkhat_{\BAA^m_n}^{<}(\{v,\langle \vec v \pm \vec e_i\rangle\})$, then the additive core of $\ls e_1, \dots, e_m\rs \cup \ls v, \langle \vec v \pm \vec e_i\rangle\ \rs \cup \sigma$ is of the form 
\begin{equation*}
\{e_i, v, \langle \vec v \pm \vec e_i\rangle\} \cup \tau,
\end{equation*}
where $\tau = \ls w, \langle \vec v + \vec w \rangle \rs$, $\ls w, \langle \pm \vec e_i + \vec w \rangle \rs$ or $\ls w, \langle \vec v \pm \vec e_i+ \vec w\rangle\rs$ for some $w \in \sigma$. 
In the first two cases, $\sigma = \tau \cup \ls v_2, \ldots, v_k \rs$ is a simplex of $\Linkhat_{\BA^m_n}^{<}(v)$. If however $\ls w, \langle \vec v \pm \vec e_i+ \vec w\rangle\rs$, then $\ls e_1, \dots, e_m\rs \cup \ls v \rs \cup \sigma$ contains the 3-additive simplex $\ls e_i, v,  w, \langle \vec v \pm \vec e_i+ \vec w\rangle\rs$, so  $\sigma$ is not a simplex in $\Linkhat_{\BA^m_n}^{<}(v)$.

Finally, \autoref{it_3add} follows because for any simplex $\sigma$ in $\Linkhat_{\BAA^m_n}^{<}(\{v,\langle \vec v \pm \vec e_i\rangle\})$, the simplex $\ls e_1, \dots, e_m\rs \cup \ls v, \langle \vec v \pm \vec e_i\rangle\ \rs \cup \sigma$ contains the $2$-additive face $\{e_i, v, \langle \vec v \pm \vec e_i\rangle\}$. As every face of a $3$-additive simplex is either standard or $3$-additive (see \autoref{obs_list_facet_types}), this implies that $\sigma$ cannot be $3$-additive.
\end{proof}

The following will be used to describe the link of 3-additive simplices during the proof that $\BAA_n^m$ is spherical.

\begin{lemma}
\label{lem_characterisation_critical_3add}
Let $\sigma$ be a 3-additive simplex in $\BAA_n^m$ and $R> 0$ the highest absolute value of the last coordinates of all of its vertices. 
As in \autoref{def_J}, write $\sigma=\ls v_0,v_1,\ldots,v_k\rs$, where $\vec v_0 = \vec w_1 + \vec w_2 + \vec w_3$, let $J(\sigma)$ be as in \autoref{def_J} and let $J^< \subseteq J(\sigma)$ be the subset of all vertices with last coordinate \emph{smaller} in absolute value than $R$.

Then $J^<$ is empty if and only if the last coordinate of $v_0$ is $\pm R$ and there are $1 \leq l \leq k$, $1\leq i \not= j \leq m$ such that $\vec v_0 = \vec v_l \pm \vec e_i \pm \vec e_j $.
\end{lemma}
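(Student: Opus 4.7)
Write $r(\vec u)$ for the last coordinate of $\vec u \in \Z^{m+n}$; since $i \leq m$ implies $r(\vec e_i) = 0$, additivity of $r$ will do most of the work. The decomposition $\vec v_0 = \vec w_1 + \vec w_2 + \vec w_3$ involves three distinct elements of $\ls v_1, \ldots, v_k, e_1, \ldots, e_m \rs$, and because $v_0$ is a vertex of $\BAA_n^m$ we have $\vec v_0 \notin \langle \vec e_1, \ldots, \vec e_m \rangle$, so not all three of the $w_i$ are standard basis vectors. I would split on the number of the $w_i$ belonging to $\ls v_1, \ldots, v_k\rs$, which is $1$, $2$, or $3$; the claim says that only the first possibility is compatible with $J^< = \emptyset$, and pins down its form.

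For the ``if'' direction and the analysis of the one-$v$-summand case, suppose $\vec v_0 = \vec v_l \pm \vec e_i \pm \vec e_j$ with $1 \leq l \leq k$ and $1 \leq i \neq j \leq m$. The three pairwise sums of $\vec v_l, \pm\vec e_i, \pm\vec e_j$ are $\vec v_l \pm \vec e_i$, $\vec v_l \pm \vec e_j$, and $\pm \vec e_i \pm \vec e_j$; the last lies in $\langle \vec e_1, \ldots, \vec e_m\rangle$ so is not a vertex of $\BAA_n^m$, giving $J(\sigma) = \ls \langle \vec v_l \pm \vec e_i \rangle, \langle \vec v_l \pm \vec e_j \rangle \rs$. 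Both members have last coordinate of absolute value $|r(\vec v_l)| = |r(\vec v_0)|$. Combining $v_l, v_0 \in \sigma$ (so both have last coordinate absolute value at most $R$) with the condition $J^< = \emptyset$ (which forces $|r(\vec v_l)| \geq R$) shows that in this case $J^< = \emptyset$ is equivalent to $|r(v_0)| = R$.

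It remains to rule out the other two cases under $J^< = \emptyset$. For two $v$-summands, write $\vec v_0 = \vec v_l + \vec v_{l'} + \vec e_i$ and set $a = r(\vec v_l)$, $b = r(\vec v_{l'})$. All three pairwise sums span vertices of $\BAA_n^m$ (none lie in $\langle \vec e_1, \ldots, \vec e_m\rangle$, by linear independence of the partial basis) and have last coordinates $a+b$, $a$, $b$; since $v_l, v_{l'}, v_0 \in \sigma$ and $a + b = r(\vec v_0)$, each has absolute value at most $R$, while $J^< = \emptyset$ forces all three to equal $R$. But $|a| = |b| = R$ gives $a + b \in \ls -2R, 0, 2R\rs$, none of which has absolute value $R$. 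For three $v$-summands, write $\vec v_0 = \vec v_l + \vec v_{l'} + \vec v_{l''}$ and let $X, Y, Z$ be the last coordinates of $\vec v_l + \vec v_{l'}$, $\vec v_l + \vec v_{l''}$, $\vec v_{l'} + \vec v_{l''}$. Then $X+Y+Z = 2 r(\vec v_0)$ while $-X+Y+Z, X-Y+Z, X+Y-Z$ equal $2r(\vec v_l), 2r(\vec v_{l'}), 2r(\vec v_{l''})$; since each vertex of $\sigma$ has last-coordinate absolute value at most $R$, every signed combination $|{\pm}X \pm Y \pm Z|$ is at most $2R$. Choosing signs to match those of $X, Y, Z$ yields $|X|+|Y|+|Z| \leq 2R$, contradicting the lower bound $|X|+|Y|+|Z| \geq 3R$ coming from $J^< = \emptyset$.

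The only subtle point is tracking signs, since vertices are $\pm$-vectors; I would handle this at the outset by fixing representatives $\vec v_l, \vec e_i$ (and a representative for $\vec v_0$) consistent with the expression $\vec v_0 = \vec w_1 + \vec w_2 + \vec w_3$. Once that is done, each case reduces to an elementary calculation with integer last coordinates.
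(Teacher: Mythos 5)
Your proof is correct and follows essentially the same route as the paper's: both reduce to the observation that in the one-$v$-summand case the sum $\pm\vec e_i \pm \vec e_j$ is not a vertex while the other two members of $J(\sigma)$ share the last coordinate of $\vec v_0$, and that otherwise all three pairwise sums are vertices and an elementary inequality on last coordinates forces one of them into $J^<$. The only difference is presentational: the paper asserts that latter arithmetic fact in one line for the two- and three-$v$-summand cases together, whereas you prove it explicitly in each case (your $|X|+|Y|+|Z|\leq 2R$ versus $\geq 3R$ argument is a clean way to do so).
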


\subsection{Induction beginning}

The following is an adaptation of \cite[Proof of Theorem C', Base Case]{CP}.

\begin{lemma}\label{inductionBeg}
Let $m \geq 2$. The complex $\BAA_1^m$ is $1$-connected.
\end{lemma}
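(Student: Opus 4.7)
The plan is to give an explicit combinatorial description of $\BAA_1^m$ and then deduce simple connectivity by comparison with the 2-skeleton of the standard cubical CW-structure on $\R^m$.

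First I would identify the simplicial structure by a case analysis through the five simplex types. Since $\BAA_1^m = \Linkhat_{\BAA_{m+1}}(\{e_1, \ldots, e_m\})$, each vertex $v$ satisfies that $\{v, e_1, \ldots, e_m\}$ is a simplex of $\BAA_{m+1}$ with $\vec v \notin \langle \vec e_1, \ldots, \vec e_m \rangle$. Checking each of the five simplex types shows this forces a standard simplex, so $\vec v$ has last coordinate $\pm 1$. Normalising, the vertex set is identified with $\Z^m$ via $\vec v = (c, 1)$. An analogous analysis shows that the edges are of two kinds --- 2-additive ($\vec w = \vec v \pm \vec e_i$) and 3-additive ($\vec w = \vec v \pm \vec e_i \pm \vec e_j$ with $i \neq j$) --- the 2-simplices are precisely the double-triple simplices of the form $\{w, w+\epsilon_i\vec e_i, w+\epsilon_j\vec e_j\}$ for distinct $i,j$ and signs $\epsilon_i, \epsilon_j$, and there are no simplices of dimension $\geq 3$. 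Connectivity is then immediate, since the 2-additive edges form the Cayley graph of $\Z^m$ with the standard generators.

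For simple connectivity, I would first observe that every 3-additive edge $c \to c + \epsilon_i \vec e_i + \epsilon_j \vec e_j$ is homotopic, via the 2-simplex centred at $c + \epsilon_i \vec e_i$, to the two-edge path $c \to c + \epsilon_i \vec e_i \to c + \epsilon_i \vec e_i + \epsilon_j \vec e_j$ in the 2-additive subgraph. Hence every loop in $\BAA_1^m$ is homotopic to a loop in the Cayley graph of $\Z^m$, which is precisely the 1-skeleton of the standard cubical CW-structure on $\R^m$. Its 2-skeleton $X_2$ is simply connected for $m \geq 2$ (being 2-connectedly embedded in $\R^m$). I would construct a continuous map $f \colon X_2 \to \BAA_1^m$ extending this inclusion by sending each square 2-cell with boundary $c \to c+\vec e_i \to c+\vec e_i+\vec e_j \to c+\vec e_j \to c$ to the topological disk formed by the two triangles $\{c, c+\vec e_i, c+\vec e_j\}$ and $\{c+\vec e_i, c+\vec e_i+\vec e_j, c+\vec e_j\}$, glued along the 3-additive edge $\{c+\vec e_i, c+\vec e_j\}$. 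Any loop in $\BAA_1^m$ is homotopic to a loop in the Cayley graph, which is nullhomotopic in $X_2$; pushing the nullhomotopy forward through $f$ yields a nullhomotopy in $\BAA_1^m$, proving that $\pi_1(\BAA_1^m) = 0$.

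The main technical obstacle is the finicky case analysis identifying the simplicial structure of $\BAA_1^m$, in particular ruling out double-double simplices and the alternative ways a double-triple simplex could appear among the low-dimensional simplices. Once this combinatorial picture is in place, the topological argument reducing to $X_2$ is routine.
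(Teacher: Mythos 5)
Your proposal is correct and follows essentially the same route as the paper: identify the vertices of $\BAA_1^m$ with $\Z^m$, observe that the $2$-additive edges form the Cayley graph of $\Z^m$ with respect to the standard generators, note that the $3$-additive (diagonal) edges and the double-triple triangles fill in each unit square, and conclude simple connectivity because the commutator squares generate $\pi_1$ of that Cayley graph. The only difference is presentational — you package the last step as a map from the cubical $2$-skeleton of $\R^m$, whereas the paper works directly with the generating square loops $p_{\vec a}\, l_{\vec a,i,j}\, p_{\vec a}^{-1}$ — and your combinatorial case analysis (no double-double simplices, nothing above dimension two, all triangles of the form $\{w, w+\epsilon_i\vec e_i, w+\epsilon_j\vec e_j\}$) matches the paper's description.
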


\begin{proof}
We show this by successively describing the structures of $\B_1^m,\BA_1^m$ and $\BAA_1^m$. All of these complexes have the same vertex set. 
Every vertex is a line $v$ that is spanned by a vector $\vec v$ of the form $(a_1,\ldots,a_m,1)$, which we will write as $\vec v = (\vec a ,1)$ for $\vec a \in \Z^m$.  This gives an identification of the vertex set with $\Z^m$.
The complex $\B_1^m$ has dimension zero, so it has no simplices other than these vertices.

The complex $\BA_1^m$ has dimension one.  In \cite[Proof of Theorem C', Base Case]{CP}, Church--Putman show that it is isomorphic to the Cayley graph of $\Z^m$ with respect to the generating set given by $\vec e_1, \ldots, \vec e_m$:
Every edge in $\BA_1^m$ can be written in the form $\sigma = \ls v, \langle \vec v + \vec e_i \rangle \rs$ for some $v\in B_1^m$ and $1\leq i \leq m$; such an edge comes from the $2$-additive simplex $\ls v, \langle \vec v+ \vec e_i \rangle \rs \cup \ls e_1, \ldots, e_m \rs$ in $\BA_{1+m}$. 
For $\vec v = (\vec a, 1)$, this edge gets identified with the edge $\ls \vec a, \vec a + \vec e_i \rs$ of the Cayley graph. (We slightly abuse notation here by writing $\vec e_i$ both for elements in $\Z^{1+m}$ and in $\Z^{m}$.)
 
The complex $\BAA_1^m$ has dimension two. It is obtained from $\BA_1^m$ by attaching simplices $\sigma$ such that $\sigma \cup \ls e_1, \ldots, e_m \rs$ is either 3-additive or of type double-triple in $\BAA_{1+m}$. (No double-double simplices can occur in this low dimensional case.)  Concretely, the double-triple simplex in $\BAA_1^m$ are all of the form
\begin{gather*}
\sigma =   \ls v,\langle \vec v \pm \vec e_i \rangle, \langle \vec v \pm \vec e_i \pm \vec e_j \rangle\rs, \quad \sigma = \ls v, \langle \vec v \pm \vec e_j \rangle , \langle \vec v \pm \vec e_i \pm \vec e_j \rangle\rs, \\
\sigma = \ls v,\langle \vec v \pm \vec e_i \rangle,\langle \vec v\pm\vec e_j \rangle \rs, 
   \text{ or }
\sigma = \ls \langle \vec v \pm \vec e_i \rangle,\langle \vec v\pm\vec e_j \rangle , \langle \vec v \pm \vec e_i \pm \vec e_j \rangle\rs.
\end{gather*}
for some $v\in \B_1^m$ and $1 \leq i < j \leq m$. The 3-additive simplices arise as faces of these.
\autoref{BAA1-1Skeleton} shows the $1$-skeleton of $\BAA_1^m$ and its relationship to the Cayley graph of $\Z^m$ with respect to the standard generators. In fact, the $1$-skeleton of $\BAA_1^m$ is isomorphic to the Cayley graph of $\Z^m$ with respect to generators of the form $\vec e_i$ and $\vec e_i \pm \vec e_j$. 

\begin{figure}[h!]
\begin{center}
\includegraphics[scale=0.5]{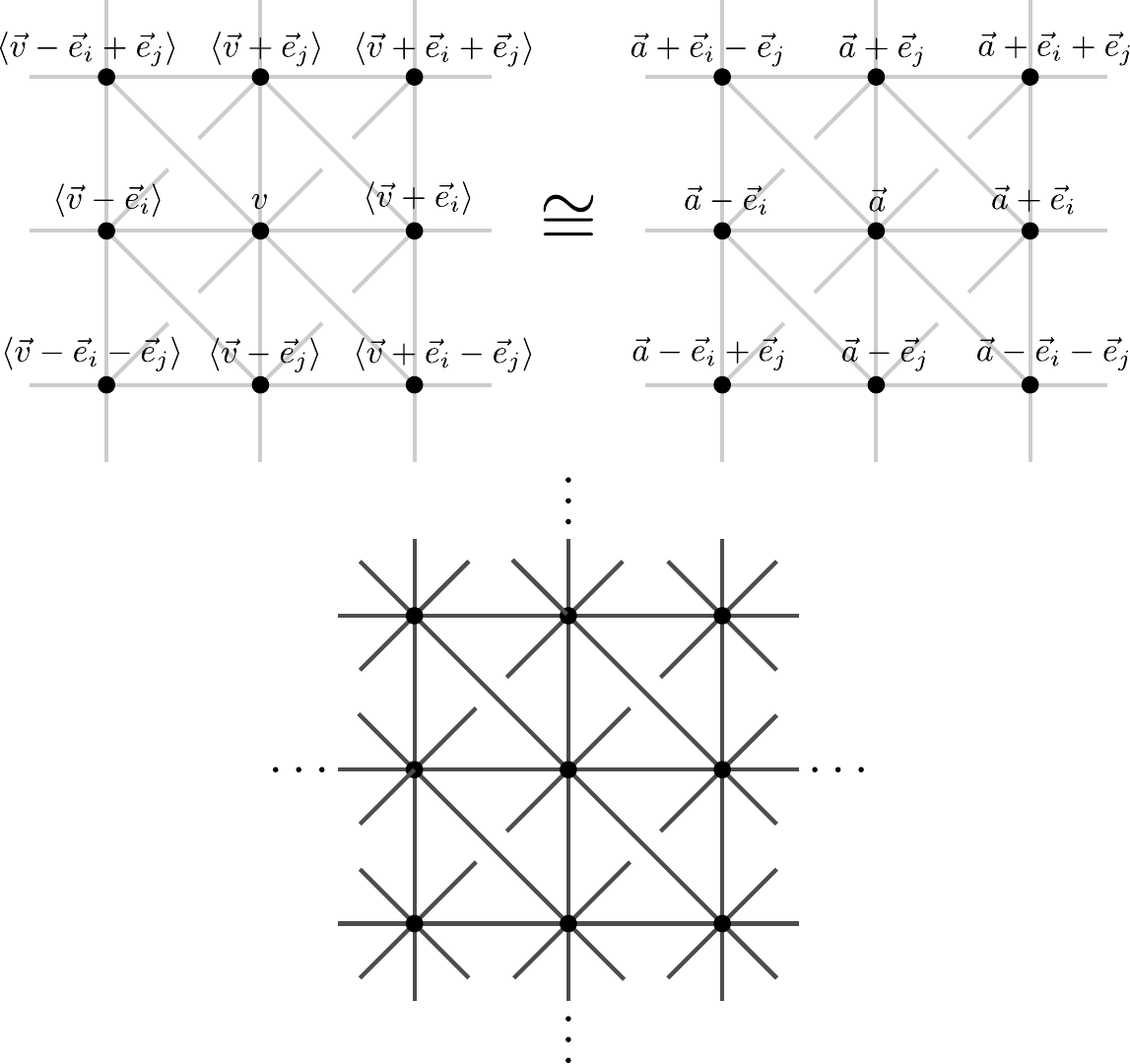}
\end{center}
\caption{We identify the 1-skeleton of $\BAA_1^m$ (left) with a graph obtained by gluing edges onto the Cayley graph of $\Z^m$ (right). Here $v$ is spanned by $\vec v = (\vec a, 1)$.}
\label{BAA1-1Skeleton}
\end{figure}

It follows that $\BAA_1^m$ is isomorphic to a complex that is obtained as follows. Start with the Cayley graph of $\Z^m$ with respect to the generating set $\vec e_1, \ldots, \vec e_m$. Every minimal cycle in this graph has length four and vertices $\vec a$, $\vec a+\vec e_i$, $\vec a+\vec e_j$, $\vec a+\vec e_i+\vec e_j$ for some $\vec a \in \Z^m$ and $1 \leq i < j \leq m$.

Now attach to each such cycle two quadrilaterals along their boundaries. Both quadrilaterals are composed of two triangles, the first one of 
\begin{equation*}
\ls\vec a, \vec a+\vec e_i, \vec a+\vec e_i+\vec e_j \rs \text{ and } \ls\vec a, \vec a+\vec e_j, \vec a+\vec e_i+\vec e_j \rs,
\end{equation*}
the second one of 
\begin{equation*}
\ls\vec a, \vec a+\vec e_i, \vec a+\vec e_j\rs \text{ and }\ls\vec a+\vec e_i, \vec a+\vec e_j, \vec a+\vec e_i+\vec e_j \rs.
\end{equation*}
See \autoref{BAA-Sphere}.
The fundamental group of this Cayley graph (with base point the identity) is generated by loops of the form $p_{\vec a}\cdot l_{\vec a,i,j} \cdot p_{\vec a}^{-1}$, where $p_{\vec a}$ is a path from the identity to $\vec a$,  $p_{\vec a}$ is its inverse and $l_{\vec a,i,j}$ is the 4-edges loop around the square  $\ls\vec a, \vec a+\vec e_i, \vec a+\vec e_j,  \vec a+\vec e_i+\vec e_j \rs$. Our complex is constructed by gluing a 2-disk (in fact, two 2-disks) to each such square, the resulting complex is 1-connected. We conclude $\BAA_1^m$ is 1-connected as claimed. 
\begin{figure}[h!]
\begin{center}
\includegraphics[scale=0.6]{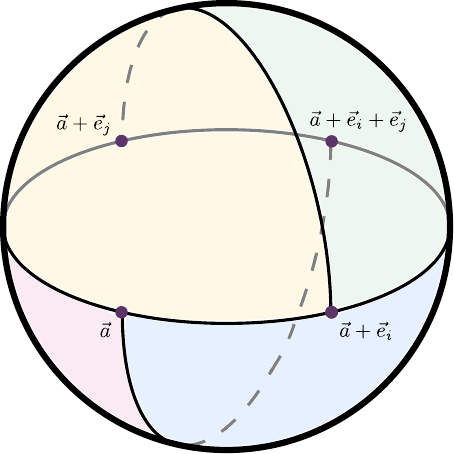}
\end{center}
\caption{The subcomplex spanned by $\ls\vec a, \vec a+\vec e_i, \vec a+\vec e_j,  \vec a+\vec e_i+\vec e_j \rs$.  }
\label{BAA-Sphere}
\end{figure}
\end{proof}


\section{Proof of \autoref{BAAnmCM}}
\label{sec_connectivity_BAA}

In this section, we will finish the proof of \autoref{BAAnmCM}, which states that $\BAA_{n}^{m}$ is Cohen--Macaulay of dimension $(n+1)$ whenever $n\geq 1$ and $m+n \geq 3$. By \autoref{prop_connecitivity_links}, to prove this it suffices to show  $\BAA_{n}^{m}$ is $n$-connected whenever $n\geq 1$ and $m+n \geq 3$.
Our proof roughly follows the strategy of Church--Putman \cite[Proof of Theorem C', Steps 1-4]{CP}. The analogue of \cite[Proof of Theorem C', Step 2]{CP} does not work in our context but fortunately it is not essential here or in \cite[Proof of Theorem C']{CP}. \hyperref[step1]{Step 1} in our proof is roughly speaking a combination of Step 1 and Step 3 of the proof of Church--Putman while our \hyperref[step2]{Step 2} corresponds to their Step 4.
\newline

Let $n \geq 1$ and $m+n \geq 3$. By \autoref{inductionBeg}, $\BAA_{1}^{m}$ is $1$-connected for all $m\geq 2$. We use this as a base case for an induction on $n$. Now assume that $n\geq 2$ and that by induction, $\BAA_{n-1}^{m+1}$ is $(n-1)$-connected.  For $d \leq n$, let $f \colon S^d \m \BAA_n^m$ be a map that is simplicial with respect to some simplicial structure on $S^d$. Here and from now on, we will assume that all simplicial structures on manifolds (possibly with boundary) are chosen to be combinatorial. This ensures that links of simplices are homeomorphic to spheres of the appropriate dimension. Let $R$ be the maximum of the absolute value of the last coordinate of $f(x)$ over all vertices $x \in S^d$. If $R=0$, then $f$ can be extended to a disk via coning its image with the vertex $e_{m+n}$. Thus, we are done if we can show that we can homotope $f$ to lower $R$. A visual outline of the proof is shown in \autoref{Flowchart}. 

This homotopy is done in two steps: In \hyperref[step1]{Step 1}, we isolate vertices in $S^d$ that get mapped to vertices with last entry $\pm R$, i.e.~we homotope $f$  such that if $x,y$ form an edge in $S^d$ and $f(x)$, $f(y)$ have last entry $\pm R$, then $f(x) = f(y)$. In \hyperref[step2]{Step 2}, we then successively replace all of these ``bad'' vertices by vertices whose last coordinate has absolute value less than $R$. Only this second step uses our inductive hypothesis. 
In order to perform these two steps, we will perform a sequence of homotopies that step-by-step replace $f$ by ``better'' maps. Before we start with these, we make some definitions that help us to keep track of the progress we make and describe a \hyperref[procedure1]{Procedure 1} that we will repeatedly use during \hyperref[step1]{Step 1}.

\begin{definition}
A simplex $\sigma$ of $S^d$ is called \emph{edgy} if $f(\sigma)=\{ v_0, v_1\}$ is an edge with the last coordinates of $v_0$ and $v_1$ equal to $\pm R$.
\end{definition}
\noindent If $f \colon S^d \m \BAA_n^m$ has no edgy simplices, then the bad vertices are isolated in the above sense. So removing all edgy simplices is the aim of \hyperref[step1]{Step 1}.

Our method for removing edgy simplices only works if we can control the stars of such simplices. 
For this, we need to make sure that there are no simplices of the following type:

\begin{definition}
A simplex $\sigma$ of $S^d$ is called \emph{$(a,b,c)$-over-augmented}, $a,b,c \in \N$, if 
\begin{itemize}
\item $f(\sigma)$ is a 3-additive, double-triple, or double-double simplex,
\item every vertex of $f(\sigma)$ either has last coordinate $\pm R$ or is contained in the additive core,
\item $\sigma$ contains exactly $a\geq 1$ vertices $x$ such that $f(x)$ has last coordinate $\pm R$,
\item $\sigma$ contains exactly $b\geq 0$ vertices $x$ such that $f(x)$ is contained in the additive core of a 3-additive face of $f(\sigma)$,
\item $\dim(\sigma) = c$, and
\item if $f(\sigma)$ is $3$-additive, then for all $v_0 \in f(\sigma) $ with last coordinate $\pm R$, there does {\bf not} exist $v_1\in f(\sigma) $  and $1 \leq i \not= j \leq m$ such that $\vec v_0 = \vec v_1 \pm \vec e_i \pm \vec e_j $. 
\end{itemize}
We call a simplex \emph{overly augmented} if it is $(a,b,c)$-over-augmented for some $a\geq 1$ and $b,c\geq 0$.
Suppose $\sigma$ is an $(a,b,c)$-over-augmented simplex and $\tau$ is a $(a',b',c')$-over-augmented simplex. We call $\tau$ \emph{better} than $\sigma$ if $(a,b,c)< (a',b',c')$ lexicographically. 
\end{definition}
Note that the last condition of the definition coincides with the one given in the last bullet point of \autoref{lem_characterisation_critical_3add}. It excludes the case of edgy simplices $\sigma$ whose image $f(\sigma) = \ls \langle \vec v_1 \pm \vec e_i \pm \vec e_j \rangle,  v_1 \rs$ is 3-additive. These are considered in detail later on (\hyperref[step1.2]{Step 1.2}).
For later reference, we record the following observation. It describes the stars of edgy simplices in the case where $f$ has no overly augmented simplices.
\begin{observation}
\label{observatio_structure_without_overly_augmented}
If $f$ has no overly augmented simplices, then the following is true: Let $\sigma$ be a simplex of $S^d$ such that $f(\sigma)$ contains two vertices with last coordinate $\pm R$. Then $f(\sigma)$ is neither a double-triple nor a double-double simplex. If it is 3-additive, it can be written in the form $f(\sigma) = \ls \langle \vec v_1 \pm \vec e_i \pm \vec e_j \rangle , v_1, v_2, \ldots, v_k \rs$, where the last coordinate of $v_1$ is $\pm R$; in particular, $\sigma$ then contains an edgy simplex with 3-additive image $\ls \langle \vec v_1 \pm \vec e_i \pm \vec e_j \rangle , v_1\rs$.
\end{observation}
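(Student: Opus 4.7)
The plan is to pass from $\sigma$ to a carefully chosen sub-face $\sigma' \subseteq \sigma$ whose image would be overly augmented, so that the absence of such simplices in $f$ either yields an immediate contradiction or forces a very specific algebraic identity. First I would restrict attention to the interesting types: the conclusions are vacuous when $f(\sigma)$ is standard or $2$-additive, so I may assume $f(\sigma)$ is $3$-additive, double-triple, or double-double.

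Let $A \subseteq f(\sigma)$ be its additive core and let $B \subseteq f(\sigma)$ be the set of vertices whose last coordinate is $\pm R$, so $|B| \geq 2$ by hypothesis. Since any face of $f(\sigma)$ containing $A$ has the same simplex type and still has $A$ as its additive core, the subset $A \cup B$ is a face of $f(\sigma)$ of the same type. I would lift it by choosing a preimage of each vertex of $A \cup B$ in $\sigma$; these are distinct and span a face $\sigma' \subseteq \sigma$ with $f(\sigma') = A \cup B$. Then $\sigma'$ automatically satisfies all defining bullets of an $(a,b,c)$-over-augmented simplex with $a \geq |B| \geq 2$, $b$ the number of additive-core vertices, $c = \dim \sigma'$, with the single possible exception of the last bullet — and that bullet is only a restriction when the image is $3$-additive. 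In the double-triple and double-double cases this completes the argument: $\sigma'$ is unconditionally overly augmented, contradicting the standing hypothesis on $f$, so $f(\sigma)$ can be of neither type.

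The remaining and most delicate case is when $f(\sigma)$ is $3$-additive; here I expect the main work to be translating the failure of the last bullet into the asserted form. Since $\sigma'$ is not overly augmented, the last bullet of the definition must be violated, so there exist $v_0 \in f(\sigma')$ with last coordinate $\pm R$, some $v_1 \in f(\sigma')$, and indices $1 \leq i \neq j \leq m$ with $\vec v_0 = \vec v_1 \pm \vec e_i \pm \vec e_j$. Because $\vec e_i, \vec e_j \in \Z^{m+n}$ have vanishing last coordinate, $v_1$ also has last coordinate $\pm R$. Relabelling so that $v_0 = \langle \vec v_1 \pm \vec e_i \pm \vec e_j\rangle$ then gives the claimed expression $f(\sigma) = \{\langle \vec v_1 \pm \vec e_i \pm \vec e_j\rangle, v_1, v_2, \ldots, v_k\}$, and the $3$-additive edge $\{v_0, v_1\}$ lifts to an edge in $\sigma$ which is edgy and has $3$-additive image, completing the observation.
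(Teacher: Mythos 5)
Your proof is correct and is exactly the definition-unwinding argument the paper intends for this (unproved) Observation: restrict to the face of $\sigma$ lifting the additive core together with the vertices of last coordinate $\pm R$, note that it satisfies every bullet of the over-augmented definition except possibly the final one, and in the $3$-additive case read off the forced relation $\vec v_0 = \vec v_1 \pm \vec e_i \pm \vec e_j$, which pins down the additive core and yields the edgy edge. No gaps.
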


\begin{figure}
\begin{center}
\includegraphics[height=\textheight]{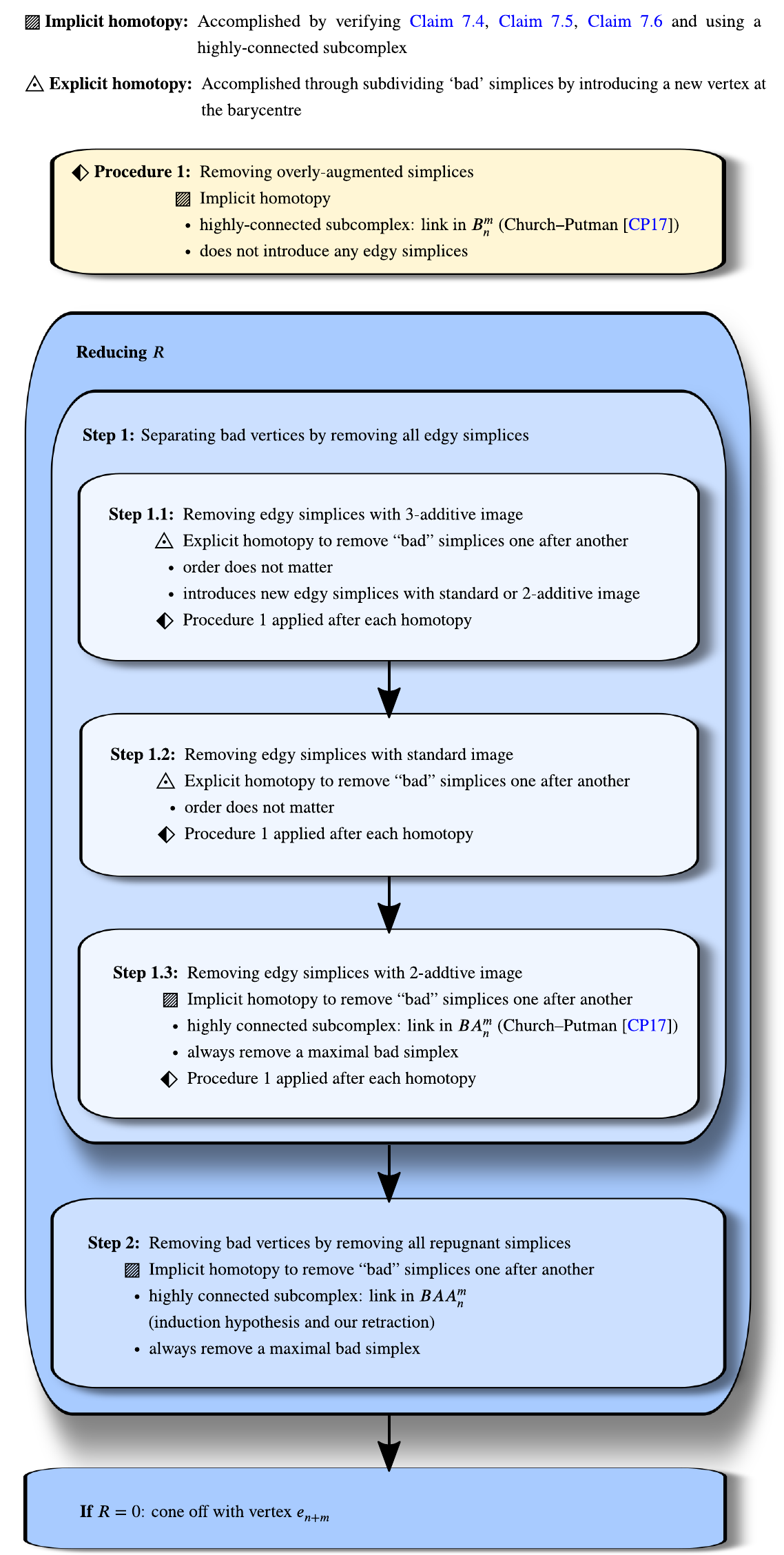}
\end{center}
\caption{A schematic of the proof of \autoref{BAAnmCM}}
\label{Flowchart}
\end{figure}

%
%

\subsection*{Procedure 1: Removing overly augmented simplices}
\label{procedure1}
We will now describe a procedure that allows us to remove overly augmented simplices from $f$. 
Let $\sigma$ be an $(a,b,c)$-over-augmented simplex with $(a,b,c)$ as large as possible lexicographically. Our goal is to homotope $f$ to have one less $(a,b,c)$-over-augmented simplex while only adding better simplices and no new edgy simplices. In order to do so, we will modify $f|_{\Star_{S^d}(\sigma)}$ such that image of the result lies in $f(\partial \sigma) * K(\sigma)$, where $K(\sigma)$ is a certain subcomplex of $\BAA_n^m$ whose vertices have more desirable properties than those of $f(\sigma)$. The same type of argument will be used several times in this article (\hyperref[step1.3]{Step 1.3}, \hyperref[step2]{Step 2}, \autoref{BAtoBAA}). We spell it out in detail here and will use this as a blueprint for later occurrences. This is a standard procedure that has been used by many authors to prove various simplicial complexes are highly-connected. This proof strategy is often called a ``bad simplex'' argument.

We start by defining $K(\sigma)$.
If $f(\sigma)$ is a double-triple or double-double simplex, we can write $f(\sigma) = \ls v_0,v_1,\ldots,v_k\rs$, where $\ls \vec v_2,\ldots, \vec v_k\rs$ is a partial basis. We define $$K(\sigma)\coloneqq \Link^{<}_{B_n^m}(\ls v_2,\ldots,v_k \rs) \qquad \qquad \text{[$f(\sigma)$ double-triple or double-double]}.$$
If $f(\sigma)$ is a 3-additive simplex, we can write $f(\sigma)=\ls v_0,v_1,\ldots,v_k\rs$ as in \autoref{def_J}, i.e.~such that $\ls \vec v_1,\ldots, \vec v_k,  \vec e_1, \dots, \vec e_m\rs$ is a partial basis and $\vec v_0 = \vec w_1 + \vec w_2 + \vec w_3$ for $w_1, w_2, w_3 \in \ls v_1,\ldots,v_k,e_1,\ldots,e_m \rs$. Let $J(\sigma)$ be the set of vertices of $\BAA_n^m$ that are lines spanned by a vector of the form $\ls \vec w_1+\vec w_2, \vec w_1+\vec w_3, \vec w_2+\vec w_3 \rs$, as in \autoref{def_J} and $J^< \subseteq J(\sigma)$ the subset of all vertices with last coordinate smaller in absolute value than $R$. By the last assumption in the definition of overly augmented simplices and \autoref{lem_characterisation_critical_3add}, the set $J^<$ is nonempty. We view $J^<$ as a 0-dimensional simplicial complex and define 
$$ K(\sigma)\coloneqq \Link^{<}_{\B_n^m}(\ls v_1,\ldots,v_k \rs)*J^< \qquad \qquad \text{[$f(\sigma)$  $3$-additive]}.$$

\begin{claim}
\label{claim_K_in_link}
$K(\sigma)$ is a subcomplex of $\Link_{\BAA_n^m}(f(\sigma))$ and $f\left(\Link_{S^d}(\sigma)\right)\subseteq K(\sigma)$.
\end{claim}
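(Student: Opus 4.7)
The plan is to address the two assertions of the claim separately. For the first assertion, $K(\sigma) \subseteq \Link_{\BAA_n^m}(f(\sigma))$, the strategy is simply to invoke \autoref{links_of_BAA_augmentations}, which gives the explicit descriptions $\Link_{\BAA_n^m}(f(\sigma)) = \Link_{\B_n^m}(\{v_2,\ldots,v_k\})$ when $f(\sigma)$ is double-triple or double-double, and $\Link_{\BAA_n^m}(f(\sigma)) = \Link_{\B_n^m}(\{v_1,\ldots,v_k\}) \ast J(\sigma)$ when $f(\sigma)$ is 3-additive. Since $K(\sigma)$ is obtained from these by restricting the vertex set according to the last-coordinate bound and, in the 3-additive case, by replacing $J(\sigma)$ with $J^< \subseteq J(\sigma)$, it is a full subcomplex on the corresponding vertex set and the first assertion follows.

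For the second assertion, $f(\Link_{S^d}(\sigma)) \subseteq K(\sigma)$, the plan is a proof by contradiction exploiting the maximality of $\sigma$ in the lexicographic order on $(a,b,c)$. Let $x \in \Link_{S^d}(\sigma)$ with $v = f(x) \notin K(\sigma)$; since $\sigma \ast x$ is a simplex in $S^d$ and $f$ is simplicial, $v \in \Link_{\BAA_n^m}(f(\sigma))$. The failure $v \notin K(\sigma)$ then means one of: (i) $v$ lies in the ``$\Link_{\B_n^m}$ part'' of the description above and has last coordinate $\pm R$, or (ii) $f(\sigma)$ is 3-additive and $v \in J(\sigma) \setminus J^<$. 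In either case the plan is to verify that $\sigma \cup \{x\}$ is overly augmented and strictly better than $\sigma$. In case (i), the type of $f(\sigma \cup \{x\})$ and its additive core are unchanged (adding a standard neighbor preserves both), so the old conditions persist and $(a',b',c')=(a+1,b,c+1)$. In case (ii), \autoref{links_of_BAA_augmentations} shows $f(\sigma \cup \{x\})$ is now double-triple with additive core $\{v_0,v\} \cup (\{w_1,w_2,w_3\} \cap f(\sigma))$; this contains the previous 3-additive additive core, so every old vertex is still either in the new additive core or has last coordinate $\pm R$, and again $a' = a+1$.

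The main obstacle is the preservation of the last bullet in the definition of overly augmented when $f(\sigma)$ is 3-additive and we are in case (i): one must rule out a new forbidden relation $\vec u = \vec u' \pm \vec e_i \pm \vec e_j$ involving the new vertex $v$. The plan here is to observe that such a relation with $u$ or $u'$ equal to $v$ would force $\vec v \in \langle \vec v_0, \vec v_1, \ldots, \vec v_k, \vec e_1, \ldots, \vec e_m\rangle$, contradicting the assumption that $v \in \Link_{\B_n^m}(\{v_1,\ldots,v_k\})$ (because $\vec v_0$ itself lies in $\langle \vec v_1, \ldots, \vec v_k, \vec e_1, \ldots, \vec e_m \rangle$ via the decomposition $\vec v_0 = \vec w_1 + \vec w_2 + \vec w_3$). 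This rules out all new violations, so $\sigma \cup \{x\}$ is indeed overly augmented with strictly larger triple, contradicting maximality of $\sigma$ and completing the proof.
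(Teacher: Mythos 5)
Your proposal is correct and follows essentially the same route as the paper: the first assertion is read off from \autoref{links_of_BAA_augmentations}, and the second is exactly the maximality argument the paper invokes when it says that, since $\sigma$ is maximally over-augmented, every vertex of $\Link_{S^d}(\sigma)$ must land in $\Link^{<}_{\BAA_n^m}(f(\sigma))=K(\sigma)$. You simply unfold in detail the verification (including the last bullet of the over-augmented definition in the 3-additive case) that the paper leaves implicit, and that verification is sound.
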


As $f$ is simplicial, we have $f(\Link_{S^d}(\sigma))\subseteq \Star_{\BAA_n^m}(f(\sigma))$. Since $\sigma$ is maximally over-augmented, every $x \in \Link_{S^d}(\sigma)$ gets mapped to a vertex $f(x)\in \Link_{\BAA_n^m}(f(\sigma))$ with last coordinate smaller in absolute value than $R$.
Hence, we actually have $f(\Link_{S^d}(\sigma))\subseteq \Link^{<}_{\BAA_n^m}(f(\sigma))$ and it suffices to show that $K(\sigma) = \Link^{<}_{\BAA_n^m}(f(\sigma))$. 
This follows immediately from \autoref{links_of_BAA_augmentations}.

\begin{claim}
\label{claim_link_highly_conn}
$K(\sigma)$ is $(\dim \Link_{S^d}(\sigma))$-connected.
\end{claim}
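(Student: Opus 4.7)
The goal is to show $K(\sigma)$ is $(\dim \Link_{S^d}(\sigma))$-connected, i.e.\ $(d-c-1)$-connected, where $c \coloneqq \dim \sigma$. The plan is to first reduce to a bound in terms of $n$ and $k$, where $k+1$ is the number of vertices of $f(\sigma)$, and then invoke Church--Putman's Cohen--Macaulay result for $\B_n^m$ together with the topological retraction from \autoref{Bretraction}. Since $f$ is simplicial and each vertex of $f(\sigma) = \{v_0, \ldots, v_k\}$ is the image of some vertex of $\sigma$, we have $k \leq c$; combined with $d \leq n$, this means it suffices to show that $K(\sigma)$ is $(n-k-1)$-connected.

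Next I would handle the case where $f(\sigma)$ is of type double-triple or double-double. Here $K(\sigma) = \Link^{<}_{\B_n^m}(\{v_2, \ldots, v_k\})$. The ambient link $\Link_{\B_n^m}(\{v_2, \ldots, v_k\})$ is isomorphic to $\B_{n-k+1}^{m+k-1}$ by \cite[Lemma 4.3]{CP}, which by \cite[Theorem 4.2]{CP} is Cohen--Macaulay of dimension $n-k$, hence $(n-k-1)$-connected. Since $K(\sigma)$ is a topological retract of this link by \autoref{Bretraction}, it inherits $(n-k-1)$-connectedness.

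For the 3-additive case, we have $K(\sigma) = \Link^{<}_{\B_n^m}(\{v_1, \ldots, v_k\}) * J^<$. The same combination of \cite[Lemma 4.3]{CP}, \cite[Theorem 4.2]{CP}, and \autoref{Bretraction} shows that the first join factor is a retract of a complex isomorphic to $\B_{n-k}^{m+k}$, hence $(n-k-2)$-connected. The condition excluding $\vec v_0 = \vec v_l \pm \vec e_i \pm \vec e_j$ in the definition of an overly augmented simplex is exactly what \autoref{lem_characterisation_critical_3add} uses to guarantee $J^<$ is nonempty, hence $(-1)$-connected as a discrete set. The standard join connectivity formula then gives that $K(\sigma)$ is $((n-k-2)+(-1)+2) = (n-k-1)$-connected, which is the desired bound.

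The whole argument is essentially bookkeeping once these inputs are in place; I do not anticipate a real obstacle. The main subtlety worth flagging is that the awkward-looking last bullet in the definition of an overly augmented simplex (excluding $\vec v_0 = \vec v_l \pm \vec e_i \pm \vec e_j$) is included precisely so that $J^<$ is nonempty and the join argument goes through.
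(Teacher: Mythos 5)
Your proof is correct and takes essentially the same route as the paper's: reduce to showing $(n-k-1)$-connectivity, obtain the connectivity of the $\Link^{<}_{\B_n^m}$ factor from Church--Putman's Cohen--Macaulay theorem together with their retraction (the content of \autoref{Bretraction}), and in the $3$-additive case combine non-emptiness of $J^<$ (via \autoref{lem_characterisation_critical_3add} and the last bullet of the overly-augmented definition) with the join connectivity formula. The paper simply compresses the first ingredient into a single citation of \cite[Theorem 4.2 and Lemma 4.5]{CP}, so there is no substantive difference.
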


By the work of Church--Putman, $\Link^{<}_{B_n^m}(\ls v_0,\ldots,v_{\ell} \rs)$ is $(n-\ell-3)$-connected \cite[Theorem 4.2 and Lemma 4.5; see the first paragraph on p.~1016]{CP}.
This implies that $K(\sigma)$ is $(n-k-1)$-connected in all cases under consideration; note when $\sigma$ is 3-additive, we know $J^< \neq \emptyset$ by  \autoref{lem_characterisation_critical_3add}. The claim follows because $\dim \Link_{S^d}(\sigma) = d-\dim(\sigma)-1\leq n-k-1$.
\newline

These two claims allow us to modify $f$ up to homotopy on $\Star(\sigma)$:
By \autoref{claim_K_in_link}, $f$ restricts to a map $$\Link_{S^d}(\sigma)\to K(\sigma)$$ whose domain $\Link_{S^d}(\sigma)$ is isomorphic to a triangulated sphere. By \autoref{claim_link_highly_conn}, this map can be extended to a map 
\begin{equation*}
g\colon  \Cone ( \Link_{S^d}(\sigma) ) \m K(\sigma)
\end{equation*}
that is simplicial with respect to some simplicial structure on  $ \Cone ( \Link_{S^d}(\sigma) )$.
Again by \autoref{claim_K_in_link}, $K(\sigma)$ is a subcomplex of $\Link_{\BAA_n^m}(f(\sigma))$. This implies that $g$ extends to 
\begin{equation*}
f|_\sigma * g\colon  \sigma * \Cone ( \Link_{S^d}(\sigma) ) \to f(\sigma) * K(\sigma)\subset \BAA_n^m.
\end{equation*}
Topologically, $ \sigma * \Cone ( \Link_{S^d}(\sigma) )$ is a ball whose boundary can be decomposed as
\begin{equation*}
\partial (\sigma * \Cone ( \Link_{S^d}(\sigma) )) = (\partial \sigma * \Cone ( \Link_{S^d}(\sigma) )) \cup \Star_{S^d}(\sigma).
\end{equation*}
Note that $f|_{\Link_{S^d}(\sigma)} = g|_{\Link_{S^d}(\sigma)}$. It follows that the restriction of $f$ to $\Star_{S^d}(\sigma)$ is homotopic to a simplicial map $h\colon  \partial \sigma * \Cone ( \Link_{S^d}(\sigma) ) \to f(\sigma) * K(\sigma)$ that agrees with $f$ on $\partial \sigma * \Link_{S^d}(\sigma)$. 

\begin{claim}
\label{claim_not_worse_but_better}
The map $h\colon \partial \sigma * \Cone ( \Link_{S^d}(\sigma) ) \to f(\sigma) * K(\sigma)$ has only simplices that are better than $\sigma$.
Furthermore, every edgy simplex of $h$ is contained in $\partial \sigma$.
\end{claim}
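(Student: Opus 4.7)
The key structural observation is that every simplex $\mu$ of $\partial\sigma * \Cone(\Link_{S^d}(\sigma))$ decomposes uniquely as $\mu = \mu_1 \cup \mu_2$ where $\mu_1 \subseteq \partial\sigma$ is a proper face of $\sigma$ (possibly empty) and $\mu_2$ is a simplex of the chosen triangulation of $\Cone(\Link_{S^d}(\sigma))$. By the construction of $h = f|_{\partial\sigma} * g$, the image is $h(\mu) = f(\mu_1) \cup g(\mu_2) \subseteq f(\sigma) * K(\sigma)$, with $f(\mu_1) \subseteq f(\sigma)$ and $g(\mu_2) \subseteq K(\sigma)$. I will crucially use that $K(\sigma) \subseteq \Linkhat^<_{\BAA_n^m}(f(\sigma))$, so every vertex of $K(\sigma)$ has last coordinate of absolute value strictly less than $R$.

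The edgy statement is immediate from this: if $h(\mu) = \{v_0,v_1\}$ is an edge with both vertices having last coordinate of absolute value $R$, then neither $v_0$ nor $v_1$ can lie in $g(\mu_2)$, forcing $\mu_2 = \emptyset$ and hence $\mu = \mu_1 \subseteq \partial\sigma$.

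For the ``better than $\sigma$'' statement, suppose $\mu$ is an $(a',b',c')$-over-augmented simplex of $h$, so $h(\mu)$ is $3$-additive, double-triple, or double-double. Every vertex contributing to $a'$ has last coordinate of absolute value $R$, so it must lie in $f(\mu_1)$. Since $\mu_1$ is a proper face of $\sigma$ and $f|_{\sigma}$ is a bijection onto its image (which follows from $\sigma$ being $(a,b,c)$-over-augmented, hence non-degenerate on vertices), this gives $a' \leq a$. If $a' < a$ we are done. Otherwise $a' = a$, and all $a$ ``last-coordinate-$\pm R$'' vertices of $f(\sigma)$ are already contained in $f(\mu_1)$; consequently the vertices of $\sigma \setminus \mu_1$ are precisely vertices that contributed to the $b$ count of $\sigma$. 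I will then check in each of the three possible types of $h(\mu)$ that any vertex of $\mu$ contributing to $b'$ must again come from $\mu_1$ — this uses that vertices of $g(\mu_2) \subseteq K(\sigma)$ cannot sit in the additive core of a $3$-additive face of $h(\mu)$ of the relevant form, because their last coordinate is strictly smaller in absolute value than the maximum last coordinate among the ``bad'' vertices in $f(\mu_1)$. Hence $b' \leq b - |\sigma\setminus\mu_1|$, and in particular either $b' < b$, or $b' = b$ and necessarily $\mu_1 = \sigma$, which is impossible since $\mu_1$ is a proper face. This gives $(a',b',c') < (a,b,c)$ lexicographically.

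The main technical obstacle will be the case where $f(\sigma)$ is $3$-additive and the cone vertex of $g(\mu_2)$ maps into $J^<$: here $g(\mu_2)$ contributes a vertex that could conceivably participate in a new additive relation with vertices of $f(\mu_1)$, so one has to verify carefully that such a vertex cannot lie in the additive core of any $3$-additive face of $h(\mu)$, invoking the last-coordinate constraint and the definition of $J^<$ from \autoref{lem_characterisation_critical_3add}. Once this case is handled, the remaining cases (when $f(\sigma)$ is double-double or double-triple, where $K(\sigma) = \Link^<_{\B_n^m}(\ldots)$ contains only standard-type vertices) follow by the same reasoning but more directly.
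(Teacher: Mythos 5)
Your decomposition $\mu=\mu_1\cup\mu_2$, the treatment of the edgy part, and the first step $a'\le a$ all match the paper's argument. The gap is in your claim that $a'=a$ forces $b'\le b-|\sigma\setminus\mu_1|<b$: this rests on the assertion that every vertex of $\sigma\setminus\mu_1$ contributes to the $b$-count, which is false. The $b$-count only records vertices landing in the additive core of a \emph{$3$-additive face} of $f(\sigma)$, not in the additive core of $f(\sigma)$ itself, and for double-triple and double-double images these differ. Concretely, take $f(\sigma)=\{v_0=\langle\vec v_2+\vec v_3\rangle,\,v_1=\langle\vec v_2+\vec v_4\rangle,\,v_2,v_3,v_4,v_5\}$ a double-triple simplex with $v_5$ the only vertex of last coordinate $\pm R$, so $(a,b,c)=(1,4,5)$ (the unique $3$-additive face has additive core $\{v_0,v_1,v_3,v_4\}$, which excludes $v_2$). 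Deleting the domain vertex over $v_2$ gives a face in $\partial\sigma$ whose image $\{v_0,v_1,v_3,v_4,v_5\}$ is $3$-additive and over-augmented with $(a',b',c')=(1,4,4)$: here $a'=a$ \emph{and} $b'=b$, and the comparison only succeeds via $c'<c$. Your argument never invokes the third coordinate $c$, so it cannot close this case; this is precisely why the paper's proof splits into the double-triple/double-double case (where it forces $\mu_2=\emptyset$ and falls back on $c'<c$) and the $3$-additive case (where every vertex of $\sigma$ contributes to $a$ or $b$, so one really does get $a'+b'<a+b$).

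A secondary problem is your stated mechanism for why vertices of $g(\mu_2)$ avoid the additive core of $3$-additive faces of $h(\mu)$, namely a comparison of last coordinates. Membership in an additive core has nothing to do with maximality of last coordinates. The actual reasons are structural: the vertices of $\Link^{<}_{\B_n^m}(\cdots)\subseteq K(\sigma)$ extend a maximal partial basis of $f(\sigma)$ and therefore cannot participate in any additive relation at all; and in the $3$-additive case a vertex $j\in J^<$ \emph{can} lie in the additive core of $h(\mu)$ (which becomes a double-triple simplex), but not in the additive core of a $3$-additive face, because a double-triple simplex has a unique $3$-additive facet (\autoref{obs_list_facet_types}), and that facet is $f(\sigma)$, whose additive core does not contain $j$. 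You correctly flag the $J^<$ case as the delicate one, but the resolution is this combinatorial uniqueness statement, not the last-coordinate constraint.
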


Every simplex in $\partial \sigma * \Cone ( \Link_{S^d}(\sigma) )$ is of the form $\sigma' = \tilde{\sigma} \cup \tau$, where $\emptyset \subseteq \tilde{\sigma} \subset \sigma$ is a proper face of $\sigma$. Such a simplex gets mapped to $h(\sigma') = f(\tilde{\sigma}) \cup  g(\tau)$, where $g(\tau) \subseteq K(\sigma)$.
Observe that every vertex of $K(\sigma)$ has last entry of absolute value smaller than $R$. This implies that every edgy simplex of $h$ must be contained in $\partial \sigma$.
Now let $\sigma' = \tilde{\sigma} \cup \tau$ be a simplex in the domain of $h$ that is $(a',b',c')$-over-augmented. We need to show that $(a',b',c')<(a,b,c)$ lexicographically. That $a'\leq a$ follows immediately because every vertex of $K(\sigma)$ has last entry of absolute value smaller than $R$.

Assume that $f(\sigma)$ is a double-triple or double-double simplex. In this case, the definition of $K(\sigma)$ implies that no vertex of $g(\tau)$ can be contained in the additive core of $h(\sigma')$.
This and the assumption that $\sigma'$ is overly augmented imply firstly that $b'\leq b$
and secondly that $\tau$ is the empty simplex, i.e.~$\sigma' = \tilde{\sigma} \subset \sigma$. 
But then, as $c' = \dim(\sigma') < \dim(\sigma) = c$, we have $(a',b',c')< (a,b,c)$.

Next assume that $f(\sigma)$ is 3-additive. Here, we defined $K(\sigma) = \Link^{<}_{\B_n^m}(\ls v_1,\ldots,v_k \rs)*J^<$.
The vertices of $g(\tau)$ that are contained in $\Link^{<}_{\B_n^m}(\ls v_1,\ldots,v_k \rs)$ can neither be in the additive core of $h(\sigma')$ nor do they have last coordinate of absolute value $\pm R$. Hence, as $\sigma' = \tilde{\sigma}\cup \tau$ is overly augmented, we have $g(\tau)\cap \Link^{<}_{\B_n^m}(\ls v_1,\ldots,v_k \rs) = \emptyset$. In other words, either $\tau$ is the empty simplex and $\sigma' = \tilde{\sigma} \subset \sigma$ or $h(\sigma') = f(\tilde{\sigma}) \cup \ls j \rs$ for some $j \in J^<$ and $\tilde{\sigma}\subset \sigma$.
In the first case, we have $(a',b',c')< (a,b,c)$ for the same reasons as in the situation of double-triple or double-double simplices.
For the second case, note that although $j$ might be contained in the additive core of $h(\sigma')$, it cannot be contained in the additive core of a 3-additive face: We know that $f(\sigma)\cup \ls j \rs$ is a double-triple simplex containing $h(\sigma')$ and that $f(\sigma)$ is a 3-additive face of it. But a double-triple simplex has exactly one 3-additive face (see \autoref{obs_list_facet_types}). Hence, $b'\leq b$.
As $\sigma$ is overly augmented, every vertex of it is either mapped to a vertex with last coordinate $\pm R$ or to the additive core of the 3-additive simplex $f(\sigma)$. This implies that every vertex contributes either to $a$ or $b$ (or to both). On the other hand, the vertices of $\tau$ are mapped to $j$, which neither has rank $R$ nor is it contained in the additive core of a 3-additive face of $h(\sigma')$. It follows that these vertices neither contribute to $a'$ nor to $b'$. Consequently, we have $a'+b' < a+b$, which implies $(a',b',c')< (a,b,c)$.
\newline

We can now replace $f$ by the homotopic map $f'\colon S^d\to \BAA_n^m$ that is obtained by replacing $\Star(\sigma)$ with $\partial \sigma * \Cone ( \Link_{S^d}(\sigma) ) $ and setting $f'$ to be equal to $h$ on this subset of $S^d$. By \autoref{claim_not_worse_but_better}, the map $f'$ has one less $(a,b,c)$-over-augmented simplex than $f$, no worse simplices and no additional edgy simplices.
Iterating this shows that we may replace $f$ by a map that has no overly augmented simplices and no other edgy simplices than those of $f$.

We now proceed with the process of reducing $R$, the maximum of the absolute values of the last coordinate of vectors in the image of $f$.

\subsection*{Step 1: Separating bad vertices.}
\label{step1}
In this first step, we will remove all edgy simplices. If $\sigma$ is edgy, then its image $f(\sigma)=\ls v_0, v_1\rs$ is either a standard simplex (if $\ls \vec v_0, \vec v_1, \vec e_1, \ldots, \vec e_m \rs$ is a partial basis) or a 2-additive simplex (if $\vec v_0 = \vec v_1 \pm \vec e_i $) or a 3-additive simplex (if $\vec v_0 = \vec v_1 \pm \vec e_i \pm \vec e_j $).
We will now successively remove edgy simplices by first removing those with 3-additive image, then those with standard image and finally those with 2-additive image.
While doing so, we will repeatedly apply \hyperref[procedure1]{Procedure 1}.
\subsubsection*{Step 1.1 Removing edgy simplices with $3$-additive image}
\label{step1.1}
Let $\sigma$ be an edgy simplex such that $f(\sigma) = \ls v_0, v_1 \rs$ is 3-additive. We can find representatives $\vec v_0$ and $\vec v_1$ such that their last coordinates are equal to $R$ and $\vec v_0 = \vec v_1  \pm \vec e_i \pm \vec e_j$ for some $1\leq i\not = j \leq m$.
Define $v = \langle \vec v_1 \pm \vec e_i \rangle$, where the sign $\pm$ of $e_i$ agrees with its sign in the sum $\vec v_0 = \vec v_1  \pm \vec e_i \pm \vec e_j$.
Our aim is to use $v$ to replace $f$ by a map $f'$ that avoids the simplex $\sigma$ and has no further edgy simplices with 3-additive image than those of $f$.

Consider a simplex $\tau \supsetneq \sigma$ of $S^d$ that contains $\sigma$. Then its image $f(\tau) = \ls v_0, v_1, \ldots, v_k \rs$ contains $v_0$ and $v_1$, which have last coordinates $\pm R$.
As $f$ has no overly augmented simplices, this implies that $f(\tau)$ cannot be a double-triple or double-double simplex (see \autoref{observatio_structure_without_overly_augmented}). On the other hand, $f(\tau)$ contains the 3-additive edge $f(\sigma)$, so it must be 3-additive itself, with additive core $\ls v_0, v_1, e_i, e_j \rs$ (see \autoref{obs_list_facet_types}).
Hence, $f(\tau) \cup \ls v \rs  = \ls \langle \vec v_1  \pm \vec e_i \rangle, \langle \vec v_1  \pm \vec e_i \pm \vec e_j \rangle, v_1, \ldots, v_k \rs$ is a double-triple simplex in $\BAA_n^m$ with additive core $\ls v, v_0, v_1, e_i, e_j \rs$.
This implies that $f$ maps $\Star_{S^d}(\sigma)$ to $\Star_{\BAA_n^m}(\ls v_0,  v_1, v\rs)$.

Let $(S^d)'$ be the coarsest subdivision of $S^d$ that subdivides $\sigma$ by adding a new vertex $t$ at its barycentre. Let $f'\colon (S^d)' \m \BAA_n^m$ be the map that agrees with $f$ on vertices of $S^d$ and sends $t$ to $v$. The previous paragraph proves that $f'$ is simplicial, and $f$ and $f'$ are homotopic. The structure of $f'$ can be described as follows: To obtain $f'$ from $f$,  subdivide every simplex $\tau \supseteq \sigma$ that contains $\sigma$  into $(\dim(\sigma)+1)$-many simplices of the same dimension as $\tau$. Each such new simplex is obtained by replacing one vertex of $\sigma\subseteq \tau$ with the newly added $t$. This vertex gets mapped to  $f'(t) = v$ and $f'$ agrees with $f$ on the remaining vertices of $\tau'$.  Every simplex of $S^d$ that does not contain $\sigma$ is also a simplex in $(S^d)'$ and the maps $f$ and $f'$ agree on these simplices.

Clearly, $f'$ does not contain the edgy simplex $\sigma$ anymore. We claim that furthermore, no new edgy simplices with 3-additive image were created when passing from $f$ to $f'$. To see this, assume that $\sigma'$ is an edgy simplex of $f'$ that is not an edgy simplex of $f$. Then $\sigma'$ must contain the newly added vertex $t$ and hence is a face of some $\tau'$ that was obtained by subdividing a simplex $\tau \supseteq \sigma$. This implies that the image $f(\sigma')$ must be of one of the forms
\begin{gather*}
\ls v,v_0 \rs = \ls \langle \vec v_1 \pm \vec e_i \rangle, \langle \vec v_1 \pm \vec e_i \pm \vec e_j \rangle  \rs, \, \ls v, v_1 \rs = \ls \langle \vec v_1 \pm \vec e_i  \rangle, v_1 \rs , \text{ or}  \\
\ls v, v_l \rs \text{for some } v_l \text{ such that } \ls \vec v, \vec v_l, \vec e_1,\ldots, \vec e_m \rs \text{ is a partial basis.}
\end{gather*}
But $f(\sigma')$ is not 3-additive in any of these cases.\footnote{The subdivision created new edgy simplices with 2-additive image though, e.g.~of the form $\ls \langle \vec v_1 \pm \vec e_i  \rangle, v_1 \rs$. These will be removed in the next \hyperref[step1.2]{Step 1.2}.}

The subdivision mentioned above might have introduced new overly augmented simplices. Before we can remove another edgy simplex, we need to get rid of these simplices. To do so, we apply \hyperref[procedure1]{Procedure 1} again. This removes all overly augmented simplices without introducing new edgy simplices. Afterwards, we can remove another edgy simplex whose image is 3-additive. Iterating this procedure leads to a map in which the image of every edgy simplex is either standard or 2-additive.

\subsubsection*{Step 1.2: Removing edgy simplices with standard image}
\label{step1.2}
After the previous step, we can assume that $f$ has no edgy simplices with 3-additive image and (after possibly applying \hyperref[procedure1]{Procedure 1} again) also has no overly augmented simplices. In this step, we will also remove all edgy simplices with standard image.

Let $\sigma$ be an edgy simplex such that $f(\sigma) = \ls v_0, v_1 \rs$ is  standard. We will use a procedure that is very similar to the one described in \hyperref[step1.1]{Step 1.1} in order to replace $f$ by a map $f'$ that avoids $\sigma$. Choose representatives $\vec v_0$ and $\vec v_1$ such that their last coordinates are equal to $R$ and define $ \vec v \coloneqq \vec v_0 - \vec v_1$. Clearly, $v$ is a vertex in $\BAA^m_n$ and has last coordinate equal to $0$.

Let $\tau \supseteq \sigma$ be a simplex of $S^d$ that contains $\sigma$. Then its image $f(\tau) = \ls v_0, v_1, \ldots, v_k \rs$ contains $v_0$ and $v_1$, which have last coordinates $\pm R$.
As $f$ has no overly augmented simplices and no edgy simplices with 3-additive image, this implies that $f(\tau)$ cannot be a 3-additive, double-triple or double-double simplex (see \autoref{observatio_structure_without_overly_augmented}). Hence, it must be either standard or 2-additive. In either case, $f(\tau) \cup \ls v \rs  = \ls \langle \vec v_0 - \vec v_1 \rangle, v_0, v_1, \ldots, v_k \rs$ is a simplex in $\BAA_n^m$. Here, we use the observation that $v = \langle \vec v_0 - \vec v_1 \rangle$ might be contained in $f(\tau)$, but $\langle \vec v_0 + \vec v_1 \rangle$ cannot:  the last coordinate of $\vec v_0 + \vec v_1$ is $2R$, which would contradict the definition of $R$. 
This implies that $f$ maps $\Star_{S^d}(\sigma)$ to $\Star_{\BAA_n^m}(\ls v_0,  v_1, v\rs)$.

Let $(S^d)'$ be the coarsest subdivision of $S^d$ that subdivides $\sigma$ by adding a new vertex $t$ at its barycentre. Let $f'\colon (S^d)' \m \BAA_n^m$ be the map that agrees with $f$ on vertices of $S^d$ and sends $t$ to $v$.
By the observations of the previous paragraph, this map is simplicial and $f$ and $f'$ are homotopic. 
Just as in \hyperref[step1.1]{Step 1.1}, every edgy simplex of $f'$ is either also an edgy simplex of $f$ or it contains the vertex $t$. However, the latter is impossible here as $t$ gets mapped to the vertex $v$. This has last coordinate $0$, whereas every vertex in the image of an edgy simplex must have last coordinate $\pm R$.

It follows that $f'$ has one less edgy simplex than $f$ (namely $\sigma$, which got subdivided) and that every edgy simplex of $f'$ also forms an edgy simplex of $f$. In particular, as $f$ does not have any edgy simplex with 3-additive image, neither does $f'$.
It might be that $f'$ has overly augmented simplices\footnote{If $\sigma$ is contained in $\tau$ and $f(\tau)$ is 2-additive, it might be that the image of $f'$ contains a 3-additive simplex with a vertex that has last coordinate $\pm R$. For example if $f(\tau) = \ls v_0,v_1, \langle \vec v_1+ \vec v_2 \rangle, v_2 \rs$, then there is a simplex $\tau'$ with $f'(\tau') = \ls v_0,\langle \vec v_0- \vec v_1\rangle, \langle \vec v_1+ \vec v_2 \rangle, v_2 \rs$.}.
However, we can use \hyperref[procedure1]{Procedure 1} again to remove those without introducing new edgy simplices. Afterwards, we can remove another edgy simplex with standard image. After finitely many iterations, we obtain a map that has only edgy simplices with 2-additive image.

\subsubsection*{Step 1.3: Removing edgy simplices with 2-additive image}
\label{step1.3}
We can now assume that $f$ has no edgy simplices whose image is standard or 3-additive. After performing \hyperref[procedure1]{Procedure 1}, we can also assume that is has no overly augmented simplices.
What remains to be done for completing \hyperref[step1]{Step 1} is to remove edgy simplices with 2-additive image. Let $\sigma$ be a maximal such simplex, i.e.~$f(\sigma) = \ls v_0, v_1 \rs$ is a 2-additive simplex, the last coordinates of $v_0$ and $v_1$ are equal to $\pm R$ and if $\tau \supset \sigma$, then $f(\tau) \not = f(\sigma)$.
As $f(\sigma) = \ls v_0, v_1 \rs$ is 2-additive, we have $\vec v_0 = \vec v_1 \pm \vec e_i$ for some $1\leq i \leq m$.
Here, we cannot proceed as in the case of standard simplices (\hyperref[step1.2]{Step 1.2}), because if $\vec v_0, \vec v_1$ have last coordinate $R$, then $\vec v_0 - \vec v_1 = \pm \vec e_i$ is a not vertex in $\BAA^m_n$.
What we will do instead is to apply an argument similar to the one of \hyperref[procedure1]{Procedure 1}: We will define a complex $K(\sigma)$ and homotope $f$ such that it maps $\Star_{S^d}(\sigma)$ to $f(\partial \sigma) * K(\sigma)$.

Define $K(\sigma)\coloneqq \Linkhat_{\BA^m_n}^{<}(v_0)$. In order to perform an argument similar to \hyperref[procedure1]{Procedure 1}, we need to verify the analogues of \autoref{claim_K_in_link}, \autoref{claim_link_highly_conn} and \autoref{claim_not_worse_but_better}.

That $K(\sigma)$ is a subcomplex of $\Link_{\BAA_n^m}(f(\sigma))$ is a part of \autoref{lem_compare_linkhatBA_and_linhatBAA}. Furthermore, $f$ maps $\Link_{S^d}(\sigma)$ to $K(\sigma)$: As $f$ is simplicial, we have $ f(\Link_{S^d}(\sigma)) \subseteq \Star_{\BAA_n^m}(f(\sigma))$ and because we assumed $\sigma$ to be maximal with respect to inclusion, $f(\Link_{S^d}(\sigma)) \subseteq \Link_{\BAA_n^m}(f(\sigma))$.
Next, we show that the image of every vertex of $\Link_{S^d}(\sigma)$ has last coordinate of absolute value less than $R$. 
Assume for contradiction that there is a vertex $x\in \Link_{S^d}(\sigma)$ such that the last coordinate of $f(x)$ is $\pm R$.
As we assumed $\sigma$ to be maximal, the image of the simplex $\sigma \cup \ls x \rs$ has three vertices. Its image $f(\sigma \cup \ls x \rs)$ contains vertices with last coordinate $\pm R$ and has the 2-additive simplex $f(\sigma)$ as a (proper) face. 
As $f$ has no overly augmented simplices, this implies that $f(\sigma \cup \ls x \rs)$ is 2-additive as well (see \autoref{observatio_structure_without_overly_augmented} and \autoref{obs_list_facet_types}).
But then it has a face that is a standard edge. As all three vertices of $f(\sigma \cup \ls x \rs)$ have last coordinate $\pm R$, this shows that $f$ needs to have an edgy simplex whose image is standard. This is a contradiction to our assumption. Hence, we have $f(\Link_{S^d}(\sigma))\subseteq \Link_{\BAA_n^m}^{<}(f(\sigma))$.

By \autoref{equaility_link_linkhat}, we know that $$\Link_{\BAA^m_n}^{<}(f(\sigma)) = \Linkhat_{\BAA^m_n}^{<}(f(\sigma)) $$  and by \autoref{lem_compare_linkhatBA_and_linhatBAA}, every simplex of $\Linkhat_{\BAA^m_n}^{<}(f(\sigma))$ is either contained in $\Linkhat_{\BA^m_n}^{<}(v_0) = K(\sigma)$ or is of type double-triple. However, as $v_0$ has last coordinate $\pm R$ (as does $v_1$) and there are no overly augmented simplices, there are no double-triple simplices in $f(\Link_{S^d}(\sigma)) $ (see \autoref{observatio_structure_without_overly_augmented}). This finishes the proof of our claim that $f(\Link_{S^d}(\sigma))\subseteq K(\sigma)$.

The analogue of \autoref{claim_link_highly_conn} is to show that $K(\sigma)$ is $(\dim \Link_{S^d}(\sigma))$-connected. 
Here, we can use again a result of Church--Putman. By \cite[Section 4.5, third paragraph after Step 4 on p.~1029]{CP}, $K(\sigma) = \Linkhat_{\BA^m_n}^{<}(v_0)$ is $(n-2)$-connected. The claim follows because $\dim \Link_{S^d}(\sigma) = d - \dim (\sigma) - 1 \leq n - 1 - 1$.

As in \hyperref[procedure1]{Procedure 1}, it follows that the restriction of $f$ to $\Star_{S^d}(\sigma)$ is homotopic to a simplicial map $$h\colon  \partial \sigma * \Cone ( \Link_{S^d}(\sigma) ) \to f(\sigma) * K(\sigma)$$ that agrees with $f$ on $\partial \sigma * \Link_{S^d}(\sigma)$ and has the property that $h(\Cone(\Link_{S^d}(\sigma)))\subseteq K(\sigma).$
We next verify the analogue of \autoref{claim_not_worse_but_better}, namely that every edgy simplex of $h$ is contained in $\partial \sigma$.
This is immediate here because every vertex of $K(\sigma)$ has last coordinate of absolute value smaller than $R$. Hence, a simplex can only be edgy if $h$ maps it to $f(\sigma)$. This is only the case for simplices in $\partial \sigma$.

We can now replace $\Star(\sigma)$ with $\partial \sigma * \Cone ( \Link_{S^d}(\sigma) )$ and replace $f$ by a homotopic map $f'$ that agrees with $f$ outside $\Star(\sigma)$ and is equal to $h$ on $\partial \sigma * \Cone ( \Link_{S^d}(\sigma) )$.
As every edgy simplex of $h$ is contained in $\partial \sigma$, every edgy simplex of $f'$ is also an edgy simplex of $f$. Hence, no new edgy simplices are created when passing from $f$ to $f'$. In particular, $f'$ still has only edgy simplices whose image is 2-additive. However, $f'$ has one less of these simplices than $f$ (namely $\sigma$). 

After applying \hyperref[procedure1]{Procedure 1} again to remove overly augmented simplices, we can go on and remove another edgy simplex of the resulting map. Iterating this leads to a map that has no edgy simplices (with 2-additive, standard, or 3-additive image).

\subsection*{Step 2: Removing bad vertices.}
\label{step2}
We can now assume that $f$ has no edgy simplices. Call a simplex $\sigma$ of $S^d$ \emph{bad} if $f(\sigma)=\ls v \rs$ with the last coordinates of $v$  equal to $\pm R$.
Recall that our aim is to the replace $f$ by a map whose image has only vertices with last entries of absolute value less than $R$. Hence, we are done if we can remove all bad simplices. Let $\sigma$ be a bad simplex that is maximal with respect to inclusion among all bad simplices. 
We define $K(\sigma)\coloneqq \Link^{<}_{\BAA_n^m}(v)$ and proceed as in \hyperref[procedure1]{Procedure 1} above, verifying in the following three paragraphs the analogues of \autoref{claim_K_in_link}, \autoref{claim_link_highly_conn} and \autoref{claim_not_worse_but_better}.

First note that $f$ maps $\Link_{S^d}(\sigma)$ to $K(\sigma)$: As $f$ is simplicial and $\sigma$ is maximal among bad simplices, we have $ f(\Link_{S^d}(\sigma)) \subseteq \Link_{\BAA_n^m}(f(\sigma))$.
Assume that there was $x\in \Link_{S^d}(\sigma)$ that gets mapped to a line with last entry $\pm R$. Then, as there are no edgy simplices, we have $f(x) = v $ and $\sigma\cup \ls x \rs$ gets mapped to $\ls v \rs$. This contradicts $\sigma$ being maximal. Consequently, we have $ f(\Link_{S^d}(\sigma)) \subseteq \Link^<_{\BAA_n^m}(f(\sigma))=K(\sigma)$.

Next, we want to verify that $K(\sigma)$ is $(\dim \Link_{S^d}(\sigma))$-connected. For this, we finally use the inductive hypothesis and the retraction defined in \autoref{Sec4}: First note that by the first item of \autoref{equaility_link_linkhat}, $K(\sigma)$ actually coincides with $\Linkhat^{<}_{\BAA_n^m}(v)$. Hence, it suffices to show that this complex is $(\dim \Link_{S^d}(\sigma))=(d-\dim(\sigma)-1)$-connected. 
As noted in \autoref{links_of_BAA_augmentations}, there is an isomorphism
\begin{equation*}
\Linkhat_{\BAA_n^m}(v) \cong \BAA_{n-1}^{m+1},
\end{equation*}
so by induction, $\Linkhat_{\BAA_n^m}(v)$ is $(n-1)$-connected. By \autoref{retraction}, $\Linkhat^{<}_{\BAA_n^m}(v)$ is as highly-connected as $\Linkhat_{\BAA_n^m}(v)$ and hence is also $(n-1)$-connected. The claimed connectivity of $K(\sigma)$ now follows because $(n-1)\geq (d-\dim(\sigma)-1)$.

As in \hyperref[procedure1]{Procedure 1} and \hyperref[step1.3]{Step 1.3}, it follows that the restriction of $f$ to $\Star_{S^d}(\sigma)$ is homotopic to a simplicial map $h\colon  \partial \sigma * \Cone ( \Link_{S^d}(\sigma) ) \to f(\sigma) * K(\sigma)$ that agrees with $f$ on $\partial \sigma * \Link_{S^d}(\sigma)$ and such that $h(\Cone(\Link_{S^d}(\sigma)))\subseteq K(\sigma)$.
For the analogue of \autoref{claim_not_worse_but_better}, observe that every bad simplex of $h$ is contained in $\partial \sigma$ and that $h$ does not have any edgy simplices: This follows similarly to \hyperref[step1.3]{Step 1.3} because every vertex of $K(\sigma)$ has last coordinate of absolute value smaller than $R$.

We now replace $\Star(\sigma)$ with $\partial \sigma * \Cone ( \Link_{S^d}(\sigma) )$ and $f$ by the map $f'$ that agrees with $f$ outside $\Star(\sigma)$ and is equal to $h$ on $\partial \sigma * \Cone ( \Link_{S^d}(\sigma) )$. This removes the bad simplex $\sigma$ without introducing any new bad or edgy simplices. Iterating this, we obtain a map that has no bad simplices and hence maps every vertex of $S^d$ to a line with last entry of absolute value less than $R$.\qed

\section{Maps of posets}
\label{Sec6}
In this section, we recall Quillen's map of posets spectral sequence \cite{Quillen-Poset} and some of its corollaries. In this and the following sections, we use posets as they are closely related to simplicial complexes. In fact, to each poset $\mathbb A$, we associate a simplicial complex of chains in $\mathbb A$, i.e.\ its vertices are the elements of $\mathbb A$ and a set $\{a_0, \dots, a_p\}$ forms a $p$-simplex if it is a chain $a_0< \dots < a_p$ in $\mathbb A$. Vice versa, given a simplicial complex $X$, we denote by $\mathbb P(X)$ the poset of simplices of $X$. The associated simplicial complex to $\mathbb P(X)$ is the barycentric subdivision of $X$.

We begin by fixing some terminology concerning posets.

\begin{definition}
Let $\mathbb A$ be a poset and $a \in \mathbb A$. Define \[\operatorname{ht}(a): =\min (\{k \,  | \, \exists \, a_1<a_2 < \dots < a_k<a\}).\] We call $\operatorname{ht}(a)$ the \emph{height} of $a$.
\end{definition}

\begin{definition}
Let $\mathbb A $ be a poset and let $a \in \mathbb A $. Let $\mathbb A_{>a}$ be the subposet of $\mathbb A $ of elements $x$ with $x>a$.
\end{definition}

\begin{definition}
Let $\phi \colon \mathbb A \m \mathbb B$ be a map of posets and $b \in \mathbb B$. Let $\phi^{\leq b}$ be the subposet of $\mathbb A$ of elements $a$ with $\phi(a) \leq b$.
\end{definition}

When we speak about the homology of a poset $\mathbb A$, we mean the homology of the geometric realisation of its associated simplicial complex, which we will just refer to as the geometric realisation of $\mathbb A$. Similarly, when we say that a poset is $d$-connected, $d$-dimensional, etc., we mean its geometric realisation has this property.

We now define a more general notion of homology of posets.

\begin{definition} Let $\Ab$ denote the category of abelian groups. Let $\mathbb A$ be a poset (viewed as a category with objects the elements of $\mathbb A$ and exactly one morphism $a_1\to a_2$ if $a_1\le a_2$ and none otherwise) and let $T\colon \mathbb A \m \Ab$ be a functor. For $p \geq 0$, let $$C_p(\mathbb A;T): =\bigoplus_{a_0 < \dots<a_p} T(a_0).$$
Define maps
$$  d_i\colon C_p(\mathbb A;T) \m C_{p-1}(\mathbb A;T), \qquad (0 \leq i \leq p)$$
as follows.   For $i >0$ let $d_i$ be given by the identity map $T(a_0) \m T(a_0)$ from the summand indexed by $a_0 < \dots<a_p$ to the summand indexed by $a_0 <\dots < a_{i-1}<a_{i+1}< \dots<a_p$. Let $d_0\colon C_p(\mathbb A;T) \m C_{p-1}(\mathbb A;T)$ be given by $T(a_0 \m a_1)\colon T(a_0) \m T(a_1)$ from the summand indexed by $a_0 < \dots<a_p$ to the summand indexed by $a_1 <\dots <a_p$. Let $$d = \sum_i (-1)^{i} d_i \colon C_p(\mathbb A;T) \m C_{p-1}(\mathbb A;T).$$ Since $d \circ d=0$, these groups and maps form a chain complex which we denote by $C_*(\mathbb A;T)$. Let $$H_i(\mathbb A;T)=H_i(C_*(\mathbb A;T)). $$
\end{definition}

One of the most basic examples of a functor is the constant functor $\Z$ which sends every object to $\Z$ and every morphism to the identity map. Note that $H_i(\mathbb A;\Z)$ is isomorphic to the homology of the geometric realisation of $\mathbb{A}$. Another class of functors that we will consider is the following.

\begin{definition}
Let $\phi \colon \mathbb A \m \mathbb B$ be a map of posets. Let $H_i(\phi)\colon   \mathbb B \m \Ab$ be the functor sending $b \in \mathbb B$ to $H_i(\phi^{\leq b})$ and $b_1\le b_2$ to $H_i(\phi^{\leq b_1}) \to H_i(\phi^{\leq b_2})$ induced by the inclusion $\phi^{\leq b_1} \subset \phi^{\leq b_2}$.
\end{definition}

 The following spectral sequence is due to Quillen \cite{Quillen-Poset}.

\begin{theorem}[Quillen] \label{SSquillen}
Let $\phi \colon \mathbb A \m \mathbb B$ be a map of posets. There is a homologically graded spectral sequence: $$E^2_{pq}=H_p(\mathbb B;H_q(\phi)) \implies H_{p+q}( \mathbb A). $$

\end{theorem}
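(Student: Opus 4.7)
The plan is to construct an explicit bicomplex that interpolates between the chain complexes of $\mathbb A$ and $\mathbb B$, and to compare the two spectral sequences arising from its two natural filtrations. For $p, q \geq 0$, let $D_{p,q}$ be the free abelian group generated by pairs of chains
$$ (a_0 < \dots < a_q,\ b_0 < \dots < b_p) $$
with $a_i \in \mathbb A$, $b_j \in \mathbb B$, subject to the compatibility condition $\phi(a_q) \leq b_0$. Equip $D_{*,*}$ with the vertical (simplicial) differential $d^v$ deleting an $a_i$, with the convention that the leftmost face uses the inclusion of posets $\phi^{\leq b_0} \hookrightarrow \phi^{\leq b_0}$ (i.e.\ the identity), and with the horizontal differential $d^h$ deleting a $b_j$. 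A direct check shows $d^v d^v = 0$, $d^h d^h = 0$ and $d^v d^h + d^h d^v = 0$, so the total complex $\Tot D_{*,*}$ is a well-defined chain complex.

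Filtering by the $p$-degree yields a spectral sequence with
$$ E^0_{p,q} = \bigoplus_{b_0 < \dots < b_p} C_q(\phi^{\leq b_0}; \Z), $$
since fixing a chain $b_0 < \dots < b_p$ restricts the $a$-chain to lie in $\phi^{\leq b_0}$. Hence $E^1_{p,q} = \bigoplus_{b_0 < \dots < b_p} H_q(\phi^{\leq b_0}) = C_p(\mathbb B;H_q(\phi))$. The horizontal $d^1$-differential is precisely the differential in $C_*(\mathbb B;H_q(\phi))$, since deleting $b_0$ induces the structural inclusion $\phi^{\leq b_0}\hookrightarrow \phi^{\leq b_1}$ that defines the functor $H_q(\phi)$. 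Therefore $E^2_{p,q} = H_p(\mathbb B; H_q(\phi))$.

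Filtering instead by the $q$-degree produces a second spectral sequence $\widetilde E^r$ converging to the same total homology. For a fixed chain $a_0 < \dots < a_q$, the corresponding row is the unreduced simplicial chain complex of the subposet $\mathbb B_{\geq \phi(a_q)} = \{b \in \mathbb B : b \geq \phi(a_q)\}$. This poset has minimum $\phi(a_q)$, so its geometric realisation is a cone and hence contractible. Consequently $\widetilde E^1_{p,q} = 0$ for $p > 0$ and $\widetilde E^1_{0,q} = C_q(\mathbb A;\Z)$ with $d^1$ equal to the chain differential of $\mathbb A$, giving $\widetilde E^2_{p,q} = 0$ for $p > 0$ and $\widetilde E^2_{0,q} = H_q(\mathbb A)$. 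The second spectral sequence therefore collapses and identifies $H_*(\Tot D_{*,*}) \cong H_*(\mathbb A)$. Combined with the first spectral sequence, this gives the desired $E^2_{p,q} = H_p(\mathbb B; H_q(\phi)) \Longrightarrow H_{p+q}(\mathbb A)$.

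The main obstacles are bookkeeping rather than conceptual: one has to fix sign conventions so that $d^v$ and $d^h$ anticommute (standard Koszul signs suffice) and verify that the identification of the $E^1$-page of the first filtration with $C_p(\mathbb B; H_q(\phi))$ uses exactly the functoriality of $H_q(\phi)$ as defined above. Once these points are in place, the argument is essentially the poset analogue of the Leray spectral sequence, and the contractibility of the ``over-fibers'' $\mathbb B_{\geq \phi(a_q)}$ is the key geometric input — a manifestation of Quillen's Theorem A in this combinatorial setting.
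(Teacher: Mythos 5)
Your proof is correct. The paper gives no proof of this statement---it simply cites Quillen---and your bicomplex of compatible pairs of chains $(a_0<\dots<a_q,\ b_0<\dots<b_p)$ with $\phi(a_q)\leq b_0$, analysed via its two filtrations (one giving $H_p(\mathbb B;H_q(\phi))$, the other collapsing because the over-posets $\mathbb B_{\geq \phi(a_q)}$ have a minimum and are hence contractible), is precisely Quillen's original argument; the only remaining work is the sign bookkeeping you already flag.
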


See Charney \cite[Lemma 1.3]{Charney} or \cite[Lemma 3.2]{MPWY} for a proof of the following.

\begin{lemma}\label{CharneyLemma}
Let $\mathbb A$ be a poset, let $T\colon \mathbb A \m \Ab$ be a functor, and $m \in \mathbb N$. Suppose $T(a) \cong 0$ if $\operatorname{ht}(a) \neq m$. Then there is a natural isomorphism: $$H_i(\mathbb A;T) \cong\bigoplus_{\operatorname{ht}(a)=m} \widetilde H_{i-1}(\mathbb A_{>a} ;T(a))
.$$

\end{lemma}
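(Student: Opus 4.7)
The plan is to exhibit an explicit splitting of the chain complex $C_*(\mathbb A; T)$ as a direct sum indexed by elements $a_0$ of height $m$, and identify each summand with a shift of the reduced chain complex of $\mathbb A_{>a_0}$ with coefficients in $T(a_0)$. The naturality assertion will then be automatic.

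First I would observe that the vanishing hypothesis forces
\[ C_p(\mathbb A; T) \;=\; \bigoplus_{\operatorname{ht}(a_0) = m} C_p^{(a_0)}, \qquad C_p^{(a_0)} = \bigoplus_{a_0 < a_1 < \dots < a_p} T(a_0), \]
because any chain starting at some $a_0$ with $\operatorname{ht}(a_0) \neq m$ contributes a zero summand. The face maps $d_i$ with $i \geq 1$ only modify the tail $a_1 < \dots < a_p$, so they preserve the index $a_0$ and respect this splitting. The only potentially troublesome face map is $d_0$, whose image is indexed by chains starting at $a_1$ and whose values come from the transition morphism $T(a_0) \to T(a_1)$. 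But $a_1 > a_0$ implies $\operatorname{ht}(a_1) \geq m+1$, so $T(a_1) = 0$ and $d_0$ vanishes on every relevant summand. Hence the differential on $C_*(\mathbb A; T)$ respects the splitting.

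Next I would identify each subcomplex $C_*^{(a_0)}$ with a degree shift of a reduced chain complex. The bijection that deletes $a_0$ from a chain $a_0 < a_1 < \dots < a_p$ gives
\[ C_p^{(a_0)} \;\cong\; \widetilde C_{p-1}(\mathbb A_{>a_0}; T(a_0)), \]
where the $p=0$ case uses the reduced convention $\widetilde C_{-1} = T(a_0)$ with augmentation being the identity on each summand. Under this identification, the residual face map $d_i$ on the left (for $i \geq 1$) corresponds to the face map $\partial_{i-1}$ on the right. A sign calculation shows that $\sum_{i=1}^p (-1)^i d_i$ corresponds to $-\sum_{j=0}^{p-1} (-1)^j \partial_j$, i.e.\ to the negative of the usual reduced differential; this sign change does not affect homology, so
\[ H_i(C_*^{(a_0)}) \;\cong\; \widetilde H_{i-1}(\mathbb A_{>a_0}; T(a_0)). \]

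Taking the direct sum over $a_0$ of height $m$ yields the stated isomorphism. The main obstacle, such as it is, lies in bookkeeping the low-degree augmentation carefully (in particular checking the $i = 0$ case, where $\widetilde H_{-1}(\mathbb A_{>a_0}; T(a_0))$ contributes $T(a_0)$ exactly when $\mathbb A_{>a_0} = \emptyset$, matching the cokernel computation of the relevant piece of $d_1$); all remaining steps are formal. Naturality in $T$ follows because every step of the construction is natural in the coefficient functor.
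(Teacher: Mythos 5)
Your argument is correct. Note that the paper does not actually prove this lemma itself — it defers to Charney and to MPWY — so there is no in-paper proof to compare against; your direct chain-level splitting (the vanishing hypothesis kills $d_0$ because $\operatorname{ht}(a_1)>m$, the remaining faces preserve the initial vertex $a_0$, and deleting $a_0$ identifies each summand with the shifted reduced complex of $\mathbb A_{>a_0}$ with coefficients $T(a_0)$, including the degree-$0$/augmentation bookkeeping) is precisely the standard argument given in those references, and all the steps you flag as needing care are handled correctly.
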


Here $ \widetilde H_{i-1}(\mathbb A_{>a} ;T(a))$ means the reduced homology of the geometric realisation with (untwisted) coefficients $T(a)$. This lemma gives the following corollary (see e.g. \cite[Lemma 3.7]{MPP}).

\begin{proposition} \label{AndyLemma}
Let $\phi \colon  \mathbb A \m \mathbb B$ be a map of posets and let $E^r_{p,q}$ denote the map of posets spectral sequence. Assume for some fixed $d,e,r \geq 0$, the following holds for all $V \in \mathbb B$: \begin{itemize}
\item $ \widetilde H_i(\phi^{\leq V}) \cong 0$ for all $i \not \in [\operatorname{ht}(V)+d-r,\operatorname{ht}(V)+d]$.
\item $ \widetilde H_i(\mathbb B_{>V}) \cong 0$ for all $i \neq e- \operatorname{ht}(V) -1$.
\end{itemize}
\noindent Then for all $a \geq 0$ and $b \geq 1$ satisfying $a+b \not \in [d+e-r,d+e]$, we have that $E^2_{a,b} \cong 0$.

\end{proposition}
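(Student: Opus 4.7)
The plan is to fix $b \geq 1$ and show the vanishing of $E^2_{a, b} = H_a(\mathbb B; H_b(\phi))$ outside the claimed range by putting a height-based filtration on the coefficient functor $T \coloneqq H_b(\phi)\colon \mathbb B \to \Ab$ and reducing to \autoref{CharneyLemma}. Since $b \geq 1$, unreduced and reduced homology of $\phi^{\leq V}$ agree, so the first hypothesis translates to: $T(V) = 0$ whenever $\operatorname{ht}(V) \notin [b-d, b-d+r]$.

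First I would introduce, for each integer $k$, the subfunctor $T^{\geq k} \subseteq T$ defined by $T^{\geq k}(V) \coloneqq T(V)$ if $\operatorname{ht}(V) \geq k$ and $T^{\geq k}(V) \coloneqq 0$ otherwise. Since $V < W$ implies $\operatorname{ht}(W) > \operatorname{ht}(V)$, this assignment is genuinely a subfunctor. The quotient $Q_k \coloneqq T^{\geq k}/T^{\geq k+1}$ is concentrated in height exactly $k$, where it agrees with $T$. The support constraint on $T$ yields the finite filtration
$$0 = T^{\geq b-d+r+1} \subseteq T^{\geq b-d+r} \subseteq \dots \subseteq T^{\geq b-d} = T$$
with successive quotients $Q_{b-d+r}, \ldots, Q_{b-d}$.

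To each $Q_k$ I would apply \autoref{CharneyLemma} to obtain $H_i(\mathbb B; Q_k) \cong \bigoplus_{\operatorname{ht}(V) = k} \widetilde H_{i-1}(\mathbb B_{>V}; T(V))$. The second hypothesis says $\widetilde H_j(\mathbb B_{>V})$ is concentrated in degree $j = e - k - 1$, and the universal coefficient theorem (here I would implicitly use that this homology is free, as it is in the intended applications, so that the $\operatorname{Tor}$ term vanishes) extends this vanishing to coefficients in $T(V)$. Hence $H_i(\mathbb B; Q_k) = 0$ unless $i = e - k$, i.e.\ unless $i + b \in [d + e - r, d + e]$ as $k$ ranges over $[b-d, b-d+r]$.

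To conclude, I would iteratively invoke the long exact sequence in $\mathbb B$-homology associated to each short exact sequence $0 \to T^{\geq k+1} \to T^{\geq k} \to Q_k \to 0$. Starting from $H_*(\mathbb B; T^{\geq b-d+r+1}) = 0$ and descending in $k$, each step shows that $H_i(\mathbb B; T^{\geq k})$ can only be nonzero at values of $i$ where either $H_i(\mathbb B; T^{\geq k+1})$ or $H_i(\mathbb B; Q_k)$ is, so the support grows by at most one value of $i + b$ at each stage. After $r+1$ steps one arrives at $T = T^{\geq b-d}$ with $H_i(\mathbb B; T)$ supported in $\{i : i + b \in [d+e-r, d+e]\}$. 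The main obstacle is really the delicate bookkeeping needed to verify that $T^{\geq k}$ is a subfunctor and to run the long exact sequences while tracking support bounds; the remaining steps are routine given \autoref{CharneyLemma} and the stated hypotheses.
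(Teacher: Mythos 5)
Your overall route is the intended one: the paper gives no proof of this proposition (it defers to \cite[Lemma 3.7]{MPP}), but explicitly presents it as a corollary of \autoref{CharneyLemma}, and your height filtration of $T=H_b(\phi)$ with successive quotients concentrated in a single height, \autoref{CharneyLemma} applied to each quotient, and the long exact sequences in $H_*(\mathbb B;-)$ is exactly the standard way to extract this corollary. The support computation ($T(V)=0$ unless $\operatorname{ht}(V)\in[b-d,b-d+r]$, using $b\geq 1$ to identify reduced and unreduced homology), the verification that $T^{\geq k}$ is a subfunctor, and the bookkeeping in the long exact sequences are all correct.

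The one genuine gap is the step you flag yourself: passing from the vanishing of $\widetilde H_*(\mathbb B_{>V})$ with integral coefficients to vanishing with coefficients in $T(V)$. The universal coefficient theorem gives a possible contribution $\Tor\bigl(\widetilde H_{i-2}(\mathbb B_{>V}),T(V)\bigr)$ to $\widetilde H_{i-1}(\mathbb B_{>V};T(V))$, which is potentially nonzero when $i-2=e-\operatorname{ht}(V)-1$, i.e.\ in degree $i=e-k+1$ rather than $i=e-k$. Tracking this through \autoref{CharneyLemma}, $H_i(\mathbb B;Q_k)$ could then be nonzero for $i+b=d+e+1$ (attained at $k=b-d$), which lies \emph{outside} the interval $[d+e-r,d+e]$ in the conclusion. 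So without an extra hypothesis your argument only yields $E^2_{a,b}\cong 0$ for $a+b\notin[d+e-r,d+e+1]$, one degree weaker than claimed; the statement as literally written is in fact vulnerable to a counterexample in which $\widetilde H_{e-\operatorname{ht}(V)-1}(\mathbb B_{>V})$ and $T(V)$ have matching torsion. The gap closes whenever $\widetilde H_{e-\operatorname{ht}(V)-1}(\mathbb B_{>V})$ is free (e.g.\ if $\mathbb B_{>V}$ is Cohen--Macaulay of dimension $e-\operatorname{ht}(V)-1$, so that its top homology is a subgroup of the top chain group) or whenever the groups $T(V)$ are flat; both conditions hold in the paper's application in \autoref{CatC}, where $\mathbb B_{>V}\cong \mathbb T^{\pm}_{n-\operatorname{ht}(V)-1}(\F_p)$ is Cohen--Macaulay by \autoref{Tconn} and the only relevant coefficient group is $H_2(\BAA_2^\pm(\F_p))\cong\Z$. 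To make your proof a proof of the stated proposition you should either add such a hypothesis or observe why the extra degree is harmless where the proposition is used.
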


A poset is called Cohen--Macaulay of dimension $d$ if its associated simplicial complex is Cohen--Macaulay. A map $f \colon A \m B$ is called $k$-acyclic if it induces an isomorphism on $H_i$ for $i<k$ and a surjection for $k=i$.

\begin{proposition}[{\cite[Proposition 2.3]{CP}}]
\label{mopss_CP}
Fix $m \geq 0$ and let $\phi \colon \mathbb A \to \mathbb B$ be a map of posets. Assume that $\mathbb B$ is Cohen--Macaulay of dimension $d$ and that for all $b\in \mathbb B$ and $q\not = \operatorname{ht}(b)+m$, we have $\widetilde{H}_q(\phi^{\leq b}) = 0$. Then $\phi$ is $(d+m)$-acyclic.
\end{proposition}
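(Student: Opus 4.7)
My plan is to apply Quillen's map-of-posets spectral sequence (\autoref{SSquillen}),
\[
E^2_{p,q} = H_p(\mathbb B; H_q(\phi)) \Rightarrow H_{p+q}(\mathbb A),
\]
and show that it collapses enough to make the bottom-row edge map identify $\phi_*$ through degree $d+m$. The key vanishing input is \autoref{AndyLemma}, applied with parameters $(d,r,e) = (m, 0, d)$ in the notation of that lemma. Its first hypothesis becomes precisely the standing assumption that $\widetilde H_i(\phi^{\leq V}) = 0$ for $i \neq \operatorname{ht}(V) + m$. For the second, since $\mathbb B$ is Cohen--Macaulay of dimension $d$, the upper poset $\mathbb B_{>V}$ is itself Cohen--Macaulay of dimension $d - \operatorname{ht}(V) - 1$, so its reduced homology vanishes outside that degree. \autoref{AndyLemma} then gives $E^2_{a,b} = 0$ whenever $a \geq 0$, $b \geq 1$, and $a+b \neq d+m$.

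Next, I would identify the bottom row $E^2_{p,0} = H_p(\mathbb B; H_0(\phi))$ with $H_p(\mathbb B)$ in the range of interest. For $m \geq 1$, the hypotheses force every $\phi^{\leq b}$ to be nonempty (via the vanishing of $\widetilde H_{-1}$) and connected (via that of $\widetilde H_0$), so $H_0(\phi)$ is the constant functor $\mathbb Z$ and $E^2_{p,0} = H_p(\mathbb B)$ on the nose. For $m = 0$, I would use the short exact sequence of functors $0 \to \widetilde H_0(\phi) \to H_0(\phi) \to \mathbb Z \to 0$ and observe that $\widetilde H_0(\phi)$ is supported on the minimal elements of $\mathbb B$. \autoref{CharneyLemma}, together with the fact that $\mathbb B_{>b}$ is Cohen--Macaulay of dimension $d-1$ for minimal $b$, yields $H_i(\mathbb B; \widetilde H_0(\phi)) = 0$ for $i \neq d$. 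The resulting long exact sequence in $H_*(\mathbb B; -)$ then shows $H_i(\mathbb B; H_0(\phi)) \to H_i(\mathbb B)$ is an isomorphism for $i < d$ and a surjection for $i = d$.

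Finally, I would read off the conclusion from the spectral sequence. For $i < d+m$, every $E^2_{p,q}$ with $p+q = i$ and $q \geq 1$ is zero, and no nonzero differentials can touch $E^r_{i,0}$: incoming ones would originate at positions with $q < 0$, while outgoing ones would land at positions with $q \geq 1$ and total degree $i-1 < d+m$. Thus the edge map yields an isomorphism $H_i(\mathbb A) \cong E^2_{i,0} = H_i(\mathbb B; H_0(\phi))$, and postcomposing with the natural map to $H_i(\mathbb B)$ recovers $\phi_*$, which is therefore an isomorphism. For $i = d+m$, the same analysis still gives a surjection $H_{d+m}(\mathbb A) \twoheadrightarrow E^\infty_{d+m,0} = E^2_{d+m,0}$; composing with the surjection onto $H_{d+m}(\mathbb B)$ (trivial when $m \geq 1$ since $\mathbb B$ is $d$-dimensional, and provided by the $m=0$ step otherwise) shows $\phi_*$ is surjective in that degree.

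The main point to verify carefully is that the composite of Quillen's edge homomorphism with the map $H_*(\mathbb B; H_0(\phi)) \to H_*(\mathbb B)$ induced by the augmentation genuinely agrees with $\phi_*$; this is a functoriality check built into the construction of the spectral sequence. Once that identification is in hand, the rest is a routine degeneration-and-edge-map argument.
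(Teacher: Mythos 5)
Your argument is correct. The paper does not actually prove this proposition --- it is quoted from Church--Putman \cite[Proposition 2.3]{CP} --- and your reconstruction is essentially the standard argument from that source, assembled from exactly the tools the paper sets up in \autoref{Sec6}: Quillen's spectral sequence, \autoref{AndyLemma} with $r=0$ (whose second hypothesis is supplied by the Cohen--Macaulayness of $\mathbb B_{>V}$, together with the dimension bound $\dim \mathbb B_{>V} \le d - \operatorname{ht}(V) - 1$), and \autoref{CharneyLemma} for the $m=0$ bottom-row comparison. The only points deserving explicit mention are the two you already flag --- the identification of the composite edge map with $\phi_*$, and the case analysis on $m$ for identifying $E^2_{*,0}$ with $H_*(\mathbb B)$ --- and both are handled correctly.
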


\section{Proof of \autoref{TheoremA} and \autoref{TheoremB}}
\label{Sec7}
The goal of this section is to prove \autoref{TheoremA}, which describes the relations among the relations in Steinberg modules and use this to prove \autoref{TheoremB}, which states that the codimension-$2$ rational homology of $\SL_n(\Z)$ vanishes for $n \geq 3$. Throughout this section, we will assume that $n\geq 3$.

For a field $\F$, we write $\bT_n(\F)$ for the poset of proper nonzero subspaces of $\F^n$. As in the introduction, the geometric realisation of this poset is the Tits building associated to $\SL_n(\F)$, denoted by $\cT_n(\F)$. It is elementary to see that  $\bT_n(\Q)$ is isomorphic to the following poset.

\begin{definition} We write $\bT_n(\Z)$ (or simply $\bT_n$) for the poset of proper nonzero  direct summands of $\Z^n$ under inclusion. We write its geometric realisation as $\cT_n(\Z)$.
\end{definition}

We prove  \autoref{TheoremA} and \autoref{TheoremB} using $n$-connectivity of $\BAA_n$. The proof here works very similarly to \cite[Proof of Theorem A and B]{CP}; we largely follow \cite[Section 3]{CP}. 

\begin{definition} For $\Lambda = \Z$ or $\Lambda = \F_p$, let $\BAA_n^\pm(\Lambda)'$ be the subcomplex of $\BAA_n^\pm(\Lambda)$ consisting of all simplices $\ls v_0,\ldots , v_k \rs$ such that $\langle \vec v_0,\ldots , \vec v_k\rangle_\Lambda$ is a proper subgroup of $\Lambda^n$. Let $\BAA_n'=\BAA_n^\pm(\Z)'$.
\end{definition}

In other words, the simplices of $\BAA_n^\pm(\Lambda)$ that are not contained in $\BAA_n^\pm(\Lambda)'$ are precisely
\begin{itemize}
\item the standard simplices of dimension $n-1$,
\item the 2-additive and 3-additive simplices of dimension $n$, and
\item the double-triple and double-double simplices of dimension $n+1$.
\end{itemize}
In particular, 
\begin{align*}
&C_k(\BAA_n,\BAA_n') &&  \text{vanishes for $k \leq (n-2)$}, &\\ 
&C_{n-1}(\BAA_n,\BAA_n') && \text{is spanned by standard simplices}, &\\ 
&C_{n}(\BAA_n,\BAA_n') && \text{is spanned by 2-additive and 3-additive simplices}, &\\ 
&C_{n+1}(\BAA_n,\BAA_n') && \text{is spanned by double-triple and double-double simplices.} &
\end{align*}
(Note that in the case $n=3$, the complex $\BAA_n$ contains no double-double simplices. So in this case, $C_{n+1}(\BAA_n,\BAA_n')$ is spanned by double-triple simplices.)

The connectivity of $\BAA_n$ gives us isomorphisms between the homology of these relative chains and the homology of $\BAA_n'$:
\begin{lemma}
\label{isos_relative_homology}
Let $n \geq 3$. There are isomorphisms
\begin{equation*}
H_{n-2}(\BAA_n') \cong H_{n-1}(\BAA_n,\BAA_n') \text{ and } H_{n-1}(\BAA_n') \cong H_{n}(\BAA_n,\BAA_n').
\end{equation*}
\end{lemma}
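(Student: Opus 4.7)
The plan is to invoke the long exact sequence of the pair $(\BAA_n, \BAA_n')$ together with the $n$-connectivity of $\BAA_n$ established in \autoref{BAAnmConn} (with $m=0$) and refined to Cohen--Macaulayness in \autoref{BAAnmCM}.

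Concretely, I would first note that because $\BAA_n$ is $n$-connected, its (unreduced) homology satisfies $H_i(\BAA_n) = 0$ for every $1 \leq i \leq n$. The relevant portion of the long exact sequence of the pair reads
\begin{equation*}
H_n(\BAA_n) \to H_n(\BAA_n, \BAA_n') \to H_{n-1}(\BAA_n') \to H_{n-1}(\BAA_n) \to H_{n-1}(\BAA_n, \BAA_n') \to H_{n-2}(\BAA_n') \to H_{n-2}(\BAA_n).
\end{equation*}
Since $n \geq 3$ and therefore $n-2 \geq 1$, all three of the groups $H_n(\BAA_n)$, $H_{n-1}(\BAA_n)$, $H_{n-2}(\BAA_n)$ vanish. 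Extracting the two resulting four-term exact sequences with zeros on both ends immediately gives the isomorphisms
\begin{equation*}
H_n(\BAA_n, \BAA_n') \cong H_{n-1}(\BAA_n') \qquad \text{and} \qquad H_{n-1}(\BAA_n, \BAA_n') \cong H_{n-2}(\BAA_n'),
\end{equation*}
which is exactly the statement of the lemma.

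There is no real obstacle here; the proof is a direct application of the LES. The genuine content has already been absorbed into \autoref{BAAnmCM}, and the vanishing of the relative chain groups $C_k(\BAA_n, \BAA_n')$ for $k \leq n-2$ recorded just before the lemma is consistent with (but not needed for) this argument. One small sanity check worth doing is to make sure the hypothesis $n \geq 3$ is used correctly: it ensures that $n-2 \geq 1$, so that the boundary terms in the LES indeed land in degrees where $H_*(\BAA_n)$ vanishes (unreduced homology of $\BAA_n$ in degree $0$ is $\Z$, not $0$, so the case $n=2$ would require extra care, but that case is excluded).
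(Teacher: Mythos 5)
Your argument is exactly the paper's proof: the lemma follows immediately from the long exact sequence of the pair $(\BAA_n,\BAA_n')$ together with the $n$-connectivity of $\BAA_n$ from \autoref{BAAnmConn}, and your degree bookkeeping (using $n\geq 3$ so that $n-2\geq 1$) is correct. The only quibble is that the input you need is \autoref{BAAnmConn} with $m=0$, not the stronger Cohen--Macaulay statement \autoref{BAAnmCM}.
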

\begin{proof}
This follows immediately from the long exact sequence of the pair $(\BAA_n,\BAA_n')$ because $\BAA_n$ is $n$-connected (\autoref{BAAnmConn}).
\end{proof}

We use \autoref{mopss_CP} to get an explicit description of the homology of $\BAA_n'$ in high degrees.

\begin{lemma}
\label{homology_BAA'} Let $n \geq 3$. 
There are isomorphisms
\begin{equation*}
H_{n-2}(\BAA_n') \cong \St_n(\Q) \text{ and } H_{n-1}(\BAA_n') \cong  0.
\end{equation*}
\end{lemma}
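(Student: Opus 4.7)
The plan is to apply Quillen's map-of-posets machinery, specifically \autoref{mopss_CP}, to the augmentation map
\begin{equation*}
\phi \colon \mathbb P(\BAA_n') \longrightarrow \bT_n, \qquad \{v_0, \dots, v_p\} \longmapsto \langle \vec v_0, \dots, \vec v_p \rangle,
\end{equation*}
sending each simplex to the proper direct summand of $\Z^n$ it spans. First I would verify that $\phi$ is well-defined: in every simplex type of $\BAA_n$ (standard, 2- or 3-additive, double-double, double-triple), the vertices outside the additive core already form a partial basis whose $\Z$-span equals the span of the whole simplex, so this span is automatically a direct summand, and it is proper because the simplex lies in $\BAA_n'$.

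Next I would analyse the fibers. For $V \in \bT_n$ of rank $k$, the subposet $\phi^{\leq V}$ consists of those simplices whose vertices all lie in $V$. Choosing an identification $V \cong \Z^k$ identifies primitive vectors in $V$ with primitive vectors in $\Z^k$ (using that $V$ is a summand) and carries the five simplex types to themselves, yielding a poset isomorphism $\phi^{\leq V} \cong \mathbb P(\BAA_k)$ whose geometric realisation is homotopy equivalent to $\BAA_k$. The key connectivity input is then: $\BAA_1$ and $\BAA_2$ are contractible (as recorded just after \autoref{BAAnmConn}), while for $k \geq 3$, \autoref{BAAnmCM} guarantees that $\BAA_k$ is Cohen--Macaulay of dimension $k+1$, so its reduced homology is concentrated in degree $k+1$. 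Since $\operatorname{ht}(V) = k - 1$ in $\bT_n$, in every case
\begin{equation*}
\widetilde H_q(\phi^{\leq V}) = 0 \quad \text{for all } q \neq \operatorname{ht}(V) + 2.
\end{equation*}

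The remaining input is that $\bT_n$ is Cohen--Macaulay of dimension $d = n - 2$: its realisation is the rational Tits building, $(n-2)$-spherical by Solomon--Tits, and the link of any flag in $\bT_n$ decomposes as a join of Tits buildings of smaller rank and hence has the correct connectivity. Applying \autoref{mopss_CP} with $m = 2$ then shows that $\phi$ is $(d + m) = n$-acyclic, so $\phi_\ast \colon H_i(\BAA_n') \to H_i(\bT_n)$ is an isomorphism for $i < n$. Specialising to $i = n - 2$ and $i = n - 1$ and using $\widetilde H_{n-2}(\cT_n) = \St_n(\Q)$ together with $\widetilde H_{n-1}(\cT_n) = 0$ yields both asserted isomorphisms.

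I expect the main obstacle to be the careful bookkeeping for the fiber identification $\phi^{\leq V} \cong \mathbb P(\BAA_k)$: one must ensure that the defining conditions for each simplex type (partial basis, additive relations, and the determinant condition in the $\F_p$-version) are intrinsic to the summand $V$, and are neither lost nor gained when passing between $V$ and $\Z^n$. The Cohen--Macaulay property of $\bT_n$ is classical but also needs a brief check of the link condition before \autoref{mopss_CP} can be invoked.
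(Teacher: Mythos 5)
Your proposal is correct and follows essentially the same route as the paper: the same augmentation map $\phi \colon \mathbb P(\BAA_n') \to \bT_n$, the identification of the fibres $\phi^{\leq V}$ with (the poset of simplices of) $\BAA(V)$, and an application of \autoref{mopss_CP} with $m=2$ using the Cohen--Macaulayness of $\bT_n$. The only cosmetic difference is that you invoke \autoref{BAAnmCM} for the fibres where the paper cites the connectivity statement \autoref{BAAnmConn} together with the dimension bound $\dim \BAA(V) \leq \rank(V)+1$; both yield the required concentration of $\widetilde H_*(\phi^{\leq V})$ in degree $\operatorname{ht}(V)+2$.
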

\begin{proof}
Let $\mathbb P(\BAA_n')$ denote the poset of simplices of $\BAA_n'$ under inclusion, and consider the map of posets
\begin{align*}
\phi \colon \mathbb P(\BAA_n') & \longrightarrow \bT_n \\ 
\{v_0, \ldots, v_k\} & \longmapsto \langle \vec v_0, \ldots, \vec v_k\rangle.
\end{align*}
We want to apply \autoref{mopss_CP} with $m=2$. As $\bT_n$ is Cohen--Macaulay of dimension $(n-2)$, we have to verify that for every proper direct summand $\ls 0 \rs \not = V \subset \Z^n$, the fibre $\phi_{\leq V}$ has vanishing reduced homology in all degrees except $(\operatorname{ht}(V)+2)=(\rank(V)+1)$.
But we have 
\begin{equation*}
\phi^{\leq V} = \{ \sigma \in \mathbb P(\BAA_n') \; | \; \phi(\sigma) \leq V \} \cong \mathbb{P}(\BAA(V)). 
\end{equation*}
The complex $\BAA(V)$ has dimension at most\footnote{In fact, its dimension is equal to $\rank(V)+1$ if $\rank(V)\geq 3$, see the comments after \autoref{BAAnmConn}.} $\rank(V)+1$ and is $\rank(V)$-connected by \autoref{BAAnmConn}.

It follows that the map $\phi$ induces isomorphisms
\begin{equation*}
H_{n-2}(|\mathbb P(\BAA_n')|) \to H_{n-2}(\cT_n) \cong \St_n(\Q) \text{ and } H_{n-1}(|\mathbb P(\BAA_n')|) \to H_{n-1}(\cT_n) \cong 0.
\end{equation*}
The claim follows because $|\mathbb P(\BAA_n')|$ is the geometric realisation of the barycentric subdivision of $|\BAA_n'|$.
\end{proof}

\begin{proposition}
\label{partial_resolution_Steinberg}
Let $n \geq 3$. The sequence
\begin{multline*}
C_{n+1}(\BAA_n, \BAA_n') \stackrel{\partial_{n+1}}{\longrightarrow} C_{n}(\BAA_n, \BAA_n') \stackrel{\partial_{n}}{\longrightarrow}  C_{n-1}(\BAA_n, \BAA_n') \stackrel{q}{\longrightarrow} \\
\stackrel{q}{\longrightarrow} H_{n-1}(\BAA_n,\BAA_n') \cong \St_n(\Q) \longrightarrow 0
\end{multline*}
is exact.
\end{proposition}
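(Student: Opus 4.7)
The plan is to recognise the sequence in question as a tail of the relative singular chain complex $C_*(\BAA_n, \BAA_n')$, and then to read off exactness from the computation of its homology, which has already been carried out in \autoref{isos_relative_homology} and \autoref{homology_BAA'}.

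First, I would recall the vanishing of $C_k(\BAA_n, \BAA_n')$ for $k \leq n-2$. This is immediate from the description of the simplices of $\BAA_n$ that do not lie in $\BAA_n'$ given at the beginning of the section (standard simplices of dimension $n-1$, $2$- and $3$-additive simplices of dimension $n$, and double-triple/double-double simplices of dimension $n+1$). In particular, $C_{n-2}(\BAA_n, \BAA_n') = 0$, so the boundary map $\partial_{n-1}\colon C_{n-1}(\BAA_n, \BAA_n') \to C_{n-2}(\BAA_n, \BAA_n')$ is zero, and hence $H_{n-1}(\BAA_n, \BAA_n') = C_{n-1}(\BAA_n, \BAA_n')/\im(\partial_n)$.

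Second, I would identify the map $q$ as the canonical quotient $C_{n-1}(\BAA_n, \BAA_n') \twoheadrightarrow C_{n-1}(\BAA_n, \BAA_n')/\im(\partial_n) = H_{n-1}(\BAA_n, \BAA_n')$, composed with the isomorphism $H_{n-1}(\BAA_n, \BAA_n') \cong \St_n(\Q)$ provided by \autoref{isos_relative_homology} and \autoref{homology_BAA'}. With this identification, exactness at $\St_n(\Q)$ (surjectivity of $q$) and exactness at $C_{n-1}(\BAA_n, \BAA_n')$ (i.e.\ $\ker q = \im \partial_n$) are both tautological.

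Finally, exactness at $C_n(\BAA_n, \BAA_n')$ amounts to the statement $\ker \partial_n = \im \partial_{n+1}$, which by the same vanishing of lower-degree relative chains is equivalent to $H_n(\BAA_n, \BAA_n') = 0$. By \autoref{isos_relative_homology} this relative homology group is isomorphic to $H_{n-1}(\BAA_n')$, which vanishes by \autoref{homology_BAA'}. The main work has already been done in these two lemmas (which in turn rely on the high connectivity of $\BAA_n$ established in \autoref{BAAnmConn} together with Quillen's map-of-posets machinery from \autoref{Sec6}); the present proof is essentially a bookkeeping step.
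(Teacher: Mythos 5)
Your proposal is correct and follows essentially the same route as the paper's own proof: exactness at the last two spots is read off from the vanishing of $C_{n-2}(\BAA_n,\BAA_n')$ and the definition of $q$ as the quotient onto $H_{n-1}(\BAA_n,\BAA_n')$, and exactness at $C_n(\BAA_n,\BAA_n')$ is reduced to the vanishing of $H_n(\BAA_n,\BAA_n')\cong H_{n-1}(\BAA_n')$ via \autoref{isos_relative_homology} and \autoref{homology_BAA'}. No gaps.
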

\begin{proof}
Firstly, the map $q$ is surjective by the definition of homology, so we have exactness at $H_{n-1}(\BAA_n,\BAA_n')$. Secondly, we noted above that $C_{n-2}(\BAA_n, \BAA_n')$ is trivial. Hence, 
\begin{equation*}
H_{n-1}(\BAA_n,\BAA_n')\cong C_{n-1}(\BAA_n, \BAA_n')/ \im(\partial_{n}),
\end{equation*}
which shows that the sequence is exact at $C_{n-1}(\BAA_n, \BAA_n')$.
Lastly, exactness at $C_{n}(\BAA_n, \BAA_n')$ is equivalent to the vanishing of the homology group $H_{n}(\BAA_n, \BAA_n')$. By \autoref{isos_relative_homology}, this group is isomorphic to $H_{n-1}(\BAA_n')$, which vanishes by \autoref{homology_BAA'}.

The isomorphism $H_{n-1}(\BAA_n,\BAA_n') \cong \St_n(\Q)$ is also an immediate consequence of \autoref{isos_relative_homology} and \autoref{homology_BAA'}.
\end{proof}

This proposition implies \autoref{TheoremA}, our partial resolution of $\St_n(\Q)$.

\begin{proof}[Proof of \autoref{TheoremA}]
If $n=2$, the group $\cM_2$ is trivial and the result was shown by Church--Putman \cite[Theorem B]{CP}. If $n\geq 3$, for $i=0,1$ or $2$, the groups $\cM_i$ in the statement of \autoref{TheoremA} are isomorphic to the relative chain groups $C_{n-1+i}(\BAA_n,\BAA_n')$. The claim now follows from \autoref{partial_resolution_Steinberg}.
\end{proof}

To deduce \autoref{TheoremB} from this, we need the following well-known lemma. The proof is adapted from  Church--Putman \cite[Lemma 3.2]{CP} using Putman--Studenmund \cite[Lemma 2.2]{PutStu}. Also see Putman--Studenmund \cite[Lemma 2.3]{PutStu}.
 
 \begin{lemma}\label{projectiveGeneral}
 Let $G$ be a group and let $Y \hookrightarrow X$ be an inclusion of $G$-simplicial complexes. Assume that the setwise stabiliser subgroup of every $k$-simplex of $X$ that is not contained in the image of $Y$ is finite. Let $R$ be a ring such that the orders of all stabiliser groups of such simplices are invertible in $R$.  Then $C_k(X,Y)$ is a projective $R[G]$-module.
 
 \end{lemma}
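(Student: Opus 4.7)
The plan is to decompose $C_k(X,Y)$ into a direct sum of induced modules, one per $G$-orbit of $k$-simplices not lying in $Y$, and then check that each summand is projective over $R[G]$ using the invertibility hypothesis.

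First, I would choose an orientation on each $k$-simplex of $X$ (arbitrarily) and identify $C_k(X,Y)$ with the free $R$-module on the set $S$ of oriented $k$-simplices of $X$ that are not in $Y$, modulo the relation that reversing orientation negates the generator. The group $G$ permutes the underlying simplices, but an element $g \in G$ may or may not preserve the chosen orientation of $\sigma$; this defines, for each simplex $\sigma \in S$, a character $\chi_\sigma \colon G_\sigma \to \{\pm 1\}$ on its setwise stabiliser, where $G_\sigma$ is finite by hypothesis. Pick a set of orbit representatives $\{\sigma_\alpha\}_{\alpha \in A}$. A routine check shows
\[
C_k(X,Y) \;\cong\; \bigoplus_{\alpha \in A} \Ind_{G_{\sigma_\alpha}}^{G} R_{\chi_{\sigma_\alpha}},
\]
where $R_{\chi_{\sigma_\alpha}}$ denotes the rank-one $R[G_{\sigma_\alpha}]$-module on which $G_{\sigma_\alpha}$ acts through the sign character $\chi_{\sigma_\alpha}$.

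Next, I would argue that each summand is projective. Because $|G_{\sigma_\alpha}|$ is a unit in $R$ by hypothesis, Maschke's theorem applies to $R[G_{\sigma_\alpha}]$: every $R[G_{\sigma_\alpha}]$-module that is projective as an $R$-module is also projective as an $R[G_{\sigma_\alpha}]$-module. Concretely, the idempotent $e = \frac{1}{|G_{\sigma_\alpha}|}\sum_{g \in G_{\sigma_\alpha}} \chi_{\sigma_\alpha}(g)\, g^{-1}$ in $R[G_{\sigma_\alpha}]$ exhibits $R_{\chi_{\sigma_\alpha}}$ as a direct summand of the free module $R[G_{\sigma_\alpha}]$, so $R_{\chi_{\sigma_\alpha}}$ is projective over $R[G_{\sigma_\alpha}]$. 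Induction $\Ind_{G_{\sigma_\alpha}}^{G}$ is a left adjoint and hence preserves projectives; alternatively, $\Ind_{G_{\sigma_\alpha}}^{G} R_{\chi_{\sigma_\alpha}}$ is a direct summand of $\Ind_{G_{\sigma_\alpha}}^{G} R[G_{\sigma_\alpha}] \cong R[G]$, which is free. Either way, each summand is a projective $R[G]$-module, and a direct sum of projectives is projective, so $C_k(X,Y)$ is projective.

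The main obstacle, and the only place where real care is needed, is the bookkeeping of the sign characters $\chi_{\sigma_\alpha}$. Once one verifies that the idempotent $e$ above is well-defined (using that $|G_{\sigma_\alpha}|$ is invertible in $R$) and splits the inclusion $R_{\chi_{\sigma_\alpha}} \hookrightarrow R[G_{\sigma_\alpha}]$ given by $r \mapsto r e$, the rest of the argument is formal. The direct sum decomposition of $C_k(X,Y)$ reduces to the observation that oriented simplices not in $Y$ form a disjoint union of $G$-orbits, and each orbit contributes the induced module displayed above.
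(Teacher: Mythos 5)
Your proposal is correct and follows essentially the same route as the paper: both decompose $C_k(X,Y)$ into a direct sum over $G$-orbits of modules of the form $\Ind_{G_\sigma}^G R_\sigma$, where $R_\sigma$ is the rank-one orientation (sign) representation of the finite stabiliser, and then observe that each such induced module is a summand of the free module $R[G]$. The only difference is that where the paper cites Putman--Studenmund for the projectivity of $R_\sigma$ over $R[G_\sigma]$, you supply the explicit Maschke-type idempotent $e = \tfrac{1}{|G_\sigma|}\sum_{g}\chi_\sigma(g)g^{-1}$, which is a valid (and self-contained) substitute given that $|G_\sigma|$ is invertible in $R$ by hypothesis.
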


\begin{proof}
Let $\sigma$ be a $k$-simplex of $X$ not contained in $Y$ and pick an orientation on $\sigma$. Let $G_\sigma \subset G$ be the stabiliser of $\sigma$. By abuse of notation, also view $\sigma$ as an element of $C_k(X,Y;R)$. Let $M_\sigma \subset C_k(X,Y;R)$ be the $R[G]$-submodule generated by $\sigma$. Let $R_\sigma$ be the $R[G_\sigma]$-module whose underlying $R$-module is just $R$ but an element of $G$ acts by $\pm 1$ depending on whether it reverses the orientation on $\sigma$ or not. As in  Church--Putman \cite[Lemma 3.2]{CP}, we have that \[M_\sigma \cong \Ind_{G_\sigma}^G R_\sigma.\] Putman--Studenmund \cite[Lemma 2.2]{PutStu} states that $R_\sigma$ is a projective $R[G_\sigma]$-module and hence a summand of a free $R[G_\sigma]$-module. Since \[\Ind_{G_\sigma}^G  R[G_\sigma] \cong R[G], \] it follows that $M_\sigma$ is a summand of a free $R[G]$-module and hence $M_\sigma$ is projective. Since $C_k(X,Y;R)$ is a direct sum of modules of the form $M_\sigma$, the module $C_k(X,Y;R)$ is also projective.
\end{proof}

\begin{lemma}
\label{projectiveness}
Let $R$ be a ring and let $\Gamma$ be a subgroup of $\SL_n(\Z)$. Assume that for any $g \in \Gamma$ of finite order $j<\infty$, the element $j$ is a unit in $R$. Then $C_{k}(\BAA_n, \BAA_n'; R)$ is projective as an $R[\Gamma]$-module. 
\end{lemma}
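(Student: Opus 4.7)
The plan is to deduce this from \autoref{projectiveGeneral} applied to $G = \Gamma$, $X = \BAA_n$ and $Y = \BAA_n'$, so what must be checked is that the $\Gamma$-setwise stabiliser of every $k$-simplex of $\BAA_n$ \emph{not} contained in $\BAA_n'$ is finite. Given this, the hypothesis that finite orders of elements of $\Gamma$ are invertible in $R$ takes care of the invertibility condition in \autoref{projectiveGeneral}, and projectivity of $C_k(\BAA_n,\BAA_n';R)$ over $R[\Gamma]$ follows immediately.

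To check finiteness of stabilisers, I would recall from the discussion following the definition of $\BAA_n'$ that a simplex of $\BAA_n$ fails to lie in $\BAA_n'$ precisely when its vertices span all of $\Z^n$, and that such simplices fall into one of the following types: standard $(n-1)$-simplices, $2$-additive or $3$-additive $n$-simplices, and double-triple or double-double $(n+1)$-simplices. In each of these cases, the simplex $\sigma = \{v_0,\ldots,v_k\}$ contains (after possibly removing one or two additive vertices, cf.\ \autoref{def:BA-simplices} and \autoref{def:BAA-simplices}) a collection of $\pm$-vectors $\{v_{i_0},\ldots,v_{i_{n-1}}\}$ coming from a basis of $\Z^n$, and the remaining additive vertices are determined by explicit $\Z$-linear combinations of these.

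The key observation is then the following: any $g \in \Gamma \subseteq \SL_n(\Z)$ fixing $\sigma$ setwise must permute the $\pm$-vectors in $\sigma$, and in particular must permute the sub-$\pm$-basis $\{v_{i_0},\ldots,v_{i_{n-1}}\}$ up to signs (the additive vertices are then forced by their linear-dependence relations on this basis). Thus the stabiliser of $\sigma$ embeds into the finite group of signed permutations of an $n$-element basis, i.e.\ the hyperoctahedral group $(\Z/2)^n \rtimes \Sym(n)$, and is therefore finite. (In fact, the stabiliser lies in the intersection of this group with $\SL_n(\Z)$, which corresponds to signed permutations of determinant $+1$, but only finiteness is needed here.)

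The one point I would want to verify carefully is that this argument uniformly covers the 3-additive, double-double and double-triple cases, since there the choice of ``underlying basis'' among the vertices of $\sigma$ is not unique. The resolution is that although the choice is not unique, the \emph{set of standard sub-simplices of top dimension} inside $\sigma$ is a $\Gamma_\sigma$-invariant finite set of bases of $\Z^n$, and $\Gamma_\sigma$ acts on the union of their vertex sets by signed permutations; this union is still finite, so the stabiliser is again finite. Once this is in place, \autoref{projectiveGeneral} applies verbatim, yielding the conclusion. I do not expect any serious obstacle; the statement is essentially the $\BAA$-analogue of Church--Putman's projectivity argument \cite[Lemma 3.2]{CP}, with only the enumeration of simplex types in $\BAA_n \setminus \BAA_n'$ being new.
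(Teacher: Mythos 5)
Your proposal is correct and follows essentially the same route as the paper: apply \autoref{projectiveGeneral} with $G=\Gamma$, $X=\BAA_n$, $Y=\BAA_n'$, and observe that any simplex not in $\BAA_n'$ spans $\Z^n$, so its setwise stabiliser acts by signed permutations of the vertex set and is determined by its effect on a basis contained among the vertices, hence embeds into a finite hyperoctahedral group (your worry about non-uniqueness of the basis is unnecessary, since one fixed choice already determines the element). The only elided step is that \autoref{projectiveGeneral} requires the \emph{order of the stabiliser group} to be invertible in $R$, whereas the hypothesis only gives invertibility of \emph{element} orders; the paper bridges this with Cauchy's theorem, which shows every prime dividing $|\Gamma_\sigma|$ is the order of some element and is therefore a unit in $R$.
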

\begin{proof}
Note that the groups $C_{k}(\BAA_n, \BAA_n'; R)$ vanish unless $k\in \ls n-1,n, n+1 \rs$, so we shall restrict attention to those cases.

In order to apply \autoref{projectiveGeneral}, we will first show that for every $k\in \ls n-1,n, n+1 \rs$ and for every $k$-simplex $\sigma$ of $\BAA_n$ that is not contained in $\BAA_n'$, the setwise stabiliser $\SL_n(\Z)_\sigma$ of $\sigma$ under the action of $\SL_n(\Z)$ is finite. 
Let $\sigma = \ls v_0, \ldots, v_{k} \rs$ be such a simplex. Then by definition, we have $\langle \vec v_0, \ldots, \vec v_k\rangle =\Z^n$ and we can assume that $\ls \vec v_0, \ldots, \vec v_{n-1} \rs$ is a basis.
An element $\phi\in \SL_n(\Z)$ that stabilises $\sigma$ induces a signed permutation of the set 
$\ls \vec v_0, \ldots, \vec v_{k}\rs$. Furthermore, any such $\phi$ is uniquely determined by the images of $\vec v_0, \ldots, \vec v_{n-1}$ because these form a basis of $\Z^n$. It follows that $\SL_n(\Z)_\sigma$ is a subgroup of the group of signed permutations of a set with $k+1$ elements. This is the Coxeter group of type $\mathtt{B}_{k+1}$, a finite group of order $2^{k+1}\cdot (k+1)!$.

This implies that the stabiliser $\Gamma_\sigma$ is finite and by assumption, the orders of all its elements are invertible in $R$. It follows from Cauchy's Theorem that the order of $\Gamma_\sigma$ is invertible in $R$ as well, so we can apply \autoref{projectiveGeneral}.
\end{proof}

\begin{remark}
In particular, this implies that $C_{k}(\BAA_n, \BAA_n'; \Q)$ is projective as a $\Q[\SL_n(\Z)]$-module and $C_{k}(\BAA_n, \BAA_n'; \Z)$ is projective as a $\Z[\Gamma]$-module if $\Gamma$ is torsion-free.
\end{remark}

We use the concrete description of $C_{n+1}(\BAA_n, \BAA_n')$ in terms of double-double and double-triple simplices to show the following.

\begin{lemma}
\label{vanishing_coinvariants}
The group  $C_{n+1}(\BAA_n, \BAA_n';\Q)\otimes_{\SL_n(\Z)} \Q$ vanishes for all $n \geq 3$.
\end{lemma}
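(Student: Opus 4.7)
The plan is a bad-orientation (symmetry) argument. By the description of $\cM_2 \cong C_{n+1}(\BAA_n,\BAA_n';\Q)$ in \autoref{TheoremA}, together with the paragraph preceding \autoref{isos_relative_homology}, this group is generated by symbols $[[\vec v_0, \ldots, \vec v_{n+1}]]$ associated to double-triple and double-double simplices of $\BAA_n$ whose vertices span $\Z^n$. Combining the permutation relation $[[\vec v_0, \ldots]] = \sgn(\tau)[[\vec v_{\tau(0)}, \ldots]]$ with the $\SL_n(\Z)$-action $\phi \cdot [[\vec v_0, \ldots]] = [[\phi(\vec v_0), \ldots]]$, it suffices to produce, for every such simplex $\sigma = \{v_0, \ldots, v_{n+1}\}$, an element $\phi \in \SL_n(\Z)$ that stabilises $\sigma$ as a set of lines and induces an odd permutation on its $n+2$ vertices. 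Indeed, in the $\Q$-coinvariants one then has $[[\sigma]] = \phi \cdot [[\sigma]] = -[[\sigma]]$, forcing $[[\sigma]] = 0$.

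For the double-double case (which forces $n \geq 4$, consistent with the remarks after \autoref{TheoremA}), I may, after using the sign-flip and permutation relations of $\cM_2$, assume $\vec v_0 = \vec v_2 + \vec v_3$ and $\vec v_1 = \vec v_4 + \vec v_5$ with $\vec v_2, \ldots, \vec v_{n+1}$ a basis of $\Z^n$. I take $\phi \in \SL_n(\Z)$ to be the permutation matrix swapping $\vec v_2 \leftrightarrow \vec v_4$ and $\vec v_3 \leftrightarrow \vec v_5$ while fixing every other $\vec v_i$: this is a product of two disjoint transpositions on the basis, so $\det\phi = 1$, and it visibly sends $\vec v_0$ to $\vec v_1$ and vice versa. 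The induced vertex permutation $(v_0\,v_1)(v_2\,v_4)(v_3\,v_5)$ is a product of three transpositions, hence odd.

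The more delicate case is the double-triple simplex, for which I may analogously reduce to $\vec v_0 = \vec v_2 + \vec v_3$ and $\vec v_1 = \vec v_2 + \vec v_4$ with $\vec v_2, \ldots, \vec v_{n+1}$ a basis. The obvious symmetry $\vec v_3 \leftrightarrow \vec v_4$ only realises the even permutation $(v_0\,v_1)(v_3\,v_4)$, so a nontrivial shear is required. I will take $\phi \in \SL_n(\Z)$ defined on the basis by $\vec v_2 \mapsto -\vec v_2$, $\vec v_3 \mapsto \vec v_2 + \vec v_3$, $\vec v_4 \mapsto -\vec v_4$, and $\vec v_i \mapsto \vec v_i$ for $i \geq 5$; the only nontrivial block is
$\left(\begin{smallmatrix}-1 & 1 & 0\\0 & 1 & 0\\0 & 0 & -1\end{smallmatrix}\right)$,
which has determinant $+1$. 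A direct computation yields $\phi(\vec v_0) = -\vec v_2 + (\vec v_2 + \vec v_3) = \vec v_3$ and $\phi(\vec v_3) = \vec v_2 + \vec v_3 = \vec v_0$, while $\phi$ preserves each of $v_1, v_2, v_4, \ldots, v_{n+1}$ up to sign. The induced vertex permutation is thus the single transposition $(v_0\,v_3)$, which is odd; this handles all $n \geq 3$, including $n = 3$ where only double-triple simplices contribute.

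I expect the double-triple step to be the main obstacle: the signed-permutation part of the $\SL_n(\Z)$-stabiliser turns out to consist of even permutations only, so one might be tempted to conclude wrongly that the orbit contributes nontrivially. The key insight is to leave the signed-permutation subgroup and use the shear $\vec v_3 \mapsto \vec v_2 + \vec v_3$—which is permissible precisely because $\vec v_2$ belongs to the additive core of $v_0$—to interchange the two lines $v_0$ and $v_3$ and thereby realise the required odd transposition.
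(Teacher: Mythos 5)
Your proposal is correct and follows essentially the same route as the paper: for double-double simplices the even basis permutation swapping $\vec v_2 \leftrightarrow \vec v_4$, $\vec v_3 \leftrightarrow \vec v_5$ induces the odd vertex permutation $(v_0\,v_1)(v_2\,v_4)(v_3\,v_5)$, and for double-triple simplices the paper uses the very same determinant-one automorphism $\vec v_2 \mapsto -\vec v_2$, $\vec v_3 \mapsto \vec v_2 + \vec v_3$, $\vec v_4 \mapsto -\vec v_4$ to realise the transposition $(v_0\,v_3)$. The only cosmetic difference is that you phrase the argument via the generators $[[\vec v_0,\ldots,\vec v_{n+1}]]$ of $\cM_2$ while the paper works directly with oriented simplices in $C_{n+1}(\BAA_n,\BAA_n';\Q)$, which are identified anyway.
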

\begin{proof}
The group $C_{n+1}(\BAA_n, \BAA_n';\Q) $ is generated by all oriented $(n+1)$-dimensional double-triple and double-double simplices of $\BAA_n$. (As noted above, the double-double simplices only occur if $n\geq 4$.) Let $\sigma = \{v_0, \ldots, v_{n+1} \}$ be such an $(n+1)$-simplex, where $\vec v_2, \ldots , \vec v_{n+1}$ is a basis of $\Z^n$. We need to show that for any $q\in \Q$, the element $\sigma\otimes q$ is trivial in $C_{n+1}(\BAA_n, \BAA_n';\Q)\otimes_{\SL_n(\Z)} \Q$.
There are two cases to consider.

First suppose that $\sigma$ is a double-double simplex. Then for suitable choices of signs, we have  $\vec v_0 = \vec v_2 + \vec v_3 $ and $\vec v_1 = \vec v_4 + \vec v_5$. Let $ \phi \colon \Z^n \to \Z^n$ be the automorphism defined by
\begin{equation*}
	\phi(\vec v_2) = \vec v_4, \, \phi(\vec v_4) = \vec v_2,  \, \phi(\vec v_3) = \vec v_5,\, \phi(\vec v_5) = \vec v_3 , \, \phi(\vec v_i) = \vec v_i \text{ for } i>5.
\end{equation*}
The automorphism $\phi$ is contained in $\SL_n(\Z)$ because it acts as an even permutation on the basis $\vec v_2, \ldots , \vec v_{n+1}$.
On the other hand, it acts as an odd permutation on the vertices of $\sigma$, as
\begin{equation*}
\phi((v_0, \ldots, v_{n+1})) = (v_1, v_0, v_4, v_5, v_2, v_3, v_6, \ldots, v_{n+1}).
\end{equation*}
Hence, in $C_{n+1}(\BAA_n, \BAA_n';\Q)\otimes_{\SL_n(\Z)} \Q$, we have $\sigma\otimes q = \phi(\sigma)\otimes q = -\sigma\otimes q$ for any $q\in \Q$. This implies that $\sigma\otimes q$ is trivial.

Next suppose that $\sigma$ is a double-triple simplex. In this case, we can choose signs such that $\vec v_0 = \vec v_2 + \vec v_3 $ and $\vec v_1 = \vec v_2 + \vec v_4$. We define $\psi\colon \Z^n \to \Z^n$ as the automorphism given by
\begin{equation*}
	\psi(\vec v_2) = - \vec v_2, \, \psi(\vec v_3) = \vec v_2+ \vec v_3,  \, \psi(\vec v_4) = - \vec v_4,\, \psi(\vec v_i) = \vec v_i \text{ for } i>4.
\end{equation*}
It is easy to see that $\psi$ has determinant $1$ and hence is contained in $\SL_n(\Z)$.
Noting that $\psi(\vec v_0) = \psi(\vec v_2 + \vec v_3) = \vec v_3$ and $\psi(\vec v_1) = \psi(\vec v_2 + \vec v_4) = -\vec v_1$, one sees that $\psi$ acts as an odd permutation on the vertices of $\sigma$, namely
\begin{equation*}
\psi((v_0, \ldots, v_{n+1})) = (v_3, v_1, v_2, v_0, v_4, v_5, \ldots, v_{n+1}).
\end{equation*}
As before, it follows that $\sigma\otimes q$ is trivial.
\end{proof}

That the codimension-$2$ rational cohomology of $\SL_n(\Z)$ vanishes for $n \geq 3$ is an easy consequence of the previous results:

\begin{proof}[Proof of \autoref{TheoremB}]
Because of Borel--Serre duality (see \autoref{Borel-Serre}), it is sufficient to show that $H_{2}(\SL_n(\Z); \St_n(\Q) \otimes \Q)$ is trivial. \autoref{partial_resolution_Steinberg} and \autoref{projectiveness} give us a partial projective resolution of $\St_n(\Q) \otimes \Q$  as follows:
\begin{equation*}
C_{n+1}(\BAA_n, \BAA_n';\Q) \longrightarrow C_{n}(\BAA_n, \BAA_n';\Q) \longrightarrow C_{n-1}(\BAA_n, \BAA_n';\Q) \longrightarrow \St_n(\Q) \otimes \Q \longrightarrow 0.
\end{equation*}
As this partial resolution can be extended to a projective resolution, it suffices to show that the second homology of the chain complex
\begin{multline*}
\cdots \longrightarrow C_{n+1}(\BAA_n, \BAA_n';\Q) \otimes_{\SL_n(\Z)} \Q \longrightarrow C_{n}(\BAA_n, \BAA_n';\Q) \otimes_{\SL_n(\Z)} \Q \longrightarrow \\
\longrightarrow C_{n-1}(\BAA_n, \BAA_n';\Q) \otimes_{\SL_n(\Z)} \Q \longrightarrow 0
\end{multline*}
vanishes. This is an immediate consequence of \autoref{vanishing_coinvariants}.
\end{proof}

\begin{remark}
Church--Putman \cite[Theorem A]{CP} also proved a vanishing result for the codimension-$1$ cohomology of $\SL_n(\Z)$ with coefficients in rational representations of $\GL_n(\Q)$. The analogous result is true for the codimension-$2$ cohomology.
\end{remark}

\section{Proof of \autoref{TheoremC}}

\label{Sec8}
We now shift attention to the codimension-$1$ cohomology congruence subgroups and prove \autoref{TheoremC}. 

\subsection{Relevant simplicial complexes and connectivity results}

We will deduce our results about congruence subgroups by studying connectivity properties of $\BAA_n^\pm(\F_p)$.  To prove the following result, it is not difficult to adapt the proofs of  \cite[Lemmas 2.35 and 2.43]{MPP}. 

\begin{proposition} \label{quotientBAA}
 For $p$ an odd prime, $\BAA_n /\Gamma_n(p) \cong \BAA_n^\pm(\F_p)$ and $\BAA_n' /\Gamma_n(p) \cong \BAA_n^\pm(\F_p)'$. 
\end{proposition}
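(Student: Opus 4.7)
The plan is to adapt the argument of \cite[Lemma 2.43]{MPP} almost verbatim, upgrading it to handle the two new simplex types ($3$-additive and double-triple, and also double-double when $n \geq 4$). Throughout, the relevant map is the one induced by mod-$p$ reduction $\Z \twoheadrightarrow \F_p$: a primitive vector $\vec v \in \Z^n$ has $\gcd$ of entries coprime to $p$, so its reduction $\bar{\vec v} \in \F_p^n$ is nonzero, and this descends to a map $\phi \colon V_n^\pm(\Z) \to V_n^\pm(\F_p)$ on $\pm$-vectors. The two claims will follow once we check: (a) $\phi$ sends every simplex of $\BAA_n$ to a simplex of $\BAA_n^\pm(\F_p)$ of the same type, and preserves the subcomplexes $\BAA_n'$ and $\BAA_n^\pm(\F_p)'$; (b) $\phi$ is surjective on simplices of each type; (c) two simplices of $\BAA_n$ have the same image under $\phi$ if and only if they lie in the same $\Gamma_n(p)$-orbit.

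Part (a) is immediate from the definitions in \autoref{def:BA-simplices} and \autoref{def:BAA-simplices}: the additive relations $\vec v_0 = \pm \vec v_1 \pm \vec v_2$, $\vec v_0 = \pm \vec v_1 \pm \vec v_2 \pm \vec v_3$, etc., are preserved under reduction, and a partial basis of $\Z^n$ reduces to a partial basis of $\F_p^n$ (a summand of $\Z^n$ of rank $k$ spanned by vectors whose $(k{\times}k)$ minors have gcd $1$ reduces to a rank-$k$ summand of $\F_p^n$ with the determinant condition of \autoref{def:BA-simplices}\autoref{2.3i} automatic since the reduction has determinant a unit in $\F_p^\times$). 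The subcomplex condition — that $\langle \vec v_0,\ldots,\vec v_k\rangle$ be a proper subgroup of $\Z^n$ — is likewise clearly preserved, because a proper summand of $\Z^n$ reduces to a proper subspace of $\F_p^n$, and vice versa (any submodule of $\Z^n$ whose reduction is proper must be proper).

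For part (b), given a simplex $\bar\sigma = \{\bar v_0, \ldots, \bar v_k\}$ of $\BAA_n^\pm(\F_p)$, we lift it by first choosing a lift of its underlying partial basis via the surjectivity of $\SL_n(\Z) \twoheadrightarrow \SL_n(\F_p)$ (or for standard simplices of dimension $n-1$, using that $\GL_n(\Z) \to \GL_n(\F_p)$ surjects onto the $\pm 1$-determinant subgroup). The auxiliary vertices (additive summands, barycentre-like vertices in the double-double or double-triple case) are then lifted by just using the same $\Z$-linear combinations with the same signs of the chosen lifts of the basis vectors. This preserves the simplex type by part (a) and gives the required lift.

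Part (c) — transitivity of $\Gamma_n(p)$ on lifts of a fixed simplex — is the substantive step and the only place the index calculation enters. Given two simplices $\sigma = \{v_0,\ldots,v_k\}$ and $\sigma' = \{v_0',\ldots,v_k'\}$ of the same type with $\phi(\sigma) = \phi(\sigma')$ (as ordered tuples, after choosing matching orderings), we must produce $g \in \Gamma_n(p)$ with $g \cdot \sigma = \sigma'$. Using the description in \autoref{obs_list_facet_types}, every simplex of $\BAA_n$ is determined up to rearrangement by its underlying partial basis and by the signed linear combinations expressing the non-basis vertices. The combinations are dictated by $\phi(\sigma)$, so it suffices to move the underlying partial basis $\{\vec v_{i_0}, \ldots, \vec v_{i_\ell}\}$ of $\sigma$ to that of $\sigma'$ using a matrix in $\Gamma_n(p)$, which is exactly the content of \cite[Lemma 2.43]{MPP} and follows from surjectivity of $\SL_n(\Z) \twoheadrightarrow \SL_n(\F_p)$ applied after extending both partial bases to full bases with matching reductions. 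The main subtlety is handling the sign ambiguity of $\pm$-vectors: replacing a chosen representative $\vec v_i$ by $-\vec v_i$ in one simplex may require a sign flip in another vertex, but since $\phi(\sigma) = \phi(\sigma')$ already encodes these signs up to the equivalence relation in the definition of $\BAA$, and since every diagonal sign change with an even number of $-1$'s lies in $\SL_n(\Z)$, one can always absorb the signs into the $\Gamma_n(p)$-element; this is the step where the hypothesis that $p$ is odd is used (so that $-1 \neq 1$ in $\F_p$ distinguishes the signs as needed). The main obstacle, and the only place where any real work is needed beyond \cite[Lemma 2.43]{MPP}, is bookkeeping: confirming that for the four simplex types introduced in \autoref{def:BAA-simplices}, the additive relations between the non-basis vertices and the basis vertices of $\sigma$ and $\sigma'$ are the same (not merely equivalent mod $p$), so that a $\Gamma_n(p)$-element moving the partial basis of $\sigma$ to that of $\sigma'$ automatically moves $\sigma$ to $\sigma'$. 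This uses that the additive coefficients are all in $\{0, \pm 1\}$, which are distinct mod $p$ for $p$ odd.
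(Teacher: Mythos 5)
Your outline follows the same route as the paper: reduce mod $p$, use surjectivity of $\SL_n(\Z)\to\SL_n(\F_p)$ (and of $\GL_n(\Z)$ onto the determinant-$\pm1$ subgroup for top-dimensional standard simplices) to lift simplices and to prove transitivity of $\Gamma_n(p)$ on lifts, and use that all augmentation coefficients lie in $\{0,\pm1\}$, which are distinct mod an odd $p$, so the mod-$p$ image of a simplex records its linear relations exactly. Your parts (a), (b), (c) are correct and match the corresponding steps of the paper's argument, including the places where oddness of $p$ enters.

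What is missing is the point the paper spends most of its effort on: the quotient of a simplicial complex by a simplicial group action need not be a simplicial complex, and even when the orbit sets of simplices do form one, its geometric realisation need not agree with the topological quotient $|\BAA_n|/\Gamma_n(p)$. Your (a)--(c) give a bijection between $\Gamma_n(p)$-orbits of simplices and simplices of $\BAA_n^\pm(\F_p)$, but they do not by themselves rule out that some $g\in\Gamma_n(p)$ carries a vertex $v$ of a simplex to a different vertex $gv$ of the \emph{same} simplex (which would collapse that simplex in the topological quotient), nor that some $g$ stabilises a simplex setwise while permuting its vertices nontrivially (which would obstruct the identification $C_*(\BAA_n,\BAA_n')_{\Gamma_n(p)}\cong C_*(\BAA_n^\pm(\F_p),\BAA_n^\pm(\F_p)')$ that this proposition is used for in \autoref{RelHomologyGamma}). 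The paper handles this regularity issue via \autoref{ClaimRegularAction}. The fix is short and uses only facts already implicit in your write-up: distinct vertices of any simplex of $\BAA_n$ have distinct mod-$p$ reductions (again because the only relations among them have coefficients in $\{0,\pm1\}$ and $p$ is odd), while every $g\in\Gamma_n(p)$ preserves mod-$p$ reductions; hence $g$ can neither identify two vertices of a simplex nor permute them nontrivially. You should state and use this explicitly; without it the isomorphism in the proposition, read as a statement about quotient spaces or quotient chain complexes, is not yet established.
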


An argument identical to \cite[Lemma 3.23]{MPP} gives the following corollary.

\begin{proposition} \label{RelHomologyGamma}
Let $p$ be an odd prime. There is a natural isomorphism  \[ H_1(\Gamma_n(p);\St_n(\Q) ) \cong H_{n}(\BAA_n^\pm(\F_p),\BAA_n^\pm(\F_p)').\]
\end{proposition}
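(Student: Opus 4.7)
The plan is to mimic the strategy of \cite[Lemma 3.23]{MPP}, using the partial resolution of $\St_n(\Q)$ from \autoref{TheoremA} together with the quotient identification of \autoref{quotientBAA} to directly compute the relevant group homology in terms of chains on the quotient complex. Concretely, by \autoref{partial_resolution_Steinberg} we have an exact sequence
\begin{equation*}
C_{n+1}(\BAA_n,\BAA_n') \longrightarrow C_{n}(\BAA_n,\BAA_n') \longrightarrow C_{n-1}(\BAA_n,\BAA_n') \longrightarrow \St_n(\Q) \longrightarrow 0
\end{equation*}
of $\Z[\SL_n(\Z)]$-modules, hence in particular of $\Z[\Gamma_n(p)]$-modules. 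Since $p$ is odd, $\Gamma_n(p)$ is torsion-free (by Minkowski's theorem for principal congruence subgroups), so \autoref{projectiveness} implies that each $C_k(\BAA_n,\BAA_n';\Z)$ is a projective $\Z[\Gamma_n(p)]$-module. After extending this partial resolution to a full projective resolution of $\St_n(\Q)$, the group $H_*(\Gamma_n(p);\St_n(\Q))$ is computed as the homology of the complex obtained by applying $-\otimes_{\Z[\Gamma_n(p)]} \Z$.

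Next I would identify the coinvariant chain complex with the relative chain complex of the $\F_p$-level complex. Because $\Gamma_n(p)$ is torsion-free, the stabiliser in $\Gamma_n(p)$ of any simplex of $\BAA_n$ not contained in $\BAA_n'$ is trivial; combined with \autoref{ClaimRegularAction} (which shows that $\Gamma_n(p)$ acts on $\BAA_n$ without inversions on any simplex), this yields a decomposition of $C_k(\BAA_n,\BAA_n';\Z)$ as a direct sum of free rank-one $\Z[\Gamma_n(p)]$-modules, one for each orbit of oriented $k$-simplices. Tensoring with $\Z$ over $\Z[\Gamma_n(p)]$ therefore gives the natural isomorphism
\begin{equation*}
C_k(\BAA_n,\BAA_n';\Z) \otimes_{\Z[\Gamma_n(p)]} \Z \;\cong\; C_k\bigl(\BAA_n/\Gamma_n(p),\,\BAA_n'/\Gamma_n(p);\Z\bigr) \;\cong\; C_k\bigl(\BAA_n^\pm(\F_p),\,\BAA_n^\pm(\F_p)'\bigr),
\end{equation*}
where the second isomorphism is \autoref{quotientBAA}. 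These isomorphisms are compatible with the simplicial boundary maps.

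Putting these ingredients together, $H_*(\Gamma_n(p);\St_n(\Q))$ is computed, in the relevant range of degrees, as the homology of the complex
\begin{equation*}
\cdots \longrightarrow C_{n+1}\bigl(\BAA_n^\pm(\F_p),\BAA_n^\pm(\F_p)'\bigr) \longrightarrow C_n\bigl(\BAA_n^\pm(\F_p),\BAA_n^\pm(\F_p)'\bigr) \longrightarrow C_{n-1}\bigl(\BAA_n^\pm(\F_p),\BAA_n^\pm(\F_p)'\bigr).
\end{equation*}
Since $\St_n(\Q)$ sits in degree $n-1$ of the resolution, the degree-$0$ homology of this complex computes $H_0(\Gamma_n(p);\St_n(\Q)) \cong H_{n-1}(\BAA_n^\pm(\F_p),\BAA_n^\pm(\F_p)')$, and the degree-$1$ homology gives the desired isomorphism
\begin{equation*}
H_1(\Gamma_n(p);\St_n(\Q)) \;\cong\; H_n\bigl(\BAA_n^\pm(\F_p),\BAA_n^\pm(\F_p)'\bigr).
\end{equation*}
Naturality follows from the functoriality of the partial resolution in the $\SL_n(\Z)$-module structure.

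The main obstacle to make fully rigorous is the passage from the coinvariant chain complex $C_k(\BAA_n,\BAA_n';\Z)\otimes_{\Z[\Gamma_n(p)]}\Z$ to $C_k(\BAA_n^\pm(\F_p),\BAA_n^\pm(\F_p)')$: one needs to verify that the action of $\Gamma_n(p)$ is free enough on orientations of simplices of $\BAA_n$ outside $\BAA_n'$ that no signs are introduced, and that the boundary maps match after reduction modulo $p$. This is where \autoref{ClaimRegularAction} (which guarantees trivial simplex stabilisers, hence absence of inversions) and the torsion-freeness of $\Gamma_n(p)$ for odd $p$ are essential.
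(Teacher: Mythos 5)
Your proposal is correct and follows essentially the same route as the paper: both use the partial projective resolution from \autoref{partial_resolution_Steinberg} together with \autoref{projectiveness} (via torsion-freeness of $\Gamma_n(p)$), pass to $\Gamma_n(p)$-coinvariants, and identify the resulting complex with the relative chains of $\BAA_n^\pm(\F_p)$ via \autoref{quotientBAA}. The only difference is that you spell out the coinvariants-to-quotient-chains identification (via trivial stabilisers and absence of inversions) in more detail than the paper does.
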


\begin{proof}
\autoref{partial_resolution_Steinberg} states that 
\[C_{n+1}(\BAA_n,\BAA_n') \xrightarrow{\partial_{n+1}} C_{n}(\BAA_n,\BAA_n')\xrightarrow{\partial_{n}}   C_{n-1}(\BAA_n,\BAA_n') \m \St_n(\Q)  \to 0 \] is exact. 
Note that for $p$ odd $\Gamma_n(p)$ is torsion-free. Thus \autoref{projectiveness} implies that the groups $C_k(\BAA_n,\BAA_n')$ are projective $\Z[\Gamma_n(p)]$-modules and hence that this is a partial projective resolution of $\Z[\Gamma_n(p)]$-modules. Therefore, $H_1(\Gamma_n(p);\St_n(\Q) )$ is the homology 
of the sequence \[C_{n+1}(\BAA_n,\BAA_n')_{\Gamma_n(p)} \m  C_{n}(\BAA_n,\BAA_n')_{\Gamma_n(p)} \m C_{n-1}(\BAA_n,\BAA_n')_{\Gamma_n(p)}. \] This sequence agrees with 
\begin{multline*}
C_{n+1}(\BAA_n/\Gamma_n(p),\BAA_n'/\Gamma_n(p)) \m  C_{n}(\BAA_n/\Gamma_n(p),\BAA_n'/\Gamma_n(p)) \m \\
\m  C_{n-1}(\BAA_n/\Gamma_n(p),\BAA_n'/\Gamma_n(p)). 
\end{multline*}
Using \autoref{quotientBAA}, this is exactly \[C_{n+1}(\BAA_n^\pm(\F_p)',\BAA_n^\pm(\F_p)') \m  C_{n}(\BAA_n^\pm(\F_p)',\BAA_n^\pm(\F_p)') \m C_{n-1}(\BAA_n^\pm(\F_p)',\BAA_n^\pm(\F_p)'). \] Thus, the homology of this sequence is $H_{n}(\BAA_n^\pm(\F_p),\BAA_n^\pm(\F_p)')$.
\end{proof}

\begin{proposition}\label{BAtoBAA}
For all $p$, the inclusion $\BA_n^\pm(\F_p) \m \BAA_n^\pm(\F_p)$ induces a surjective map on $\pi_d$, $d\leq n$.
\end{proposition}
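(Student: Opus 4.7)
The plan is a bad simplex argument in the style of Church--Putman, paralleling the argument used in the proof of \autoref{BAAnmCM} (compare Procedure 1 and Step 2 in \autoref{sec_connectivity_BAA}). Call a simplex of $\BAA_n^\pm(\F_p)$ \emph{bad} if it is $3$-additive, double-triple, or double-double, so that the bad simplices are exactly those lying in $\BAA_n^\pm(\F_p) \setminus \BA_n^\pm(\F_p)$. Given a simplicial map $f \colon S^d \to \BAA_n^\pm(\F_p)$ with $d \leq n$, the goal is to homotope $f$ so that no bad simplex lies in its image; the resulting map then factors through $\BA_n^\pm(\F_p)$, as required.

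I will proceed in two stages: first removing simplices mapping to double-triple or double-double simplices, then removing those mapping to $3$-additive simplices. In each stage, pick a simplex $\sigma \subseteq S^d$ mapping isomorphically onto a bad simplex $f(\sigma)$ of the relevant type whose dimension is maximal among bad simplices in the image of $f$. The $\F_p$-analog of \autoref{links_of_BAA_augmentations}, whose proof is purely combinatorial and transfers verbatim, identifies
\begin{equation*}
  \Link_{\BAA_n^\pm(\F_p)}(f(\sigma)) = \Link_{\B_n^\pm(\F_p)}(\tau')
\end{equation*}
in the double-triple and double-double cases, and as the join $\Link_{\B_n^\pm(\F_p)}(\tau') * J(f(\sigma))$ in the $3$-additive case, where $\tau'$ is the partial-basis part of $f(\sigma)$ and $J(f(\sigma))$ is the nonempty zero-dimensional complex from \autoref{def_J}. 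In both cases this link lies inside $\BA_n^\pm(\F_p)$ and is at least $(n - \dim \sigma - 1)$-connected, by the $\F_p$-analog of Church--Putman's connectivity result for links in the complex of partial bases \cite[Theorem 4.2]{CP}. The maximality assumption guarantees $f(\Link_{S^d}(\sigma)) \subseteq \Link_{\BAA_n^\pm(\F_p)}(f(\sigma))$, so the sphere $\Link_{S^d}(\sigma)$ can be null-homotoped inside this link. Replacing $\Star_{S^d}(\sigma)$ by $\partial\sigma * D$ for a disk $D$ realizing the null-homotopy yields a homotopic simplicial map $f'$ whose image on the modified region lies in $\partial f(\sigma) * \Link_{\BAA_n^\pm(\F_p)}(f(\sigma))$.

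The proof then concludes by verifying that this modification does not reintroduce simplices of the type just eliminated. Any new simplex has the form $f(\tau) \cup g(\eta)$ with $\tau \subsetneq \sigma$ and $g(\eta)$ contained in the link; using \autoref{obs_list_facet_types} to classify the possible types of $f(\tau)$ as a proper face of $f(\sigma)$, a direct case analysis shows that in Stage~1 the new simplices are at worst $3$-additive, and in Stage~2 at worst $2$-additive. Iterating therefore removes all bad simplices of each stage in turn, producing a homotopy of $f$ to a simplicial map into $\BA_n^\pm(\F_p)$.

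The main obstacle will be establishing the two $\F_p$-analogs used above: the structural link computation \autoref{links_of_BAA_augmentations} and the Cohen--Macaulay property of the complex of partial bases $\B_n^\pm(\F_p)$. The former is purely formal from the simplex-type definitions, and the latter follows by adapting the proof of \cite[Theorem 4.2]{CP} (or, for $\B_n^\pm(\F_p)$ itself, by appealing to the Solomon--Tits theorem). The case analysis verifying that no bad simplices are reintroduced is mechanical but tedious.
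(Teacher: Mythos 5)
Your overall strategy -- a bad simplex argument that pushes $f\colon S^d \to \BAA_n^\pm(\F_p)$ into $\BA_n^\pm(\F_p)$ using the high connectivity of links of partial-basis simplices -- is the same as the paper's, but the progress measure you propose breaks down, and the step asserting that ``in Stage~1 the new simplices are at worst $3$-additive'' is false. The problem is that you target \emph{all} simplices with double-triple (or double-double, or $3$-additive) image. If $f(\sigma)$ is a double-triple simplex of dimension $k>4$, i.e.\ one carrying standard vertices beyond its additive core, then by \autoref{obs_list_facet_types} it has proper faces that are again double-triple (remove a standard vertex outside the core), and $\Link_{\BAA_n^\pm(\F_p)}(f(\sigma)) = \Link_{\B_n^\pm(\F_p)}(\{v_2,\dots,v_k\})$ consists of standard simplices $g(\eta)$ for which $f(\tilde\sigma)\cup g(\eta)$ is \emph{again double-triple} whenever $f(\tilde\sigma)$ still contains the additive core. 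Taking $\tilde\sigma$ to omit only a standard vertex and $g(\eta)$ an edge in the link produces a new double-triple simplex of dimension $k+1$ in the image of the modified map. So the maximal dimension of bad simplices in the image can go \emph{up}, and counting simplices does not help either, since $\Cone(\Link_{S^d}(\sigma))$ may be arbitrarily large. Your two-stage ordering has the same defect in reverse: removing a $3$-additive simplex uses $K(\sigma)=\Link_{\B_n^\pm(\F_p)}(\cdot)\ast J$, and the simplices $f(\tilde\sigma)\cup\{j\}$ with $j\in J$ are double-triple, so Stage~2 reintroduces the simplices Stage~1 was supposed to have eliminated.

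The paper's fix is to target only simplices $\sigma$ for which \emph{every vertex of $f(\sigma)$ lies in the additive core} (these suffice, since any bad simplex in the image contains such a ``core'' simplex as the image of a face of $S^d$), and to order them by the lexicographic pair $(b,c)$ where $b$ counts vertices mapping into the additive core of a $3$-additive face and $c=\dim_{S^d}\sigma$. With that restriction no proper face of $f(\sigma)$ retains the full additive core, so the only new non-standard simplices are faces of $\sigma$ itself or joins $f(\tilde\sigma)\cup\{j\}$, both strictly smaller in the $(b,c)$-order; taking $c$ to be the dimension of $\sigma$ in $S^d$ (rather than of $f(\sigma)$) also repairs your maximality step, since image-dimension maximality alone does not force $f(\Link_{S^d}(\sigma))$ into the \emph{link} of $f(\sigma)$ when $f$ is not injective near $\sigma$. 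Two smaller points: the connectivity input you need is for $\B_n^\pm(\F_p)$, whose top-dimensional simplices carry the determinant-$\pm1$ condition, so it is not the ordinary complex of partial frames and neither Solomon--Tits nor a verbatim transfer of \cite[Theorem 4.2]{CP} applies directly; the paper cites \cite[Proposition 2.45]{MPP} for this.
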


\begin{proof}
Fix $d \leq n$ and let $f \colon  S^d \m  \BAA_n^\pm(\F_p)$ be a map that is simplicial with respect to some simplicial structure on $S^d$.  It suffices to show that $f$ is homotopic to a map $\tilde{f} \colon S^d \m  \BAA_n^\pm(\F_p)$ that factors through the inclusion $\BA_n^\pm(\F_p) \hookrightarrow \BAA_n^\pm(\F_p)$.

This can be shown similarly to \hyperref[procedure1]{Procedure 1}, which was used in \autoref{sec_connectivity_BAA} to show that $\BAA_n^m$ is highly-connected. We will follow this procedure very closely and keep the notation as similar as possible to make it easier to follow. We first define the ``bad'' simplices that we want to remove:
A simplex $\sigma$ of $S^d$ is called \emph{$(b,c)$-over-augmented}, $b,c \in \N$, if 
\begin{itemize}
\item $f(\sigma)$ is a 3-additive, double-triple, or double-double simplex,
\item every vertex of $f(\sigma)$ is contained in the additive core,
\item $\sigma$ has exactly $b\geq 0$ vertices $x$ such that $f(x)$ is contained in the additive core of a 3-additive face of $f(\sigma)$,
\item $\dim(\sigma) = c$.
\end{itemize}
We call a simplex \emph{overly augmented} if it is $(b,c)$-over-augmented for some $b,c\geq 0$.
We say that a $(b,c)$-over-augmented simplex $\sigma$ is \emph{better} than a $(b',c')$-over-augmented simplex $\tau$ if $(b,c)< (b',c')$ lexicographically. 
If $f$ has no overly augmented simplices, then its image lies in $\BA_n^\pm(\F_p)$. To obtain such a map, we will successively replace $f$ with homotopic maps that have less such simplices that are maximally over-augmented.

Let $\sigma$ be a $(b,c)$-over-augmented simplex with $(b,c)$ as large as possible lexicographically. We want to remove $\sigma$ from $f$. To do so, we first define, just as in \hyperref[procedure1]{Procedure 1}, a complex $K(\sigma)$ and then verify adapted versions of \autoref{claim_K_in_link}, \autoref{claim_link_highly_conn} and \autoref{claim_not_worse_but_better}.

If $f(\sigma)$ is a double-triple or double-double simplex, it can be written as $\ls v_0, v_1, \ldots v_k \rs$, where $\ls \vec v_2, \ldots, \vec v_k\rs$ is a partial basis. As we assumed that every vertex of $f(\sigma)$ is contained in the additive core, we here have $k=4$ for a double-triple and $k=5$ for a double-double simplex.
 Define $K(\sigma)\coloneqq \Link_{\B_n^\pm(\F_p)}(\ls v_2, \ldots, v_k\rs)$. 
If $f(\sigma)$ is 3-additive, we can write $f(\sigma) = \ls \langle \vec v_1 + \vec v_2 + \vec v_3 \rangle, v_1, v_2, v_3 \rs$, where $\ls \vec v_1, \vec v_2, \vec v_3\rs$ is a partial basis. Let $J\coloneqq \ls \langle \vec v_1 + \vec v_2 \rangle, \langle \vec v_1 + \vec v_3 \rangle, \langle \vec v_2 + \vec v_3 \rangle\rs$. Note that all elements of $J$ are vertices of $\BAA_n^\pm(\F_p)$. We view $J$ as a 0-dimensional simplicial complex and define $K(\sigma) \coloneqq \Link_{\B_n^\pm(\F_p)}(\ls v_1, v_2, v_3 \rs) \ast J$.

As $f$ is simplicial and $\sigma$ is maximally over-augmented, we have $f(\Link_{S^d}(\sigma)) \subseteq \Link_{\BAA_n^\pm(\F_p)}(f(\sigma))$. So to prove the analogue of \autoref{claim_K_in_link}, it suffices to see that $K(\sigma) = \Link_{\BAA_n^\pm(\F_p)}(f(\sigma))$. This can be checked easily just as in \hyperref[procedure1]{Procedure 1}. In \autoref{prop_connecitivity_links}, we describe the links of simplices in $\BAA_n$ and an analogous statement is true for $\BAA_n^\pm(\F_p)$.

To see that $K(\sigma)$ is  $\dim \Link_{S^d}(\sigma)$-connected, note that by a result of Miller--Patzt--Putman the complex $\Link_{\B_n^\pm(\F_p)}(\ls v_0,\ldots,v_l \rs)$ is $(n-l-3)$-connected \cite[Proposition 2.45]{MPP}. Hence, $K(\sigma)$ is $(n-5)$-connected if $f(\sigma)$ is a double-triple simplex, $(n-6)$-connected if $f(\sigma)$ is a double-double simplex and $(n-5+1)=(n-4)$-connected if $f(\sigma)$ is 3-additive. We have $\dim \Link_{S^d}(\sigma) \leq n - \dim(f(\sigma)) - 1$. The claim follows because $f(\sigma)$ is a double-triple, double-double or 3-additive simplex with all vertices contained in the additive core and hence has dimension 4, 5 or 3, respectively.

Consequently, the restriction $f|_{\Star_{S^d}(\sigma)}$ is homotopic to a simplicial map $h \colon  \partial \sigma * \Cone ( \Link_{S^d}(\sigma) ) \to f(\sigma) * K(\sigma)$ that agrees with $f$ on $\partial \sigma * \Link_{S^d}(\sigma)$ and such that $h(\Cone(\Link_{S^d}(\sigma)))\subseteq K(\sigma)$.
We will now verify that $h$ has only simplices that are better than $\sigma$. This is very similar to the proof of \autoref{claim_not_worse_but_better} in \hyperref[procedure1]{Procedure 1}, so we will be slightly briefer here:

Every simplex in $\partial \sigma * \Cone ( \Link_{S^d}(\sigma) )$ is of the form $\tilde{\sigma} \cup \tau$, where $\emptyset \subseteq \tilde{\sigma} \subset \sigma$ is a proper face of $\sigma$. It gets mapped to $h(\sigma') = f(\tilde{\sigma}) \cup  g(\tau)$, where $g(\tau) \subseteq K(\sigma)$.
Let $\sigma' = \tilde{\sigma} \cup \tau$ be a simplex in the domain of $h$ that is $(b',c')$-over-augmented. We need to show that $(b',c')<(b,c)$ lexicographically.
If $f(\sigma)$ is a double-triple or double-double simplex, no vertex of $g(\tau)\subseteq K(\sigma)$ can be contained in the additive core of $h(\sigma')$. As $\sigma'$ is overly augmented, this implies that $b'\leq b$
and that $\tau$ is the empty simplex.
Hence, $c' = \dim(\sigma') < \dim(\sigma) = c$ and we have $(b',c')< (b,c)$.
Next assume that $f(\sigma)$ is 3-additive. In this case, $K(\sigma) \coloneqq \Link_{\B_n^\pm(\F_p)}(\ls v_1, v_2, v_3 \rs) \ast J$.
As $\sigma'$ is overly augmented and no vertex of $\Link_{\B_n^\pm(\F_p)}(\ls v_1, v_2, v_3 \rs)$ can be contained in the additive core of $h(\sigma')$, all vertices of $\tau$ get mapped to $J$. This means that either $\tau$ is the empty simplex and $\sigma' = \tilde{\sigma} \subset \sigma$ or $h(\sigma') = f(\tilde{\sigma}) \cup \ls j \rs$ for some $j \in J$ and $\tilde{\sigma}\subset \sigma$.
In the first case, we have $(b',c')< (b,c)$ with the same argument as in the situation of double-triple or double-double simplices.
In the second case, $f(\sigma)\cup \ls j \rs$ is a double-triple simplex that contains $h(\sigma')$ and has $f(\sigma)$ as its unique 3-additive face. Hence, $j$ cannot be contained in the additive core of a 3-additive face of $h(\sigma')$.
It follows that $b' \leq \dim(\tilde{\sigma})< \dim(\sigma) = b$. In particular, $(b',c')< (b,c)$.

We now replace $\Star(\sigma)$ with $\partial \sigma * \Cone ( \Link_{S^d}(\sigma) )$ and $f$ by the map $f'$ that agrees with $f$ outside $\Star(\sigma)$ and is equal to $h$ on $\partial \sigma * \Cone ( \Link_{S^d}(\sigma) )$. This removes $\sigma$ without introducing any other simplices that are $(b,c)$-over-augmented or worse. Iterating this procedure, we obtain a map that has no overly augmented simplices and hence factors through $\BA_n^\pm(\F_p)$.
\end{proof}
 
\begin{corollary} \label{BAApmConn}
For $p=3$ or $5$, the complex $\BAA_n^\pm(\F_p)$ is $(n-1)$-connected.
\end{corollary}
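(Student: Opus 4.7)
The plan is to deduce this directly from \autoref{BAtoBAA} together with a known high-connectivity result for $\BA_n^\pm(\F_p)$. Since \autoref{BAtoBAA} gives that the inclusion $\BA_n^\pm(\F_p) \hookrightarrow \BAA_n^\pm(\F_p)$ induces a surjection on $\pi_d$ for all $d \leq n$, it suffices to prove that $\BA_n^\pm(\F_p)$ is itself $(n-1)$-connected: any class in $\pi_d(\BAA_n^\pm(\F_p))$ with $d \leq n-1$ would then lift to a trivial class in $\pi_d(\BA_n^\pm(\F_p))$, forcing it to be trivial in $\BAA_n^\pm(\F_p)$.

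High connectivity of $\BA_n^\pm(\F_p)$ in the relevant range is exactly the content of the analogous Church--Putman-style result over $\F_p$, which is available for the small primes $p=3,5$ in the work of Miller--Patzt--Putman \cite{MPP} (see the results used in the proof of \autoref{BAtoBAA}, where \cite[Proposition 2.45]{MPP} provides the parallel link-connectivity result for $\B_n^\pm(\F_p)$). The restriction to $p=3,5$ reflects the current state of knowledge about connectivity of $\BA_n^\pm(\F_p)$: it is for precisely these primes that the appropriate analogue of \cite[Theorem C']{CP} has been established.

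Thus the proof will be a single short paragraph: invoke the cited $(n-1)$-connectivity of $\BA_n^\pm(\F_p)$ for $p\in\{3,5\}$, apply \autoref{BAtoBAA} to promote this to surjectivity of $\pi_d(\BA_n^\pm(\F_p)) \twoheadrightarrow \pi_d(\BAA_n^\pm(\F_p))$ for $d \leq n$, and conclude vanishing of $\pi_d(\BAA_n^\pm(\F_p))$ for $d \leq n-1$. No additional obstruction arises; the only subtlety is correctly citing the $\F_p$-version of the Church--Putman connectivity theorem, which is the sole reason the statement of the corollary is restricted to $p=3$ and $p=5$.
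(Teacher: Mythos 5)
Your proposal is correct and is essentially identical to the paper's own proof: it combines \autoref{BAtoBAA} with the $(n-1)$-connectivity of $\BA_n^\pm(\F_p)$ for $p=3,5$ established by Miller--Patzt--Putman (the precise reference is \cite[Proposition 2.50]{MPP} rather than Proposition 2.45, which concerns links in $\B_n^\pm(\F_p)$). No gap.
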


\begin{proof}
By \autoref{BAtoBAA}, there is a surjection $\pi_d(\BA_n^\pm(\F_p)) \twoheadrightarrow \pi_d(\BAA_n^\pm(\F_p))$ for $d \leq n$. The claim follows because by \cite[Proposition 2.50]{MPP}, the complex $\BA_n^\pm(\F_p)$ is $(n-1)$-connected for $p=3$ or $5$.
\end{proof}

\begin{corollary} \label{surjCor}
For $p=3$ or $5$, there is a surjection $H_1(\Gamma_n(p);\St_n(\Q) ) \m H_{n-1}(\BAA_n^\pm(\F_p)')$.
\end{corollary}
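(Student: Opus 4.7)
The plan is to combine the isomorphism from \autoref{RelHomologyGamma} with the long exact sequence of the pair $(\BAA_n^\pm(\F_p), \BAA_n^\pm(\F_p)')$, using the high connectivity of $\BAA_n^\pm(\F_p)$ established in \autoref{BAApmConn}.

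First, recall from \autoref{RelHomologyGamma} that there is a natural isomorphism
\[
H_1(\Gamma_n(p);\St_n(\Q)) \cong H_n(\BAA_n^\pm(\F_p), \BAA_n^\pm(\F_p)').
\]
So it suffices to exhibit a surjection from the relative homology group to $H_{n-1}(\BAA_n^\pm(\F_p)')$. This will come from the connecting homomorphism in the long exact sequence of the pair, namely
\[
\cdots \to H_n(\BAA_n^\pm(\F_p)) \to H_n(\BAA_n^\pm(\F_p), \BAA_n^\pm(\F_p)') \xrightarrow{\partial} H_{n-1}(\BAA_n^\pm(\F_p)') \to H_{n-1}(\BAA_n^\pm(\F_p)) \to \cdots.
\]

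The key input is \autoref{BAApmConn}, which asserts that for $p=3$ or $5$, the complex $\BAA_n^\pm(\F_p)$ is $(n-1)$-connected. In particular $H_{n-1}(\BAA_n^\pm(\F_p)) = 0$, and exactness at $H_{n-1}(\BAA_n^\pm(\F_p)')$ forces the connecting map $\partial$ to be surjective. Composing with the isomorphism from \autoref{RelHomologyGamma} yields the desired surjection $H_1(\Gamma_n(p);\St_n(\Q)) \twoheadrightarrow H_{n-1}(\BAA_n^\pm(\F_p)')$.

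There is no real obstacle here: once \autoref{RelHomologyGamma} and \autoref{BAApmConn} are in hand, the argument is a routine application of the long exact sequence of a pair. The nontrivial content has already been packaged into those two results (the projectivity of the relative chain complex as a $\Z[\Gamma_n(p)]$-module via \autoref{projectiveness}, and the connectivity deduced from \autoref{BAtoBAA} together with the Miller--Patzt--Putman connectivity result for $\BA_n^\pm(\F_p)$).
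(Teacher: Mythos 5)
Your argument is correct and is precisely the paper's proof: the isomorphism of \autoref{RelHomologyGamma}, the long exact sequence of the pair, and the vanishing of $H_{n-1}(\BAA_n^\pm(\F_p))$ from \autoref{BAApmConn} force the connecting homomorphism to surject. No further comment is needed.
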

\begin{proof}
This follows from the long exact sequence of the pair $(\BAA_n^\pm(\F_p),\BAA_n^\pm(\F_p)')$, \autoref{RelHomologyGamma} and \autoref{BAApmConn}.
\end{proof}

\begin{proposition} \label{H2}
For $p$ an odd prime,  $H_2(\BAA_2^\pm(\F_p)) \cong \Z$.
\end{proposition}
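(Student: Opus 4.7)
My plan is to identify $\BAA_2^\pm(\F_p)$ with a closed connected orientable combinatorial $2$-manifold, from which $H_2(\BAA_2^\pm(\F_p)) \cong \Z$ follows. First, note that for $n = 2$ the complex contains no $3$-additive, double-triple, or double-double simplex, because each would require three linearly independent vectors in $\F_p^2$. Hence $\BAA_2^\pm(\F_p) = \BA_2^\pm(\F_p)$ is $2$-dimensional, its edges are the standard edges, and its triangles are the $2$-additive $2$-simplices.

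Next I would verify the combinatorial manifold structure. Given an edge $\{u_1, u_2\}$, the triangles containing it are precisely $\{u_1, u_2, v^{+}\}$ and $\{u_1, u_2, v^{-}\}$ with $\vec v^{\pm} = \vec u_1 \pm \vec u_2$; these two third vertices are distinct from each other and from $u_1, u_2$ because $p$ is odd (otherwise $2\vec u_i = 0$). For the link of a vertex, I would use the transitivity of the $\SL_2(\F_p)$-action to reduce to $v = \langle \vec e_1 \rangle$, then compute directly that its link has vertex set $\{\langle a \vec e_1 + \vec e_2\rangle : a \in \F_p\}$ with an edge joining $\langle a \vec e_1 + \vec e_2\rangle$ to $\langle (a+1) \vec e_1 + \vec e_2\rangle$, giving a cycle of length $p$. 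Connectedness follows from a short coordinate argument: from $\langle \vec e_1 \rangle$ one can reach any $\langle x \vec e_1 + y \vec e_2\rangle$ with $y \neq 0$ through a single intermediate vertex $\langle c \vec e_1 + \vec e_2\rangle$ chosen so that $cy - x = \pm 1$, and any $\langle x \vec e_1 \rangle$ via two intermediaries using $\langle \vec e_2 \rangle$.

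The main work is orientability. For each triangle $T = \{v_0, v_1, v_2\}$, I choose representatives $\vec v_0, \vec v_1, \vec v_2$, unique up to simultaneous negation, such that $\vec v_0 + \vec v_1 + \vec v_2 = 0$; such representatives exist because $T$ is $2$-additive. I orient $T$ as $[v_0, v_1, v_2]$ when $\det[\vec v_0, \vec v_1] = 1$ in $\F_p$, and as $[v_1, v_0, v_2]$ otherwise. The identity $\det[\vec v_1, \vec v_2] = \det[\vec v_1, -\vec v_0 - \vec v_1] = \det[\vec v_0, \vec v_1]$ shows that this rule is invariant under cyclic permutations of the labelling, and the determinant is clearly invariant under simultaneous sign flip, so the orientation is well-defined. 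To check consistency across a shared edge $\{u_1, u_2\}$, set $\epsilon = \det[\vec u_1, \vec u_2]$: in the triangle $\{u_1, u_2, v^+\}$ one obtains zero-sum representatives $(-\vec u_1, -\vec u_2, \vec v^+)$ with first determinant $\epsilon$, while in $\{u_1, u_2, v^-\}$ one obtains $(-\vec u_1, \vec u_2, \vec v^-)$ with first determinant $-\epsilon$. A direct case analysis then shows that the two triangles contribute opposite signs to the coefficient of $[u_1, u_2]$ in the simplicial boundary, so $z = \sum_T [T]$ is a nonzero $2$-cycle. Since $\BAA_2^\pm(\F_p)$ is thereby a closed connected orientable surface, $H_2(\BAA_2^\pm(\F_p)) \cong \Z$, generated by $z$. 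The combinatorial manifold step is a quick coordinate computation; the main obstacle is organising the signs in the orientability check.
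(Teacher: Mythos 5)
Your proof is correct, but it takes a genuinely different route from the paper. Both arguments begin with the same observation that $\BAA_2^\pm(\F_p)=\BA_2^\pm(\F_p)$ because every $3$-additive, double-triple or double-double simplex would require a rank-$3$ partial basis. From there the paper simply cites Lemma~2.44 of Miller--Patzt--Putman, which identifies $\BA_2^\pm(\F_p)$ with the compactified modular curve of level $p$, a closed orientable surface of genus $(p+2)(p-3)(p-5)/24$; orientability (hence $H_2\cong\Z$) comes for free from the complex-analytic model. You instead verify by hand that the complex is a closed connected combinatorial surface (each standard edge $\{u_1,u_2\}$ lies in exactly the two triangles with third vertex $\langle \vec u_1\pm\vec u_2\rangle$, and each vertex link is a $p$-cycle) and then produce an explicit coherent orientation via the zero-sum representatives $\vec v_0+\vec v_1+\vec v_2=0$ and the sign of $\det[\vec v_0\ \vec v_1]$. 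Your checks are sound: the zero-sum triple is unique up to simultaneous negation, the determinant rule is cyclically invariant, and the two triangles on a common edge do contribute opposite signs to that edge's boundary coefficient. What your approach buys is a self-contained, elementary proof with an explicit generating cycle $z=\sum_T[T]$; what it gives up is the genus (and hence $H_1$), which the modular-curve identification provides and which the paper records. Two small points worth making explicit: the standard edges of $\B_2^\pm(\F_p)$ carry the determinant-$\pm1$ condition from \autoref{def:BA-simplices}~\autoref{2.3i} (your link computation uses this silently), and the distinctness of $\langle \vec u_1\pm\vec u_2\rangle$ from $u_1,u_2$ follows from linear independence of $\vec u_1,\vec u_2$ rather than from $p$ being odd, which is only needed to separate $\langle \vec u_1+\vec u_2\rangle$ from $\langle \vec u_1-\vec u_2\rangle$.
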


\begin{proof}
Note that the inclusion $\BA_2^\pm(\F_p) \m \BAA_2^\pm(\F_p)$ is an isomorphism. The claim follows from \cite[Lemma 2.44]{MPP} which identifies $\BA_2^\pm(\F_p)$ with the compactified modular curve of level $p$, a compact surface of genus $(p+2)(p-3)(p-5)/24$. \end{proof}

\begin{remark}
\autoref{H2} shows that $\BAA_n^\pm(\F_p)$ is not always $n$-connected. This may come as a surprise. This fact is not just due to our restriction that the determinant of bases be equal to $\pm 1$ as this condition is vacuous for  $p=3$. If the reader is interested in defining a complex $\BAA_n(\F)$ to determine the relations among the relations in $\St_n(\F)$ for $\F$ an arbitrary field, we expect that extra types of additive simplices will be needed. For example, simplices of the form $\{v_1,v_2,\langle a \vec v_1 + b \vec v_2 \rangle, \langle c \vec v_1 + d \vec v_2 \rangle  \}$ with $ad-bc \neq \pm 1$ might be needed in the $n=2$ case.

\end{remark}

We now describe a model for $\cT_n(\Q)/\Gamma_n(p)$.

\begin{definition}
A \emph{$\pm$-orientation} on a rank $k$ submodule $V \subset \F_p^n$ is an equivalence class of generators of $\wedge^k  V \cong \F_p$ up to sign. 

Let $Gr_k^n(\F_p)^\pm$ denote the set of $\pm$-oriented summands of rank $k$ in $\F_p^n$. We let $\bT_n^\pm(\F_p)$ denote the poset whose elements are all proper nonzero $\pm$-oriented summands of $\F_p^n$ with order induced by proper inclusion. Let $\cT_n^\pm(\F_p)$ denote its geometric realisation.
\end{definition}

Note that the $\pm$-orientations play no role in deciding if summands of different ranks are comparable and differently oriented subspaces of the same rank are never comparable.  The following results are due to Miller--Patzt--Putman \cite{MPP}. 

\begin{proposition}[{\cite[Proposition 3.16]{MPP}}]
For $p$ an odd prime, the natural map $\cT_n(\Z)/\Gamma_n(p) \m \cT_n^{\pm}(\F_p)$ is an isomorphism.
\end{proposition}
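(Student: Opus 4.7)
The plan is to construct the map explicitly and then verify it is a simplicial isomorphism in two stages: first identifying $\cT_n(\Z)/\Gamma_n(p)$ with the geometric realisation of the orbit poset $\bT_n(\Z)/\Gamma_n(p)$, and then showing this orbit poset is isomorphic to $\bT_n^\pm(\F_p)$ via reduction modulo $p$. On the level of posets, the map I would use sends a summand $V \subseteq \Z^n$ of rank $k$ to the pair $(V \otimes \F_p, \; [\vec v_1 \wedge \cdots \wedge \vec v_k \bmod p])$, where $\vec v_1, \ldots, \vec v_k$ is any $\Z$-basis of $V$. Since two such bases differ by an element of $\GL_k(\Z)$ of determinant $\pm 1$, the $\pm$-orientation is well-defined. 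Moreover, for $g \in \Gamma_n(p)$ we have $g \equiv I \pmod{p}$, so $g$ acts trivially on the mod-$p$ reduction of the wedge, and the assignment descends to the $\Gamma_n(p)$-orbit space.

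For the first stage, I would adapt the argument used in the proof of \autoref{quotientBAA}: the key point is to verify that the action of $\Gamma_n(p)$ on $\cT_n(\Z)$ is \emph{regular} in the sense that for any chain $V_0 \subsetneq \cdots \subsetneq V_k$ of summands and any $g_0, \ldots, g_k \in \Gamma_n(p)$ for which $\{g_0 V_0, \ldots, g_k V_k\}$ is again a chain, there is a single $g \in \Gamma_n(p)$ realising $g V_i = g_i V_i$ for all $i$. This follows by choosing a basis of $\Z^n$ adapted to the chain (extending a basis of $V_0$ to one of $V_1$, and so on), doing the same for the image chain, and then patching the resulting change-of-basis transformations using the surjectivity of $\SL_n(\Z) \to \SL_n(\F_p)$ together with the fact that elements of $\Gamma_n(p)$ agree with the identity mod $p$. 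This regularity implies that the quotient topology on $\cT_n(\Z)/\Gamma_n(p)$ agrees with the weak topology coming from $|\bT_n(\Z)/\Gamma_n(p)|$.

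For the second stage, it remains to check that the induced poset map $\bT_n(\Z)/\Gamma_n(p) \to \bT_n^\pm(\F_p)$ is an order-preserving bijection. Order-preservation is clear. Surjectivity is a lifting argument: given $(W, [\omega]) \in \bT_n^\pm(\F_p)$ of rank $k$, pick a basis of $W$ representing $\omega$, lift each basis vector to a primitive vector of $\Z^n$ (possible because $W$ is a summand mod $p$), and use Smith normal form to arrange that the resulting vectors span a rank-$k$ summand $V \subseteq \Z^n$ whose reduction is $(W, [\omega])$. For injectivity, suppose two summands $V, V' \subseteq \Z^n$ reduce to the same $\pm$-oriented summand. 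Choose adapted bases of $\Z^n$ whose first $k$ vectors span $V$ and $V'$ respectively and whose wedge products reduce to the same element of $\wedge^k \F_p^n$ up to sign; the change-of-basis matrix then reduces mod $p$ to a signed permutation fixing the subspace, and after multiplying by a suitable element of $\SL_n(\Z)$ that preserves $V$ and adjusts the sign, one obtains an element of $\Gamma_n(p)$ mapping $V$ to $V'$. The main subtlety I foresee is bookkeeping the interaction between the $\pm$-orientation and the $\det = \pm 1$ ambiguity of $\GL_n(\Z)$ versus $\SL_n(\Z)$; this is exactly why the target poset is $\bT_n^\pm(\F_p)$ and not its unoriented version, but one must track these signs carefully throughout the surjectivity and injectivity arguments.
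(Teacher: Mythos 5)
The paper does not actually prove this statement: it is quoted from Miller--Patzt--Putman \cite[Proposition 3.16]{MPP}, so there is no internal proof to compare against. Your argument is, however, precisely the Bredon-style template that the paper does spell out for the analogous quotient statement \autoref{quotientBAA} about $\BAA_n/\Gamma_n(p)$: establish regularity of the $\Gamma_n(p)$-action (your flag-patching claim is the exact analogue of \autoref{ClaimRegularAction}) to identify the topological quotient with the realisation of the orbit poset, then check that reduction mod $p$ induces an order-preserving bijection of posets. The strategy is sound and all the essential inputs (surjectivity of $\SL_n(\Z)\to\SL_n(\F_p)$, oddness of $p$ to separate $\pm1$ mod $p$, the rank function forcing comparable same-rank summands to coincide) are correctly identified.

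Two details deserve tightening, though neither is a gap in the strategy. First, in the injectivity step the change-of-basis matrix between adapted bases does not reduce mod $p$ to a \emph{signed permutation}; it reduces to a block-upper-triangular matrix whose top-left block stabilises $W$ and acts on $\wedge^k W$ by $\pm 1$. What you actually need is that the stabiliser of $V$ in $\GL_n(\Z)$ surjects onto the stabiliser of $(W,[\omega])$ inside the subgroup of $\GL_n(\F_p)$ of determinant-$(\pm1)$ matrices; this follows by lifting the diagonal blocks through $\SL_d(\Z)\twoheadrightarrow\SL_d(\F_p)$ (with a sign adjustment) and choosing the off-diagonal blocks freely, after which the quotient of the two lifts lands in $\Gamma_n(p)$ because its determinant is $\pm1$ and congruent to $1$ mod the odd prime $p$. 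Second, in the surjectivity step, taking arbitrary primitive lifts of a basis of $W$ and then saturating rescales the orientation by the index of the lifted lattice in its saturation, which is a unit mod $p$ but need not be $\pm1$; it is cleaner to extend the chosen basis of $W$ to a basis of $\F_p^n$, lift the whole change-of-basis matrix to $\GL_n(\Z)$, and take the span of the first $k$ columns. These are exactly the sign-bookkeeping issues you flagged at the end, and they are resolvable.
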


\begin{proposition}[{\cite[Lemma 3.15]{MPP}}] \label{Tconn}
For all $p$, the complex $\cT_n^\pm(\F_p)$ is  Cohen--Macaulay of dimension $n-2$.
\end{proposition}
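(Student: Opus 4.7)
\textbf{Proof plan for \autoref{Tconn}.} The plan is to proceed by induction on $n$, with base case $n=2$ being trivial since $\cT_2^\pm(\F_p)$ is a nonempty $0$-dimensional complex (each rank-$1$ summand carries two $\pm$-orientations, and $0$-dimensional nonempty spaces are exactly the Cohen--Macaulay complexes of dimension $0$).

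For the inductive step with $n \geq 3$, the key structural input is a \emph{link decomposition formula}. Given a $p$-simplex $\sigma = \{V_0^{\epsilon_0} < \cdots < V_p^{\epsilon_p}\}$ of $\cT_n^\pm(\F_p)$, set $V_{-1} := 0$ and $V_{p+1} := \F_p^n$, and let $n_i := \rank(V_i) - \rank(V_{i-1})$ for $0 \leq i \leq p+1$. I claim that
\[
\Link_{\cT_n^\pm(\F_p)}(\sigma) \;\cong\; \cT_{n_0}^\pm(\F_p) \ast \cT_{n_1}^\pm(\F_p) \ast \cdots \ast \cT_{n_{p+1}}^\pm(\F_p),
\]
with the convention that $\cT_1^\pm(\F_p) = \emptyset$ (so factors with $n_i = 1$ drop out since $X \ast \emptyset = X$). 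This follows because any oriented summand $W^\eta$ in the link lies in a unique gap $V_{i-1} < W < V_i$, and within gap $i$ the $\pm$-oriented summands correspond bijectively (using the fixed orientation $\epsilon_{i-1}$ on $V_{i-1}$, or the canonical one on $V_{-1}=0$) with vertices of $\cT^\pm_{n_i}(\F_p)$ applied to $V_i/V_{i-1}$. Vertices in distinct gaps are automatically comparable, giving the simplicial join. By induction each $\cT^\pm_{n_i}(\F_p)$ with $n_i \geq 2$ is Cohen--Macaulay of dimension $n_i - 2$; using that joins preserve the Cohen--Macaulay property and that dimensions add as $\sum (n_i - 2) + (\#\text{nonempty factors} - 1) = n - p - 3$, the link is Cohen--Macaulay of the expected dimension.

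It remains to establish that $\cT^\pm_n(\F_p)$ itself is $(n-3)$-connected. For this I would apply \autoref{mopss_CP} to the forgetful map of posets
\[
\phi\colon \bT^\pm_n(\F_p) \longrightarrow \bT_n(\F_p), \qquad W^\eta \longmapsto W.
\]
For $V \in \bT_n(\F_p)$ with $\rank V = k$, the fibre $\phi^{\leq V}$ consists of $V^+, V^-$ together with all oriented proper nonzero summands of $V$; topologically this is the suspension $\Sigma \cT^\pm_k(\F_p)$ (two cones on $\cT^\pm_k(\F_p)$ glued along the base). By induction $\cT^\pm_k(\F_p)$ is Cohen--Macaulay of dimension $k - 2$, so its suspension has reduced homology concentrated in degree $k - 1 = \operatorname{ht}(V)$. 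Applying \autoref{mopss_CP} with $m=0$ to the Cohen--Macaulay target $\bT_n(\F_p)$ (of dimension $n-2$ by Solomon--Tits) shows $\phi$ is $(n-2)$-acyclic, and combined with the $(n-3)$-connectivity of $\cT_n(\F_p)$ this forces $\widetilde H_q(\cT^\pm_n(\F_p)) = 0$ for $q \leq n - 3$. For $n \geq 4$, simple connectivity follows by a direct path-lifting argument along $\phi$ using that each fibre is a simply connected suspension and $\cT_n(\F_p)$ is simply connected; then Hurewicz upgrades homology vanishing to $(n-3)$-connectivity. The case $n=3$ requires only that $\cT_3^\pm(\F_p)$ be connected, which is immediate from the observation that any two orientations of a line are joined via any plane containing it.

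The main obstacle is the bookkeeping in the link decomposition --- specifically, making the canonical identification of oriented summands in a gap with $\cT^\pm_{n_i}(\F_p)$ precise (since $\pm$-orientations on $W$ and on $W/V_{i-1}$ are related only after choosing a $\pm$-orientation on $V_{i-1}$, which is however supplied by $\sigma$ itself). Once this is set up cleanly, the induction runs smoothly and the connectivity of the whole complex is the formal consequence of Quillen's spectral sequence together with Solomon--Tits.
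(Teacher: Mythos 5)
The paper does not actually prove \autoref{Tconn}; it quotes it from \cite[Lemma 3.15]{MPP}, so your argument has to stand on its own. Its skeleton is sound and is essentially the standard route: the induction on $n$, the identification of $\Link(\sigma)$ with the join of the order complexes of the gaps of the chain (with orientations in gap $i$ transported to $V_i/V_{i-1}$ by wedging with the chosen generator of $\wedge^{\mathrm{top}}V_{i-1}$), the dimension count, and the homology vanishing via \autoref{mopss_CP} all check out --- for the last of these you only need $\widetilde H_q(\phi^{\leq V})$ to be concentrated in degree $\operatorname{ht}(V)$, which does follow from the join description of the fibre plus the inductive hypothesis.

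The genuine gap is the simple connectivity step. First, the fibre is misdescribed: a rank-$k$ summand $V$ carries $(p-1)/2$ distinct $\pm$-orientations (one for $p\leq 3$, three for $p=7$, etc.), so $\phi^{\leq V}$ is the join of $\cT_k^\pm(\F_p)$ with a discrete set of $(p-1)/2$ points and is a suspension only when $p=5$. More seriously, these fibres are \emph{not} simply connected: for $\operatorname{rank}(V)=2$ and $p\geq 5$ the fibre is a complete bipartite graph on $(p+1)(p-1)/2$ and $(p-1)/2$ vertices, whose fundamental group is free of large rank, and for $\operatorname{rank}(V)=1$ and $p\geq 7$ the fibre is not even connected. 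There is also no path-lifting for maps of posets, so the proposed argument for $\pi_1(\cT_n^\pm(\F_p))=1$ does not get off the ground. The repair is standard: the fibre $\phi^{\leq V}$ \emph{is} $(\operatorname{ht}(V)-1)$-connected (the join of the $(k-3)$-connected complex $\cT_k^\pm(\F_p)$ with a nonempty set is $(k-2)$-connected), and this is precisely the hypothesis of the homotopy-theoretic poset fiber theorem (\cite[Theorem 9.1]{Quillen-Poset}, or Bj\"orner--Wachs--Welker), which yields $|\bT_n^\pm(\F_p)|\simeq |\bT_n(\F_p)|\vee\bigvee_V\bigl(|\phi^{\leq V}|\ast|(\bT_n(\F_p))_{>V}|\bigr)$; every wedge summand is $(n-3)$-connected, so the $(n-3)$-connectivity of $\cT_n^\pm(\F_p)$ follows with no separate Hurewicz or $\pi_1$ step. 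With that substitution for your use of \autoref{mopss_CP}, the induction closes.
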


\subsection{Lower bounds on the codimension-$1$ cohomology of certain congruence subgroups}

In this subsection, we use the map-of-poset spectral sequence and the fact that $\BAA_2^\pm(\F_p)$ is not $2$-connected to produce cohomology classes in the codimension-$1$ cohomology of level $3$ and $5$ congruence subgroups.

The following is a categorified version of \autoref{TheoremC}.

\begin{theorem} \label{CatC}

For $p=3$ or $5$ and $n \geq 3$, $H^{{n \choose 2}-1}(\Gamma_n(p)) $ surjects onto  \[ \Z[Gr_2^n(\F_p)^\pm] \otimes \widetilde H_{n-4}(\cT_{n-2}^\pm(\F_p)).\] 
\end{theorem}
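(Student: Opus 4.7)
By Borel--Serre duality (\autoref{Borel-Serre}), since $\Gamma_n(p)$ is torsion-free for odd $p$, we have $H^{\binom{n}{2}-1}(\Gamma_n(p)) \cong H_1(\Gamma_n(p);\St_n(\Q))$, and by \autoref{surjCor} this group surjects onto $H_{n-1}(\BAA_n^\pm(\F_p)')$. Hence the plan reduces to exhibiting a surjection
$$H_{n-1}(\BAA_n^\pm(\F_p)') \twoheadrightarrow \Z[Gr_2^n(\F_p)^\pm] \otimes \widetilde H_{n-4}(\cT_{n-2}^\pm(\F_p)).$$

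To obtain this surjection, I would invoke Quillen's map-of-posets spectral sequence (\autoref{SSquillen}) applied to the span map
$$\phi \colon \mathbb{P}(\BAA_n^\pm(\F_p)') \longrightarrow \bT_n^\pm(\F_p),\qquad \sigma\longmapsto \langle\sigma\rangle^\pm,$$
where $\langle\sigma\rangle^\pm$ denotes the span of $\sigma$ equipped with the $\pm$-orientation induced by the wedge product of representatives of its vertices. For $W$ of rank $k$, the fibre $\phi^{\leq W}$ is identified with the $\pm$-oriented complex $\BAA_k^\pm(W)$: by \autoref{BAApmConn} it is $(k-1)$-connected for $k\geq 3$, and by \autoref{H2} the rank-$2$ fibre has $H_2 \cong \Z$ in each $\pm$-orientation class, so $H_2(\phi)$ is the constant functor $\Z$ supported on the height-$1$ elements $Gr_2^n(\F_p)^\pm$ and vanishes elsewhere. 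Charney's Lemma (\autoref{CharneyLemma}) combined with the isomorphism $\bT_n^\pm(\F_p)_{>V} \cong \bT_{n-2}^\pm(\F_p)$ coming from $\F_p^n/V$ then gives
$$E^2_{n-3,2} \;\cong\; \Z[Gr_2^n(\F_p)^\pm] \otimes \widetilde H_{n-4}(\cT_{n-2}^\pm(\F_p)),$$
which is exactly the target group.

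It remains to show that $E^2_{n-3,2}$ is the only nonzero entry on the diagonal $p+q=n-1$ and that no differentials hitting or leaving $E^r_{n-3,2}$ are nonzero, so that the standard filtration argument yields $E^\infty_{n-3,2} = E^2_{n-3,2}$ as a quotient of the abutment. The other diagonal entries vanish by the Cohen--Macaulay property from \autoref{Tconn}: $E^2_{n-1,0} = H_{n-1}(\cT_n^\pm(\F_p)) = 0$ since $\dim\cT_n^\pm = n-2$, and Charney expresses $E^2_{n-2,1}$ in terms of $\widetilde H_{n-3}(\cT_{n-2}^\pm) = 0$. Incoming differentials vanish by the same dimension counts. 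The outgoing differentials $d_r\colon E^r_{n-3,2}\to E^r_{n-3-r,\,r+1}$ are the hardest step: \autoref{BAApmConn} pins the support of $H_{r+1}(\phi)$ to the heights $r-1$ and $r$, and filtering the functor by height and applying Charney to each graded piece expresses the target in terms of reduced homology of $\cT_{n-r}^\pm(\F_p)$ and $\cT_{n-r-1}^\pm(\F_p)$ in degree $n-r-4$, strictly below their respective top dimensions $n-r-2$ and $n-r-3$, so \autoref{Tconn} forces these contributions to vanish. The main obstacle to be careful about throughout is the compatibility of the fibre identification $\phi^{\leq W} \simeq \BAA_k^\pm(W)$ with the $\pm$-orientation data in $\bT_n^\pm(\F_p)$, which comes down to matching a choice of generator of $H_k^{\mathrm{top}}$ of the fibre with a choice of $\pm$-orientation on $W$.
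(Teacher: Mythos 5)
Your proposal is correct and follows essentially the same route as the paper: Borel--Serre duality plus \autoref{surjCor}, then Quillen's spectral sequence for the span map $\mathbb{P}(\BAA_n^\pm(\F_p)') \to \bT_n^\pm(\F_p)$, with Charney's lemma identifying $E^2_{n-3,2}$ and the connectivity of $\BAA_k^\pm(\F_p)$ together with the Cohen--Macaulayness of $\cT^\pm$ killing the other relevant entries and differentials. The only difference is cosmetic -- you verify the vanishing of the needed entries by hand via Charney's lemma, whereas the paper packages the same computation into \autoref{AndyLemma}.
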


\begin{proof}
Since $\Gamma_n(p)$ is torsion-free for $p$ an odd prime, Borel--Serre duality holds with integral coefficients. In particular, \[ H^{{n \choose 2}-1}(\Gamma_n(p)) \cong H_1(\Gamma_n(p);\St_n(\Q)) .\]  Thus, by \autoref{surjCor}, it suffices to produce a surjection \[H_{n-1}(\BAA_n^\pm(\F_p)')  \m \Z[Gr_2^n(\F_p)^\pm] \otimes \widetilde H_{n-4}(\mathcal T_{n-2}^\pm(\F_p)).\] 

Let $\phi \colon\mathbb P(\BAA_n^\pm(\F_p)') \m \mathbb T_n^\pm(\F_p)$ be the map sending a simplex $\sigma = \ls v_0,\ldots,v_k \rs$ to $\langle \vec v_0,\ldots, \vec v_k\rangle_{\F_p}$ with the orientation given by $\vec v_{i_0} \wedge \vec v_{i_1} \wedge \ldots \wedge \vec v_{i_m}$, where $\ls \vec v_{i_0} , \vec v_{i_1} , \ldots , \vec v_{i_m} \rs$ is any maximal partial basis contained in $\sigma$. Observe that this orientation does not depend on the choice of the maximal partial basis in $\sigma$: For example, if $\sigma=\ls \vec v_1+\vec v_2,\vec v_1,\vec v_2,\ldots,\vec v_k\rs$ is $2$-additive, then 
\begin{equation*}
\vec v_1 \wedge \vec v_2 \wedge \ldots \wedge \vec v_k = (\vec v_1+\vec v_2 ) \wedge \vec v_2 \wedge \vec v_3 \wedge \ldots \wedge \vec v_k= (\vec v_1+\vec v_2 ) \wedge \vec v_1 \wedge \vec v_3 \wedge \ldots \wedge \vec v_k.
\end{equation*}
Reordering the vectors of the partial basis introduces a sign but does not change the equivalence class of the orientation. 

 Let $E^r_{a,b}$ denote the associated map-of-poset spectral sequence associated to $\phi$ described in \autoref{SSquillen}. For $V$ a proper nonzero rank $k$ $\pm$-summand of $\F_p^n$, note that $\operatorname{ht}(V)=k-1$, $\phi^{\leq V} \cong \mathbb P(\BAA_k^\pm(\F_p))$, and $\mathbb T_n^\pm(\F_p)_{>V} \cong \mathbb T_{n-k}^\pm(\F_p)$. Applying \autoref{AndyLemma} with $e=n-2$, $d=2$, and $r=1$, we find that for $b \geq 1$, $E^2_{a,b} \cong 0$ unless $a+b=n-1$ or $a+b=n$. See \autoref{MPSS}. 
  \begin{figure}[h!]    \hspace{-0cm} \hspace{-1.2cm} 
\begin{center}  \begin{tikzpicture} \scriptsize
  \matrix (m) [matrix of math nodes,
    nodes in empty cells,nodes={minimum width=3ex,
    minimum height=3ex,outer sep=2pt},
 column sep=3ex,row sep=3ex]{  
  8    & 0 &  &  & & &  & &&  \\   
  7    &\bigstar & 0 && &  &  & &&  \\ 
 6    &  \bigstar  & \bigstar & 0 &  &  &   &&&  \\   
 5    &  0 &  \bigstar  & \bigstar & 0 & &   &&& \\  
4   & 0  & 0 & \bigstar  & \bigstar & 0 &  & && \\            
3    &  0  & 0 & 0 & \bigstar & \bigstar & 0 &&&  \\        
2     & 0 & 0 & 0 & 0&|[draw=red, circle]|\bigstar& \bigstar & 0 && \\       
 1     & 0 & 0 & 0 & 0&0&\clubsuit & 0& 0 & \\      
   0     & \bigstar & \bigstar & \bigstar & \bigstar&\bigstar&\bigstar&0& 0 & 0  \\      
 \quad\strut &     0  &  1  & 2  & 3 & 4 &5 &6 & 7 & 8  \\ &&&&&&&&&& \\}; 
 \draw[thick] (m-1-1.east) -- (m-10-1.east) ;
 \draw[thick] (m-10-1.north) -- (m-10-10.north east) ;
 
  \draw[-stealth, blue]  (m-7-6) -- (m-6-4);
  \draw[-stealth, blue]  (m-7-6) -- (m-5-3);
    \draw[-stealth, blue]  (m-7-6) -- (m-4-2);
  \draw[-stealth, blue]  (m-8-8) -- (m-7-6);
    \draw[-stealth, blue]  (m-9-9) -- (m-7-6);

      \begin{pgfonlayer}{background}
\draw[rounded corners, draw=none, fill=black!60!white, inner sep=3pt,fill opacity=0.25]
   (m-10-7.north east)  -| (m-10-10.south east) 
    -| (m-1-10.north east) -| (m-1-7.south east);
    \end{pgfonlayer}

\end{tikzpicture} \vspace{-2em} 
\end{center}
\caption{ The page $E^2_{a,b}$ when $n=7$. The domains of all subsequent differentials into $E^2_{n-3,2}$ are 0, as are the codomains of all subsequent differentials out of $E^2_{n-3,2}$. Thus  $E^2_{n-3,2} \cong E^\infty_{n-3,2}$.}
\label{MPSS}
\end{figure}
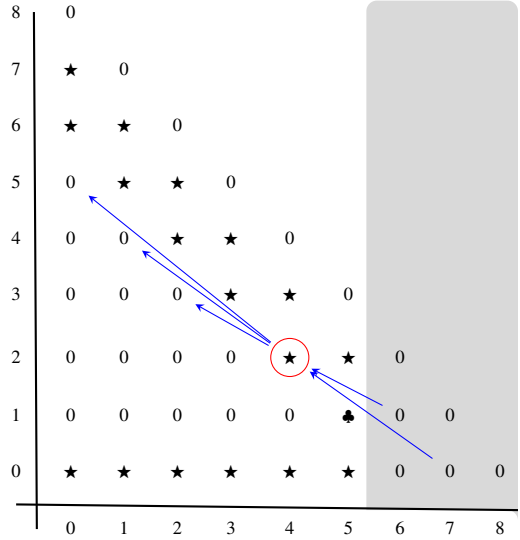

 Since $\cT_n^\pm(\F_p^n)$ is $(n-2)$-dimensional,   $E^2_{a,b} \cong 0$  for $a>n-2$. This region is shaded grey in \autoref{MPSS}. Thus $E^2_{n-3,2} \cong E^\infty_{n-3,2}$ as all higher differentials to or from $E^*_{n-3, 2}$ vanish, as in \autoref{MPSS}.  

Observe that the group $E^\infty_{n-2,1}$ (marked by $\clubsuit$ in \autoref{MPSS}) must vanish,  since $H_1(\BAA_1^\pm(\F_p)) = H_1(\BA_1^\pm(\F_p)) \cong 0$. Thus the abutment of the spectral sequence surjects onto $E^\infty_{n-3,2}$.  All that remains is to identify $E^2_{n-3,2}$ with $[ \Z[Gr_2^n(\F_p)^\pm] \otimes \widetilde H_{n-4}(\mathcal T_{n-2}^\pm(\F_p))$. We will apply \autoref{CharneyLemma}. Observe that the functor $V \mapsto H_2(\phi^{\leq V})$ is supported on vector spaces $V$ of dimension $2$, equivalently, of height 1 in the poset $ \mathbb T_n^\pm(\F_p)$. By  \autoref{CharneyLemma},
$$E^2_{n-3,2} \cong H_{n-3}( \mathbb T_n^\pm(\F_p); H_2(\phi)) \cong\bigoplus_{\operatorname{ht}(V)=1} \widetilde H_{n-4}(\mathbb T_{n-2}^\pm(\F_p); H_2(\BAA_2^\pm(\F_p))
.$$
The set of height-$1$ elements of $\mathbb{T}_n^\pm(\F_p)$ is isomorphic to $Gr_2^n(\F_p)^\pm$, and $H_2(\BAA_2^\pm(\F_p)) \cong \Z$ by \autoref{H2}.  Thus $E^2_{n-3,2} \cong  \Z[Gr_2^n(\F_p)^\pm] \otimes \widetilde H_{n-4}(\mathcal T_{n-2}^\pm(\F_p))$ and the result follows. 
\end{proof}

We now prove \autoref{TheoremC} which gives a numerical lower bound for $H^{{n \choose 2}-1}(\Gamma_n(p)) ;\Q)$.

\begin{proof}[Proof of \autoref{TheoremC}]

We must show that if $p=3$ or $5$, then \[\dim_{\Q} H^{{n \choose 2}-1}(\Gamma_n(p);\Q) \geq p^{n-2 \choose 2} |Gr_2^{n}(\F_p)| \left(\frac{p-1}{2}\right)^{n-2}. \] 
When $n=2$, both sides of the inequality are equal to $1$. Assume $n \geq 3$. 
By \autoref{CatC}, $$ \dim_{\Q} H^{{n \choose 2}-1}(\Gamma_n(p);\Q) \geq |Gr_2^n(\F_p)^\pm| \dim_{\Q} \widetilde H_{n-4}(\cT_{n-2}^\pm(\F_p);\Q).$$ 
Observe that the order of $Gr_2^{n}(\F_p)^\pm$ is $\frac{p-1}{2}$ times the order of $Gr_2^{n}(\F_p)$.  Furthermore, \cite[Page 5]{MPP} contains a proof that $\widetilde H_{n-4}(\cT_{n-2}^\pm(\F_p))$ is a free abelian group of rank at least $p^{n-2 \choose 2} \left( \frac {p-1}{2}\right)^{n-3}$. We deduce  \autoref{TheoremC}. 
\end{proof}

\bibliographystyle{amsalpha}
\bibliography{codim2}

\end{document}